\title{DR/DZ equivalence conjecture and tautological relations}
\author{Alexandr Buryak}
\address{School of Mathematics, University of Leeds, Leeds LS2 9JT, United Kingdom}
\email{a.buryak@leeds.ac.uk}
\urladdr{https://sites.google.com/site/alexandrburyakhomepage/home}
\author{J\'er\'emy Gu\'er\'e}
\address{Universit\'e Grenoble Alpes, CNRS, Institut Fourier, F-38000 Grenoble, France}
\email{jeremy.guere@gmail.com}
\urladdr{https://www-fourier.ujf-grenoble.fr/~guerej/index.html}
\author{Paolo Rossi}
\address{Dipartimento di Matematica ``Tullio Levi-Civita'', Universit\`a degli Studi di Padova, Via Trieste~63, 35121 Padova, Italy}
\email{paolo.rossi@math.unipd.it}
\urladdr{http://www.math.unipd.it/~rossip/}
\newtheorem{theorem}{Theorem}[section]
\newtheorem{proposition}[theorem]{Proposition}
\newtheorem{lemma}[theorem]{Lemma}
\newtheorem{corollary}[theorem]{Corollary}
\newtheorem{conjecture}[theorem]{Conjecture}
\theoremstyle{definition}
\newtheorem{definition}[theorem]{Definition}
\newtheorem{remark}[theorem]{Remark}
\newcommand{\mbP}{\mathbb P}
\newcommand{\mbZ}{\mathbb Z}
\newcommand{\mbC}{\mathbb C}
\newcommand{\cP}{\mathcal P}
\newcommand{\oM}{\overline{\mathcal M}}
\newcommand{\tu}{{\widetilde u}}
\newcommand{\oh}{\overline h}
\newcommand{\hLambda}{\widehat\Lambda}
\def\cM{{\mathcal{M}}}
\def\oM{{\overline{\mathcal{M}}}}
\def\mbQ{{\mathbb Q}}
\def\d{{\partial}}
\newcommand{\<}{\left<}
\renewcommand{\>}{\right>}
\newcommand{\eps}{\varepsilon}
\newcommand{\hcA}{\widehat{\mathcal A}}
\newcommand{\DR}{\mathrm{DR}}
\newcommand{\DZ}{\mathrm{DZ}}
\newcommand{\even}{\mathrm{even}}
\newcommand{\ct}{\mathrm{ct}}
\newcommand{\cF}{\mathcal F}
\newcommand{\Coef}{\mathrm{Coef}}
\newcommand{\Desc}{\mathrm{Desc}}
\newcommand{\ST}{\mathrm{ST}}
\newcommand{\tv}{\widetilde v}
\renewcommand{\top}{\mathrm{top}}
\newcommand{\red}{\mathrm{red}}
\newcommand{\gl}{\mathrm{gl}}
\newcommand{\tA}{\widetilde{A}}
\newcommand{\tGamma}{\widetilde{\Gamma}}
\newcommand{\tl}{\widetilde{l}}
\newcommand{\sol}{\mathrm{sol}}
\newcommand{\cR}{\mathcal{R}}
\newcommand{\NN}{\mathbb{N}}
\newcommand{\ee}{\mathrm{e}}
\newcommand{\hA}{\widehat{A}}
\newcommand{\ta}{\widetilde{a}}
\newcommand{\st}{\mathrm{st}}
\renewcommand{\gg}[2]{\fill[color=white] (#2) circle(2mm) node {\color{black}$\substack{#1}$}; \draw (#2) circle (2mm)}
\newcommand{\lab}[4]{\draw (#1)++(#2:#3) node {$\substack{#4}$};}
\newcommand{\leg}[2]{\begin{scope}[shift={(#1)}] \draw (0:0) -- (#2:6.1mm);\end{scope}}
\newcommand{\legm}[3]{\begin{scope}[shift={(#1)}] \draw (0:0) -- (#2:7.8mm);\fill[color=white] (#2:7.8mm) circle(1.7mm) node {\color{black}$\substack{#3}$};\end{scope}}
\newcommand{\lp}[2]{\begin{scope}[shift={(#1)}] \draw (#2:0.3) circle(0.3);\end{scope}}
\newcommand{\doubleedge}{\draw plot [smooth,tension=1] coordinates {(A) (AB1) (B)}; \draw plot [smooth,tension=1.5] coordinates {(A) (AB2) (B)};}
\newcommand{\doubleedgeB}{\draw plot [smooth,tension=1] coordinates {(B) (BC1) (C)}; \draw plot [smooth,tension=1.5] coordinates {(B) (BC2) (C)}}
\renewcommand{\ggg}[2]{\fill[color=white] (#2) circle(4mm) node {\color{black}$#1$}; \draw (#2) circle (4mm)}
\newcommand{\legg}[3]{\begin{scope}[shift={(#1)}] \draw (0:0) -- (#2:#3);\end{scope}}
\newcommand{\legmm}[4]{\begin{scope}[shift={(#1)}]\draw (0:0) -- (#2:#3);\fill[color=white] (#2:#3) circle(2mm) node {\color{black}$#4$};\end{scope}}
\begin{document}

\tikz{\coordinate (AAA) at (-0.8,0);\coordinate (AA) at (0,0.8);\coordinate (A) at (0,0);\coordinate (B) at (0.8,0);\coordinate (C) at (1.6,0);\coordinate (D) at (2.4,0);\coordinate (AB1) at (0.4,0.18);\coordinate (AB2) at (0.4,-0.18);\coordinate (E) at (-0.69,0.4);\coordinate (F) at (-0.69,-0.4);\coordinate (BC1) at (1.2,0.18);\coordinate (BC2) at (1.2,-0.18);\coordinate (G) at (-1.38,0);\coordinate (H) at (-2.18,0);\coordinate (I) at (-2.98,0);}

\begin{abstract}    
In this paper we present a family of conjectural relations in the tautological ring of the moduli spaces of stable curves which implies the strong double ramification/Dubrovin-Zhang equivalence conjecture introduced in \cite{BDGR16a}. Our tautological relations have the form of an equality between two different families of tautological classes, only one of which involves the double ramification cycle. We prove that both families behave the same way upon pullback and pushforward with respect to forgetting a marked point. We also prove that our conjectural relations are true in genus $0$ and $1$ and also when first pushed forward from~$\oM_{g,n+m}$ to~$\oM_{g,n}$ and then restricted to $\cM_{g,n}$, for any $g,n,m\geq 0$. Finally we show that, for semisimple CohFTs, the DR/DZ equivalence only depends on a subset of our relations, finite in each genus, which we prove for $g\leq 2$. As an application we find a new formula for the class $\lambda_g$ as a linear combination of dual trees intersected with kappa and psi classes, and we check it for $g \leq 3$.
\end{abstract}

\maketitle


\section{Introduction}

A cohomological field theory (CohFT) $c_{g,n}$ is a family of cohomology classes on the moduli spaces  $\oM_{g,n}$ of genus $g$ stable curves with $n$ marked points (parameterized by $n$ tensor copies of a vector space) which satisfy certain compatibility axioms with respect to the natural morphisms among different moduli spaces. They were introduced by Kontsevich and Manin~\cite{KM94} to axiomatize the properties of Gromov-Witten classes for a given smooth projective variety, but have since then also proved to be a powerful probe for the cohomology and Chow rings of~$\oM_{g,n}$ itself, and their tautological subrings in particular \cite{PPZ15,Jan15,JPPZ16}.

Recall that the tautological rings $R^*(\oM_{g,n})$, for $g,n\geq 0$ satisfying $2g-2+n>0$, are the smallest $\mbQ$-subalgebras of $H^*(\oM_{g,n},\mbQ)$ closed under pushforward along the morphisms forgetting marked points and gluing two marked points together to form a node. $R^*(\oM_{g,n})$ is much smaller than the full cohomology ring, but still has a rich structure and contains most of the natural and geometrically interesting classes. The ring structure of $R^*(\oM_{g,n})$, however, is not yet completely under control. We know a system of additive generators, the so-called strata algebra, formed by basic classes which are represented by the closure of the loci of curves with fixed dual stable graph intersected with a given monomial in kappa and psi classes. The product of basic classes is explictly described, but the full system of relations is still unknown, although Pixton has found a large set of relations that is conjectured to be complete, see \cite{PPZ15}.

In this paper we present a new family of conjectural relations in the form of an equality between two families of tautological classes. We denote these classes in $R^*(\oM_{g,n})$ by $A^g_{d_1,\ldots,d_n}$ and $B^g_{d_1,\ldots,d_n}$, where the $n$ integer non-negative parameters $d_1,\ldots,d_n$ satisfy $2g-1\leq \sum d_i \leq 3g-3+n$. Their precise definition is given in Sections \ref{section:Aclasses} and~\ref{section:Bclasses} respectively, but here we stress that they can be described as two different linear combinations of stable trees with psi classes at the half-edges and, moreover, for the $A$-classes only, a double ramification cycle times the Hodge class $\lambda_g$ is attached at each vertex.

The motivation for this conjecture comes from the study of the double ramification (DR) hierarchy, an integrable system of Hamiltonian PDEs associated to a CohFT and involving the geometry of the DR cycle, introduced by the first author in~\cite{Bur15} and further studied in \cite{BR16a,BR16b,BG16,BDGR16a,BDGR16b} (see also \cite{Bur17,Ros17} for a review). In~\cite{BDGR16a}, sharpening a conjecture from \cite{Bur15}, it was conjectured that (the logarithm of) the tau-function of (a particular solution of) the DR hierarchy coincides with the reduced potential of the CohFT. The reduced potential is obtained from the full potential, i.e.~the generating series of the intersection numbers of the CohFT with monomials in the psi classes, by an explicit procedure, also described in \cite{BDGR16a}, which only depends on the potential itself and which ultimately forgets part of the information.

In case the CohFT is semisimple (a technical condition on its genus $0$ part), the conjecture translates into a statement about the relation between the DR hierarchy and the Dubrovin--Zhang hierarchy, another, more classical, construction associating an integrable system to a semisimple CohFT for which we have the Witten-type result that (the logarithm of) the tau-function of (a special solution of) the DZ hiearchy coincides with the potential of the CohFT.

In this case the strong DR/DZ equivalence conjecture states that the two hierarchies are related by a normal Miura transformation, i.e.~a change of coordinates preserving the tau-structure, and hence acting in particular on the tau-functions. This action on the tau-functions precisely corresponds to the reduction procedure described above for the potential of the CohFT.

As we have seen, the DR/DZ equivalence conjecture is about intersection numbers, not cohomology classes. However in Section~\ref{section:DR/DZ} we show how the coefficients of the two involved generating series, the (logarithm of the) DR tau-function and the reduced potential of the CohFT, are the intersection numbers of the CohFT with two different families of cohomology classes. These two families are precisely the $A$- and $B$-classes above. So the DR/DZ equivalence conjecture states that the intersection numbers of the $A$- and $B$-classes with any (possibly non tautological) CohFT are equal:
$$
\int_{\oM_{g,n}} A^g_{d_1,\ldots,d_n} c_{g,n} = \int_{\oM_{g,n}} B^g_{d_1,\ldots,d_n} c_{g,n}.
$$
This motivates us to conjecture that it is the $A$- and $B$-classes themselves to be equal:
$$A^g_{d_1,\ldots,d_n} = B^g_{d_1,\ldots,d_n}.$$

In the rest of the paper we work towards the proof of such conjecture. In Section~\ref{section:further structure} we prove the string and dilaton equations for both $A$- and $B$-classes, establishing that their behaviour upon pullback and pushforward along the morphism $\pi:\oM_{g,n+1}\to \oM_{g,n}$ that forgets the last marked point is the same.

The string equation allows us to prove that the conjecture is true if and only if it is true when all the parameters $d_1,\ldots,d_n$ are strictly positive. This in turn yields a full proof of the conjecture in genus $0$ and genus $1$.

The dilaton equation is used to show that the relations in $R^*(\cM_{g,n})$ obtained by pushing forward our conjectural relations from $R^*(\oM_{g,n+m})$ to $R^*(\oM_{g,n})$ and then restricting them to~$R^*(\cM_{g,n})$ are valid. This is what we mean by saying that the conjecture is valid on $\cM_{g,n}$.

We then show that our relations imply in particular a new expression for the top Chern class of the Hodge bundle $\lambda_g$ as a linear combination of basic classes whose dual graph is a tree (with psi and kappa classes). No expression of this type for $\lambda_g$ was known before. We check its validity for $g\leq 3$.

Finally, in Section \ref{section:restricted} we show that, for semisimple CohFTs, the DR/DZ equivalence conjecture actually depends on just a subset of our conjectural relations, namely the ones for which $\sum d_i = 2g$ and $d_i>0$. This means that the number of relations one needs to check is finite in each genus, and equal to the number of partitions of $2g$.

In the appendix we check this finite subset of relations for $g=2$ thereby proving the strong DR/DZ equivalence conjecture in genus $g\leq 2$ for any semisimple CohFT.

\subsection*{Acknowledgements}

We would like to thank Boris Dubrovin, Rahul Pandharipande, Sergey Shadrin and Dimitri Zvonkine for useful discussions. 

A.~B. received funding from the European Union's Horizon 2020 research and innovation programme under the Marie Sk\l odowska-Curie grant agreement No 797635, was supported by Grant ERC-2012-AdG-320368-MCSK in the group of Rahul Pandharipande at ETH Zurich and Grant RFFI-16-01-00409. J. G. was supported by the Einstein foundation. P.~R.~was partially supported by a Chaire CNRS/Enseignement superieur 2012-2017 grant.

Part of the work was completed during the visit of J.~G. and P.~R. of the Forschungsinstitut f\"ur Mathematik at ETH Z\"urich in 2017.


\section{Tautological relations}

In this section we present our conjectural tautological relations.

\subsection{Tautological ring of $\oM_{g,n}$}\label{subsection:tautological ring}

Here we fix notations concerning tautological cohomology classes on $\oM_{g,n}$. We will use the notations from~\cite[Sections 0.2 and 0.3]{PPZ15}.

Recall that for any stable graph $\Gamma$ we have the associated moduli space
$$
\oM_{\Gamma}:=\prod_{v\in V(\Gamma)}\oM_{g(v),n(v)}
$$
and the canonical morphism 
$$
\xi_\Gamma\colon\oM_{\Gamma}\to\oM_{g(\Gamma),|L(\Gamma)|}.
$$
Recall~\cite{PPZ15} that given numbers $x_i[v],y[h]\ge 0$, $i\ge 1$, $v\in V(\Gamma)$, $h\in H(\Gamma)$, we can define a basic cohomology class on $\oM_{\Gamma}$ by
\begin{gather}\label{eq:basic class}
\gamma=\prod_{v\in V(\Gamma)}\prod_{i\ge 1}\kappa_i[v]^{x_i[v]}\cdot\prod_{h\in H(\Gamma)}\psi_h^{y[h]}\in H^*(\oM_\Gamma,\mbQ),
\end{gather}
where $\kappa_i[v]$ is the $i$-th kappa class on $\oM_{g(v),n(v)}$ and $\psi_h$ is the psi class on $\oM_{g(v(h)),n(v(h))}$. A cohomology class on $\oM_{g,n}$ of the form $\xi_{\Gamma*}(\gamma)$, where $\Gamma$ is a stable graph of genus~$g$ with~$n$ legs and $\gamma$ is a basic class on $\oM_{\Gamma}$, will be called a basic tautological class. Denote by $R^*(\oM_{g,n})$ the subspace of $H^*(\oM_{g,n},\mbQ)$ spanned by all basic tautological classes. The subspace $R^*(\oM_{g,n})$ is closed under multiplication and is called the tautological ring of the moduli space of curves. Let 
$$
R^i(\oM_{g,n}):=R^*(\oM_{g,n})\cap H^{2i}(\oM_{g,n},\mbQ).
$$
Denote by $\cM^\ct_{g,n}\subset\oM_{g,n}$ the moduli space of curves of compact type and by $\cM_{g,n}\subset\oM_{g,n}$ the moduli space of smooth curves. We will use the notations
$$
R^i(\cM^\ct_{g,n}):=\left.R^i(\oM_{g,n})\right|_{\cM_{g,n}^\ct},\qquad R^i(\cM_{g,n}):=\left.R^i(\oM_{g,n})\right|_{\cM_{g,n}}.
$$
Linear relations between basic tautological classes are called tautological relations. The class $\xi_{\Gamma *}(1)\in R^{|E(\Gamma)|}(\oM_{g(\Gamma),|L(\Gamma)|})$ will be called a boundary stratum.

We will represent a basic tautological class $\xi_{\Gamma*}(\gamma)$ on $\oM_{g(\Gamma),|L(\Gamma)|}$ by a picture of the graph $\Gamma$ where we put the monomial $\prod_{i\ge 1}\kappa_i[v]^{x_i[v]}$ next to each vertex $v$ and the power of the psi class~$\psi_h^{y[h]}$ next to each half-edge $h$. For example, we have the following well-known formulas:
\begin{align*}
&\psi_1=\tikz[baseline=-1mm]{\draw (A)--(B);\legm{A}{150}{1};\legm{A}{-150}{2};\legm{B}{30}{3};\legm{B}{-30}{4};\gg{0}{A};\gg{0}{B};}\in R^1(\oM_{0,4}),\\
&\lambda_1=\frac{1}{24}\tikz[baseline=-1mm]{\lp{A}{0};\legm{A}{180}{1};\gg{0}{A};}\in R^1(\oM_{1,1}),
\end{align*}
where we denote by $\lambda_i\in H^{2i}(\oM_{g,n},\mbQ)$ the $i$-th Chern class of the Hodge vector bundle over~$\oM_{g,n}$. It is well-known that the class $\lambda_i$ is tautological (see e.g.~\cite{FP00}).

Suppose $\Gamma_1$ and $\Gamma_2$ are two stable graphs, both of genus $g$ and with $n$ legs. They are called isomorphic, if there exists a pair $f=(f_1,f_2)$ of set isomorphisms $f_1\colon V(\Gamma_1)\to V(\Gamma_2)$ and $f_2\colon H(\Gamma_1)\to H(\Gamma_2)$ that preserve all the additional structure of the stable graphs. Suppose $\gamma_1$ and $\gamma_2$ are two basic classes on the spaces $\oM_{\Gamma_1}$ and $\oM_{\Gamma_2}$ respectively:
\begin{gather*}
\gamma_1=\prod_{v\in V(\Gamma_1)}\prod_{i\ge 1}\kappa_i[v]^{x_{1,i}[v]}\cdot\prod_{h\in H(\Gamma_1)}\psi_h^{y_1[h]},\qquad
\gamma_2=\prod_{v\in V(\Gamma_2)}\prod_{i\ge 1}\kappa_i[v]^{x_{2,i}[v]}\cdot\prod_{h\in H(\Gamma_2)}\psi_h^{y_2[h]}.
\end{gather*}
We will say that the pairs~$(\Gamma_1,\gamma_1)$ and~$(\Gamma_2,\gamma_2)$ are combinatorially equivalent, if there exists a pair of maps $f=(f_1,f_2)$, $f_1\colon V(\Gamma_1)\to V(\Gamma_2)$, $f_2\colon H(\Gamma_1)\to H(\Gamma_2)$, that defines an isomorphism between the stable graphs~$\Gamma_1$ and~$\Gamma_2$ and also satisfies the properties
\begin{align*}
x_{1,i}[v]=&x_{2,i}[f_1(v)],&&\text{for any $i\ge 1$ and $v\in V(\Gamma_1)$},\\
y_1[h]=&y_2[f_2(h)],&&\text{for any $h\in H(\Gamma_1)$}.
\end{align*} 
Obviously, if the pairs~$(\Gamma_1,\gamma_1)$ and~$(\Gamma_2,\gamma_2)$ are combinatorially equivalent, then $\xi_{\Gamma_1*}(\gamma_1)=\xi_{\Gamma_2*}(\gamma_2)$.

Consider the set of stable graphs of genus~$g$ with~$n$ legs. Suppose $I$ is a subset of $\{1,2,\ldots,n\}$. The symmetric group~$S_{|I|}$ acts on our set of stable graphs by permutations of markings from the set~$I$. This gives an $S_{|I|}$-action on the set of pairs $(\Gamma,\gamma)$, where~$\Gamma$ is a stable graph and~$\gamma$ is a basic class on~$\oM_{\Gamma}$. Let us fix some stable graph~$\Gamma$ and a basic class~$\gamma$. The sum of the basic tautological classes corresponding to combinatorially non-equivalent pairs in the $S_{|I|}$-orbit of the pair~$(\Gamma,\gamma)$ will be represented by the picture corresponding to the class $\xi_{\Gamma*}(\gamma)$, where we erase the labels from the set~$I$. Let us give two examples in order to illustrate this rule:
\begin{align*}
\tikz[baseline=-1mm]{\draw (A)--(B)--(C);\leg{A}{-120};\leg{A}{-60};\leg{C}{-120};\leg{C}{-60};\gg{0}{A};\gg{1}{B};\gg{0}{C};}=&
\tikz[baseline=-1mm]{\draw (A)--(B)--(C);\legm{A}{-120}{1};\legm{A}{-60}{2};\legm{C}{-120}{3};\legm{C}{-60}{4};\gg{0}{A};\gg{1}{B};\gg{0}{C};}
+\tikz[baseline=-1mm]{\draw (A)--(B)--(C);\legm{A}{-120}{1};\legm{A}{-60}{3};\legm{C}{-120}{2};\legm{C}{-60}{4};\gg{0}{A};\gg{1}{B};\gg{0}{C};}
+\tikz[baseline=-1mm]{\draw (A)--(B)--(C);\legm{A}{-120}{1};\legm{A}{-60}{4};\legm{C}{-120}{2};\legm{C}{-60}{3};\gg{0}{A};\gg{1}{B};\gg{0}{C};},\\
\tikz[baseline=-1mm]{\draw (A)--(B);\leg{A}{-90};\legm{B}{-120}{1};\leg{B}{-60};\gg{1}{A};\gg{1}{B};\lab{B}{-38}{3.5mm}{\psi};}=&\,\,
\tikz[baseline=-1mm]{\draw (A)--(B);\legm{A}{-90}{3};\legm{B}{-120}{1};\legm{B}{-60}{2};\gg{1}{A};\gg{1}{B};\lab{B}{-38}{3.5mm}{\psi};}
+\tikz[baseline=-1mm]{\draw (A)--(B);\legm{A}{-90}{2};\legm{B}{-120}{1};\legm{B}{-60}{3};\gg{1}{A};\gg{1}{B};\lab{B}{-38}{3.5mm}{\psi};}.
\end{align*}
As another useful example, we can write the topological resursion relations in genus~$0$ and~$1$:
\begin{align}
&\psi_1=\sum_{\substack{i+j=n-3\\i\ge 1,\,j\ge 0}}\tikz[baseline=-1mm]{\draw (A)--(B);\legm{A}{180}{1};\legm{B}{30}{2};\legm{B}{-30}{3};\leg{A}{-60};\leg{A}{-120};\leg{B}{-60};\leg{B}{-120};\gg{0}{A};\gg{0}{B};\lab{A}{-90}{5.9mm}{...};\lab{A}{-90}{7mm}{\underbrace{\phantom{aaa}}_{\text{$i$ legs}}};\lab{B}{-90}{5.9mm}{...};\lab{B}{-90}{7mm}{\underbrace{\phantom{aaa}}_{\text{$j$ legs}}};}\in R^1(\oM_{0,n}),\quad n\ge 4,\label{eq:TRR0}\\
&\psi_1=\frac{1}{24}\tikz[baseline=-1mm]{\legm{A}{180}{1};\leg{A}{-60};\leg{A}{-120};\lp{A}{90};\gg{0}{A};\lab{A}{-90}{5.9mm}{...};\lab{A}{-90}{7mm}{\underbrace{\phantom{aaa}}_{\text{$n-1$ legs}}};}
+\sum_{\substack{i+j=n-1\\i\ge 1,\,j\ge 0}}\tikz[baseline=-1mm]{\draw (A)--(B);\legm{A}{180}{1};\leg{A}{-60};\leg{A}{-120};\leg{B}{-60};\leg{B}{-120};\gg{0}{A};\gg{1}{B};\lab{A}{-90}{5.9mm}{...};\lab{A}{-90}{7mm}{\underbrace{\phantom{aaa}}_{\text{$i$ legs}}};\lab{B}{-90}{5.9mm}{...};\lab{B}{-90}{7mm}{\underbrace{\phantom{aaa}}_{\text{$j$ legs}}};}\in R^1(\oM_{1,n}).\label{eq:TRR1}
\end{align}

By stable tree we mean a stable graph $\Gamma$ with the first Betti number $b_1(\Gamma)$ equal to zero. Suppose $\Gamma$ is a stable tree. Let 
$$
H^e(\Gamma):=H(\Gamma)\backslash L(\Gamma).
$$
A path in~$\Gamma$ is a sequence of pairwise distinct vertices $v_1,v_2,\ldots,v_k\in V(\Gamma)$, $v_i\ne v_j$ for $i\ne j$, such that for any $1\le i\le k-1$ the vertices $v_i$ and $v_{i+1}$ are connected by an edge. For a vertex $v\in V(\Gamma)$ define a number $r(v)$ by
\begin{gather*}
r(v):=2g(v)-2+n(v).
\end{gather*}
Denote by $\ST^m_{g,n}$ the set of stable trees of genus~$g$ with~$m$ vertices and with~$n$ legs marked by numbers $1,\ldots,n$. For a stable tree $\Gamma\in\ST^m_{g,n}$ denote by~$l_i(\Gamma)$ the leg in~$\Gamma$ that is marked by~$i$. For a leg $l\in L(\Gamma)$ denote by $1\le i(l)\le n$ its marking.

A stable rooted tree is a pair $(\Gamma,v_1)$, where $\Gamma$ is a stable tree and $v_1\in V(\Gamma)$. The vertex~$v_1$ is called the root. Denote by~$H_+(\Gamma)$ the set of half-edges of $\Gamma$ that are directed away from the root $v_1$. Clearly, $L(\Gamma)\subset H_+(\Gamma)$. Let 
$$
H^e_+(\Gamma):=H_+(\Gamma)\backslash L(\Gamma).
$$
A vertex $w$ is called a descendant of a vertex $v$, if $v$ is on the unique path from the root $v_1$ to $w$. Note that according to our definition the vertex $v$ is a descendant of itself. Denote by $\Desc[v]$ the set of all descendants of~$v$. A vertex $w$ is called a direct descendant of~$v$, if $w\in\Desc[v]$, $w\ne v$ and $w$ and $v$ are connected by an edge. In this case the vertex $v$ is called the mother of~$w$. 

\subsection{Double ramification cycle and the definition of the $A$-class}\label{section:Aclasses}

Consider integers $a_1,\ldots,a_n$ such that $a_1+\ldots+a_n=0$. The double ramification cycle $\DR_g(a_1,\ldots,a_n)$ is a cohomology class in $H^{2g}(\oM_{g,n},\mbQ)$. If not all of $a_i$'s are equal to zero, then the restriction~$\left.\DR_g(a_1,\ldots,a_n)\right|_{\cM_{g,n}}$ can be defined as the Poincar\'e dual to the locus of pointed smooth curves~$(C,p_1,\ldots,p_n)$ satisfying $\mathcal O_C\left(\sum_{i=1}^n a_ip_i\right)\cong\mathcal O_C$, and we refer the reader, for example, to~\cite{BSSZ15} for the definition of the double ramification cycle on the whole moduli space~$\oM_{g,n}$. We will often consider the Poincar\'e dual to the double ramification cycle~$\DR_g(a_1,\ldots,a_n)$. It is an element of $H_{2(2g-3+n)}(\oM_{g,n},\mbQ)$ and, abusing our notations a little bit, it will also be denoted by $\DR_g(a_1,\ldots,a_n)$. 

The double ramification cycle $\DR_g(a_1,\ldots,a_n)$ is a tautological class on $\oM_{g,n}$ \cite{FP05}. A simple explicit formula for the restriction $\left.\DR_g(a_1,\ldots,a_n)\right|_{\cM_{g,n}^\ct}$ was derived in~\cite{Hai13,MW13}:
\begin{gather}\label{eq:Hain's formula}
\left.\DR_g(a_1,\ldots,a_n)\right|_{\cM_{g,n}^\ct}=\frac{1}{g!}\left(\sum_{i=1}^n \frac{a_i^2 \psi_i}{2}-\frac{1}{2}\sum_{\substack{I \subset \{1,\ldots,n\}\\|I|\ge 2}}a_I^2 \delta_0^I - \frac{1}{4} \sum_{I \subset \{1,\ldots,n\}} \sum_{h=1}^{g-1} a_I^2 \delta_h^I\right)^g,
\end{gather}
where for a subset $I\subset\{1,2,\ldots,n\}$ and a number $0\le h\le g$ we use the following notations:
\begin{align*}
&a_I:=\sum_{i\in I}a_i,\\
&\delta_h^I:=\tikz[baseline=-1mm]{\draw (A)--(B);\leg{A}{-60};\leg{A}{-120};\leg{B}{-60};\leg{B}{-120};\gg{h}{A};\gg{h'}{B};\lab{A}{-90}{5.9mm}{...};\lab{A}{-90}{7mm}{\underbrace{\phantom{aaa}}_{\text{$I$}}};\lab{B}{-90}{5.9mm}{...};\lab{B}{-90}{7mm}{\underbrace{\phantom{aaa}}_{\text{$I^c$}}};}\in R^1(\oM_{g,n}),\quad I^c:=\{1,2,\ldots,n\}\backslash I,\quad h':=g-h.
\end{align*}
Formula~\eqref{eq:Hain's formula} is usually referred as Hain's formula. It implies that the class $\left.\DR_g(a_1,\ldots,a_n)\right|_{\cM_{g,n}^\ct}$ is a polynomial in the variables $a_1,\ldots,a_n$ homogeneous of degree~$2g$. Since $\lambda_g|_{\oM_{g,n}\backslash\cM^\ct_{g,n}}=0$, we obtain that the class $\lambda_g\DR_g(a_1,\ldots,a_n)\in R^{2g}(\oM_{g,n})$ is a polynomial in $a_1,\ldots,a_n$ homogeneous of degree~$2g$. The full double ramification cycle is also polynomial, but not necessarily homogeneous~\cite{JPPZ16}. 

The following properties of the double ramification cycle will be useful for us. Let $\pi_i\colon\oM_{g,n+1}\to\oM_{g,n}$ be the forgetful map that forgets the $i$-th marked point. Then 
$$
\DR_g(a_1,\ldots,a_n,0)=\pi_{n+1}^*\DR_g(a_1,\ldots,a_n).
$$
Let $\pi\colon\oM_{g,n+g}\to\oM_{g,n}$ be the forgetful map that forgets the last~$g$ marked points. Then we have~\cite[Example 3.7]{BSSZ15}
\begin{gather}\label{eq:DR and fundamental}
\pi_*\DR_g(a_1,\ldots,a_{n+g})=g!a_{n+1}^2\cdots a_{n+g}^2[\oM_{g,n}].
\end{gather}
It is also useful to remember that (see e.g.~\cite{JPPZ16})
$$
\DR_g(0,0,\ldots,0)=(-1)^g\lambda_g\in R^g(\oM_{g,n}).
$$
We will denote by $\DR_g(a_1,\ldots,\widetilde{a_i},\ldots,a_n)$ the class $\pi_{i*}\DR_g(a_1,\ldots,a_n)\in R^{g-1}(\oM_{g,n-1})$. Recall the following important divisibility property.
\begin{lemma}[\cite{BDGR16a}]\label{lemma:divisibility}
Let $g,n\ge 1$. Then the polynomial class 
\begin{gather*}
\left.\DR_g\left(-\sum a_i,a_1,a_2,\ldots,\widetilde{a_n}\right)\right|_{\cM_{g,n}^\ct}\in R^{g-1}(\cM^{\ct}_{g,n})
\end{gather*}
is divisible by $a_n^2$.
\end{lemma}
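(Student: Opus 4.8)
\emph{Proof proposal.} The plan is to deduce the statement from Hain's formula \eqref{eq:Hain's formula} together with a short computation of pushforwards of divisor classes along a forgetful map. Label the $n+1$ marked points of $\oM_{g,n+1}$ by $0,1,\dots,n$, putting the weight $b_0:=-\sum_{i=1}^n a_i$ at the point $0$ and the weight $b_i:=a_i$ at the point $i$, and let $\pi\colon\oM_{g,n+1}\to\oM_{g,n}$ be the map forgetting the point $n$; the class in question is then $P(a_1,\dots,a_n):=\pi_*\!\left(\left.\DR_g(b_0,b_1,\dots,b_n)\right|_{\cM^\ct_{g,n+1}}\right)$, a polynomial of degree $2g$ in $a_1,\dots,a_n$ with coefficients in $R^{g-1}(\cM^\ct_{g,n})$. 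As $a_n$ is prime in $\mathbb{Q}[a_1,\dots,a_n]$, it suffices to prove $P|_{a_n=0}=0$ and $\left.\partial_{a_n}P\right|_{a_n=0}=0$. The first vanishing is immediate: at $a_n=0$ we have $b_n=0$, so by the pullback property $\DR_g(\dots,0)=\pi^*\DR_g(\dots)$ the class $\left.\DR_g(b_0,\dots,b_{n-1},0)\right|_{\cM^\ct}$ is pulled back from $\oM_{g,n}$, and $\pi_*\pi^*(\cdots)=0$ since $\pi_*(1)\in H^{-2}(\oM_{g,n})=0$.

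For the second vanishing, write $\left.\DR_g(b_0,\dots,b_n)\right|_{\cM^\ct}=\frac1{g!}Q^g$ with $Q$ the quadratic form of \eqref{eq:Hain's formula} in the $b_j$, and differentiate in $a_n$ using $\partial b_0/\partial a_n=-1$ and $\partial b_n/\partial a_n=1$. Evaluating at $a_n=0$ gives
\[
\left.\partial_{a_n}P\right|_{a_n=0}=\frac1{(g-1)!}\,\pi_*\!\left((Q|_{a_n=0})^{g-1}\cdot R\right),\qquad R:=\left.\left(\partial_{b_n}Q-\partial_{b_0}Q\right)\right|_{a_n=0}.
\]
A routine application of the comparison formulas $\psi_i=\pi^*\psi_i+\delta_0^{\{i,n\}}$ (for $i\le n-1$) and $\pi^*\delta_h^I=\delta_h^I+\delta_h^{I\cup\{n\}}$ identifies $Q|_{a_n=0}$ with $\pi^*Q^{(n)}$, where $Q^{(n)}$ is Hain's form for $\DR_g(b_0,\dots,b_{n-1})$ on $\oM_{g,n}$ (consistently with $\DR_g(\dots,0)=\pi^*\DR_g(\dots)$); by the projection formula this reduces the claim to $\pi_*R=0$.

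To compute $\pi_*R$, note that in $\partial_{b_n}Q-\partial_{b_0}Q$ every term attached to a subset containing both $0$ and $n$ cancels (equal coefficients, since $b_n=0$), and, pushing forward to $H^0(\oM_{g,n})$, every $\delta_h^I$ with $h\ge1$, every $\delta_0^I$ with $n\notin I$, and every $\delta_0^I$ with $n\in I$, $0\notin I$ and $|I|\ge3$ contributes $0$ because its image in $\oM_{g,n}$ is a proper subvariety. What is left is
\[
R=\Big(\sum_{i=1}^{n-1}a_i\Big)\psi_0-\sum_{i=1}^{n-1}a_i\,\delta_0^{\{i,n\}}+(\text{terms with }\pi_*=0);
\]
since $\pi_*\psi_0=\pi_*(\pi^*\psi_0+\delta_0^{\{0,n\}})=0+1=1$ and $\pi_*\delta_0^{\{i,n\}}=1$ (the divisors $\delta_0^{\{0,n\}}$ and $\delta_0^{\{i,n\}}$ being the sections of $\pi$ at the points $0$ and $i$), we get $\pi_*R=\sum_{i=1}^{n-1}a_i-\sum_{i=1}^{n-1}a_i=0$, which finishes the argument.

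The main obstacle is the bookkeeping in the last two paragraphs: applying the comparison formulas correctly to identify $Q|_{a_n=0}$ as a pullback and to evaluate each term of $\pi_*R$, and verifying that the contributions of the sections cancel the $\psi_0$ term exactly. One should also note that restricting to compact type is harmless throughout, since $\pi^{-1}(\cM^\ct_{g,n})=\cM^\ct_{g,n+1}$ and hence $\pi_*$ commutes with it; as a sanity check, in genus $1$ the computation collapses and $\left.\DR_1(b_0,\dots,b_n)\right|_{\cM^\ct}$ pushes forward to exactly $a_n^2\,[\oM_{1,n}]$.
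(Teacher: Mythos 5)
The paper does not actually prove this lemma: it is imported from \cite{BDGR16a} with no argument given, so there is no in-paper proof to compare against, and I have checked your proposal on its own terms; it is correct and complete. The two places you flag as bookkeeping do work out. With $b_0=-\sum a_i$, $b_i=a_i$ and $\pi$ forgetting the point labelled $n$, the comparison formulas $\psi_i=\pi^*\psi_i+\delta_0^{\{i,n\}}$ and $\pi^*\delta_h^I=\delta_h^I+\delta_h^{I\cup\{n\}}$ do give $Q|_{a_n=0}=\pi^*Q^{(n)}$: the section corrections coming from the $\psi$-terms cancel exactly against the $\delta_0^{\{i,n\}}$-terms of the $\delta_0$-sum, and all other $\delta$-terms pair up into pullbacks. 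In $R=(\partial_{b_n}Q-\partial_{b_0}Q)|_{a_n=0}$, terms indexed by $I\ni 0,n$ cancel because $b_n=0$, every $\delta_h^I$ with $1\le h\le g-1$ pushes forward to $0$ (neither component has genus $0$, so none is contracted), and among the genus-$0$ divisors only the sections $\delta_0^{\{i,n\}}$, $1\le i\le n-1$, survive $\pi_*$, each with pushforward $1$; together with $\pi_*\psi_0=1$ this gives $\pi_*R=\sum_{i<n}a_i-\sum_{i<n}a_i=0$, and the projection formula then kills $\partial_{a_n}P|_{a_n=0}$. Combined with $P|_{a_n=0}=\pi_*\pi^*(\cdots)=0$ and the (correctly noted) compatibility of $\pi_*$ with restriction to compact type via $\pi^{-1}(\cM^{\ct}_{g,n})=\cM^{\ct}_{g,n+1}$, this yields the divisibility by $a_n^2$. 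Your route — Taylor-expand Hain's quadratic form to first order in $a_n$ — is the natural one, and is in the same spirit as the original argument in \cite{BDGR16a}, which likewise rests on Hain's formula for the restriction to $\cM^{\ct}_{g,n}$; your genus-$1$ sanity check, $\pi_*\left(\left.\DR_1(b_0,\ldots,b_n)\right|_{\cM^{\ct}}\right)=a_n^2$, is also consistent with formula~\eqref{eq:DR and fundamental}.
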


Consider a stable tree $\Gamma\in\ST^m_{g,n}$ and integers $a_1,\ldots,a_n$ such that $a_1+\ldots+a_n=0$. To each half-edge $h\in H(\Gamma)$ we assign an integer~$a(h)$ in such a way that the following conditions hold:
\begin{itemize}

\item[a)] If $h\in L(\Gamma)$, then $a(h)=a_{i(h)}$;

\item[b)] If $h\in H^e(\Gamma)$, then $a(h)+a(\iota(h))=0$;

\item[c)] For any vertex $v\in V(\Gamma)$, we have $\sum_{h\in H[v]}a(h)=0$.

\end{itemize}
Clearly, such a function $a\colon H(\Gamma)\to\mbZ$ exists and is uniquely determined by the numbers~$a_1,\ldots,a_n$. For each moduli space $\oM_{g(v),n(v)}$, $v\in V(\Gamma)$, the numbers $a(h)$, $h\in H[v]$, define the double ramification cycle
$$
\DR_{g(v)}\left(A_{H[v]}\right)\in R^{g(v)}(\oM_{g(v),n(v)}).
$$
Here $A_{H[v]}$ denotes the list $a(h_1),\ldots,a(h_{n(v)})$, where $\{h_1,\ldots,h_{n(v)}\}=H[v]$. If we multiply all these cycles, we get the class
$$
\prod_{v\in V(\Gamma)}\DR_{g(v)}\left(A_{H[v]}\right)\in H^{2g}(\oM_\Gamma,\mbQ).
$$
We define a class $\DR_\Gamma(a_1,\ldots,a_n)\in R^{g+m-1}(\oM_{g,n})$ by 
\begin{gather*}
\DR_\Gamma(a_1,\ldots,a_n):=\xi_{\Gamma*}\left(\prod_{v\in V(\Gamma)}\DR_{g(v)}\left(A_{H[v]}\right)\right).
\end{gather*}
Clearly, the class
$$
\lambda_g\DR_\Gamma\left(a_1,\ldots,a_n\right)\in R^{2g+m-1}(\oM_{g,n})
$$
is a polynomial in $a_1,\ldots,a_n$ homogeneous of degree $2g$. 

Suppose now that $a_1,\ldots,a_n$ are arbitrary integers and let $a_0:=-\sum_{i=1}^n a_i$. Consider the set of stable trees $\ST^m_{g,n+1}$. It would be convenient for us to assume that the legs of stable trees from~$\ST^m_{g,n+1}$ are marked by $0,1,\ldots,n$. Let $\Gamma\in\ST_{g,n+1}^m$ be an arbitrary stable tree. Consider it as a rooted tree with the root $v_1(\Gamma):=v(l_0(\Gamma))$. As above, the numbers $a_0,a_1,\ldots,a_n$ define a function $a\colon H(\Gamma)\to\mbZ$. Define a coefficient $a(\Gamma)$ by
$$
a(\Gamma):=\left(\prod_{h\in H^e_+(\Gamma)}a(h)\right)\left(\prod_{v\in V(\Gamma)}\frac{r(v)}{\sum_{\tv\in\Desc[v]}r(\tv)}\right).
$$
Let $\pi\colon\oM_{g,n+1}\to\oM_{g,n}$ be the forgetful map that forgets the first marked point. Define a class $\tA^{g,m}(a_1,\ldots,a_n)\in R^{2g+m-2}(\oM_{g,n})$ by
$$
\tA^{g,m}(a_1,\ldots,a_n):=\sum_{\Gamma\in\ST^m_{g,n+1}}a(\Gamma)\lambda_g\pi_*\DR_\Gamma(a_0,a_1,\ldots,a_n).
$$
We know that this class is a polynomial in $a_1,\ldots,a_n$ homogeneous of degree $2g+m-1$. Note that the expression for the class $\tA^{g,1}(a_1,\ldots,a_n)$ is actually very simple:
$$
\tA^{g,1}(a_1,\ldots,a_n)=\lambda_g\DR_g(\widetilde{a_0},a_1,\ldots,a_n).
$$
\begin{lemma}
The polynomial class $\tA^{g,m}(a_1,\ldots,a_n)$ is divisible by $\sum_{i=1}^n a_i$.
\end{lemma}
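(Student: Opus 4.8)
The class $\tA^{g,m}(a_1,\ldots,a_n)$ is a polynomial in $a_1,\ldots,a_n$ with coefficients in the $\mbQ$-vector space $R^*(\oM_{g,n})$, so it is divisible by the linear form $\sum_{i=1}^n a_i$ if and only if it vanishes identically on the hyperplane $\sum_{i=1}^n a_i=0$ (pick a $\mbQ$-basis of $R^*(\oM_{g,n})$ and apply the standard one-variable-at-a-time argument coordinatewise). Since $a_0=-\sum_{i=1}^n a_i$, the plan is therefore to impose $a_0=0$ and to prove that under this assumption $\tA^{g,m}(a_1,\ldots,a_n)=0$.

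Now $a(l_0)=a_0=0$, and I would split the defining sum over $\ST^m_{g,n+1}$ according to the value $r(v_1)$ at the root $v_1=v(l_0(\Gamma))$. If $r(v_1)\ge 2$, then forgetting the marked point $0$ amounts, by compatibility of $\xi_\Gamma$ with the forgetful map, to forgetting a point on the component $\oM_{g(v_1),n(v_1)}$, which stays stable; since one entry of $\DR_{g(v_1)}(A_{H[v_1]})$ equals $a_0=0$, the identity $\DR_{g}(b_1,\ldots,b_k,0)=\pi^*\DR_g(b_1,\ldots,b_k)$ together with the projection formula (and $\pi_*1=0$ for dimensional reasons) forces the corresponding term of $\pi_*\DR_\Gamma$ to vanish. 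If $r(v_1)=1$, then $(g(v_1),n(v_1))=(0,3)$ (the subcase in which the root is all of $\Gamma$ does not occur for admissible $(g,n)$), forgetting the point $0$ contracts the root, and a direct computation gives $\pi_*\DR_\Gamma(0,a_1,\ldots,a_n)=\DR_{\Gamma''}(a_1,\ldots,a_n)$, where $\Gamma''\in\ST^{m-1}_{g,n}$ is the contracted stable tree, the leg values being unchanged and the $(0,3)$-vertex contributing the fundamental class. Hence, under $a_0=0$,
\begin{gather*}
\tA^{g,m}(a_1,\ldots,a_n)=\sum_{\Gamma''\in\ST^{m-1}_{g,n}}c_{\Gamma''}\,\lambda_g\,\DR_{\Gamma''}(a_1,\ldots,a_n),
\end{gather*}
where $c_{\Gamma''}$ is the sum of the coefficients $a(\Gamma)$ over all $\Gamma\in\ST^m_{g,n+1}$ with $r(v_1(\Gamma))=1$ that contract to $\Gamma''$. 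It thus suffices to prove that every $c_{\Gamma''}$ vanishes when $\sum_{i=1}^n a_i=0$.

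Such $\Gamma$ are obtained from $\Gamma''$ by inserting a $(0,3)$-root carrying $l_0$ either in the interior of an edge of $\Gamma''$ or by bending off one of its legs; equivalently, by subdividing one edge of the graph obtained from $\Gamma''$ by replacing each leg with an edge to a new leaf-marker vertex. Expanding $a(\Gamma)$ from its definition: the root always contributes the constant $\frac{r(v_1)}{\sum_v r(v)}=\frac1{2g+n-1}$; the half-edges freshly created at $v_1$ contribute $-a_i$ if the subdivided edge is the leg $l_i$, and $-\sigma^2$, with $\sigma$ the sum of the $a_i$ over the markers on one side, if it is a true edge of $\Gamma''$; the remaining half-edge values and the factors $\frac{r(v)}{\sum_{\tv\in\Desc[v]}r(\tv)}$ for $v\in V(\Gamma'')$ are governed by the partial sums of the $a_i$ over marker sets and by subtree $r$-sums, all computed with $\Gamma''$ rooted towards the inserted vertex. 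The identity $c_{\Gamma''}=0$ is then a purely combinatorial statement about weighted trees, which I would establish by induction on the number $m-1$ of vertices of $\Gamma''$. The base case, $\Gamma''$ a single vertex, is immediate: each admissible $\Gamma$ is a leg-bending $\Gamma_{l_i}$ with $a(\Gamma_{l_i})=\frac{-a_i}{2g+n-1}$, so $c_{\Gamma''}=\frac{-\sum_i a_i}{2g+n-1}=0$. For the inductive step I would peel off a leaf vertex $v^*$ of $\Gamma''$ together with its legs, using the conservation law $\sum_{h\in H[v^*]}a(h)=0$ and $\sum_i a_i=0$ to reorganize the contributions of the edges incident to, and legs of, $v^*$ into the contribution of a single leg of the contracted tree, thereby reducing to a smaller instance; the computations for $m=2$ and $m=3$, where a true-edge subdivision term cancels exactly against a combination of leg-bending terms, serve as the model.

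The main obstacle is precisely this last combinatorial identity: controlling how the factors $\frac{r(v)}{\sum_{\tv\in\Desc[v]}r(\tv)}$ transform when the leaf vertex $v^*$ is contracted, and verifying that the true-edge contributions pair with the leg-bending ones exactly as needed, are the delicate points. The polynomial-divisibility reduction of the first paragraph and the vanishing of the $r(v_1)\ge 2$ terms are routine.
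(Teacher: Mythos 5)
Your reduction is sound and coincides with the paper's: divisibility by the linear form is equivalent to vanishing on the hyperplane $\sum a_i=0$; the terms whose root stays stable after forgetting the $0$-th marked point vanish (your argument via $\DR_g(b_1,\ldots,b_k,0)=\pi^*\DR_g(b_1,\ldots,b_k)$, the projection formula and $\pi_*1=0$ is correct, is in fact a little cleaner than the paper's appeal to the divisibility lemma, and covers $m=1$ uniformly); and the surviving $(0,3)$-root terms contract to $\sum_{\Gamma''\in\ST^{m-1}_{g,n}}c_{\Gamma''}\,\lambda_g\DR_{\Gamma''}(a_1,\ldots,a_n)$ with $c_{\Gamma''}=\sum_{l\in L(\Gamma'')}a(\Phi_l(\Gamma''))+\sum_{e\in E(\Gamma'')}a(\Phi_e(\Gamma''))$, which is exactly the paper's key identity to be proved. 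Your base case is also correct.

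The gap is that the heart of the lemma is precisely the identity $c_{\Gamma''}=0$, and you do not prove it: you sketch the same leaf-peeling induction that the paper uses and then yourself flag its inductive step as an unresolved ``delicate point''. Two ingredients are missing. First, the statement must be strengthened before the induction can close: after erasing a leaf vertex $v$, the remaining tree is not a stable tree with the weights $r(\cdot)=2g(\cdot)-2+n(\cdot)$ arising from the same kind of data, so the induction hypothesis as you state it (for trees in $\ST^{m-2}_{g,n'}$) does not apply; the paper resolves this by allowing rational genera, i.e.\ proving the identity for arbitrary positive vertex weights, and absorbing $r(v)/2$ into the genus of the mother $v'$ so that all descendant sums $\sum_{\tv\in\Desc[w]}r(\tv)$ are preserved. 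Second, one must actually carry out the coefficient bookkeeping: the contributions of the legs of $v$ together with the subdivision term for the edge joining $v$ to $v'$ must be shown to combine into $\frac{r'}{r+r'}\,a(h')$ times the new-leg contribution $a(\Phi_{l'}(\Gamma'))$, while every other leg and edge contribution rescales by the same factor $\frac{r'}{r+r'}\,a(h')$; only then is the whole sum a multiple of the corresponding sum for $\Gamma'$, which vanishes by induction (note the new leg carries the value $a(h')=\sum_{\tl\in L[v]}a(\tl)$, so the zero-sum hypothesis persists). Since neither the weight generalization nor these transformation formulas are supplied, the proof as written is incomplete at its central step.
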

\begin{proof}
If $m=1$, then the lemma follows from Lemma~\ref{lemma:divisibility}. Suppose $m\ge 2$ and $a_0=-\sum_{i=1}^n a_i=0$. We have to prove that 
$$
\tA^{g,m}(a_1,\ldots,a_n)=0.
$$
Consider a stable tree $\Gamma\in\ST^m_{g,n+1}$. If $g(v_1(\Gamma))\ge 1$, then, again by Lemma~\ref{lemma:divisibility},
$$
\lambda_g\pi_*\DR_\Gamma\left(0,a_1,\ldots,a_n\right)=0.
$$
If $g(v_1(\Gamma))=0$, then $\pi_*\DR_\Gamma\left(0,a_1,\ldots,a_n\right)$, unless $n(v_1(\Gamma))=3$. We obtain
\begin{gather}\label{eq:divisibility of A-class,tmp1}
\tA^{g,m}(a_1,\ldots,a_n)=\sum_{\substack{\Gamma\in\ST^m_{g,n+1}\\g(v_1(\Gamma))=0\\n(v_1(\Gamma))=3}}a(\Gamma)\lambda_g\pi_*\DR_\Gamma\left(0,a_1,\ldots,a_n\right).
\end{gather}

Let us define certain maps
$$
\ST^{m-1}_{g,n}\to\{\Gamma\in\ST^m_{g,n+1}|g(v_1(\Gamma))=0,\,n(v_1(\Gamma))=3\}.
$$
Note that we mark the legs of stable trees from $\ST^{m-1}_{g,n}$ by $1,\ldots,n$ and the legs of stable trees from $\ST^m_{g,n+1}$ by $0,1,\ldots,n$. Let $\Gamma\in\ST^{m-1}_{g,n}$. Choose a leg $l\in L(\Gamma)$. Suppose that it is marked by number $1\le i\le n$. Let us attach to the leg~$l$ a new vertex of genus~$0$ with two legs marked by numbers~$0$ and~$i$. Denote the resulting stable tree by $\Phi_l(\Gamma)\in\ST^m_{g,n+1}$. Similarly, if we choose an edge $e\in E(\Gamma)$, then we can break this edge and insert a genus~$0$ vertex with one leg marked by~$0$. Denote the resulting stable tree by $\Phi_e(\Gamma)\in\ST^m_{g,n+1}$. Using these operations, we can rewrite formula~\eqref{eq:divisibility of A-class,tmp1} in the following way:
$$
\tA^{g,m}(a_1,\ldots,a_n)=\sum_{\Gamma\in\ST^{m-1}_{g,n}}\left(\sum_{l\in L(\Gamma)}a(\Phi_l(\Gamma))+\sum_{e\in E(\Gamma)}a(\Phi_e(\Gamma))\right)\lambda_g\DR_{\Gamma}(a_1,\ldots,a_n).
$$
We see that it is sufficient to prove that for any stable tree $\Gamma\in\ST^{m-1}_{g,n}$ we have the identity
\begin{gather}\label{eq:divisibility of A-class,tmp2}
\sum_{l\in L(\Gamma)}a(\Phi_l(\Gamma))+\sum_{e\in E(\Gamma)}a(\Phi_e(\Gamma))=0.
\end{gather}

We prove~\eqref{eq:divisibility of A-class,tmp2} by induction on~$m$. It will be convenient for us to assume that the genus $g(v)$ of a vertex $v\in V(\Gamma)$ can be a rational number such that $2g(v)-2+n(v)>0$. So the total genus $g=\sum_{v\in V(\Gamma)}g(v)$ can also be rational. If $m=2$, then 
$$
\sum_{l\in L(\Gamma)}a(\Phi_l(\Gamma))=\sum_{i=1}^n\frac{-a_i}{2g-1+n}=0.
$$
Suppose $m\ge 3$. Choose a vertex $v\in V(\Gamma)$ such that $|H[v]\backslash L[v]|=1$. Let $h$ be the unique half-edge from the set $H[v]\backslash L[v]$. Denote
$$
h':=\iota(h),\quad v':=v(h'),\quad r:=r(v),\quad r':=r(v'),\quad R:=2g-2+n.
$$
Denote by $e$ the edge of $\Gamma$ corresponding the pair of half-edges $(h,h')$. Let us erase the vertex~$v$ together with all half-edges incident to it. Then the half-edge $h'$ becomes a leg. Let us denote it by $l'$ and mark by~$n+1$. Finally, let us increase the genus of the vertex~$v'$ by~$\frac{r}{2}$. As a result, we get a stable tree from $\ST^{m-2}_{g+\frac{r}{2},n-|L[v]|+1}$ that we denote by~$\Gamma'$. Note that the legs of~$\Gamma'$ are marked by the numbers $i(l), l\in L(\Gamma)\backslash L[v]$, and $n+1$. We want to apply the induction assumption to the tree~$\Gamma'$. Naturally, we assign to a leg $l\in L(\Gamma')$ the number~$a_{i(l)}$, if $l\ne l'$, and the number $a(h')=\sum_{\tl\in L[v]}a(\tl)$, if $l=l'$. It is not hard to see that
\begin{align*}
\sum_{l\in L[v]}a(\Phi_l(\Gamma))=&(-a(h'))\frac{r r'}{(R-r)(r+r')}a(\Phi_{l'}(\Gamma')),\\
a(\Phi_e(\Gamma))=&a(h')\frac{r' R}{(R-r)(r+r')}a(\Phi_{l'}(\Gamma')).
\end{align*}
It is also easy to see that for any leg $l\in L(\Gamma')$, $l\ne l'$, and for any edge $e'\in E(\Gamma')$ we have
\begin{gather*}
a(\Phi_{l}(\Gamma))=\frac{r'}{r+r'}a(h')a(\Phi_{l}(\Gamma')),\qquad a(\Phi_{e'}(\Gamma))=\frac{r'}{r+r'}a(h')a(\Phi_{e'}(\Gamma')).
\end{gather*}
Therefore, we obtain
\begin{multline*}
\sum_{l\in L(\Gamma)}a(\Phi_l(\Gamma))+\sum_{e'\in E(\Gamma)}a(\Phi_{e'}(\Gamma))=\\
=\frac{r'}{r+r'}a(h')\left(\sum_{l\in L(\Gamma')}a(\Phi_{l}(\Gamma'))+\sum_{e'\in E(\Gamma')}a(\Phi_{e'}(\Gamma'))\right)=0,
\end{multline*}
where the last equality follows from the induction assumption. The lemma is proved.
\end{proof}
The lemma allows to define a class $A^{g,m}(a_1,\ldots,a_n)$ by
$$
A^{g,m}(a_1,\ldots,a_n):=\frac{1}{\sum a_i}\tA^{g,m}(a_1,\ldots,a_n)\in R^{2g+m-2}(\oM_{g,n}).
$$
It is a polynomial in $a_1,\ldots,a_n$ homogeneous of degree~$2g+m-2$.

\begin{definition}\label{A-class}
For any $d_1,\ldots,d_n\ge 0$ such that $\delta := \sum_{i=1}^n d_i \ge 2g-1$ we define
$$
A^g_{d_1,\ldots,d_n}:=\Coef_{a_1^{d_1}\cdots a_n^{d_n}}A^{g,\delta-2g+2}(a_1,\ldots,a_n) \in R^{\delta}(\oM_{g,n}).
$$	
\end{definition}

If $\sum d_i=2g-1$, then the formula for $A^g_{d_1,\ldots,d_n}$ becomes particularly simple:
$$
A^g_{d_1,\ldots,d_n}=\Coef_{a_1^{d_1}\cdots a_n^{d_n}}\left(\frac{1}{\sum a_i}\lambda_g\DR_g\left(\widetilde{-\sum a_i},a_1,\ldots,a_n\right)\right).
$$

\subsection{Definition of the $B$-class and the main conjecture}\label{section:Bclasses}

Let $T$ be a stable rooted tree with at least $n$ legs, where we split the set of legs in two subsets:
\begin{itemize}
	\item[-] the legs $\sigma_1, \dotsc, \sigma_n$ corresponding to the markings,
	\item[-] some extra legs, whose set is denoted by $F(T)$, corresponding to additional marked points that we will eventually forget.
\end{itemize}
We will never call marking an element of $F(T)$ and let 
$$
H^{em}_+(T):=H_+(T)\backslash F(T).
$$
There is a natural level function $l \colon V(T) \to \NN^*$ such that the root is of level $1$ and if a vertex~$v$ is the mother of a vertex $v'$, then $l(v')=l(v)+1$. The total number of levels in $T$ will be denoted by $\deg(T)$ and called the degree of~$T$. It is also convenient to extend the level function to $H^{em}_+(T)$ by taking $l(h):=k$ if the half-edge $h$ is attached to a vertex of level $k$. We say that~$T$ is complete if the following conditions are satisfied:
\begin{itemize}
	\item[-] every vertex has at least one of its descendants with level $\deg(T)$,
	\item[-] all the markings are attached to the vertices of level $\deg(T)$,
	\item[-] each vertex of level $\deg(T)$ is attached to at least one marking,
	\item[-] there are no extra legs attached to the root,
	\item[-] for every vertex except the root there is at least one extra leg attached to it.
\end{itemize}

For a complete tree $T$ define a power function
$$
q\colon H^e_+(T) \to \NN
$$
by requiring that for a half-edge $h\in H_+^e(T)$ there is exactly $q(h)+1$ extra legs attached to the vertex~$v$ which is the direct descendant of~$h$. We say that $T$ is stable if
\begin{itemize}
	\item[-] for every $1\le k\le\deg(T)$, there is at least one vertex $v \in V(T)$ of level $k$ such that $v$ remains stable once we forget all the extra legs,
	\item[-] every vertex of genus $0$ with exactly one half-edge $h \in H^{em}_+(T)$ attached to it has exactly $q(h)+1$ extra legs attached to it,
	\item[-] every vertex of genus $0$ with exactly two half-edges $h_1, h_2 \in H^{em}_+(T)$ attached to it has exactly $q(h_1)+q(h_2)$ extra legs attached to it.
\end{itemize}
We say that a stable complete tree $T$ is admissible if for every $1\leq k < \deg(T)$ we have the condition
\begin{gather}\label{admissibility condition}
\sum_{\substack{h \in H_+^e(T) \\ l(h)=k}} q(h) \leq 2 \sum_{\substack{v \in V(T) \\ l(v) \leq k}} g(v) -2.
\end{gather}
We denote by $\Omega^{B,g}_{d_1,\dotsc,d_n}$ the set of pairs $(T,q)$, where $T$ is an admissible stable complete tree with total genus $g$ and $n$ markings, and $q\colon H_+^{em}(T)\to\NN$ is the extension of the power function from above defined by $q(\sigma_i):=d_i$. We denote by
$$
[T,q] :=\xi_{T*}\left(\prod_{h \in H_+^{em}(T)} \psi_h^{q(h)}\right) \in R^*(\oM_{g,n+\#F(T)})
$$
and by 
$$
\ee \colon \oM_{g,n+\#F(T)} \to \oM_{g,n}
$$
the map forgetting all the extra legs.

\begin{definition}\label{B-class}
For any $d_1,\ldots,d_n\ge 0$ with $\delta:=d_1+\dotsb+d_n$, we define
\begin{equation}\label{eq:Bdef}
B^g_{d_1,\dotsc,d_n} = \sum_{(T,q) \in \Omega^{B,g}_{d_1,\dotsc,d_n}} (-1)^{\deg(T)-1} \ee_* [T,q] \in R^\delta(\oM_{g,n}).
\end{equation}
\end{definition}

\begin{conjecture}\label{conjecture}
Suppose $g\ge 0$, $n\ge 1$ and $2g-2+n>0$. Then for any $d_1,\ldots,d_n\ge 0$, such that $\sum d_i\ge 2g-1$, we have 
\begin{gather}\label{eq:main relations}
A^g_{d_1,\ldots,d_n}=B^g_{d_1,\ldots,d_n}.
\end{gather}
\end{conjecture}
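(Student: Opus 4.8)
The plan is to prove Conjecture~\ref{conjecture} by first reducing it to an ``all positive'' statement and then running a simultaneous induction on the genus and on the number of marked points, using the boundary stratification of $\oM_{g,n}$ to carry the inductive step. The starting point is the string equation for both the $A$- and $B$-classes, established in Section~\ref{section:further structure}: since $A^g_{d_1,\dots,d_n}$ and $B^g_{d_1,\dots,d_n}$ transform in exactly the same way under pullback along $\pi\colon\oM_{g,n+1}\to\oM_{g,n}$ and insertion of a vanishing index, the full conjecture follows from the case in which all $d_i>0$ (which forces $n\le\sum d_i$ and in particular makes the genus $0$ and $1$ cases, already settled, available as the base of the induction in low genus). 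This leaves the regime $g\ge 1$, $d_i>0$ for all $i$, $\sum d_i\ge 2g-1$.

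In this regime I would show that $A^g_{d_1,\dots,d_n}-B^g_{d_1,\dots,d_n}$ is supported on the boundary $\oM_{g,n}\setminus\cM_{g,n}$ and that its pullback to every boundary divisor vanishes, hence it is zero. The interior vanishing is precisely the statement that the two classes agree on $\cM_{g,n}$, which is the $m=0$ instance of the result (proven in the paper) that $A$ and $B$ agree after pushforward from $\oM_{g,n+m}$ and restriction to $\cM_{g,n}$. For the boundary I would aim for a \emph{splitting compatibility}: for a one-edge stable graph $\Gamma$ with gluing morphism $\xi_\Gamma\colon\oM_\Gamma\to\oM_{g,n}$, the pullbacks $\xi_\Gamma^*A^g_{d_1,\dots,d_n}$ and $\xi_\Gamma^*B^g_{d_1,\dots,d_n}$ should each expand as the \emph{same} bilinear combination of $A$- and $B$-classes on the two vertex factors, now with strictly smaller genus, or fewer markings, or smaller $\sum d_i$. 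On the $A$-side this ought to follow from the definition of $\DR_\Gamma$ as a product of double ramification cycles over vertices, the known restriction formula for $\DR$ to the boundary, the splitting axiom for $\lambda_g$, and Lemma~\ref{lemma:divisibility}; on the $B$-side it should follow from tracking how the level function, the power function $q$, and the admissibility bound~\eqref{admissibility condition} distribute across the cut edge. Granting the compatibility together with the interior vanishing, the induction closes. As an alternative, purely computational route one could instead expand $\lambda_g\DR_\Gamma$ via Pixton's formula and $\ee_*$ via the pushforward of monomials in psi classes, and compare the two resulting linear combinations of dual trees with psi and kappa classes term by term.

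The step I expect to be the main obstacle is the splitting compatibility for the $B$-classes, together with the final passage from ``vanishing on $\cM_{g,n}$ and on every boundary divisor'' to ``vanishing''. The trouble with the $B$-side is that the level function on a complete admissible tree is a \emph{global} datum and the inequalities~\eqref{admissibility condition} couple the levels across any edge one cuts; one has to check that, after summing over all ways of splitting the level structure and all positions of the cut relative to the level filtration, everything reorganizes exactly into the double ramification boundary restriction on the $A$-side, including the precise combinatorial coefficients entering $a(\Gamma)$. Moreover, a tautological class that restricts to zero on $\cM_{g,n}$ and pulls back to zero on all boundary divisors need not be zero in general, so to conclude one must either supply an explicit socle/pairing argument in the relevant bidegrees, or feed back the intersection-theoretic form of the statement: by Section~\ref{section:DR/DZ} the equality~\eqref{eq:main relations} paired against any CohFT is the strong DR/DZ equivalence, and by Section~\ref{section:restricted} for semisimple theories it reduces to a finite set of relations per genus (known for $g\le 2$), so combining this with Teleman's classification and the conjectural completeness of Pixton's relations would upgrade the integral identity to the identity of classes. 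Making this last point unconditional is essentially as hard as the conjecture itself, and is where a genuinely new idea would be required.
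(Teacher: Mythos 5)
You have not proved the statement, and there is no proof in the paper to compare against: equality~\eqref{eq:main relations} is Conjecture~\ref{conjecture}, the paper's main open conjecture. What the paper actually establishes is exactly the partial package you invoke — the string and dilaton equations for both families (Propositions~\ref{proposition:pullback of A-class}, \ref{proposition:pullback of B-class}, \ref{proposition:dilatonA}, \ref{proposition:dilatonB}), the consequent reduction to the case of all $d_i>0$ together with the genus~$0$ and~$1$ cases, the validity after pushforward and restriction to $\cM_{g,n}$ (Proposition~\ref{proposition:Mgn}), the sufficiency of the restricted relations with $\sum d_i=2g$ for the semisimple DR/DZ statement, and the restricted genus-$2$ relations in the appendix. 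None of this closes the general case, and your outline does not either.

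The concrete gaps are the following. First, the ``splitting compatibility'' is asserted, not proved, and it is not a routine consequence of the known boundary formulas: $A^g_{d_1,\ldots,d_n}$ is a sum over stable trees weighted by the rational coefficients $a(\Gamma)$ and pushed forward along the map forgetting the auxiliary $0$-th point, while the identity $\delta_h^I\cdot\DR_g=\DR_h\boxtimes_1\DR_{g-h}$ computes a product with a boundary divisor, not the pullback $\xi_\Gamma^*$, which differs by excess ($-\psi-\psi'$) terms; on the $B$-side, as you yourself note, the level structure and the admissibility inequalities~\eqref{admissibility condition} are global data that do not localize along a cut edge. Second, even granting both compatibilities, the concluding inference is invalid: $\left.\alpha\right|_{\cM_{g,n}}=0$ only says that $\alpha$ is supported on the boundary, and $\xi_\Gamma^*\alpha=0$ for every one-edge graph $\Gamma$ does not force $\alpha=0$, since pullbacks of classes pushed forward from boundary strata acquire self-intersection corrections and the combined ``restrict to the interior plus pull back to all boundary divisors'' map has no general injectivity on $R^*(\oM_{g,n})$; moreover, your induction would at best give vanishing of the pullbacks of the difference $A-B$, not of the difference itself. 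Third, the fallback through intersection numbers with CohFTs would require the pairing of the tautological ring against all (semisimple) CohFTs to be nondegenerate in the relevant degrees — essentially the conjectural completeness of Pixton's relations — which, as you concede, is where a genuinely new idea would be needed. So the proposal is a reasonable research plan, consistent with the partial results of the paper, but it contains no proof of the conjecture.
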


\begin{remark}\label{Baltrem}

Let us show how to express the $B$-class in terms of basic tautological classes. Let~$T$ be a stable complete tree with $n$ markings. For a vertex $v\in V(T)$ denote by $F[v]$ the set of extra legs incident to $v$ and by $H^{em}_+[v]$ the set of half-edges $h\in H^{em}_+(T)$ incident to $v$. The vertex $v$ will be called strongly stable if it remains stable once we forget all the extra legs. Otherwise, we call it weakly stable. Clearly, the vertex $v$ is weakly stable if and only if $g(v)=0$ and $|H^{em}_+[v]|=1$. The set of all strongly stable vertices of $T$ will be denoted by $V^{ss}(T)$.

For a stable complete tree $T$ denote by $\st(T)$ the stable rooted tree obtained by forgetting all extra legs of $T$ and then contracting all unstable vertices. Clearly, we can identify $V(\st(T))=V^{ss}(T)$ and we also identify the set $H(\st(T))$ with the set of half-edges $h\in H(T)$ such that~$v(h)$ is strongly stable. 

Suppose $\pi\colon\oM_{g,n+m}\to\oM_{g,n}$ is the forgetful map that forgets the last $m$ marked points. Then for any numbers $c_1,\ldots,c_n\ge 0$ we have
$$
\pi_*(\psi_1^{c_1}\cdots\psi_n^{c_n})=\sum_{\substack{b_1,\ldots,b_n\ge 0\\b_i\le c_i\\\sum b_i+m=\sum c_i}}\frac{m!}{\prod(c_i-b_i)!}\prod_{i=1}^n\psi_i^{b_i}.
$$
Using this formula, it is easy to see that equation~\eqref{eq:Bdef} can be rewritten in the following way:
\begin{align*}
B^g_{d_1,\dotsc,d_n}=&\sum_{(T,q) \in \Omega^{B,g}_{d_1,\dotsc,d_n}} (-1)^{\deg(T)-1}\times\\
&\times \xi_{\st(T)*}\prod_{v\in V(\st(T))}\sum_{\substack{p\colon H_+[v]\to\mbZ_{\ge 0}\\p(h)\le q(h)\\\sum p(h)+|F[v]|=\sum q(h)}}\frac{|F[v]|!}{\prod(q(h)-p(h))!}\prod_{h\in H_+[v]}\psi_h^{p(h)}.
\end{align*}
 
\end{remark}

Let us immediately present some examples of relations~\eqref{eq:main relations}. Consider genus $0$. Then it is easy to see that for any $d_1,\ldots,d_n\ge 0$ we have
$$
B^0_{d_1,\ldots,d_n}=\psi_1^{d_1}\cdots\psi_n^{d_n}.
$$
On the other hand, let us compute, for example, $A^0_{1,0,0,0}$. We have
\begin{align*}
\tA^{0,3}(a_1,a_2,a_3,a_4)=&\pi_*\left(\sum_{\substack{\{i,j,k,l\}=\{1,2,3,4\}\\i<j}}\frac{(a_i+a_j)(a_i+a_j+a_k)}{6}
\tikz[baseline=-1mm]{\draw (A)--(B)--(C);\legm{A}{180}{0};\legm{A}{-45}{l};\legm{B}{-45}{k};\legm{C}{45}{i};\legm{C}{-45}{j};\gg{0}{A};\gg{0}{B};\gg{0}{C};}
\right.\\
&\hspace{0.7cm}+\left.\sum_{\substack{\{i,j,k,l\}=\{1,2,3,4\}\\i<j,\,k<l,\,i<k}}\frac{(a_i+a_j)(a_k+a_l)}{3}
\tikz[baseline=-1mm]{\draw (E)--(G)--(F);\legm{G}{180}{0};\legm{E}{18}{i};\legm{E}{-18}{j};\legm{F}{18}{k};\legm{F}{-18}{l};\gg{0}{E};\gg{0}{F};\gg{0}{G};}
\right)=\\
=&\frac{a^2}{3}\,\tikz[baseline=-1mm]{\draw (A)--(B);\leg{A}{150};\leg{A}{-150};\leg{B}{30};\leg{B}{-30};\gg{0}{A};\gg{0}{B};}
=a^2\,\tikz[baseline=-1mm]{\draw (A)--(B);\legm{A}{150}{1};\legm{A}{-150}{2};\legm{B}{30}{3};\legm{B}{-30}{4};\gg{0}{A};\gg{0}{B};},
\end{align*}
where $a:=\sum a_i$. This gives $A^0_{1,0,0,0}=\tikz[baseline=-1mm]{\draw (A)--(B);\legm{A}{150}{1};\legm{A}{-150}{2};\legm{B}{30}{3};\legm{B}{-30}{4};\gg{0}{A};\gg{0}{B};}$ that is indeed equal to $\psi_1=B^0_{1,0,0,0}$.

Consider genus~$1$ and the case $n=1$, $d_1=1$. Then we have
\begin{gather*}
A^1_1=\Coef_a\left(\frac{1}{a}\lambda_1\pi_*\DR_1(-a,a)\right)=\lambda_1=\psi_1,\qquad B^1_1=\psi_1.
\end{gather*}

Let us give one more example with $g=2$, $n=1$ and $d_1=3$. We compute
\begin{align}
&A^2_3=\Coef_{a^3}\left(\frac{1}{a}\lambda_2\DR_2(\widetilde{-a},a)\right),\notag\\
&B^2_3=\psi_1^3-\tikz[baseline=-1mm]{\draw (A)--(B);\leg{B}{0};\gg{1}{A};\gg{1}{B};\lab{B}{25}{4mm}{\psi^2};}.\label{eq:formula for B23}
\end{align}
Now the relation $A^2_3=B^2_3$ is not so trivial, and we will prove it in Section~\ref{section:genus 2}.

Below we will check that the conjecture is true in genus~$0$ and~$1$ for arbitrary $d_i$'s, and also in genus~$2$ in the case $\sum d_i\le 4$. 


\section{DR/DZ equivalence conjecture and the new tautological relations}\label{section:DR/DZ}

In this section we explain the relation between the above Conjecture \ref{conjecture} and the strong double ramification/Dubrovin-Zhang hierarchies equivalence conjecture from \cite{BDGR16a}. After recalling the main notions, we prove in particular how the first implies the second.

\subsection{Dubrovin-Zhang hierarchy}

Consider an arbitrary cohomological field theory (CohFT, see \cite{KM94}) $c_{g,n}\colon V^{\otimes n}\to H^{\even}(\oM_{g,n},\mbC)$, with $V$ its $N$-dimensional vector space, $e_1,\ldots,e_N$ a basis of $V$, $e_1$ the unit and $\eta$ its symmetric non-degenerate bilinear form. Let $F=F(t^*_*,\eps)$ denote its potential, i.e.~the generating series of its intersection numbers with monomials in the psi classes:
\begin{align*}
\<\tau_{d_1}(e_{\alpha_1})\cdots\tau_{d_n}(e_{\alpha_n})\>_g:=&\int_{\oM_{g,n}}c_{g,n}(\otimes_{i=1}^n e_{\alpha_i})\prod_{i=1}^n\psi_i^{d_i},\,\, 2g-2+n>0,\,\, 1\le\alpha_i\le N,\\
F(t^*_*,\eps):=&\sum_{g\ge 0}\eps^{2g}F_g(t^*_*),\quad\text{where}\\
F_g(t^*_*):=&\sum_{\substack{n\ge 0\\2g-2+n>0}}\frac{1}{n!}\sum_{d_1,\ldots,d_n\ge 0}\<\prod_{i=1}^n\tau_{d_i}(e_{\alpha_i})\>_g\prod_{i=1}^nt^{\alpha_i}_{d_i}.
\end{align*}
In case the cohomological field theory is semisimple, in \cite{DZ05,BPS12} the authors associate to it an integrable hierarchy of Hamiltonian PDEs.  Let $\hcA_w^{[d]}$ be the degree $d$ part of $\mbC[[w^*_*,\eps]]$, where~$w^\alpha_k$, $\alpha=1,\ldots,N$, $k=0,1,2\ldots$, are formal variables of degree $\deg w^\alpha_k = k$ and $\deg \eps = -1$. Let $\hLambda_w^{[d]}$ be its quotient with respect to constants and the image of the operator $\d_x=\sum_{k\geq 0} w^\alpha_{k+1} \frac{\d}{\d w^\alpha_{k}}$ (we perform sums over repeated Greek indices here and in what follows) and, if $f \in \hcA_w^{[d]}$, let $\overline{f}$ denote its equivalence class in $\hLambda_w^{[d]}$. The Dubrovin-Zhang (DZ) hierarchy consists in Hamiltonian densities
$$
h^\DZ_{\alpha,p} \in \hcA^{[0]}_w,\quad 1\le\alpha\le N,\quad p\ge -1,
$$
with $h^\DZ_{\alpha,-1} = \eta_{\alpha \mu} w^\mu$, and a Hamiltonian operator
$$(K^\DZ)^{\mu\nu} = \sum_{j\geq 0} (K^\DZ)_j^{\mu\nu} \partial_x^j,\qquad (K^\DZ)_j^{\mu\nu}\in \hcA_w^{[-j+1]},$$
such that
$$
\left\{\oh^\DZ_{\alpha,p},\oh^\DZ_{\beta,q}\right\}_{K^\DZ} :=\int\frac{\delta \oh^\DZ_{\alpha,p}}{\delta w^\mu} (K^\DZ)^{\mu\nu}\left(\frac{\delta \oh^\DZ_{\beta,q}}{\delta w^\nu}\right)dx = 0,\,\, 1\le\alpha,\beta\le N, \,\, p,q\ge -1,
$$
where we have used the variational derivative $\frac{\delta \overline{f}}{\delta w^\mu}:=\sum_{i\geq 0} (-\partial_x)^i \frac{\partial f}{\partial w^\mu_i}$.
This guarantees that solutions $w^\alpha_k (x,t^*_*,\eps) = \d_x^k w^\alpha(x,t^*_*,\eps) \in \mbC[[x,t^*_*,\eps]]$ exist for the system of Hamiltonian PDEs
$$
\frac{\d}{\d t^\beta_q}w^\alpha = (K^\DZ)^{\alpha\nu}\left(\frac{\delta \oh^\DZ_{\beta,q}}{\delta w^\nu}\right), \quad 1\le\alpha,\beta\le N, \quad q\ge 0.
$$
Notice how this Hamiltonian system in fact only depends on the Hamiltonian functionals $\oh^\DZ_{\alpha,p} \in \hLambda^{[0]}_w$ and not on the Hamiltonian densities $h^\DZ_{\alpha,p} \in \hcA^{[0]}_w$. Nonetheless, Dubrovin and Zhang's construction of specific Hamiltonian densities $h^\DZ_{\alpha,p} \in \hcA^{[0]}_w$ is important because it is a tau-structure (see \cite{BDGR16a} for details), which implies in particular that, for any solution $w^\alpha(x,t^*_*,\eps) \in \mbC[[x,t^*_*,\eps]]$, there exists a formal series, called (the logarithm of) the tau-function, $\cF(t^*_*,\eps) \in \mbC[[t^*_*,\eps]]$ such that
$$
\left.\frac{\d h^\DZ_{\alpha,p-1}}{\d t^\beta_q}\right|_{w^*_*=w^*_*(x,t^*_*,\eps)|_{x=0}} = \frac{\d^3 \cF}{\d t^1_0 \d t^\alpha_p \d t^\beta_q}, \quad 1\le\alpha,\beta\le N,\quad p,q\ge 0.
$$
An important property of the DZ hierarchy is that the so-called topological solution, i.e.~the solution with the initial condition $(w^\top)^\alpha(x,t^*_*,\eps)|_{t^*_*=0} = \delta^{\alpha,1} x$, has the potential $F(t^*_*,\eps)$ of the underlying semisimple CohFT as the logarithm of its tau-function,
$$
\left.\frac{\d h^\DZ_{\alpha,p-1}}{\d t^\beta_q}\right|_{w^*_*=(w^\top)^*_*(x,t^*_*,\eps)|_{x=0}} = \frac{\d^3 F}{\d t^1_0\d t^\alpha_p \d t^\beta_q}, \quad 1\le\alpha,\beta\le N,\quad p,q\ge 0.
$$

\subsection{Double ramification hierarchy}
The double ramification (DR) hierarchy, see \cite{Bur15,BDGR16a}, is another tau-symmetric hierarchy of Hamiltonian PDEs associated to an arbitrary CohFT, this time even without the requirement of semisimplicity. This time it is the Hamiltonians that are constructed as generating series of certain intersection numbers of the CohFT with psi classes, the $\lambda_g$ class and the double ramification cycle. Written in formal variables $\tu^\alpha_k$, $\alpha=1,\ldots,N$, $k=0,1,2\ldots$, it consists of differential polynomials
$$
h^\DR_{\alpha,p} \in \hcA^{[0]}_\tu, \quad 1\le\alpha\le N,\quad p\ge -1,
$$
with $h^\DR_{\alpha,-1} = \eta_{\alpha \mu} \tu^\mu$, and a Hamiltonian operator
$$(K^\DR)^{\mu\nu} = \sum_{j\geq 0} (K^\DR)_j^{\mu\nu} \partial_x^j,\qquad (K^\DR)_j^{\mu\nu}\in \hcA_\tu^{[-j+1]},$$
such that
$$
\left\{\oh^\DR_{\alpha,p},\oh^\DR_{\beta,q}\right\}_{K^\DR} := \int \frac{\delta \oh^\DR_{\alpha,p}}{\delta \tu^\mu} (K^\DR)^{\mu\nu}\left(\frac{\delta \oh^\DR_{\beta,q}}{\delta \tu^\nu}\right)dx = 0,\,\, 1\le\alpha,\beta\le N, \,\, p,q\ge -1.
$$
Like for the DZ hierarchy, the DR Hamiltonian densities $h^\DR_{\alpha,p} \in \hcA^{[0]}_\tu$ form a tau-structure and we can define the DR potential as (the logarithm of) the tau-function of the topological solution, $(\tu^\top)^\alpha \in \mbC[[x,t^*_*,\eps]]$ with $(\tu^\top)^\alpha(x,t^*_*,\eps)|_{t^*_*=0} = \delta^{\alpha,1} x$, i.e.~$F^\DR(t^*_*,\eps) \in \mbC[[t^*_*,\eps]]$ satisfies
$$
\left.\frac{\d h^\DR_{\alpha,p-1}}{\d t^\beta_q}\right|_{\tu^*_*=(\tu^\top)^*_*(x,t^*_*,\eps)|_{x=0}} = \frac{\d^3 F^\DR}{\d t^1_0 \d t^\alpha_p \d t^\beta_q}, \quad 1\le\alpha,\beta\le N,\quad p,q\ge 0.
$$
Clearly, this equation doesn't determine the function $F^\DR$ uniquely, but we can additionaly require that~$F^\DR$ should satisfy the string and the dilaton equations. Then this fixes the potential~$F^\DR$ completely. We define the DR correlators as the coefficients of the power series~$F^\DR(t^*_*,\eps)$,
\begin{align*}
F^\DR(t^*_*,\eps)=:&\sum_{g\ge 0}\eps^{2g}F^\DR_g(t^*_*),\quad\text{where}\\
F^\DR_g(t^*_*)=:&\sum_{\substack{n\ge 0\\2g-2+n>0}}\frac{1}{n!}\sum_{d_1,\ldots,d_n\ge 0}\<\prod_{i=1}^n\tau_{d_i}(e_{\alpha_i})\>^\DR_g\prod_{i=1}^nt^{\alpha_i}_{d_i}.
\end{align*}

\subsection{Strong DR/DZ equivalence conjecture}
In the effort of understanding the relation between the DR and DZ hierarchies associated to the same semisimple CohFT, in \cite{BDGR16a} it was conjectured that a change of coordinates $w^\alpha \mapsto \tu^\alpha$ existed, transforming one hierarchy into the other and preserving the given tau-structures. A natural family of such changes of coordinates (called normal Miura transformations) has the form
\begin{equation}\label{eq:normal}
\tu^\alpha(w)=w^\alpha+\eta^{\alpha\mu}\d_x\left\{\cP,\oh^\DZ_{\mu,0}\right\}_{K^\DZ},
\end{equation}
where $\cP\in\hcA^{[-2]}_w$ is an arbitrary differential polynomial and 
$$
\left\{\cP,\oh^\DZ_{\mu,0}\right\}_{K^\DZ} = \sum_{k\geq 0} \frac{\d \cP}{\d w^\mu_k} \d_x^k\left((K^\DZ)^{\mu \nu} \left(\frac{\delta \oh^\DZ_{\mu,0}}{\delta w^\nu}\right)\right).
$$
The effect of such a transformation on the topological tau-function of the DZ hierarchy is the following:
$$
F \mapsto F+\left.\cP(w^*_*,\eps)\right|_{w^*_* = (w^\top)^*_*(x,t^*_*,\eps)|_{x=0}}.
$$

In \cite{BDGR16a} the following results were proved.
\begin{proposition}[\cite{BDGR16a}]\label{proposition:vanishing}
Let $g,m\ge 0$ such that $2g-2+m>0$. Then
$$\<\tau_{d_1}(e_{\alpha_1})\cdots\tau_{d_m}(e_{\alpha_m})\>^{\DR}_g=0,\quad\text{if}\quad \sum_{i=1}^m d_i< 2g-1.$$
\end{proposition}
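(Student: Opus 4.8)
The plan is to prove the statement by induction on the genus $g$, using as the main structural input the homogeneity of the DR Hamiltonian densities. For $g=0$ there is nothing to prove, since then $2g-1=-1$. The key observation is that, by the definition of the DR hierarchy (see \cite{Bur15,BDGR16a}), each density lies in $\hcA^{[0]}_\tu$, that is
$$
h^\DR_{\alpha,p}=\sum_{g\ge0}\eps^{2g}h^{(g)}_{\alpha,p},\qquad h^{(g)}_{\alpha,p}=\sum(\text{coeff})\,\tu^{\gamma_1}_{k_1}\cdots\tu^{\gamma_\ell}_{k_\ell}\ \text{with}\ \textstyle\sum_j k_j=2g .
$$
Geometrically, $h^{(g)}_{\alpha,p}$ is assembled from the intersection numbers $\int_{\oM_{g,n+1}}\DR_g(-\sum a_i,a_1,\dots,a_n)\,\lambda_g\,\psi_1^{p+1}\,c_{g,n+1}(\cdots)$ by replacing each ramification variable $a_i$ by a $\d_x$, and, as recalled in Section~\ref{section:Aclasses}, the polynomial class $\lambda_g\DR_g(-\sum a_i,a_1,\dots,a_n)$ is homogeneous of degree $2g$ in the $a_i$. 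This is the only point at which the geometry of the DR cycle enters.

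Assume the statement in all genera $<g$. I would feed the topological solution $(\tu^\top)^\gamma(x,t^*_*,\eps)$ into the defining identity
$$
\frac{\d^3 F^\DR}{\d t^1_0\,\d t^\alpha_p\,\d t^\beta_q}=\left.\frac{\d h^\DR_{\alpha,p-1}}{\d t^\beta_q}\right|_{\tu^*_*=(\tu^\top)^*_*(x,t^*_*,\eps)|_{x=0}}
$$
and extract the coefficient of $\eps^{2g}$. Here one uses two facts about $\tu^\top$: its $\d_x$-derivatives coincide with its $\d_{t^1_0}$-derivatives (a form of the string equation), and turning on only the variable $t^1_0$ gives back $\delta^{\gamma1}(x+t^1_0)$, so that all the genus $\ge1$ information of $\tu^\top$ is carried by the variables $t^\gamma_k$ with $(\gamma,k)\ne(1,0)$. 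Now one tracks the degree in precisely these variables: in a monomial of $h^{(g')}_{\alpha,p-1}$ the exponents $k_j$ sum to $2g'$; distributing the remaining $g-g'$ units of genus over the $\ell$ factors and invoking the inductive hypothesis for $(\tu^\top)^\gamma=\eta^{\gamma\mu}\d_{t^1_0}\d_{t^\mu_0}F^\DR$ in genera $<g$ (and, within the present step, bootstrapping it in genus $g$ through the two-point function), one obtains a lower bound for the degree, in the variables $t^\gamma_k$ with $(\gamma,k)\ne(1,0)$, of the monomials occurring in $\d^3_{t^1_0t^\alpha_pt^\beta_q}F^\DR_g$; what makes this work is that $\d_x=\d_{t^1_0}$ followed by the restriction $x=0$ does not lower this degree. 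Finally the string equation (to reinstate the $t^1_0$-dependence) and the dilaton equation (to reinstate the $t^1_1$-dependence and to run an inner induction on the number of marked points) upgrade this bound to the asserted $\sum d_i\ge2g-1$, closing the induction.

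The substance of the argument is entirely in this degree bookkeeping: one must keep careful track of how the $g-g'$ extra units of genus, the $\d_x$-derivatives, and the string/dilaton operators interact with the filtration by degree in the variables $t^\gamma_k$, $(\gamma,k)\ne(1,0)$, and in particular of the mild self-reference in which the genus-$g$ part of $\tu^\top$ feeds back into the bound being established. I expect this combinatorial control, rather than any geometric input, to be the main obstacle; the geometry is confined to the homogeneity statement of the first paragraph. A convenient way to cut down the bookkeeping is to restrict from the outset to $t^1_0=0$, carry the whole induction with only the variables $t^\gamma_k$, $(\gamma,k)\ne(1,0)$, present, and recover the $t^1_0$-dependence at the very end purely from the string equation; one may also package the outcome geometrically, as the statement that the DR correlator equals the pairing of $c_{g,n}$ with a tautological class built from $\lambda_g$ and (pushforwards of) DR cycles, which is forced to vanish in codimension below $2g-1$.
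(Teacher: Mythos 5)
The paper itself does not prove this proposition; it is imported verbatim from \cite{BDGR16a}, so your proposal has to be judged as a self-contained argument, and as such it is a strategy outline whose decisive steps are asserted rather than carried out. The first genuine gap: homogeneity of the densities does \emph{not} by itself give any lower bound on $\sum d_i$. A monomial $\eps^{2g'}(\tu^1_1)^l\,\tu^{\gamma_1}_{k_1}\cdots\tu^{\gamma_r}_{k_r}$ with $l+\sum k_j=2g'$ is perfectly allowed in $h^\DR_{\alpha,p}\in\hcA^{[0]}_\tu$, and on the topological solution one has $\tu^1_1|_{x=0}=1+t^1_1+\ldots$, so evaluating the $l$ factors of $\tu^1_1$ on their constant part drops the $t$-degree to $2g'-l$, which can be arbitrarily small. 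This is exactly the point where Lemma~\ref{lemma:technical lemma for reconstruction} of the present paper has to invoke $\frac{\d^2\Omega_{1,1;\mu,0}}{\d(w^1_x)^2}=0$, and there that property is \emph{deduced from} the vanishing you are trying to prove, so you cannot borrow it without circularity. Your plan to ``upgrade the bound'' via the string and dilaton equations is precisely the missing core of the proof: you never show how the dilaton direction controls arbitrary powers of $\tu^1_1$, nor how to pass from correlators containing a $\tau_0(e_1)$ insertion (which is all the defining identity gives, since one of the three derivatives is $\d/\d t^1_0$) to arbitrary correlators --- the string equation as usually written expresses a correlator \emph{with} a $\tau_0(e_1)$ insertion in terms of lowered ones, which is the wrong direction and requires a separate induction to invert.

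The second gap is the self-reference you call ``mild'': the genus-$g$ part of $(\tu^\top)^\gamma_k$ is assembled from the very genus-$g$ two-point correlators being bounded, and it enters linearly through the dispersionless part of the densities. To close the argument one must set up a well-founded secondary induction (say on $\sum d_i$ or on the number of insertions), specify in what order the two-point functions, the higher correlators and the coefficients of $(\tu^\top)^\gamma_k$ are determined, and check that the degree filtration in the variables $t^\gamma_k$, $(\gamma,k)\ne(1,0)$, is preserved at every step; none of this is specified, and it is where the actual work lies. Finally, the geometric packaging in your last sentence is not an escape route: the identity expressing DR correlators as pairings of $c_{g,n}$ with the classes $A^g_{d_1,\ldots,d_n}$ (Section~\ref{section:DR/DZ}, quoted from \cite{BDGR16b}) is only available for $\sum d_i\ge 2g-1$, so it cannot be invoked below that threshold without first checking that its proof does not already rest on the present proposition.
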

\begin{proposition}[\cite{BDGR16a}]\label{proposition:Fred}
There exists a unique differential polynomial $\cP\in\hcA^{[-2]}_w$ such that for the power series $F^\red\in\mbC[[t^*_*,\eps]]$, defined by 
\begin{gather}\label{eq:definition of Fred}
F^\red:=F+\left.\cP(w^*_*,\eps)\right|_{w^*_* = (w^\top)^*_*(x,t^*_*,\eps)|_{x=0}},
\end{gather}
the correlators
$$
\<\tau_{d_1}(e_{\alpha_1})\cdots\tau_{d_n}(e_{\alpha_n})\>^\red_g :=\Coef_{\eps^{2g}}\left.\frac{\d^n F^\red}{\d t^{\alpha_1}_{d_1}\cdots\d t^{\alpha_n}_{d_n}}\right|_{t^*_*=0}
$$
satisfy the following vanishing property:
\begin{gather}\label{eq:property of Fred}
\<\tau_{d_1}(e_{\alpha_1})\cdots\tau_{d_n}(e_{\alpha_n})\>^\red_g=0,\quad\text{if}\quad \sum_{i=1}^n d_i< 2g-1.
\end{gather}
\end{proposition}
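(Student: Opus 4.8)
The plan is to build $\cP$ recursively in the genus and to reduce the problem, at each step, to inverting a unitriangular linear operator; its invertibility will give both existence and uniqueness.

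Since $\cP\in\hcA^{[-2]}_w$, write $\cP=\sum_{g\ge1}\eps^{2g}\cP_g$, where $\cP_g$ is an $\eps$-free differential polynomial in the $w^\alpha_k$ all of whose monomials $w^{\beta_1}_{j_1}\cdots w^{\beta_s}_{j_s}$ satisfy $\sum_i j_i=2g-2$. Set $W^\alpha_k:=\Coef_{\eps^0}\left[(w^\top)^\alpha_k(x,t^*_*,\eps)\big|_{x=0}\right]$. Substituting the genus expansion of $(w^\top)^*_*|_{x=0}$ into $\cP$ and collecting powers of $\eps$, the coefficient of $\eps^{2g}$ in $\cP(w^*_*,\eps)\big|_{w^*_*=(w^\top)^*_*|_{x=0}}$ equals $\cP_g(W^*_*)$ plus a correction $R_g$ that depends only on $\cP_1,\dots,\cP_{g-1}$ and on the intersection numbers of the CohFT. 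Hence, once $\cP_1,\dots,\cP_{g-1}$ are fixed, the only remaining freedom at genus $g$ is $\cP_g$, and
\[
F^\red_g=F_g+R_g+\cP_g(W^*_*).
\]

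The key observation I would use is the structure of $W$. Using that the $t^1_0$-flow of the hierarchy acts as $\d_x$ on the topological solution, together with the string equation for $F_0$, one gets $W^\alpha_k=t^\alpha_k+(\text{monomials }\prod_i t^{\gamma_i}_{c_i}\text{ with }\sum_i c_i>k)$. Therefore a monomial $w^{\beta_1}_{j_1}\cdots w^{\beta_s}_{j_s}$ of $\cP_g$ produces, after substitution, $\prod_i t^{\beta_i}_{j_i}$ of total $t$-degree $\sum_i j_i=2g-2$, plus monomials of total $t$-degree $>2g-2$. In particular $\cP_g(W^*_*)$ contributes nothing to $F^\red_g$ in the range $\sum d_i\le 2g-3$, and on the layer $\sum d_i=2g-2$ the map $\cP_g\mapsto\cP_g(W^*_*)$ is, under $w^\alpha_k\leftrightarrow t^\alpha_k$, the identity. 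So there is a unique $\cP_g$ killing the $\sum d_i=2g-2$ part of $F_g+R_g$, and after this choice $\<\tau_{d_1}(e_{\alpha_1})\cdots\tau_{d_n}(e_{\alpha_n})\>^\red_g=0$ for all index patterns with $\sum d_i=2g-2$. The same triangularity gives uniqueness of $\cP$: if $\cP\ne0$ and $g_0$ is minimal with $\cP_{g_0}\ne0$, then the $\sum d_i=2g_0-2$ part of the $\eps^{2g_0}$-coefficient of $\cP(w^*_*,\eps)\big|_{w^*_*=(w^\top)^*_*|_{x=0}}$ is the nonzero ``identity image'' of $\cP_{g_0}$, so distinct $\cP$'s give distinct $F^\red$.

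The hardest part, which remains, is that the construction above only removes the $\sum d_i=2g-2$ part of the bad correlators while nothing acts on those with $\sum d_i\le 2g-3$; one therefore has to prove that, after $\cP_1,\dots,\cP_{g-1}$ have been pinned down by the lower-genus conditions, the part of $F_g+R_g$ in the range $\sum d_i\le 2g-3$ is already zero. This is where the geometry of $\oM_{g,n}$ has to enter, and I would derive it from the string and dilaton equations satisfied by $F$ (hence, inductively, by $F^\red$): these express the low-$t$-degree part of each genus-$g$ generating function in terms of its higher-degree parts and of lower genus, in exactly the pattern in which $R_g$ is assembled from $\cP_1,\dots,\cP_{g-1}$; the dimension bound $\sum d_i\le 3g-3+n$ for non-vanishing CohFT correlators is the bookkeeping device that keeps the higher-genus contributions of the topological solution to $R_g$ under control. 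Combining the recursion with this consistency statement yields existence, while the triangularity above yields uniqueness.
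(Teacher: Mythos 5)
Your reduction rests on a triangularity claim that is false, and the failure is exactly at the dilaton direction. The string equation gives $(w^\top)^\alpha_n|_{x=0}=\delta^{\alpha,1}\delta_{n,1}+t^\alpha_n+(\text{higher degree})+O(\eps^2)$, so $W^1_1$ has the constant term $1$. Hence a monomial of $\cP_g$ containing factors $w^1_1$ does \emph{not} only produce $t$-monomials of degree $\ge 2g-2$ after substitution: replacing $w^1_1$ by its constant part yields contributions of total $t$-degree strictly less than $2g-2$ (e.g.\ $(w^1_1)^{2g-2}\mapsto 1+\dots$). So the map $\cP_g\mapsto\cP_g(W^*_*)$ is neither zero on the range $\sum d_i\le 2g-3$ nor the identity on the layer $\sum d_i=2g-2$, and both your existence step at the top layer and your uniqueness argument (minimal $g_0$, ``nonzero identity image'') collapse as written; any correct argument must filter by the number of $w^1_1$-factors, which is precisely the delicate point (compare how Lemma~\ref{lemma:technical lemma for reconstruction} in the paper treats the powers of $w^1_x$ separately).

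The same oversight makes the ``hardest part'' you postpone not only unproven but wrongly posed: it is not true in general that, once $\cP_1,\dots,\cP_{g-1}$ are fixed, the part of $F_g+R_g$ with $\sum d_i\le 2g-3$ vanishes automatically (already $\<\tau_0(e_{\alpha_1})\tau_0(e_{\alpha_2})\>_2$ is generically nonzero and no string/dilaton or dimension argument kills it). Those low-degree correlators are removed by using exactly the freedom your triangularity claim says does not exist, namely the $w^1_1$-containing monomials of $\cP_g$. This is how the construction the paper describes (and the proof of Proposition~7.2 in \cite{BDGR16a}, which the paper cites rather than reproves) works: one subtracts, genus by genus and degree layer by degree layer, terms proportional to $\prod\bigl((w^\top)^{\alpha_i}_{d_i}-\delta^{\alpha_i,1}\delta_{d_i,1}\bigr)|_{x=0}$ — note the subtraction of the dilaton shift — thereby killing \emph{every} layer $\sum d_i\le 2g-2$, and the fact that the total correction can be taken homogeneous in $\hcA^{[-2]}_w$ is itself a nontrivial conclusion obtained from the uniqueness statement, not an automatic bookkeeping consequence of degree counting. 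So your proposal is missing the central mechanism of the proof, and the parts it does assert (triangularity, identity on the top layer, automatic vanishing below it) are incorrect as stated.
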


In light of these two results the following conjecture was formulated in \cite{BDGR16a}.
\begin{conjecture}[Strong DR/DZ equivalence]\label{conjecture:strong}
Consider a semisimple cohomological field theory and the associated DZ and DR hierarchies. Then the normal Miura transformation (\ref{eq:normal}) defined by the differential polynomial~$\cP$ of Proposition \ref{proposition:Fred} maps the DZ hierarchy to the DR hierarchy respecting their tau-structures.
\end{conjecture}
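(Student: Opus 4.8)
The plan is to deduce Conjecture~\ref{conjecture:strong} from the tautological relations of Conjecture~\ref{conjecture}. The first step is to recall from~\cite{BDGR16a} that the strong DR/DZ equivalence is implied by the equality of the two potentials $F^\DR=F^\red$: both the DR hierarchy and the normal Miura transform by $\cP$ of the DZ hierarchy are tau-symmetric, and a tau-symmetric hierarchy together with its tau-structure is reconstructed from its topological tau-function, so it is enough to prove
$$
\<\tau_{d_1}(e_{\alpha_1})\cdots\tau_{d_n}(e_{\alpha_n})\>^\DR_g=\<\tau_{d_1}(e_{\alpha_1})\cdots\tau_{d_n}(e_{\alpha_n})\>^\red_g
$$
for every semisimple CohFT and all indices. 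Everything then reduces to identifying the two sides as intersection numbers of $c_{g,n}$ against the $A$- and $B$-classes respectively.

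For the DR side I would unwind the construction of $F^\DR$. The DR Hamiltonian densities $h^\DR_{\alpha,p}$ are generating series of intersection numbers of $c_{g,n}$ with $\lambda_g$, psi classes and double ramification cycles; evaluating at the topological solution $\tu^\top$ and passing to the tau-function produces, coefficient by coefficient, a sum over stable trees of classes of the shape $\lambda_g\pi_*\DR_\Gamma$ decorated by psi classes at the half-edges, with the distinguished insertion of the unit accounted for by the string equation. The rational weights $a(\Gamma)$, and in particular the factors $r(v)/\sum_{\tv\in\Desc[v]}r(\tv)$, emerge from iterating the pushforward formula~\eqref{eq:DR and fundamental} together with the substitution of $\tu^\top$; after matching normalizations one obtains
$$
\<\tau_{d_1}(e_{\alpha_1})\cdots\tau_{d_n}(e_{\alpha_n})\>^\DR_g=\int_{\oM_{g,n}}A^g_{d_1,\ldots,d_n}\,c_{g,n}(e_{\alpha_1}\otimes\cdots\otimes e_{\alpha_n}).
$$

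For the reduced side I would start from $F^\red=F+\cP|_{w^*_*=(w^\top)^*_*|_{x=0}}$ and determine $\cP$ recursively from the vanishing property~\eqref{eq:property of Fred}: at each step one subtracts off the unwanted low-degree part of the partially reduced potential, which geometrically amounts to attaching a new level of vertices carrying psi classes together with some auxiliary marked points destined to be forgotten. The power $q(h)$ on a descending half-edge records how many $\partial_x$'s act along it — hence the prescription "$q(h)+1$ extra legs at the direct descendant of $h$" — the sign $(-1)^{\deg(T)-1}$ is the sign of the iterated subtraction, and the admissibility inequality~\eqref{admissibility condition} is precisely the condition under which Proposition~\ref{proposition:Fred} remains applicable level by level. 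Unrolling the recursion then yields
$$
\<\tau_{d_1}(e_{\alpha_1})\cdots\tau_{d_n}(e_{\alpha_n})\>^\red_g=\int_{\oM_{g,n}}B^g_{d_1,\ldots,d_n}\,c_{g,n}(e_{\alpha_1}\otimes\cdots\otimes e_{\alpha_n}).
$$

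Combining the two identifications with the assumed equality $A^g_{d_1,\ldots,d_n}=B^g_{d_1,\ldots,d_n}$ in $R^*(\oM_{g,n})$, the DR and reduced correlators coincide for every CohFT (tautological or not), hence $F^\DR=F^\red$ and Conjecture~\ref{conjecture:strong} follows. The main obstacle I expect is the reduced-side identification: the reduction procedure is defined only implicitly, through the recursive choice of $\cP$, whereas $B^g_{d_1,\ldots,d_n}$ is given as an explicit closed sum over admissible stable complete trees, so the real work is to show that unrolling the recursion reproduces exactly that sum — matching the level and power structure, the stability and admissibility constraints, and the forgetful pushforward $\ee_*$ — while on the DR side the analogous difficulty is the purely combinatorial bookkeeping of the weights $a(\Gamma)$.
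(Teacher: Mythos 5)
The statement you are proving is a \emph{conjecture} in the paper, and your argument does not actually prove it: the decisive step is ``combining the two identifications with the assumed equality $A^g_{d_1,\ldots,d_n}=B^g_{d_1,\ldots,d_n}$'', i.e.\ you invoke Conjecture~\ref{conjecture}, which is itself open (the paper establishes it only in genus $0$, genus $1$, on $\cM_{g,n}$, and for the restricted genus-$2$ cases in the appendix). What your outline establishes, modulo details, is the implication ``Conjecture~\ref{conjecture} $\Rightarrow$ Conjecture~\ref{conjecture:strong}'', and this is exactly what the paper already proves: the two correlator identifications you sketch are equations~\eqref{eq:DR/DZ and relations,eq1} and~\eqref{eq:DR/DZ and relations,eq2} of Proposition~3.6, and the reduction of the strong conjecture to $F^\DR=F^\red$ is quoted from \cite{BDGR16a}. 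So there is no unconditional proof here, and none exists in the paper either; the paper's actual theorem in this direction is weaker and sharper at once: by Section~\ref{section:restricted} (string/dilaton compatibility plus the reconstruction Lemma~\ref{lemma:technical lemma for reconstruction}) only the finitely many relations with $\sum d_i=2g$, $d_i\ge 1$ are needed for semisimple CohFTs, and these are verified for $g\le 2$, giving the strong equivalence only up to genus~$2$.

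Within the conditional part, your two identifications are also only sketched at the point where the paper does real work. For the DR side the paper does not rederive the tree expansion from the Hamiltonian densities; it cites the closed formula of \cite{BDGR16b} with the coefficients $a(\Gamma)$, so your ``unwinding'' would have to reproduce that result. For the reduced side, the paper's proof is an induction over the sequence of pairs $(j,k)$, introducing the partially reduced potentials $F^{(j,k)}$, the $(j,k)$-admissible trees and the classes $B^{g,(j,k)}_{d_1,\ldots,d_n}$, with Lemma~\ref{simple lemma} controlling how truncating the top level of a tree interacts with admissibility; you correctly flag this as the main obstacle but do not supply the argument, and in particular the matching of the admissibility inequality~\eqref{admissibility condition} with the vanishing property~\eqref{eq:property of Fred} is precisely what the inductive bookkeeping is for. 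As written, then, the proposal neither proves the conjecture nor adds to the paper's conditional statement.
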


As proved in \cite{BDGR16a} this conjecture is equivalent to saying that $F^\red = F^\DR$. This last form of the conjecture can be generalized to arbitrary CohFTs, forgetting about the DZ hierarchy and concentrating on the reduced and DR potentials.
\begin{conjecture}[Generalized strong DR/DZ equivalence]\label{conjecture:generalization of strong}
For an arbitrary cohomological field theory we have $F^\DR=F^\red$.
\end{conjecture}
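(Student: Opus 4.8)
The plan is to deduce the identity $F^\DR=F^\red$ from the tautological relations of Conjecture~\ref{conjecture}. The bridge is a pair of cohomological interpretations: the DR correlators should be the intersection numbers of an arbitrary CohFT $c_{g,n}$ against the $A$-classes, and the reduced correlators its intersection numbers against the $B$-classes. Precisely, I would establish, for $\sum d_i\ge 2g-1$,
$$\langle\tau_{d_1}(e_{\alpha_1})\cdots\tau_{d_n}(e_{\alpha_n})\rangle^\DR_g=\int_{\oM_{g,n}}A^g_{d_1,\ldots,d_n}\,c_{g,n}(e_{\alpha_1}\otimes\cdots\otimes e_{\alpha_n})$$
together with the analogous formula for $\langle\cdots\rangle^\red_g$ and $B^g_{d_1,\ldots,d_n}$, both sides vanishing when $\sum d_i<2g-1$ by Proposition~\ref{proposition:vanishing} and Proposition~\ref{proposition:Fred}. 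Granting these two identities, $A^g_{d_1,\ldots,d_n}=B^g_{d_1,\ldots,d_n}$ forces all DR and reduced correlators to coincide, hence $F^\DR=F^\red$.

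For the DR side I would start from the explicit description of the Hamiltonian densities $h^\DR_{\alpha,p}$ as generating series of intersection numbers of $c_{g,n}$ with $\psi$-classes, $\lambda_g$ and the double ramification cycle, substitute the topological solution $(\tu^\top)^\alpha$, and extract the logarithm of the tau-function normalized by the string and dilaton equations. The topological solution is built by a recursion governed by the string/dilaton data, and unwinding this recursion is exactly what turns a single DR-cycle intersection number into the sum over stable trees $\Gamma\in\ST^m_{g,n+1}$ with a vertex-wise cycle $\DR_\Gamma$. The weight $\prod_{v\in V(\Gamma)}r(v)/\sum_{\tv\in\Desc[v]}r(\tv)$ and the edge factors $\prod_{h\in H^e_+(\Gamma)}a(h)$ are the combinatorial data accumulated along the way; the extra point labelled $0$ with $a_0=-\sum a_i$ and the pushforward $\pi_*$ forgetting it, together with the division by $\sum a_i$ controlled by Lemma~\ref{lemma:divisibility} and the divisibility lemma for $\tA^{g,m}$, reproduce Definition~\ref{A-class} verbatim. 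So this step is a careful bookkeeping of the tau-function expansion.

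For the reduced side I would unwind the procedure of Proposition~\ref{proposition:Fred}: $\cP$ is characterized by being the minimal-degree differential polynomial such that $F+\cP|_{w=(w^\top)}$ has vanishing correlators for $\sum d_i<2g-1$. I would solve for $\cP$, and hence $F^\red$, by induction on genus and on the number of levels, showing that the iterated corrections organize into the signed sum $\sum_{(T,q)}(-1)^{\deg(T)-1}\ee_*[T,q]$ of Definition~\ref{B-class}: the alternating sign $(-1)^{\deg(T)-1}$ is the M\"obius inversion of the iterated normal Miura transformation~\eqref{eq:normal}, and the admissibility inequality~\eqref{admissibility condition}, $\sum_{l(h)=k}q(h)\le 2\sum_{l(v)\le k}g(v)-2$, is precisely the statement that the intermediate potential obtained after reducing down to level $k$ still has vanishing correlators outside the range governed by Propositions~\ref{proposition:vanishing} and~\ref{proposition:Fred}. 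Rewriting $[T,q]$ in terms of basic classes as in Remark~\ref{Baltrem} then matches the outcome with $\int_{\oM_{g,n}}B^g_{d_1,\ldots,d_n}c_{g,n}$.

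The main obstacle I anticipate is the reduced side, because $\cP$ is given only implicitly by a uniqueness statement: one must prove that the explicit combinatorial $B$-class is genuinely the output of the reduction algorithm, with the correct signs and precisely the family of admissible stable complete trees and no others. The DR side is more mechanical but still rests on the nontrivial combinatorial identity behind the weights $a(\Gamma)$ — essentially the one already used above to show $\sum a_i\mid\tA^{g,m}$. Both identifications also use, as a consistency check, that the $A$- and $B$-classes satisfy the string and dilaton equations, which is proved in Section~\ref{section:further structure}. Finally, this argument establishes $F^\DR=F^\red$ only conditionally on Conjecture~\ref{conjecture}; an unconditional proof would require the tautological relations themselves, currently known in genus $\le 1$ in full and, in the range $\sum d_i=2g$ relevant to semisimple theories, in genus $\le 2$.
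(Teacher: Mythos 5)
Your proposal coincides with the paper's treatment: the paper establishes precisely the two integral identities you describe (quoting the result of \cite{BDGR16b} for the $A$-class/DR side rather than re-deriving the tau-function expansion, and proving the $B$-class/reduced side by the same induction on genus and level thresholds, via the $(j,k)$-admissible trees, that you sketch), after which $F^\DR=F^\red$ follows from the relations $A^g_{d_1,\ldots,d_n}=B^g_{d_1,\ldots,d_n}$ of Conjecture~\ref{conjecture}. As you correctly note, this only yields the statement conditionally on Conjecture~\ref{conjecture}, which is exactly the status of the result in the paper, so there is no gap beyond that shared conditionality.
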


\subsection{From intersection numbers to cohomology classes} The following result makes the relation between Conjecture \ref{conjecture} and Conjecture \ref{conjecture:generalization of strong} explicit, showing in particular how the first implies the second.

\begin{proposition}
Consider an arbitrary cohomological field theory $c_{g,n}\colon V^{\otimes n}\to H^{\even}(\oM_{g,n},\mbC)$. Then for any $g,n\ge 0$, $2g-2+n>0$, and numbers $d_1,\ldots,d_n\ge 0$ such that $\sum d_i\ge 2g-1$ we have
\begin{align}
&\<\tau_{d_1}(e_{\alpha_1})\cdots\tau_{d_n}(e_{\alpha_n})\>^\DR_g=\int_{\oM_{g,n}}A^g_{d_1,\ldots,d_n}c_{g,n}(e_{\alpha_1}\otimes\cdots\otimes e_{\alpha_n}),\label{eq:DR/DZ and relations,eq1}\\
&\<\tau_{d_1}(e_{\alpha_1})\cdots\tau_{d_n}(e_{\alpha_n})\>^\red_g=\int_{\oM_{g,n}}B^g_{d_1,\ldots,d_n}c_{g,n}(e_{\alpha_1}\otimes\cdots\otimes e_{\alpha_n}).\label{eq:DR/DZ and relations,eq2}
\end{align}
\end{proposition}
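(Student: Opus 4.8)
The plan is to prove the two identities \eqref{eq:DR/DZ and relations,eq1} and \eqref{eq:DR/DZ and relations,eq2} separately, matching each side against the corresponding construction recalled above. For \eqref{eq:DR/DZ and relations,eq1}, I would start from the definition of the DR potential $F^\DR$ as the logarithm of the tau-function of the topological solution of the DR hierarchy. The key is that the DR Hamiltonian densities $h^\DR_{\alpha,p}$ are themselves generating series of intersection numbers of the CohFT with $\lambda_g$, psi classes and the DR cycle; hence the string and dilaton constraints together with the tau-structure determine $F^\DR$ uniquely, and one can write its coefficients explicitly. I expect the class $A^g_{d_1,\dots,d_n}$, which is assembled from pushforwards of $\lambda_g\DR_\Gamma$ over stable trees $\Gamma\in\ST^m_{g,n+1}$ with the combinatorial weights $a(\Gamma)$, to be exactly the cohomological avatar of this reconstruction: the sum over stable trees corresponds to iterating the Hamiltonian flows (each vertex contributing a Hamiltonian density, each edge a propagator), the factor $\prod_{h\in H^e_+(\Gamma)}a(h)$ records the $a$-variables appearing in the intersection numbers $\pi_*\DR_\Gamma$, and the rational weights $\frac{r(v)}{\sum_{\tv\in\Desc[v]}r(\tv)}$ come from the symmetrization/normalization forced by the string and dilaton equations. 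The bulk of the argument is then a careful bookkeeping comparison, using the pushforward formula \eqref{eq:DR and fundamental} and the forgetful-map relation $\DR_g(a_1,\dots,a_n,0)=\pi_{n+1}^*\DR_g(a_1,\dots,a_n)$ to reduce everything to the stated polynomial coefficients.

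For \eqref{eq:DR/DZ and relations,eq2}, the strategy is parallel but now anchored at the reduced potential $F^\red$ of Proposition~\ref{proposition:Fred}. Here $F^\red = F + \cP|_{w=w^\top}$, where $\cP\in\hcA^{[-2]}_w$ is the unique differential polynomial enforcing the vanishing \eqref{eq:property of Fred}. I would unravel the substitution $w^*_*=(w^\top)^*_*(x,t^*_*,\eps)|_{x=0}$ into the genus expansion of $\cP$ and observe that it produces precisely a sum over stable rooted trees with levels: the level structure of the complete trees $T$ in $\Omega^{B,g}_{d_1,\dots,d_n}$ encodes the order in which the differential polynomial $\cP$ is applied and the corrections are cascaded (each level is one application of the recursive elimination procedure that constructs $\cP$), the sign $(-1)^{\deg(T)-1}$ is the alternating sign inherent in inverting the triangular system that defines $\cP$, the extra legs in $F(T)$ with their power function $q$ record the psi-class insertions and the combinatorial multiplicities coming from the forgetful pushforward (exactly the formula for $\pi_*(\psi_1^{c_1}\cdots\psi_n^{c_n})$ quoted in Remark~\ref{Baltrem}), and the admissibility inequality \eqref{admissibility condition} is the cohomological shadow of the degree bound $\cP\in\hcA^{[-2]}_w$ together with Proposition~\ref{proposition:vanishing}. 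The string and dilaton equations that pin down $F^\red$ (inherited from those for $F$) match the stability conditions imposed on the trees.

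In both cases the common mechanism is that a tau-function coefficient of a tau-symmetric Hamiltonian hierarchy, evaluated on the topological solution, can be written as a sum over (possibly decorated, possibly leveled) trees, and I would make this precise by induction on the number of vertices or on the genus, peeling off one outermost vertex at a time and invoking the defining recursions. The main obstacle, I expect, is the genus-by-genus control of the differential polynomial $\cP$ in the $B$-case: one must show that the recursive elimination process defining $\cP$ in Proposition~\ref{proposition:Fred} assembles exactly into the leveled-tree sum $\Omega^{B,g}_{d_1,\dots,d_n}$ with the stated admissibility constraint and signs, and in particular that the potentially infinite hierarchy of corrections truncates in each genus to a finite, explicitly combinatorial answer. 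Establishing that the admissibility condition \eqref{admissibility condition} is neither too weak (so no spurious trees appear) nor too strong (so no genuine contribution is lost) is the delicate point; everything else is a matching of generating-series manipulations against the pushforward combinatorics of kappa and psi classes.
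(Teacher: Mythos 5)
You have correctly identified the overall shape of the argument, but both halves of your proposal stop short of an actual proof at exactly the points where the real content lies. For \eqref{eq:DR/DZ and relations,eq1}, the paper does not re-derive anything from the Hamiltonian structure of the DR hierarchy: it quotes the result of \cite{BDGR16b}, which expresses the generating series $\sum\<\prod\tau_{d_i}(e_{\alpha_i})\>^\DR_g a_1^{d_1}\cdots a_n^{d_n}$ (for fixed $\sum d_i=d\ge 2g-1$) precisely as $\frac{1}{\sum a_i}\sum_{\Gamma\in\ST^{d-2g+2}_{g,n+1}}a(\Gamma)\int\DR_\Gamma(-\sum a_i,a_1,\ldots,a_n)\lambda_g\,c_{g,n+1}$, after which \eqref{eq:DR/DZ and relations,eq1} is immediate from the definition of $A^{g,m}$ and the projection formula. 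Your plan to obtain the same identity by interpreting vertices as Hamiltonian densities and edges as propagators, with the weights $r(v)/\sum_{\tv\in\Desc[v]}r(\tv)$ emerging from ``symmetrization/normalization forced by the string and dilaton equations,'' is a heuristic, not an argument: nothing in your text produces the coefficients $a(\Gamma)$, and this tree-sum formula is a substantive theorem (the main identity of \cite{BDGR16b}), not a bookkeeping comparison. As written this is a genuine gap, unless you intend simply to cite that result, in which case you should say so.

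For \eqref{eq:DR/DZ and relations,eq2}, your dictionary (levels of $T$ $\leftrightarrow$ iterations of the elimination procedure, sign $(-1)^{\deg(T)-1}$ $\leftrightarrow$ alternation in inverting the triangular system, extra legs and $q$ $\leftrightarrow$ the pushforward formula for $\psi$-monomials, admissibility \eqref{admissibility condition} $\leftrightarrow$ the vanishing \eqref{eq:property of Fred}) is indeed the mechanism the paper uses, but the proof consists exactly of making this dictionary precise, and you defer it. Concretely, the paper (a) replaces the construction of Proposition~\ref{proposition:Fred} by an explicit recursion $F^{(j,k)}\mapsto F^{(j,k+1)}$ subtracting, genus by genus and degree by degree, terms built from $(w^\top)^{\alpha}_{d}|_{x=0}$, and identifies the limit with $F^\red$ via the uniqueness argument of \cite{BDGR16a}; (b) introduces truncated classes $B^{g,(j,k)}$ supported on $(j,k)$-admissible trees and proves \eqref{eq:DR/DZ and relations,tmp2} by induction on the pairs $(j,k)$; and (c) needs the combinatorial lemma that if $g_{m+1}(T)=g_m(T)$ and $\ee_*[T,q]\ne 0$ then the total $\psi$-power strictly increases from level $m$ to $m+1$, which is what forces the trees in $\Omega^{B,g,(j,k+1)}_{d_1,\ldots,d_n}\backslash\Omega^{B,g,(j,k)}_{d_1,\ldots,d_n}$ to arise by attaching a single new bottom level, matching the correction term in the recursion (here the string equation's control of $(w^\top)^\alpha_n|_{x=0}$ enters). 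You explicitly name this matching as ``the delicate point'' and leave it unestablished; since that matching \emph{is} the proof of \eqref{eq:DR/DZ and relations,eq2}, the proposal as it stands is an accurate plan but not a proof.
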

\begin{proof}
In \cite{BDGR16b} the authors proved that for any $d\ge 2g-1$ we have
\begin{align*}
&\sum_{\substack{d_1,\ldots,d_n\ge 0\\\sum d_i=d}}\<\tau_{d_1}(e_{\alpha_1})\cdots\tau_{d_n}(e_{\alpha_n})\>^{\DR}_ga_1^{d_1}\cdots a_n^{d_n}=\\
=&\frac{1}{\sum a_i}\sum_{\Gamma\in\ST^{d-2g+2}_{g,n+1}}a(\Gamma)\int_{\oM_{g,n+1}}\DR_\Gamma\left(-\sum a_i,a_1,\ldots,a_n\right)\lambda_g c_{g,n+1}\left(e_1\otimes\otimes_{i=1}^n e_{\alpha_i}\right)=\\
=&\int_{\oM_{g,n}}A^{g,d-2g+2}(a_1,\ldots,a_n)c_{g,n}\left(\otimes_{i=1}^n e_{\alpha_i}\right).
\end{align*}
Equation~\eqref{eq:DR/DZ and relations,eq1} is proved.

Let us prove equation~\eqref{eq:DR/DZ and relations,eq2}. The reduced potential~$F^\red$ can be constructed in the following way. Let us recursively construct a sequence of power series
$$
F^{(0,-2)}=F,F^{(1,0)},F^{(2,0)},F^{(2,1)},F^{(2,2)},\ldots,F^{(j,0)},F^{(j,1)},\ldots,F^{(j,2j-2)},\ldots\in\mbC[[t^*_*,\eps]].
$$
Suppose that a series $F^{(j,k)}$ is already defined. Introduce correlators $\<\tau_{d_1}(e_{\alpha_1})\cdots\tau_{d_n}(e_{\alpha_n})\>^{(j,k)}_g$ by 
$$
\<\tau_{d_1}(e_{\alpha_1})\cdots\tau_{d_n}(e_{\alpha_n})\>^{(j,k)}_g:=\left.\Coef_{\eps^{2g}}\frac{\d^n F^{(j,k)}}{\d t^{\alpha_1}_{d_1}\cdots\d t^{\alpha_n}_{d_n}}\right|_{t^*_*=0}.
$$
If $k<2j-2$, then we define the series $F^{(j,k+1)}$ by 
\begin{gather}\label{eq:definition of Fjk+1}
F^{(j,k+1)}:=F^{(j,k)}-\sum_{n\ge 0}\sum_{\substack{d_1,\ldots,d_n\ge 0\\\sum d_i=k+1}}\frac{\eps^{2j}}{n!}\<\prod\tau_{d_i}(e_{\alpha_i})\>^{(j,k)}_j\prod((w^\top)_{d_i}^{\alpha_i}-\delta^{\alpha_i,1}\delta_{d_i,1})|_{x=0}.
\end{gather}
If $k=2j-2$, then we define the series $F^{(j+1,0)}$ by an analogous formula
\begin{gather*}
F^{(j+1,0)}:=F^{(j,2j-2)}-\sum_{n\ge 0}\frac{\eps^{2j+2}}{n!}\<\prod_{i=1}^n\tau_0(e_{\alpha_i})\>^{(j,2j-2)}_{j+1}\prod(w^\top)^{\alpha_i}|_{x=0}.
\end{gather*}

Recall that
$$
(w^\top)^\alpha=\left.\eta^{\alpha\mu}\frac{\d^2 F}{\d t^\mu_0\d t^1_0}\right|_{t^1_0\mapsto t^1_0+x}.
$$
The string equation for the potential $F$,
$$
\frac{\d F}{\d t^1_0}=\sum_{n\ge 0}t^\alpha_{n+1}\frac{\d F}{\d t^\alpha_n}+\frac{1}{2}\eta_{\alpha\beta}t^\alpha_0t^\beta_0+\eps^2\<\tau_0(e_1)\>_1,
$$
implies that the function $(w^\top)^\alpha_n|_{x=0}$ has the form
$$
(w^\top)^\alpha_n|_{x=0}=\delta^{\alpha,1}\delta_{n,1}+t^\alpha_n+r^\alpha_n+O(\eps^2),
$$ 
where the power series $r^\alpha_n\in\mbC[[t^*_*]]$ doesn't contain monomials $t^{\beta_1}_{b_1}\cdots t^{\beta_m}_{b_m}$ with $\sum b_i\le n$. Clearly, if $g\le j$, then we have the property
$$
\<\tau_{d_1}(e_{\alpha_1})\cdots\tau_{d_n}(e_{\alpha_n})\>^{(j,k)}_g=0,\quad\text{if}\quad
\sum d_i\le
\begin{cases}
2g-2,&\text{if $g<j$},\\
k,&\text{if $g=j$}.
\end{cases}
$$
Define a series $F'$ by $F':=\lim_{j\to\infty}F^{(j,2j-2)}$. The series $F'$ has the form
$$
F'=F+\left.\cP'(w^\top,w^\top_x,\ldots,\eps)\right|_{x=0}
$$
for some non-homogeneous differential polynomial $\cP'=\sum_{i\le -2}\cP'_i$, $\cP'_i\in\hcA^{[i]}_w$. Moreover, we have the property
$$
\left.\Coef_{\eps^{2g}}\frac{\d^n F'}{\d t^{\alpha_1}_{d_1}\cdots\d t^{\alpha_n}_{d_n}}\right|_{t^*_*=0}=0,\quad\text{if}\quad\sum d_i\le 2g-2.
$$
One can notice that the recursive construction, described above, is slightly different from the recursive construction of the reduced potential~$F^\red$, presented in the proof of Proposition~7.2 in~\cite{BDGR16a}. However, using the uniqueness argument given there we can see that $F'=F^\red$ and that actually $\cP'\in\hcA^{[-2]}_w$.

For a stable complete tree $T$ and $1\le m\le\deg(T)$ let 
$$
g_m(T):=\sum_{\substack{v\in V(T)\\l(v)\le m}}g(v).
$$
Before we proceed, let us prove the following simple lemma.
\begin{lemma}\label{simple lemma}
Let $d_1,\ldots,d_n\ge 0$, $(T,q)\in\Omega^{B,g}_{d_1,\ldots,d_n}$ and $1\le m<\deg(T)$. Suppose that $g_{m+1}(T)=g_m(T)$ and $\ee_*[T,q]\ne 0$. Then $\sum_{\substack{h\in H^e_+(T)\\l(h)=m+1}}q(h)>\sum_{\substack{h\in H^e_+(T)\\l(h)=m}}q(h)$.
\end{lemma}
\begin{proof}
Consider a half-edge $h\in H^e_+(T)$ with $l(h)=m$ and let $v:=v(\iota(h))$. We have $g(v)=0$ and the map $\ee$ forgets all $q(h)+1$ extra legs incident to $v$. Therefore, if $v$ is strongly stable, then $\sum_{h'\in H^e_+[v]}q(h')>q(h)$. If $v$ is weakly stable, then $|H^e_+[v]|=1$ and $q(h')=q(h)$, where $h'\in H^e_+[v]$. Since at least one vertex of level $m+1$ is strongly stable, the lemma is true.
\end{proof}

A stable complete tree $T$ will be called $(j,k)$-admissible, if for any $1\le m<\deg(T)$ we have $g_m(T)\le j$ and
$$
\sum_{\substack{h\in H^e_+(T)\\l(h)=m}}q(h)\le
\begin{cases}
2g_m(T)-2,&\text{if $g_m(T)<j$},\\
k,&\text{if $g_m(T)=j$}.
\end{cases}
$$
Let $\Omega^{B,g,(j,k)}_{d_1,\ldots,d_n}:=\{(T,q)\in\Omega^{B,g}_{d_1,\ldots,d_n}|\text{$T$ is $(j,k)$-admissible}\}$. Define a class $B^{g,(j,k)}_{d_1,\ldots,d_n}$ by
$$
B^{g,(j,k)}_{d_1,\ldots,d_n}:=\sum_{(T,q)\in\Omega^{B,g,(j,k)}_{d_1,\ldots,d_n}}(-1)^{\deg(T)-1}e_*[T,q]\in R^{\sum d_i}(\oM_{g,n}).
$$ 
Clearly, $B^{g,(j,k)}_{d_1,\ldots,d_n}=B^g_{d_1,\ldots,d_n}$, if $j>g$. 

In order to prove equation~\eqref{eq:DR/DZ and relations,eq2}, it is sufficient to prove that for any pair $(j,k)$ from the sequence
\begin{gather}\label{sequence of pairs}
(0,-2),(1,0),(2,0),(2,1),(2,2),\ldots,(j,0),(j,1),\ldots,(j,2j-2),\ldots
\end{gather}
we have
\begin{gather}\label{eq:DR/DZ and relations,tmp2}
\<\tau_{d_1}(e_{\alpha_1})\cdots\tau_{d_n}(e_{\alpha_n})\>^{(j,k)}_g=\int_{\oM_{g,n}}B^{g,(j,k)}_{d_1,\ldots,d_n}c_{g,n}(e_{\alpha_1}\otimes\cdots\otimes e_{\alpha_n}),
\end{gather}
if $g>j$, or $g\le j$ and 
$$
\sum d_i>
\begin{cases}
2g-2,&\text{if $g<j$},\\
k,&\text{if $g=j$}.
\end{cases}
$$
We proceed by induction. Obviously, equation~\eqref{eq:DR/DZ and relations,tmp2} is true for $(j,k)=(0,-2)$. Suppose that equation~\eqref{eq:DR/DZ and relations,tmp2} is true for a pair $(j,k)$ from the sequence~\eqref{sequence of pairs}. Let us check it for the next pair. 

Suppose $k<2j-2$. For any $d_1,\ldots,d_n\ge 0$ we have $\Omega^{B,g,(j,k)}_{d_1,\ldots,d_n}\subset\Omega^{B,g,(j,k+1)}_{d_1,\ldots,d_n}$. Using the induction assumption and formula~\eqref{eq:definition of Fjk+1}, we see that it remains to check that
\begin{align}
&\sum_{g,n\ge 0}\frac{\eps^{2g}}{n!}\sum_{d_1,\ldots,d_n\ge 0}\sum_{(T,q)\in\Omega^{B,g,(j,k+1)}_{d_1,\ldots,d_n}\backslash\Omega^{B,g,(j,k)}_{d_1,\ldots,d_n}}(-1)^{\deg(T)}\times\label{eq:B-class and correlators,case1}\\
&\hspace{5cm}\times\left(\int_{\oM_{g,n}}\ee_*[T,q]c_{g,n}(\otimes_{i=1}^n e_{\alpha_i})\right)\prod t^{\alpha_i}_{d_i}=\notag\\
=&\sum_{n\ge 0}\sum_{\substack{d_1,\ldots,d_n\ge 0\\\sum d_i=k+1}}\frac{\eps^{2j}}{n!}\<\prod\tau_{d_i}(e_{\alpha_i})\>^{(j,k)}_j\left(\prod((w^\top)_{d_i}^{\alpha_i}-\delta^{\alpha_i,1}\delta_{d_i,1})|_{x=0}-\prod t^{\alpha_i}_{d_i}\right).\notag
\end{align}
Consider a pair $(T,q)\in\Omega^{B,g,(j,k+1)}_{d_1,\ldots,d_n}\backslash\Omega^{B,g,(j,k)}_{d_1,\ldots,d_n}$ such that $\ee_*[T,q]\ne 0$. Then there exists $1\le m<\deg(T)$ such that $g_m(T)=j$ and $\sum_{\substack{h\in H^e_+(T)\\l(h)=m}}q(h)=k+1$. By Lemma~\ref{simple lemma}, $m=\deg(T)-1$. Denote by $T'$ the stable rooted tree obtained by erasing all vertices in $T$ of level $m+1$ together with half-edges incident to them. Half-edges $h\in H^e_+(T)$ with $l(h)=m$ become marked legs of $T'$. Clearly,~$T'$ is a stable complete tree. By Lemma~\ref{simple lemma}, the tree~$T'$ is $(j,k)$-admissible. Using the induction assumption, we conclude that equation~\eqref{eq:B-class and correlators,case1} is true. This completes the induction step in the case $k<2j-2$. The case $k=2j-2$ is analagous. The proposition is proved.
\end{proof}


\section{Further structure of the relations}\label{section:further structure}

In this section we discuss the structure of the conjectural relations~\eqref{eq:main relations} in more details. In Section~\ref{subsection:formulas with DR cycles} we recall the formulas for the intersections of the double ramification cycle with a $\psi$-class and with a boundary divisor on $\oM_{g,n}$. In Section~\ref{subsection:1-point} we show that for a fixed $g\ge 1$ all relations $A^g_d=B^g_d$, $d\ge 2g-1$, follow from the relation $A^g_{2g-1}=B^g_{2g-1}$. In Section~\ref{subsection:string equation} we prove that the $A$- and the $B$-class behave in the same way upon the pullback along the forgetful map. We then use this result in Section~\ref{subsection:reduction} in order to show that Conjecture~\ref{conjecture} is true if and only if it is true when all $d_i$'s are positive. In Section~\ref{subsection:dilaton equation} we prove that the classes $A^g_{d_1,\ldots,d_n,1}$ and~$B^g_{d_1,\ldots,d_n,1}$ behave in the same way upon the pushforward along the map forgetting the last marked point. Using this result, in Section~\ref{subsection:Mgn} we show that Conjecture~\ref{conjecture} is valid on $\cM_{g,n}$. In Section~\ref{subsection:lambdag} we show that the conjectural relations~\eqref{eq:main relations} give a new formula for the class $\lambda_g\in R^g(\oM_g)$ and check the resulting formula for $g\le 3$. 

\subsection{Formulas with the double ramification cycles}\label{subsection:formulas with DR cycles}

First of all, let us recall the formula from~\cite{BSSZ15} for the product of the double ramification cycle with a $\psi$-class. Denote by 
$$
\gl_k\colon\oM_{g_1,n_1+k}\times\oM_{g_2,n_2+k}\to\oM_{g_1+g_2+k-1,n_1+n_2}
$$
the gluing map that corresponds to gluing a curve from~$\oM_{g_1,n_1+k}$ to a curve from~$\oM_{g_2,n_2+k}$ along the last $k$ marked points on the first curve and the last $k$ marked points on the second curve. Introduce the notation
\begin{align*}
&\DR_{g_1}(a_1,\ldots,a_n)\boxtimes_k\DR_{g_2}(b_1,\ldots,b_m):=\\
=&\gl_{k*}\left(\DR_{g_1}(a_1,\ldots,a_n)\times\DR_{g_2}(b_1,\ldots,b_m)\right)\in R^{g_1+g_2+k}(\oM_{g_1+g_2+k-1,n+m-2k}).
\end{align*}
Let $a_1,\ldots,a_n$ be a list of integers with vanishing sum. For a subset $I=\{i_1,\ldots,i_k\}\subset\{1,\ldots,n\}$, $i_1<i_2<\ldots<i_k$, let us denote by~$A_I$ the list $a_{i_1},\ldots,a_{i_k}$ and by~$a_I$ the sum $\sum_{i\in I}a_i$. Assume that $a_s \ne 0$ for some $1\le s\le n$. Then we have~\cite[Theorem 4]{BSSZ15}
\begin{align}
a_s\psi_s \DR_g(a_1, \dots, a_n)=&\sum_{\substack{I\sqcup J=\{1,\ldots,n\}\\a_I>0}}\sum_{p\ge 1}\sum_{\substack{g_1,g_2\ge 0\\g_1+g_2+p-1=g}}\sum_{\substack{k_1,\ldots,k_p\ge 1\\\sum k_j=a_I}}\frac{\rho}{r}\frac{\prod_{i=1}^p k_i}{p!}\times\label{eq:DR times psi}\\
&\times\DR_{g_1}(A_I,-k_1,\ldots,-k_p)\boxtimes_p\DR_{g_2}(A_J,k_1,\ldots,k_p),\notag
\end{align}
where $r:=2g-2+n$ and
$$
\rho:=
\begin{cases} 
2g_2-2+|J|+p,    &\text{if $s\in I$};\\
-(2g_1-2+|I|+p), &\text{if $s\in J$}.
\end{cases}
$$

Let us also recall the formula for the intersection of the double ramification cycle with a boundary divisor on~$\oM_{g,n}$. For $0\le h\le g$ and a subset $I\subset\{1,\ldots,n\}$ we have~\cite{BSSZ15}
$$
\delta_h^I\cdot\DR_g(a_1,\ldots,a_n)=\DR_h\left(A_I,-a_I\right)\boxtimes_1\DR_{g-h}\left(A_{I^c},a_I\right),
$$
where $I^c:=\{1,2,\ldots,n\}\backslash I$.

\subsection{One-point case}\label{subsection:1-point}

\begin{lemma}\label{lemma:A-class for n=1}
Let $g\ge 1$. Then for any $k\ge 0$ we have $A^g_{2g-1+k}=\psi_1^k A^g_{2g-1}$.
\end{lemma}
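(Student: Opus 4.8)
The plan is to prove the single-step relation $A^g_d=\psi_1 A^g_{d-1}$ for every $d\ge 2g$ and then iterate it down to $d=2g-1$. Since for $n=1$ the class $A^{g,m}(a_1)$ is a homogeneous polynomial of degree $2g+m-2$ in the single variable $a_1$, it equals $A^g_{2g+m-2}\,a_1^{2g+m-2}$, so this is equivalent to the polynomial identity
$$
\tA^{g,m+1}(a_1)=a_1\,\psi_1\,\tA^{g,m}(a_1),\qquad m\ge1,
$$
where $\psi_1$ denotes the psi class at the marked point of $\oM_{g,1}$. Taking the coefficient of the top power of $a_1$ recovers $A^g_d=\psi_1 A^g_{d-1}$ for $d\ge 2g$, and iterating gives $A^g_{2g-1+k}=\psi_1^k A^g_{2g-1}$.

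The first step is to isolate the stable trees that actually contribute to $\tA^{g,m}(a_1)=\sum_{\Gamma\in\ST^m_{g,2}}a(\Gamma)\lambda_g\pi_*\DR_\Gamma(-a_1,a_1)$, where $\pi$ forgets the leg of weight $-a_1$. The weight function on $\Gamma$ vanishes on every half-edge lying off the unique path $P$ joining the two legs (such a half-edge separates both legs from the empty set), and $\xi_\Gamma^*\lambda_g=\prod_v\lambda_{g(v)}$ because over a stable tree the Hodge bundle pulls back to the external direct sum of the Hodge bundles of the vertices. A leaf $v$ of $\Gamma$ lying off $P$ has $n(v)=1$, hence $g(v)\ge1$ by stability, and it contributes the factor $\lambda_{g(v)}\DR_{g(v)}(0)=(-1)^{g(v)}\lambda_{g(v)}^2=0$ (the vanishing $\lambda_j^2=0$, $j\ge1$, follows by setting $a=0$ in the homogeneous degree-$2j$ class $\lambda_j\DR_j(a_1,\dots,a_{n(v)})$). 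Therefore the only contributing $\Gamma$ are chains $w_0-w_1-\dots-w_{m-1}$ with the legs on $w_0$ and $w_{m-1}$, every $g(w_i)\ge1$, and each edge half-edge carrying $\pm a_1$.

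For the identity I would pull $\psi_1$ inside $\pi_*$ by the projection formula; the correction in $\pi^*\psi_1=\psi_1-\delta_0$, where $\delta_0$ is the divisor on which the two legs sit on a rational bubble, contributes nothing, since for $m\ge2$ this divisor is disjoint from the image of $\xi_\Gamma$, while for $m=1$ the rational bubble concentrates the full Hodge bundle on the remaining vertex and produces $\lambda_g^2=0$. Moving $\psi_1$ onto the component carrying leg $1$ and applying formula~\eqref{eq:DR times psi} to $a_1\psi_1\DR_{g(w_{m-1})}(\dotsc)$ at that vertex, then multiplying by $\lambda_g$, kills every term with $p\ge2$ glued nodes (these produce a graph of positive first Betti number, hence not of compact type). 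What survives is the splitting of $w_{m-1}$ into two positive-genus vertices $w',w''$ joined by a single new edge, which is exactly the passage from a chain in $\ST^m_{g,2}$ to a chain in $\ST^{m+1}_{g,2}$ — inverted by contracting the edge adjacent to leg $1$ — so this sets up a bijection.

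It then remains to match the numerical weights. With $I=\{\text{leg }1\}$, $J$ the remaining half-edge of $w_{m-1}$, $p=1$ and $k_1=a_I=a_1$, the factor $\rho/r$ of~\eqref{eq:DR times psi} equals $r(w'')/r(w_{m-1})$, while the combinatorial weight transforms as $a(\Gamma')=a_1\cdot\frac{r(w'')}{r(w_{m-1})}\cdot a(\Gamma)$: the new element of $H^e_+$ carries $a_1$, the telescoping product $\prod_v r(v)/\sum_{\tv\in\Desc[v]}r(\tv)$ is unchanged above $w_{m-1}$ because $r(w')+r(w'')=r(w_{m-1})$, and it acquires exactly the factor $r(w'')/r(w_{m-1})$ at the split. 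Since the prefactor with which $\lambda_g\pi_*\DR_{\Gamma'}(-a_1,a_1)$ appears on the left-hand side is $a_1\cdot a(\Gamma)\cdot\tfrac{1}{a_1}\cdot\tfrac{\rho}{r}\cdot k_1=a_1\,a(\Gamma)\,\tfrac{\rho}{r}=a(\Gamma')$, the sum over all contributing chains and all genus splittings reproduces $\tA^{g,m+1}(a_1)$. The step I expect to be the genuine obstacle is precisely this last bookkeeping — tracking the weight $a(\Gamma)$, the forgetful pushforward $\pi$, and the several $\lambda_g$-vanishings simultaneously, and verifying that the weight transformation under the edge-split cancels against $\rho/r$ uniformly over all chains, including their degenerate endpoints.
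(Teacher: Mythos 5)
Your proposal is correct and follows essentially the same route as the paper: the projection formula with $\pi^*\psi_1=\psi_1-\delta_0^{\{0,1\}}$, the vanishing of the correction term via $\lambda_g^2=0$, the BSSZ formula~\eqref{eq:DR times psi} at the vertex carrying the surviving leg with the $p\ge 2$ terms killed by $\lambda_g$ on non-compact-type strata, and the matching of the splitting factor $\rho/r$ against the weights $a(\Gamma)$. The only difference is presentational: you prove the one-step identity $\tA^{g,m+1}(a_1)=a_1\psi_1\tA^{g,m}(a_1)$ for all $m$ by making explicit the reduction to chain-shaped trees and the coefficient bookkeeping, which the paper's proof handles for $m=1$ and then compresses into ``apply formula~\eqref{eq:DR times psi} $k-1$ times.''
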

\begin{proof}
Let $\pi\colon\oM_{g,2}\to\oM_{g,1}$ be the forgetful map that forgets the second marked point. We compute
\begin{align*}
a\psi_1 A^{g,1}(a)=&\psi_1\lambda_g\DR_g(a,\widetilde{-a})=\pi_*\left(\pi^*\psi_1\cdot\lambda_g\DR_g(a,-a)\right)=\\
=&\pi_*\left(\psi_1\cdot\lambda_g\DR_g(a,-a)\right)-\pi_*\left(\delta_0^{\{1,2\}}\cdot\lambda_g\DR_g(a,-a)\right)=\\
=&\sum_{\substack{g_1,g_2\ge 1\\g_1+g_2=g}}\frac{g_2}{g}\pi_*\left(\lambda_g\DR_{g_1}(a,-a)\boxtimes_1\DR_{g_2}(-a,a)\right)\\
&-\pi_*\left(\lambda_g\DR_g(0)\boxtimes_1\DR_0(a,-a,0)\right)=\\
=&A^{g,2}(a),
\end{align*}
where we used that $\lambda_g\DR_g(0)=(-1)^g\lambda_g^2=0$. If $k=1$, then the lemma is proved. If $k\ge 2$, then we write the equation $(a_1\psi_1)^k A^{g,1}(a)=(a_1\psi_1)^{k-1}A^{g,2}(a)$ and apply formula~\eqref{eq:DR times psi} to the right-hand side of it~$k-1$ times. The lemma is proved. 
\end{proof}

On the other hand, it is not hard to get an explicit expression for the class $B^g_d$. Let $g_1,g_2,\ldots,g_k\ge 1$ and $d_1,\ldots,d_k\ge 0$. Introduce a class $C_{d_1,\ldots,d_k}^{g_1,\ldots,g_k}\in R^{\sum d_i+k-1}(\oM_{\sum g_i,1})$ by 
$$
C_{d_1,\ldots,d_k}^{g_1,\ldots,g_k}:=\tikz[baseline=-1mm]{\draw (A)--(1,0);\leg{1,0}{0};\leg{2.8,0}{180};\leg{2.8,0}{0};\gg{g_1}{A};\gg{g_2}{1,0};\gg{g_k}{2.8,0};\lab{A}{23}{4.8mm}{\psi^{d_1}};\lab{1,0}{23}{4.8mm}{\psi^{d_2}};\lab{1.9,0}{-90}{0.06}{...};\lab{2.8,0}{23}{4.8mm}{\psi^{d_k}};}.
$$
Then it is easy to see that for $g\ge 1$ and $d\ge 2g-1$ we have
$$
B^g_d=\sum_{k=1}^g\sum_{\substack{g_1,\ldots,g_k\ge 1\\\sum g_i=g}}\sum_{d_1,\ldots,d_k}(-1)^{k-1}C^{g_1,\ldots,g_k}_{d_1,\ldots,d_k},
$$
where the last sum is taken over all non-negative integers $d_1,\ldots,d_k$ satisfying
\begin{align*}
&\sum_{i=1}^l d_i+l-1\le 2\sum_{i=1}^l g_i-2,\quad\text{if $1\le l\le g-1$},\\
&\sum_{i=1}^k d_i+k-1=d.
\end{align*}
We see that $B^g_d=\psi_1^{d-2g+1}B^g_{2g-1}$. Thus, for $n=1$ Conjecture~\ref{conjecture} is equivalent to the sequence of relations
$$
A^g_{2g-1}=B^g_{2g-1},\quad g\ge 1.
$$

\subsection{String equation}\label{subsection:string equation}

In this section we prove that the $A$- and the $B$-class behave in the same way upon the pullback along the forgetful map $\pi\colon\oM_{g,n+1}\to\oM_{g,n}$.

\begin{proposition}\label{proposition:pullback of A-class}
Denote by $\pi\colon\oM_{g,n+1}\to\oM_{g,n}$ the forgetful map that forgets the last marked point. Then we have
\begin{gather}\label{eq:pullback of A-class}
A^g_{d_1,\ldots,d_n,0}=
\begin{cases}
\pi^* A^g_{d_1,\ldots,d_n},&\text{if $\sum d_i=2g-1$},\\
\pi^* A^g_{d_1,\ldots,d_n}+\sum_{\substack{1\le i\le n\\d_i\ge 1}}\delta_0^{\{i,n+1\}}\pi^* A^g_{d_1,\ldots,d_i-1,\ldots,d_n},&\text{if $\sum d_i\ge 2g$}.
\end{cases}
\end{gather}
\end{proposition}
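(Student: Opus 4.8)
The plan is to peel off the coefficient extraction, reduce the proposition to a single polynomial identity between the classes $\tA^{g,m}$, and then prove that identity by analysing the tree sum defining $\tA^{g,m}$.

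\emph{Reduction.} Put $\delta=\sum_{i=1}^n d_i$ and $m=\delta-2g+2$. Since $\pi$ maps each divisor $\delta_0^{\{i,n+1\}}$ isomorphically onto $\oM_{g,n}$, extracting $\Coef_{a_1^{d_1}\cdots a_n^{d_n}}$ and multiplying by $\sum_{i=1}^n a_i$ turns \eqref{eq:pullback of A-class} into the single identity in $R^*(\oM_{g,n+1})[a_1,\dots,a_n]$
$$\tA^{g,m}(a_1,\dots,a_n,0)=\pi^*\tA^{g,m}(a_1,\dots,a_n)+\sum_{i=1}^n a_i\,\delta_0^{\{i,n+1\}}\,\pi^*\tA^{g,m-1}(a_1,\dots,a_n),$$
where $\tA^{g,m}(a_1,\dots,a_n,0)$ is the specialization at $a_{n+1}=0$ of the $(n{+}1)$--variable class and $\tA^{g,0}:=0$, so that the correction term is empty precisely when $m=1$, i.e.\ $\delta=2g-1$. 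It thus suffices to establish this identity for every $m\ge1$.

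\emph{Strategy for the identity.} Expand the left-hand side through $\sum_{\Gamma\in\ST^m_{g,n+2}}a(\Gamma)\,\lambda_g\,\pi_{0*}\DR_\Gamma(a_0,a_1,\dots,a_n,0)$, where $a_0=-\sum_{i=1}^n a_i$, the trees are rooted at $v(l_0)$, $\pi_0$ forgets the leg $l_0$, and $l_{n+1}$ carries weight $0$; partition the sum according to the vertex $v_\star:=v(l_{n+1})$ and whether forgetting $l_{n+1}$ destabilizes it. Three situations arise: (I) $v_\star$ stays stable; (II) $v_\star$ has genus $0$ with exactly three half-edges, namely $l_{n+1}$, a leg $l_i$ with $1\le i\le n$, and one internal half-edge; (III) $v_\star$ has genus $0$ with exactly three half-edges, $l_{n+1}$ and two internal ones; together with the degenerate variants $v_\star=v(l_0)$. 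In type (I) the trees are exactly the ones obtained from a $\bar\Gamma\in\ST^m_{g,n+1}$ by attaching $l_{n+1}$ at one of its vertices; using $\DR_h(\dots,0)=(\text{forgetful})^*\DR_h(\dots)$ at $v_\star$, $\pi^*\lambda_g=\lambda_g$, the comparison $\pi^*\psi_j=\psi_j-\delta_0^{\{j,n+1\}}$ inside the double ramification cycles, and the pull-back formula for a boundary stratum along $\pi$, these terms (together with the node-collision terms the pull-back produces) reassemble into $\pi^*\tA^{g,m}(a_1,\dots,a_n)$. In type (II) the stratum $\xi_\Gamma(\oM_\Gamma)$ lies in $\delta_0^{\{i,n+1\}}$, the genus-$0$ double ramification cycle on $v_\star$ is the fundamental class, and the internal half-edge of $v_\star$ carries weight $-a_i$, so its outgoing partner contributes a factor $a_i$ to $a(\Gamma)$; restricting to $\delta_0^{\{i,n+1\}}$ then identifies the total type-(II) contribution with $\sum_{i=1}^n a_i\,\delta_0^{\{i,n+1\}}\,\pi^*\tA^{g,m-1}(a_1,\dots,a_n)$. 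Type (III) and the degenerate $v_\star=v(l_0)$ variants (contracting a genus-$0$ bridge) produce $(m{-}1)$--vertex strata, which must cancel against the like terms coming from $\pi_{0*}$ contracting a three-valent genus-$0$ root inside both $\pi^*\tA^{g,m}$ and $\tA^{g,m}(\dots,0)$.

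\emph{Main obstacle.} The crux is the bookkeeping of the coefficients $a(\Gamma)$, especially of the factors $\prod_v r(v)\big/\sum_{\tv\in\Desc[v]}r(\tv)$: attaching $l_{n+1}$ at a vertex $v$ raises $r(v)$ by $1$ and hence changes these ratios for every ancestor of $v$, so the matching in type (I) is not term-by-term but rests on a summation identity of the same nature as \eqref{eq:divisibility of A-class,tmp2} from the divisibility lemma. Likewise, checking that exactly the divisors $\delta_0^{\{i,n+1\}}$ with the stated powers of $a_i$ survive — with no leftover $\delta_0^{\{0,n+1\}}$ contributions from the collision of the forgotten leg $l_0$ with $l_{n+1}$, and with the $(m{-}1)$--vertex pieces cancelling cleanly — is the delicate part. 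Once the displayed identity is proved, taking $\Coef_{a_1^{d_1}\cdots a_n^{d_n}}$ and separating $m=1$ from $m\ge2$ yields the two cases of the proposition.
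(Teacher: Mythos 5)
Your reduction step is correct and coincides with the paper's: dividing out $\sum a_i$ and extracting coefficients does turn the proposition into the polynomial identity
$$
\hA^{g,m}(a_0,\ldots,a_n,0)=\pi^*\hA^{g,m}(a_0,\ldots,a_n)+\sum_{i=1}^n a_i\,\delta_0^{\{i,n+1\}}\,\pi^*\hA^{g,m-1}(a_0,\ldots,a_n),
$$
(the paper prefers to prove it before pushing forward along the map forgetting $l_0$, which simplifies the bookkeeping). The problem is what comes next. Expanding both sides over stable trees in $\ST^m_{g,n+2}$, the right-hand side produces only ``good'' trees (the vertex carrying $l_{n+1}$ is either a stable vertex of an $m$-vertex tree with the extra leg added, or a genus-$0$ trivalent vertex carrying $l_{n+1}$ together with some $l_i$), while the left-hand side also contains ``bad'' trees, whose $l_{n+1}$-vertex is genus $0$, trivalent, and carries no $l_i$, $1\le i\le n$ (your type (III) and the $l_0$-degenerate variant). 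These bad-tree terms are nonzero classes supported on codimension-$(m-1)$ strata of exactly the same kind as the good-tree terms; they do not ``cancel against $(m-1)$-vertex strata coming from $\pi_{0*}$ contracting a three-valent genus-$0$ root'' --- no such counterterms exist, and no cancellation happens inside the strata algebra. The only way to remove them is to use a genuine tautological relation among the classes $\lambda_g\DR_\Gamma$: the paper takes the splitting relation for DR cycles from \cite{Bur15} (eq.\ (5.2) there), multiplies it by a $\psi$-class and pushes it forward, and uses the resulting relation repeatedly to rewrite each bad-tree contribution as a combination of good trees. This geometric input is entirely absent from your proposal.

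Moreover, after this redistribution the coefficients still do not match term by term: for a good tree $\Gamma$ one has to verify $a(\Gamma)+\sum_{\tGamma\ \mathrm{bad}}a(\tGamma,\Gamma)=a'(\Gamma)$, where $a'(\Gamma)$ is the modified coefficient arising from the pullback side and $a(\tGamma,\Gamma)$ are the coefficients produced by the relation; the paper computes these explicitly and reduces the check to an elementary telescoping identity in the quantities $r_i$, $R_i$. Your proposal defers exactly this step (``the crux'', ``the delicate part'') and predicts it will be ``of the same nature as'' the divisibility identity used earlier, which is not the case: without the DR-cycle relation there is no identity of tree coefficients to prove, because the two sides simply involve different sets of trees. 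So as written the proposal is a plausible plan whose essential content --- the treatment of the bad trees and the resulting coefficient identity --- is missing, and the proposed cancellation mechanism for type (III) is incorrect.
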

\begin{proof}
Let $m:=\sum d_i-2g+2$. The proposition is equivalent to the equation
\begin{multline}\label{eq:pullback of tA}
\tA^{g,m}(a_1,\ldots,a_n,0)=\\
=
\begin{cases}
\pi^*\tA^{g,m}(a_1,\ldots,a_n),&\text{if $m=1$},\\
\pi^*\tA^{g,m}(a_1,\ldots,a_n)+\sum_{i=1}^n a_i\delta_0^{\{i,n+1\}}\pi^*\tA^{g,m-1}(a_1,\ldots,a_n),&\text{if $m\ge 2$},
\end{cases}
\end{multline}
where $a_1,\ldots,a_n$ are arbitrary integers. Let $a_0:=-\sum_{i=1}^n a_i$. Introduce a class $\hA^{g,m}(a_0,a_1,\ldots,a_n)$ by
$$
\hA^{g,m}(a_0,a_1,\ldots,a_n):=\sum_{\Gamma\in\ST^m_{g,n+1}}a(\Gamma)\lambda_g\DR_\Gamma\left(a_0,a_1,\ldots,a_n\right).
$$
Formula~\eqref{eq:pullback of tA} follows from the equation
\begin{multline}\label{eq:string for A-class,reformulation}
\hA^{g,m}(a_0,\ldots,a_n,0)=\\
=
\begin{cases}
\pi^*\hA^{g,m}(a_0,\ldots,a_n),&\text{if $m=1$},\\
\pi^*\hA^{g,m}(a_0,\ldots,a_n)+\sum_{i=1}^na_i\delta^{\{i,n+1\}}_0\pi^*\hA^{g,m-1}(a_0,\ldots,a_n),&\text{if $m\ge 2$},
\end{cases}
\end{multline}
where the map $\pi\colon\oM_{g,n+2}\to\oM_{g,n+1}$ forgets the last marked point. 

For $m=1$ equation~\eqref{eq:string for A-class,reformulation} is clear. Suppose that $m\ge 2$. Consider a stable tree $\Gamma\in\ST^m_{g,n+2}$. Recall that we denote by $l_i(\Gamma)$ the leg of $\Gamma$ marked by $0\le i\le n+1$. We will call a vertex $v\in V(\Gamma)$ exceptional, if $g(v)=0$, $n(v)=3$ and the leg $l_{n+1}(\Gamma)$ is incident to~$v$. An exceptional vertex $v\in V(\Gamma)$ will be called bad, if it is not incident to any leg $l_i(\Gamma)$, where $1\le i\le n$. We will call the tree $\Gamma$ bad, if it has a bad vertex. Otherwise, it will be called good. For a vertex $v\in V(\Gamma)$ let
$$
r'(v):=
\begin{cases}
2g(v)+n(v)-2,&\text{if $l_{n+1}(\Gamma)$ is not incident to $v$},\\
2g(v)+n(v)-3,&\text{if $l_{n+1}(\Gamma)$ is incident to $v$}.
\end{cases}
$$    
For a good stable tree $\Gamma\in\ST^m_{g,n+2}$ introduce a constant $a'(\Gamma)$ by
$$
a'(\Gamma):=\left(\prod_{h\in H^e_+(\Gamma)}a(h)\right)\prod_{\substack{v\in V(\Gamma)\\\text{$v$ is not exceptional}}}\frac{r'(v)}{\sum_{\tv\in\Desc[v]}r'(\tv)}.
$$
Using these notations, we can rewrite the right-hand side of~\eqref{eq:string for A-class,reformulation} as follows:
\begin{multline*}
\pi^*\hA^{g,m}(a_0,\ldots,a_n)+\sum_{i=1}^na_i\delta^{\{i,n+1\}}_0\pi^*\hA^{g,m-1}(a_0,\ldots,a_n)=\\
=\sum_{\substack{\Gamma\in\ST^m_{g,n+2}\\\text{$\Gamma$ is good}}}a'(\Gamma)\lambda_g\DR_\Gamma\left(a_0,a_1,\ldots,a_n,0\right).
\end{multline*}
On the other hand, by definition,
$$
\hA^{g,m}(a_0,\ldots,a_n,0)=\sum_{\Gamma\in\ST^m_{g,n+2}}a(\Gamma)\lambda_g\DR_\Gamma\left(a_0,a_1,\ldots,a_n,0\right).
$$
We see that we have to prove the equation
\begin{gather}\label{eq:string for A-class,good and bad}
\sum_{\Gamma\in\ST^m_{g,n+2}}a(\Gamma)\lambda_g\DR_\Gamma\left(a_0,a_1,\ldots,a_n,0\right)=\sum_{\substack{\Gamma\in\ST^m_{g,n+2}\\\text{$\Gamma$ is good}}}a'(\Gamma)\lambda_g\DR_\Gamma\left(a_0,a_1,\ldots,a_n,0\right).
\end{gather}

Let us prove equation~\eqref{eq:string for A-class,good and bad}. Suppose $\Gamma$ is a bad stable tree. Let us show how to express the class $a(\Gamma)\lambda_g\DR_\Gamma(a_0,a_1,\ldots,a_n,0)$ as a linear combination of the classes $\lambda_g\DR_{\tGamma}(a_0,a_1,\ldots,a_n,0)$, where the stable trees $\tGamma$ are good. Suppose that $s\ge 2$ and $b_1,\ldots,b_s$ are integers with vanishing sum. We have the following relation in the cohomology of~$\oM_{g,s+2}$ (see e.g.~\cite[eq.~(5.2)]{Bur15}):
\begin{gather}\label{eq:string for A-class,WDVV}
\lambda_g\sum_{\substack{I\sqcup J=\{1,\ldots,s\}\\I,J\ne\emptyset}}\sum_{g_1+g_2=g}b_I\DR_{g_1}(0,B_I,-b_I)\boxtimes_1\DR_{g_2}(0,B_J,-b_J)=0.
\end{gather}
Suppose that the point with the zero multiplicity in the second double ramification cycle is marked by $s+2$. Let us multiply relation~\eqref{eq:string for A-class,WDVV} by $\psi_{s+2}$ and push it forward to $\oM_{g,s+1}$ by forgetting the last marked point:
\begin{gather}\label{eq:string for A-class,WDVV2}
\lambda_g\sum_{\substack{I\sqcup J=\{1,\ldots,s\}\\I,J\ne\emptyset}}\sum_{\substack{g_1+g_2=g\\2g_2+|J|-1>0}}b_I(2g_2+|J|-1)\DR_{g_1}(0,B_I,-b_I)\boxtimes_1\DR_{g_2}(B_J,-b_J)=0.
\end{gather}
Suppose that the level of the bad vertex in our bad stable tree $\Gamma$ is equal to~$k$. Then relation~\eqref{eq:string for A-class,WDVV2} allows to express the class $a(\Gamma)\lambda_g\DR_\Gamma(a_0,\ldots,a_n,0)$ in terms of the classes $\lambda_g\DR_{\tGamma}(a_0,\ldots,a_n,0)$, where the tree $\tGamma$ is good or bad with the bad vertex of level $k+1$. Therefore, applying relation~\eqref{eq:string for A-class,WDVV2} sufficiently many times, we come to a decomposition
$$
a(\Gamma)\lambda_g\DR_\Gamma(a_0,\ldots,a_n,0)=\sum_{\substack{\tGamma\in\ST^m_{g,n+2}\\\text{$\tGamma$ is good}}}a(\Gamma,\tGamma)\lambda_g\DR_{\tGamma}(a_0,\ldots,a_n,0),
$$ 
where $a(\Gamma,\tGamma)$ are certain coefficients. We see that for any good graph $\Gamma$ we have to prove the identity
\begin{gather}\label{eq:string for A-class,identity}
a(\Gamma)+\sum_{\substack{\tGamma\in\ST^m_{g,n+2}\\\text{$\tGamma$ is bad}}}a(\tGamma,\Gamma)=a'(\Gamma).
\end{gather}

Let us prove~\eqref{eq:string for A-class,identity}. Suppose that the leg $l_{n+1}=l_{n+1}(\Gamma)$ is incident to a vertex of level $k$. Denote it by $v_k$. Denote by~$v_1$ the root of $\Gamma$. Let $v_1,v_2,\ldots,v_k$ be the unique path connecting~$v_1$ and~$v_k$. Denote by $v^1_{k+1},\ldots,v^l_{k+1}$, $l\ge 0$, the direct descendants of $v_k$. Let $L':=L[v_k]\backslash\{l_{n+1}\}$. In Fig.~\ref{fig:string for A-class,1} we draw our tree $\Gamma$. 
\begin{figure}[t]
\begin{tikzpicture}[scale=1.1]
\legmm{9.5,0}{-90}{1.1}{l_{n+1}};\legg{5.5,0}{0}{0.8};\legg{8,0}{180}{0.8};
\legg{5.5,0}{-110}{1.1};\legg{5.5,0}{-70}{1.1};
\legg{8,0}{-110}{1.1};\legg{8,0}{-70}{1.1};\legg{11,1}{20}{1.1};\legg{11,1}{-20}{1.1};\legg{11,-1}{20}{1.1};\legg{11,-1}{-20}{1.1};
\legg{9.5,0}{110}{1};\legg{9.5,0}{70}{1};
\draw (8,0) -- (9.5,0);\draw (9.5,0) -- (11,1);\draw (9.5,0) -- (11,-1);
\ggg{r_1}{5.5,0};\ggg{r_{k-1}}{8,0};\ggg{r_k}{9.5,0};\ggg{r^1_{k+1}}{11,1};
\ggg{r^l_{k+1}}{11,-1};
\coordinate [label=center:$\ldots$] () at (6.75,0);
\coordinate [label=center:$\vdots$] () at (11,0.1);
\draw (5.5,-1.2) ellipse (0.7 and 0.5);
\coordinate [label=center:$A_1$] () at (5.5,-1.2);
\draw (8,-1.2) ellipse (0.7 and 0.5);
\coordinate [label=center:$A_{k-1}$] () at (8,-1.2);
\draw (12.2,1) ellipse (0.5 and 0.7);
\coordinate [label=center:$A^1_{k+1}$] () at (12.2,1);
\draw (12.2,-1) ellipse (0.5 and 0.7);
\coordinate [label=center:$A^l_{k+1}$] () at (12.2,-1);
\coordinate [label=center:$\ldots$] () at (9.5,0.9);
\coordinate [label=center:$\overbrace{\phantom{aaaaa}}^{L'}$] () at (9.5,1.15);
\end{tikzpicture}
\caption{Stable tree $\Gamma$}
\label{fig:string for A-class,1}
\end{figure}
Note that each vertex $v$ in the picture is decorated by the number~$r(v)$, instead of the genus. This is more convenient for the computations. We use the notations $r_i:=r(v_i)$, $1\le i\le k$, and $r^j_{k+1}:=r(v^j_{k+1})$, $1\le j\le l$. The symbols $A_i$ and~$A^j_{k+1}$ indicate the pieces of the tree~$\Gamma$ that don't contain the vertices $v_i$ and $v^j_{k+1}$. Let us also introduce the following notations:
\begin{align*}
R_i:=&\sum_{v\in\Desc[v_i]}r(v),\quad 1\le i\le k,\\
R^j_{k+1}:=&\sum_{v\in\Desc[v^j_{k+1}]}r(v),\quad 1\le j\le l,
\end{align*}
\begin{gather*}
\ta:=a(\Gamma)\left/\left(\prod_{i=1}^k\frac{r_i}{R_i}\prod_{j=1}^l\frac{r^j_{k+1}}{R^j_{k+1}}\right)\right..
\end{gather*}
There are two cases: the vertex $v_k$ can be exceptional or not. 

Suppose that $v_k$ is not exceptional. Then 
$$
a'(\Gamma)=\ta\frac{r_1\cdots r_{k-1}(r_k-1)}{(R_1-1)\cdots (R_k-1)}\prod_{j=1}^l\frac{r^j_{k+1}}{R^j_{k+1}}.
$$
It is not hard to understand the structure of bad stable trees $\tGamma$ such that $a(\tGamma,\Gamma)\ne 0$. These trees are of two types. A bad tree of of the first type is shown in Fig.~\ref{fig:string for A-class,2},
\begin{figure}[t]
\begin{tikzpicture}[scale=0.95]
\legg{0,0}{0}{0.8};\legg{2.5,0}{180}{0.8};\legmm{4,0}{-90}{1.1}{l_{n+1}};\legg{5.5,0}{0}{0.8};\legg{8,0}{180}{0.8};
\legg{0,0}{-110}{1.1};\legg{0,0}{-70}{1.1};\legg{2.5,0}{-110}{1.1};\legg{2.5,0}{-70}{1.1};\legg{5.5,0}{-110}{1.1};\legg{5.5,0}{-70}{1.1};
\legg{8,0}{-110}{1.1};\legg{8,0}{-70}{1.1};\legg{9.5,0}{-110}{1.1};\legg{9.5,0}{-70}{1.1};\legg{11,1}{20}{1.1};\legg{11,1}{-20}{1.1};\legg{11,-1}{20}{1.1};\legg{11,-1}{-20}{1.1};
\legg{9.5,0}{110}{1};\legg{9.5,0}{70}{1};
\draw (2.5,0) -- (4,0);\draw (4,0) -- (5.5,0);\draw (8,0) -- (9.5,0);\draw (9.5,0) -- (11,1);\draw (9.5,0) -- (11,-1);
\ggg{r_1}{0,0};\ggg{r_{i-1}}{2.5,0};\ggg{1}{4,0};\ggg{1}{4,0};\ggg{r_i}{5.5,0};\ggg{r_{k-2}}{8,0};\ggg{\widetilde{r}}{9.5,0};\ggg{r^1_{k+1}}{11,1};
\ggg{r^l_{k+1}}{11,-1};
\coordinate [label=center:$\ldots$] () at (1.25,0);
\coordinate [label=center:$\ldots$] () at (6.75,0);
\coordinate [label=center:$\vdots$] () at (11,0.1);
\draw (0,-1.2) ellipse (0.7 and 0.5);
\coordinate [label=center:$A_1$] () at (0,-1.2);
\draw (2.5,-1.2) ellipse (0.7 and 0.5);
\coordinate [label=center:$A_{i-1}$] () at (2.5,-1.2);
\draw (5.5,-1.2) ellipse (0.7 and 0.5);
\coordinate [label=center:$A_i$] () at (5.5,-1.2);
\draw (8,-1.2) ellipse (0.7 and 0.5);
\coordinate [label=center:$A_{k-2}$] () at (8,-1.2);
\draw (9.5,-1.2) ellipse (0.7 and 0.5);
\coordinate [label=center:$A_{k-1}$] () at (9.5,-1.2);
\draw (12.2,1) ellipse (0.5 and 0.7);
\coordinate [label=center:$A^1_{k+1}$] () at (12.2,1);
\draw (12.2,-1) ellipse (0.5 and 0.7);
\coordinate [label=center:$A^l_{k+1}$] () at (12.2,-1);
\coordinate [label=center:$\ldots$] () at (9.5,0.9);
\coordinate [label=center:$\overbrace{\phantom{aaaaa}}^{L'}$] () at (9.5,1.15);
\end{tikzpicture}
\caption{Bad stable tree of the first type}
\label{fig:string for A-class,2}
\end{figure} 
where $1\le i\le k-1$ and $\widetilde{r}=r_{k-1}+r_k-1$. A bad tree of the second type is shown in Fig.~\ref{fig:string for A-class,3}, 
\begin{figure}[t]
\begin{tikzpicture}[scale=0.95]
\legg{0,0}{0}{0.8};\legg{2.5,0}{180}{0.8};\legmm{4,0}{-90}{1.1}{l_{n+1}};\legg{5.5,0}{0}{0.8};\legg{8,0}{180}{0.8};
\legg{0,0}{-110}{1.1};\legg{0,0}{-70}{1.1};\legg{2.5,0}{-110}{1.1};\legg{2.5,0}{-70}{1.1};\legg{5.5,0}{-110}{1.1};\legg{5.5,0}{-70}{1.1};
\legg{8,0}{-110}{1.1};\legg{8,0}{-70}{1.1};\legg{11,1.7}{20}{1.1};\legg{11,1.7}{-20}{1.1};\legg{11,-1.7}{20}{1.1};\legg{11,-1.7}{-20}{1.1};
\legg{9.5,0}{110}{1};\legg{9.5,0}{70}{1};\legg{9.5,0}{20}{1.1};\legg{9.5,0}{-20}{1.1};
\draw (2.5,0) -- (4,0);\draw (4,0) -- (5.5,0);\draw (8,0) -- (9.5,0);\draw (9.5,0) -- (11,1.7);\draw (9.5,0) -- (11,-1.7);
\ggg{r_1}{0,0};\ggg{r_{i-1}}{2.5,0};\ggg{1}{4,0};\ggg{1}{4,0};\ggg{r_i}{5.5,0};\ggg{r_{k-1}}{8,0};\ggg{\widetilde{r}}{9.5,0};
\ggg{r^1_{k+1}}{11,1.7};\ggg{r^l_{k+1}}{11,-1.7};
\coordinate [label=center:$\ldots$] () at (1.25,0);
\coordinate [label=center:$\ldots$] () at (6.75,0);
\coordinate [label=center:$\vdots$] () at (12.2,0.8);
\coordinate [label=center:$\vdots$] () at (12.2,-0.6);
\draw (0,-1.2) ellipse (0.7 and 0.5);
\coordinate [label=center:$A_1$] () at (0,-1.2);
\draw (2.5,-1.2) ellipse (0.7 and 0.5);
\coordinate [label=center:$A_{i-1}$] () at (2.5,-1.2);
\draw (5.5,-1.2) ellipse (0.7 and 0.5);
\coordinate [label=center:$A_i$] () at (5.5,-1.2);
\draw (8,-1.2) ellipse (0.7 and 0.5);
\coordinate [label=center:$A_{k-1}$] () at (8,-1.2);
\draw (12.2,1.7) ellipse (0.5 and 0.7);
\coordinate [label=center:$A^1_{k+1}$] () at (12.2,1.7);
\draw (12.2,-1.7) ellipse (0.5 and 0.7);
\coordinate [label=center:$A^l_{k+1}$] () at (12.2,-1.7);
\coordinate [label=center:$\ldots$] () at (9.5,0.9);
\draw (10.7,0) ellipse (0.5 and 0.7);
\coordinate [label=center:$A^j_{k+1}$] () at (10.7,0);
\coordinate [label=center:$\overbrace{\phantom{aaaaa}}^{L'}$] () at (9.5,1.15);
\end{tikzpicture}
\caption{Bad stable tree of the second type}
\label{fig:string for A-class,3}
\end{figure}
where $1\le i\le k$, $1\le j\le l$ and $\widetilde{r}=r_k+r^j_{k+1}-1$. It is not hard to see that
\begin{gather*}
a(\tGamma,\Gamma)=
\begin{cases}
\ta\frac{r_1\cdots r_{k-1}}{R_1\cdots R_i(R_i-1)\cdots(R_{k-1}-1)}\prod_{j=1}^l\frac{r^j_{k+1}}{R^j_{k+1}},&\text{if $\tGamma$ is of the first type},\\
-\ta\frac{r_1\cdots r_{k-1}}{R_1\cdots R_i(R_i-1)\cdots(R_k-1)}R^j_{k+1}\prod_{m=1}^l\frac{r^m_{k+1}}{R^m_{k+1}},&\text{if $\tGamma$ is of the second type}.
\end{cases}
\end{gather*}
Therefore, equation~\eqref{eq:string for A-class,identity} follows from the identity
\begin{align*}
&\prod_{i=1}^k\frac{r_i}{R_i}+\sum_{i=1}^{k-1}\frac{r_1\cdots r_{k-1}}{R_1\cdots R_i(R_i-1)\cdots(R_{k-1}-1)}\\
&-\sum_{i=1}^k\sum_{j=1}^l\frac{r_1\cdots r_{k-1}}{R_1\cdots R_i(R_i-1)\cdots(R_k-1)}R^j_{k+1}=\\
=&\frac{r_1\cdots r_{k-1}(r_k-1)}{(R_1-1)\cdots (R_k-1)},
\end{align*}
or, equivalently,
\begin{align}
&\frac{r_k}{R_1\cdots R_k}+\sum_{i=1}^{k-1}\frac{1}{R_1\cdots R_i(R_i-1)\cdots(R_{k-1}-1)}\label{string for A,tmp1}\\
&-\sum_{i=1}^k\frac{R_k-r_k}{R_1\cdots R_i(R_i-1)\cdots(R_k-1)}=\notag\\
=&\frac{r_k-1}{(R_1-1)\cdots (R_k-1)}.\notag
\end{align}
Note that
\begin{align*}
&\frac{r_k}{R_1\cdots R_k}-\frac{R_k-r_k}{R_1\cdots R_k(R_k-1)}=\frac{r_k-1}{R_1\cdots R_{k-1}(R_k-1)},\\
&\frac{1}{R_1\cdots R_i(R_i-1)\cdots(R_{k-1}-1)}-\frac{R_k-r_k}{R_1\cdots R_i(R_i-1)\cdots(R_k-1)}=\\
=&\frac{r_k-1}{R_1\cdots R_i(R_i-1)\cdots(R_k-1)},
\end{align*}
where $1\le i\le k-1$. Therefore, equation~\eqref{string for A,tmp1} is equivalent to the equation
\begin{gather}\label{eq:string for A-class,elementary identity}
\frac{1}{R_1\cdots R_{k-1}}+\sum_{i=1}^{k-1}\frac{1}{R_1\cdots R_i(R_i-1)\cdots(R_{k-1}-1)}=\frac{1}{(R_1-1)\cdots (R_{k-1}-1)},
\end{gather}
which can be easily proved by induction on~$k$. 

Suppose that $v_k$ is exceptional. Then $l=0$, the set $L'$ consists of only one leg and $r_k=R_k=1$. We have
$$
a'(\Gamma)=\ta\frac{r_1\cdots r_{k-1}}{(R_1-1)\cdots (R_{k-1}-1)}.
$$
A bad stable tree $\tGamma$ with $a(\tGamma,\Gamma)\ne 0$ should necessarily be of the first type (see Fig.~\ref{fig:string for A-class,2}) and then we have
\begin{gather*}
a(\tGamma,\Gamma)=\ta\frac{r_1\cdots r_{k-1}}{R_1\cdots R_i(R_i-1)\cdots(R_{k-1}-1)}.
\end{gather*}
We immediately see that again equation~\eqref{eq:string for A-class,identity} follows from the elementary identity~\eqref{eq:string for A-class,elementary identity}. The proposition is proved.
\end{proof}

\begin{proposition}\label{proposition:pullback of B-class}
Denote by $\pi\colon\oM_{g,n+1}\to\oM_{g,n}$ the forgetful map that forgets the last marked point. Then we have
\begin{gather}\label{eq:pullback of B-class}
B^g_{d_1,\ldots,d_n,0}=
\begin{cases}
\pi^* B^g_{d_1,\ldots,d_n},&\text{if $\sum d_i=2g-1$},\\
\pi^* B^g_{d_1,\ldots,d_n}+\sum_{\substack{1\le i\le n\\d_i\ge 1}}\delta_0^{\{i,n+1\}}\pi^* B^g_{d_1,\ldots,d_i-1,\ldots,d_n},&\text{if $\sum d_i\ge 2g$}.
\end{cases}
\end{gather}
\end{proposition}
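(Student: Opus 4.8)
The plan is to establish \eqref{eq:pullback of B-class} by the same kind of explicit combinatorial bookkeeping used for the $A$-class in Proposition~\ref{proposition:pullback of A-class}, but carried out directly with stable trees decorated by psi classes, which is the natural setting for the $B$-class. The first step is to write both sides as explicit $\mbQ$-linear combinations of basic tautological classes on $\oM_{g,n+1}$. For the left-hand side this is Remark~\ref{Baltrem}: $B^g_{d_1,\ldots,d_n,0}$ is a signed sum over $(T,q)\in\Omega^{B,g}_{d_1,\ldots,d_n,0}$ of classes $\xi_{\st(T)*}$ of monomials in psi classes only (no kappa classes occur, since the extra legs carry $\psi^0$). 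For the right-hand side one invokes the standard formula for the pullback of a basic tautological class along the forgetful map $\pi\colon\oM_{g,n+1}\to\oM_{g,n}$: $\pi^*\xi_{\Gamma*}\left(\prod_h\psi_h^{p(h)}\right)$ equals the sum over the one-point extensions of $(\Gamma,p)$ — attaching the leg $\sigma_{n+1}$ to an existing vertex of $\Gamma$, inserting a genus-$0$ three-valent vertex carrying $\sigma_{n+1}$ in the interior of an edge, and splitting off a genus-$0$ three-valent vertex carrying $\sigma_{n+1}$ together with one of the $\sigma_i$ — the latter two types being produced by the comparison $\psi_i=\pi^*\psi_i+\delta_0^{\{i,n+1\}}$ and the relation $\psi_i\cdot\delta_0^{\{i,n+1\}}=0$, so that $\pi^*(\psi_i^{d})=\psi_i^{d}-\delta_0^{\{i,n+1\}}(\pi^*\psi_i)^{d-1}$. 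After this reduction, \eqref{eq:pullback of B-class} becomes an identity between two explicit signed sums of basic classes, and the self-intersection contributions can be tracked exactly as in the computation preceding Remark~\ref{Baltrem}.

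The second step is the matching, organized according to the position of the leg $\sigma_{n+1}$ in a tree $(T,q)\in\Omega^{B,g}_{d_1,\ldots,d_n,0}$. The crucial simplification is that $\sigma_{n+1}$ carries weight $0$, so it never enters the admissibility inequality \eqref{admissibility condition} nor affects $\deg(T)$; I therefore expect the following dichotomy. If $\sigma_{n+1}$ lies on a top-level vertex which, after removing $\sigma_{n+1}$, is still strongly stable (it carries at least one further marking or half-edge of $H^{em}_+(T)$), then forgetting $\sigma_{n+1}$ returns an element of $\Omega^{B,g}_{d_1,\ldots,d_n}$ of the \emph{same} degree, and the contribution of all such trees assembles into $\pi^*B^g_{d_1,\ldots,d_n}$ (its part in which $\sigma_{n+1}$ is attached to a vertex). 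If instead $\sigma_{n+1}$ lies on a genus-$0$ top-level vertex carrying $\sigma_{n+1}$ as its only marking — such a vertex is necessarily weakly stable, hence already contracted in $\st(T)$ — then, under the pushforward $\ee_*$ defining $[T,q]$, the marking $\sigma_{n+1}$ is absorbed onto its nearest strongly stable ancestor and inherits the psi power of the incident half-edge; tracking the extra legs on that rational vertex, one recovers precisely the boundary terms $\delta_0^{\{i,n+1\}}\pi^*B^g_{d_1,\ldots,d_i-1,\ldots,d_n}$, the drop $d_i\mapsto d_i-1$ being the relation $\psi_i\cdot\delta_0^{\{i,n+1\}}=0$ in disguise. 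One then has to verify that the degrees of the reduced complete trees coincide with $\deg(T)$ in both cases so that the signs $(-1)^{\deg(T)-1}$ match; that the stability and admissibility conditions of Section~\ref{section:Bclasses} are in bijection under this surgery; and that the multiplicities $\frac{|F[v]|!}{\prod(q(h)-p(h))!}$ of Remark~\ref{Baltrem} combine with the binomial coefficients in the pullback formula to produce equal coefficients — for $q(\sigma_{n+1})=0$ these should all collapse to $1$, but this is where most of the computation lies.

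The main obstacle will be this matching, and within it the treatment of the contributions to $\pi^*B^g_{d_1,\ldots,d_n}$ in which $\sigma_{n+1}$ is inserted in the interior of an edge, i.e.\ a genus-$0$ three-valent vertex carrying $\sigma_{n+1}$ that is not of top level. No such basic class can equal $\xi_{\st(T)*}(\cdots)$ for $(T,q)\in\Omega^{B,g}_{d_1,\ldots,d_n,0}$, since in an admissible complete tree every marking is forced onto a top-level vertex; hence these contributions must cancel out, either among themselves through the alternating sign $(-1)^{\deg(T)-1}$ and the level-by-level telescoping of complete trees (in the spirit of the recursion for $B^{g,(j,k)}$ used in Section~\ref{section:DR/DZ}), or against the boundary of the correction terms via $\psi_i\cdot\delta_0^{\{i,n+1\}}=0$. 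Proving this cancellation, together with the precise handling of weakly stable vertices under $\ee_*$, is the technical heart of the argument; once it is in place, \eqref{eq:pullback of B-class} follows by comparing the coefficient of each basic class on $\oM_{g,n+1}$, exactly as the $A$-class identity \eqref{eq:pullback of A-class} is obtained in Proposition~\ref{proposition:pullback of A-class}.
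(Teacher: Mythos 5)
Your overall strategy (expand both sides into basic classes via the vertex-wise pullback of $\psi$-monomials and match the resulting trees) is indeed the skeleton of the paper's argument, but the concrete matching you sketch breaks down at exactly the point you identify as the "technical heart", and it breaks for a structural reason, not a computational one. Your key claim — that a basic class in which $\sigma_{n+1}$ sits on a trivalent genus-$0$ vertex inserted in the interior of an edge cannot equal $\ee_*[T',q']$ for $(T',q')\in\Omega^{B,g}_{d_1,\ldots,d_n,0}$, "since every marking is forced onto a top-level vertex" — is false. The completeness condition constrains the complete tree $T'$ \emph{before} applying $\ee_*$, not the contracted tree $\st(T')$. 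Take an admissible complete tree in which the first strongly stable ancestor $v$ of $\sigma_{n+1}$ is a genus-$0$ vertex with exactly two half-edges of $H^{em}_+(T')$, one being the top of an edge continuing to lower levels and the other leading through a chain of weakly stable vertices down to $\sigma_{n+1}$, and $v$ is the only strongly stable vertex of its level. After $\ee_*$ the chain contracts onto $v$, which becomes precisely a trivalent genus-$0$ vertex carrying $\sigma_{n+1}$ in the interior of an edge of $\st(T')$. Consequently the interior-edge insertion terms of $\pi^*B^g_{d_1,\ldots,d_n}$ do \emph{not} cancel: in the correct accounting they are matched bijectively with such admissible trees and are absorbed into $B^g_{d_1,\ldots,d_n,0}$. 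A cancellation-based argument here would be proving a false identity.

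The second misstep is the assignment of the correction terms. Trees of $\Omega^{B,g}_{d_1,\ldots,d_n,0}$ in which $\sigma_{n+1}$ sits on a weakly stable top-level vertex push forward, after contraction of the chain, to classes where $\sigma_{n+1}$ lies on a vertex of $\st(T)$; these are part of the "attach $\sigma_{n+1}$ to a vertex" terms of $\pi^*B^g_{d_1,\ldots,d_n}$, not the $\delta_0^{\{i,n+1\}}$-terms. The boundary corrections $\delta_0^{\{i,n+1\}}\pi^*B^g_{d_1,\ldots,d_i-1,\ldots,d_n}$ arise instead from the split-off terms of the pullback in which $\sigma_{n+1}$ bubbles off together with a marking $\sigma_i$ attached to a top-level vertex: the corresponding complete trees acquire an extra bottom level at which the sum of the powers equals $\sum d_j-1$, so they violate the admissibility bound \eqref{admissibility condition} exactly when $\sum d_j\ge 2g$, whereas for $\sum d_j=2g-1$ they are admissible and hence already counted in $B^g_{d_1,\ldots,d_n,0}$. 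This admissibility threshold is the entire mechanism behind the dichotomy in \eqref{eq:pullback of B-class}; your proposed dichotomy never produces it, and as stated it would generate correction terms in the case $\sum d_i=2g-1$ as well, contradicting the statement you are trying to prove. To repair the argument you would need to redo the matching at the level of complete trees (one term for attaching $\sigma_{n+1}$ to each vertex via a chain of weakly stable vertices, one term for each half-edge of $H^{em}_+$ with positive power, with the sign accounted for by the change of degree), track which of these are admissible, and identify the non-admissible leftovers with the $\delta_0^{\{i,n+1\}}$-classes — which is in substance the paper's proof.
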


\begin{proof}
Let $(T,q) \in \Omega^{B,g}_{d_1,\dotsc,d_n}$ be an admissible and stable complete tree with a power function
$$q \colon H^{em}_+(T) \to \NN,$$
as in Definition \ref{B-class}.
We denote by $\deg(T)$ its number of levels.
In particular, there are extra legs at every vertex (except the root) that we will eventually forget when computing the $B$-class.

Choose a vertex $v \in V(T)$.
Let $C=(e^C_1,v^C_1,\dotsc,e^C_{\deg(T)-l(v)},v^C_{\deg(T)-l(v)},\sigma_{n+1})$ be a chain of weakly stable vertices with a new marking $\sigma_{n+1}$.
Precisely, the edge $e^C_1$ is attached to the vertex $v^C_1$, the edge $e^C_k$ links the vertex $v^C_{k-1}$ to $v^C_k$, and the leg $\sigma_{n+1}$ is attached to the vertex~$v^C_{\deg(T)-l(v)}$.
Moreover, every vertex is of genus $0$ and contains an extra leg.
We construct a tree $T_v$, obtained from $T$ by gluing the edge $e^C_1$ (and thus the chain $C$) to the vertex $v$.
We have $H^{em}_+(T) \subset H^{em}_+(T_v)$ and we extend the power function $q$ into a function
$q_v \colon H^{em}_+(T_v) \to \NN$
by taking $$q_v(h^C_k):=0 \quad \textrm{and} \quad q_v(\sigma_{n+1}):=0,$$
where $h^C_k$ is the half-edge in $H^{em}_+(T_v)$ contained in the edge $e^C_k$.
It is easy to see that we get
$$(T_v,q_v) \in \Omega^{B,g}_{d_1,\dotsc,d_n,0}.$$

Choose a half-edge $h \in H^{em}_+(T)$ attached to the vertex $v$ and such that $q(h)>0$.
We construct a tree $T_{(v,h)}$, obtained from $T$ by adding an extra level between the levels~$l(v)$ and~$l(v)+1$ of~$T$ as follows:
\begin{itemize}
	\item[-] denote by $h_0,\dotsc,h_m \in H^{em}_+(T)$ the half-edges of level $l(v)$, with $h_0:=h$, 
	\item[-] insert a pair $(e_k,v_k)$ between the half-edge $h_k$ and the vertex it is attached to, where $e_k=(h'_k,h''_k)$ is an edge and $v_k$ is a vertex of genus $0$,
	\item[-] glue the half-edge $h^C_1$ from the chain $C$ to the vertex $v_0$,
	\item[-] add $q(h)$ extra legs to the vertex $v_0$ and $q(h_k)+1$ extra leg to the vertex $v_k$, for $1 \leq k \leq m$.
\end{itemize}
Therefore, the number of levels of the tree $T_{(v,h)}$ is $\deg(T)+1$, the vertex $v_0 \in V(T_{(v,h)})$ is the only strongly stable vertex at its level, and we have a natural inclusion $H^{em}_+(T_v) \subset H^{em}_+(T_{(v,h)})$.
Then, we extend the power function $q_v$ into a function
$q_{(v,h)} \colon H^{em}_+(T_{(v,h)}) \to \NN$
by taking
$$q_{(v,h)}(h'_k) := \left\lbrace \begin{array}{ll}
q(h_k)-1, & \textrm{if $k=0$,} \\
q(h_k), & \textrm{if $k \neq 0$.} \\
\end{array}\right.$$
The complete tree $T_{(v,h)}$ is obviously stable, but not necessarily admissible.
We get
$$(T_{(v,h)},q_{(v,h)}) \in \Omega^{B,g}_{d_1,\dotsc,d_n,0} \iff l(v) \neq \deg(T) ~~ \textrm{or} ~~ \sum_{i=1}^n d_i = 2g-1.$$
Furthermore, observe that when $l(v)=\deg(T)$, then the half-edge $h$ corresponds to a marking~$\sigma_i$ and we get
$$\ee_* [T_{(v,h)},q_{(v,h)}] = {\sigma_i}_* \ee_* [T,q_i] = \delta_0^{\{i,n+1\}} \cdot \pi^* \ee_* [T,q_i] \in R^*(\overline{\cM}_{g,n+1}),$$
where the morphism $\sigma_i$ denotes here the section of the $i$-th marking in the universal curve $\mathcal{C}_{g,n} \simeq \overline{\cM}_{g,n+1}$, and where $q_i \colon H^{em}_+(T) \to \NN$ is defined by
$$q_{i}(h) := \left\lbrace \begin{array}{ll}
d_i-1, & \textrm{if $h=\sigma_i$,} \\
q(h), & \textrm{otherwise.} \\
\end{array}\right.$$

Conversely, let $(T',q') \in \Omega^{B,g}_{d_1,\dotsc,d_n,0}$ and denote by $v$ the first strongly stable ancestor of the marking $\sigma_{n+1}$.
In particular, the marking $\sigma_{n+1}$ is attached to the vertex $v$ via a chain $C$ of weakly stable vertices and we denote by $h^{(n+1)} \in H^{em}_+(T')$ the half-edge from $C$ attached to $v$.
We have two possibilities:
\begin{itemize}
	\item[$(1)$] $v$ is a vertex of genus $0$ with exactly two half-edges $h,h^{(n+1)} \in H^{em}_+(T')$ attached to it and $v$ is the only strongly stable vertex of level $l(v)$,
	\item[$(2)$] $v$ is another type of vertex.
\end{itemize}
Denote by $T$ the tree obtained from $T'$ by forgetting the chain $C$ containing the marking $\sigma_{n+1}$, and contracting the level $l(v)$ in case $(1)$.
In particular, the power function $q'$ restricts to a function $q$ and we get
$$(T,q) \in \Omega^{B,g}_{d_1,\dotsc,d_n} \quad \textrm{and} \quad (T',q') = \left\lbrace \begin{array}{ll}
(T_{(v,h)},q_{(v,h)}) & \textrm{in case $(1)$,} \\
(T_v,q_v) & \textrm{in case $(2)$}.
\end{array}\right.$$

Furthermore, from the formula
\begin{gather}\label{pullbackpsi}
\pi^* \tikz[baseline=-1mm]{\draw (A);\leg{A}{90};\leg{A}{-60};\leg{A}{-120};\gg{v}{A};\lab{A}{-90}{7mm}{q_1 \dotsm q_r};} =  \tikz[baseline=-1mm]{\draw (A);\leg{A}{90};\legm{A}{-30}{n+1};\leg{A}{-60};\leg{A}{-120};\gg{v}{A};\lab{A}{-90}{7mm}{q_1 \dotsm q_r};} - \sum_{\substack{1 \leq i \leq r \\ q_i>0}} \tikz[baseline=-1mm]{\draw (A)--(B)--(C);\leg{A}{90};\leg{A}{-60};\leg{A}{-120};\gg{v}{A};\lab{A}{-90}{7mm}{q_1 \dotsm \hat{q_i} \dotsm q_r};\legm{C}{-90}{i};\legm{C}{-30}{n+1};\gg{0}{C};\lab{A}{15}{5.5mm}{q_i-1};}
\end{gather}
expressing the pullback of $\psi$-classes via the map $\pi$, we obtain
$$e_*\pi^*([T,q]) = \sum_{v \in T} e_* \left( [T_v,q_v] - \sum_{\substack{h \in H^{em}_+(T) \\ h \to v, q(h)>0}} [T_{(v,h)},q_{(v,h)}] \right),$$
for every $(T,q) \in \Omega^{B,g}_{d_1,\dotsc,d_n}$ and where $h \to v$ means that the half-edge $h$ is incident to the vertex~$v$.
As a consequence, when $d_1+\dotsb+d_n \geq 2g$, we obtain
\begin{align*}
\pi^*(B^g_{d_1,\dotsc,d_n}) = & \sum_{(T,q) \in \Omega^{B,g}_{d_1,\dotsc,d_n}} (-1)^{\deg(T)-1} \pi^* \ee_* [T,q]= \\
 = & \sum_{(T,q) \in \Omega^{B,g}_{d_1,\dotsc,d_n}} (-1)^{\deg(T)-1} \ee_* \pi^* [T,q], \\
\end{align*}
where the second equality comes from the general fact that $\overline{\cM}_{g,n+2}$ is birational to the fiber product $\overline{\cM}_{g,n+1} \times_{\overline{\cM}_{g,n}} \overline{\cM}_{g,n+1}$, and then
\begin{align*}
\pi^*(B^g_{d_1,\dotsc,d_n}) = & \sum_{(T,q) \in \Omega^{B,g}_{d_1,\dotsc,d_n}} (-1)^{\deg(T)-1} \sum_{v \in T} \left( \ee_* [T_v,q_v] - \sum_{\substack{h \in H^{em}_+(T) \\ h \to v, q(h)>0}} \ee_* [T_{(v,h)},q_{(v,h)}] \right)= \\
 = & \sum_{(T',q') \in \Omega^{B,g}_{d_1,\dotsc,d_n,0}} (-1)^{\deg(T')-1} \ee_* [T',q'] \\
 & - \sum_{(T,q) \in \Omega^{B,g}_{d_1,\dotsc,d_n}} (-1)^{\deg(T)-1} \sum_{\substack{v \in T\\ l(v)=\deg(T)}} \sum_{\substack{h \in H^{em}_+(T) \\ h \to v, q(h)>0}} \ee_* [T_{(v,h)},q_{(v,h)}]= \\
 = & B^g_{d_1,\dotsc,d_n,0} - \sum_{(T,q) \in \Omega^{B,g}_{d_1,\dotsc,d_n}} (-1)^{\deg(T)-1} \sum_{\substack{1 \leq i \leq n \\ d_i > 0}} \delta_0^{\{i,n+1\}} \cdot \pi^*\ee_*[T,q_i]= \\
 = & B^g_{d_1,\dotsc,d_n,0} - \sum_{\substack{1 \leq i \leq n \\ d_i > 0}} \delta_0^{\{i,n+1\}} \cdot \pi^*B^g_{d_1,\dotsc, d_i-1, \dotsc,d_n}.
\end{align*}

When $d_1+\dotsb+d_n=2g-1$, then we have seen that $(T_{(v,h)},q_{(v,h)})$ is always admissible, so that the first three equalities are the same, but there is no second term in the last three equalities.
Hence we get
$$\pi^*B^g_{d_1,\dotsc,d_n} = B^g_{d_1,\dotsc,d_n,0}.$$
\end{proof}

\subsection{Reduction of the conjecture}\label{subsection:reduction}

\begin{proposition}
Conjecture~\ref{conjecture} is true if and only if it is true when all $d_i$'s are positive. Furthermore, Conjecture~\ref{conjecture} is true in genus $0$ and in genus $1$.
\end{proposition}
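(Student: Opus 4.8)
The plan is to establish the logical equivalence first, by induction on the number~$n$ of marked points, and then to deduce the genus~$0$ and genus~$1$ statements from it using only elementary dimension counts. \emph{Reduction to the case $d_i>0$.} One implication being trivial, I would assume that Conjecture~\ref{conjecture} holds whenever all $d_i$ are positive and prove it in general by induction on~$n$. The base cases carry essentially no content: for $n=1$ one has $g\ge 1$ and the only admissible value $d_1\ge 2g-1$ is already positive, while for $g=0$ the induction starts at $n=3$, where $\oM_{0,3}$ is a point, every class with $\sum d_i\ge 1$ vanishes, and the single remaining identity $A^0_{0,0,0}=B^0_{0,0,0}$ is a direct computation (it also follows from~\eqref{eq:DR/DZ and relations,eq1}--\eqref{eq:DR/DZ and relations,eq2} applied to the point CohFT, for which $F^\DR=F=F^\red$ in genus~$0$). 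For the inductive step, given a list with some $d_i=0$, by the $S_n$-symmetry of both families I may assume $d_n=0$. Then Propositions~\ref{proposition:pullback of A-class} and~\ref{proposition:pullback of B-class}, applied to the forgetful map $\pi\colon\oM_{g,n}\to\oM_{g,n-1}$, express $A^g_{d_1,\ldots,d_{n-1},0}$ as $\pi^*A^g_{d_1,\ldots,d_{n-1}}$ plus, when $\sum d_i\ge 2g$, the universal correction $\sum_{d_i\ge 1}\delta_0^{\{i,n\}}\pi^*A^g_{d_1,\ldots,d_i-1,\ldots,d_{n-1}}$, and the identical formula holds verbatim with $B$ in place of $A$. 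Each class appearing on the right has $n-1$ arguments whose sum is still $\ge 2g-1$, so by the inductive hypothesis its $A$- and $B$-versions agree term by term; hence $A^g_{d_1,\ldots,d_{n-1},0}=B^g_{d_1,\ldots,d_{n-1},0}$, which closes the induction.

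\emph{Genus~$0$.} By the previous step it suffices to treat $d_1,\ldots,d_n>0$, whence $\sum d_i\ge n>n-3=\dim_{\mbC}\oM_{0,n}$. Both $A^0_{d_1,\ldots,d_n}$ and $B^0_{d_1,\ldots,d_n}$ then lie in $R^{\sum d_i}(\oM_{0,n})\subseteq H^{2\sum d_i}(\oM_{0,n},\mbQ)=0$, so they coincide.

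\emph{Genus~$1$.} Again I may assume $d_1,\ldots,d_n>0$, so $\sum d_i\ge n=\dim_{\mbC}\oM_{1,n}$; if $\sum d_i>n$ both classes vanish for the same dimension reason. The only case left is $d_1=\cdots=d_n=1$, where $A^1_{1,\ldots,1}$ and $B^1_{1,\ldots,1}$ lie in the top tautological group $R^n(\oM_{1,n})=H^{2n}(\oM_{1,n},\mbQ)\cong\mbQ$. As this group is detected by integration over $\oM_{1,n}$, it is enough to prove $\int_{\oM_{1,n}}A^1_{1,\ldots,1}=\int_{\oM_{1,n}}B^1_{1,\ldots,1}$. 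By~\eqref{eq:DR/DZ and relations,eq1}--\eqref{eq:DR/DZ and relations,eq2} applied to the trivial (point) cohomological field theory ($N=1$, $c_{g,n}\equiv 1$), these two integrals are $\<\tau_1(e_1)\cdots\tau_1(e_1)\>^\DR_1$ and $\<\tau_1(e_1)\cdots\tau_1(e_1)\>^\red_1$. For the point CohFT every correlator has $\sum d_i=3g-3+n\ge 2g-1$, so the reduction procedure of Proposition~\ref{proposition:Fred} is trivial and $F^\red=F$; moreover the DR hierarchy of the point CohFT is the KdV hierarchy~\cite{Bur15}, whose topological tau-function is the Witten--Kontsevich potential, so $F^\DR=F$ as well. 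Hence both integrals equal $\<\tau_1(e_1)\cdots\tau_1(e_1)\>_1=(n-1)!/24$ and are in particular equal. (For $n=1$ one sees this directly, without integration: $A^1_1=\lambda_1=\psi_1=B^1_1$ by Lemma~\ref{lemma:A-class for n=1}, the one-point analysis of Section~\ref{subsection:1-point}, and the identity $\lambda_1=\psi_1$ on $\oM_{1,1}$.)

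\emph{Expected main difficulty.} The inductive reduction and the two dimension vanishings are routine once the string equations, Propositions~\ref{proposition:pullback of A-class} and~\ref{proposition:pullback of B-class}, are available. The one genuine input is the genus~$1$ numerical identity for the all-ones multi-index (together with the trivial $A^0_{0,0,0}=1$): I would obtain it, as above, by reducing to the already understood DR/DZ equivalence for the trivial CohFT, which seems far preferable to a head-on comparison of the two combinatorial sums defining $A^1_{1,\ldots,1}$ and $B^1_{1,\ldots,1}$.
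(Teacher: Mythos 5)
Your proposal is correct and follows essentially the same route as the paper: the reduction to positive $d_i$'s via Propositions~\ref{proposition:pullback of A-class} and~\ref{proposition:pullback of B-class} (which you merely spell out as an explicit induction on $n$), dimension-counting in genus $0$ and $1$, and the remaining case $A^1_{1,\ldots,1}=B^1_{1,\ldots,1}$ handled by comparing integrals, identified via~\eqref{eq:DR/DZ and relations,eq1}--\eqref{eq:DR/DZ and relations,eq2} with the DR and reduced correlators of the trivial CohFT. The only difference is cosmetic: the paper simply cites the verification of $F^{\DR}=F^{\red}$ for the trivial CohFT in~\cite{BDGR16a}, whereas you re-derive it ($F^{\red}=F$ by the dimension constraint and uniqueness, $F^{\DR}=F$ via KdV), which is a harmless elaboration of the same step.
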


\begin{proof}
The first statement follows immediately from Propositions~\ref{proposition:pullback of A-class} and~\ref{proposition:pullback of B-class}.

Assume $g=0$.
Since $\dim\oM_{0,n}=n-3$, the classes $A^0_{d_1,\ldots,d_n}$ and $B^0_{d_1,\ldots,d_n}$ are non-trivial only if $\sum d_i\le n-3$.
Therefore, we can always apply formulas~\eqref{eq:pullback of A-class} and~\eqref{eq:pullback of B-class} to them, unless $n=3$ and $d_1=d_2=d_3=0$, where we get $$A^0_{0,0,0}=B^0_{0,0,0}=1\in H^0(\oM_{0,3},\mbQ).$$

Assume $g=1$.
Since $\dim\oM_{1,n}=n$, the classes $A^1_{d_1,\ldots,d_n}$ and $B^1_{d_1,\ldots,d_n}$ are non-trivial only if $\sum d_i\le n$.
Therefore, we can always apply formulas~\eqref{eq:pullback of A-class} and~\eqref{eq:pullback of B-class} to them, unless $d_1=d_2=\ldots=d_n=1$.
In order to prove that $A^1_{1,1,\ldots,1}=B^1_{1,1,\ldots,1}$, it is sufficient to check that $\int_{\oM_{1,n}}A^1_{1,1,\ldots,1}=\int_{\oM_{1,n}}B^1_{1,1,\ldots,1}$.
Note that these two integrals are equal to $\<\tau_1(e_1)^n\>_1^\DR$ and~$\<\tau_1(e_1)^n\>^\red_1$, respectively, for the trivial cohomological field theory. The equality $F^\DR=F^\red$ for the trivial cohomological field theory was checked in~\cite{BDGR16a}. Therefore, Conjecture~\ref{conjecture} is true in genus~$1$.
\end{proof}

\subsection{Dilaton equation}\label{subsection:dilaton equation}

Here we prove that the classes $A^g_{d_1,\ldots,d_n,1}$ and~$B^g_{d_1,\ldots,d_n,1}$ behave in the same way upon the pushforward along the map forgetting the last marked point.

\begin{proposition}\label{proposition:dilatonA}
Denote by $\pi\colon\oM_{g,n+1}\to\oM_{g,n}$ the forgetful map that forgets the last marked point. Then we have
\begin{gather}\label{eq:dilaton of A-class}
\pi_* (A^g_{d_1,\ldots,d_n,1})=
\begin{cases}
(2g-2+n)A^g_{d_1,\ldots,d_n},&\text{if $\sum d_i>2g-2$},\\
0,&\text{if $\sum d_i=2g-2$}.
\end{cases}
\end{gather}
\end{proposition}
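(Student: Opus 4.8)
The plan is to treat the two cases of the statement separately: the boundary case $\sum d_i=2g-2$ follows quickly from the divisibility Lemma~\ref{lemma:divisibility}, while the main case $\sum d_i\ge 2g-1$ requires a combinatorial analysis of the defining sum over stable trees, in the spirit of the proof of Proposition~\ref{proposition:pullback of A-class}.

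\emph{Case $\sum d_i=2g-2$.} Here $A^g_{d_1,\dots,d_n,1}$ is the coefficient of $a_1^{d_1}\cdots a_n^{d_n}a_{n+1}$ in $A^{g,1}(a_1,\dots,a_{n+1})=\frac{1}{\sum_{i=1}^{n+1}a_i}\tA^{g,1}(a_1,\dots,a_{n+1})$, where $\tA^{g,1}(a_1,\dots,a_{n+1})=\lambda_g\DR_g(\widetilde{-\sum a_i},a_1,\dots,a_{n+1})$. Since $\lambda_g=\pi^*\lambda_g$ and $\pi$ commutes with the forgetful map implicit in the tilde notation, $\pi_*\tA^{g,1}(a_1,\dots,a_{n+1})=\lambda_g\DR_g(\widetilde{-\sum a_i},a_1,\dots,a_n,\widetilde{a_{n+1}})$. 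By Lemma~\ref{lemma:divisibility}, applied with the forgotten point taken to be the $(n+1)$-st, the restriction of this class to $\cM^\ct_{g,\bullet}$ is divisible by $a_{n+1}^2$; and since $\lambda_g$ vanishes off $\cM^\ct$ (equivalently $\xi_\Gamma^*\lambda_g=0$ for every stable graph $\Gamma$ with a non-separating edge), it annihilates any tautological class vanishing on compact type, so $\pi_*\tA^{g,1}$ is itself divisible by $a_{n+1}^2$ as a polynomial. Dividing by $\sum_{i=1}^{n+1}a_i=\sum_{i=1}^n a_i+a_{n+1}$, which is coprime to $a_{n+1}$, preserves divisibility by $a_{n+1}^2$, so the coefficient of $a_{n+1}^1$, and a fortiori of $a_1^{d_1}\cdots a_n^{d_n}a_{n+1}$, vanishes.

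\emph{Case $\sum d_i\ge 2g-1$.} Set $m:=\sum d_i-2g+2\ge 1$. The statement is equivalent to the polynomial identity $\Coef_{a_{n+1}}\big(\pi_*A^{g,m+1}(a_1,\dots,a_n,a_{n+1})\big)=(2g-2+n)A^{g,m}(a_1,\dots,a_n)$. Multiplying through by $\sum_{i=1}^{n+1}a_i$ and using the string equation already proved in Proposition~\ref{proposition:pullback of A-class} to evaluate the term $\Coef_{a_{n+1}^0}\pi_*A^{g,m+1}$, this reduces to an identity for $\tA^{g,m+1}$, hence to the sum $\sum_{\Gamma\in\ST^{m+1}_{g,n+2}}a(\Gamma)\lambda_g\pi_*\DR_\Gamma$. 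Commuting $\pi_*$ inside the sum (and past the forgetful map in the definition of $\tA$), I would organize it according to the vertex $v_0$ carrying the leg marked $n+1$. If $g(v_0)\ge 1$ and $v_0$ remains stable after forgetting that leg, then the factor $\lambda_{g(v_0)}\DR_{g(v_0)}$ at $v_0$ becomes divisible by $a_{n+1}^2$ after push-forward, again by Lemma~\ref{lemma:divisibility} and the vanishing of $\xi_\Gamma^*\lambda_g$, so such $\Gamma$ contribute nothing to $\Coef_{a_{n+1}}$; if $g(v_0)=0$ and $v_0$ remains stable, its vertex factor is $1\in R^0(\oM_{0,n(v_0)})$, whose push-forward along a forgetful map vanishes. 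Hence only the trees with $g(v_0)=0$ and $n(v_0)=3$ survive: forgetting the leg contracts $v_0$ and identifies its stratum with that of a tree $\Gamma'\in\ST^{m}_{g,n+1}$, obtained by one of the operations inverse to the insertions $\Phi_l,\Phi_e$ used in the divisibility lemma for the $A$-class, at the cost of propagating the multiplicity $a_{n+1}$ from $v_0$ along the unique path to the root.

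It then remains to collect the surviving terms. Taking $\Coef_{a_{n+1}}$ now amounts to differentiating the dependence on $a_{n+1}$ coming from the propagated multiplicity, from the factor $\prod_{h\in H^e_+(\Gamma)}a(h)$ inside $a(\Gamma)$, and from the multiplicity $a_0=-\sum_{i=1}^{n+1}a_i$ of the leg $0$; this last contribution, together with the subsequent push-forward forgetting leg $0$, is precisely what shifts the naive dilaton factor $2g-1+n$ down to $2g-2+n$. Reorganizing the resulting double sum over $\Gamma'\in\ST^m_{g,n+1}$ and over the slot in $\Gamma'$ at which the trivalent vertex $v_0$ was inserted, I expect the combinatorial coefficients to add up, for each fixed $\Gamma'$, to $(2g-2+n)\,a(\Gamma')$, the verification reducing to an elementary identity among rational functions in the numbers $r(v)$ and $\sum_{\tv\in\Desc[v]}r(\tv)$, of the same flavour as \eqref{eq:string for A-class,elementary identity}. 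The main obstacle is exactly this last bookkeeping: keeping track of the propagated multiplicity against the weights $a(\Gamma)$ and the special role of the leg $0$. Everything else is a matter of carefully mirroring the good/bad dichotomy and the weight manipulations in the proof of Proposition~\ref{proposition:pullback of A-class}.
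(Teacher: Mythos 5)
Your boundary case $\sum d_i=2g-2$ is fine and is essentially the paper's argument: there too the vanishing is an immediate consequence of Lemma~\ref{lemma:divisibility} (divisibility of the pushed-forward class by $a_{n+1}^2$, with $\lambda_g$ taking care of the non--compact-type locus).

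For the main case $\sum d_i\ge 2g-1$ there is a genuine gap, and it sits exactly where you flag "the main obstacle". In your sum $\sum_{\Gamma\in\ST^{m+1}_{g,n+2}}a(\Gamma)\lambda_g\,\pi_{(n+1)*}\pi_{0*}\DR_\Gamma$, the $a_{n+1}$-dependence of a surviving tree (genus $0$ trivalent vertex $v_0$ carrying the leg $n+1$) is not confined to the prefactor $a(\Gamma)$ and the contracted vertex: every edge on the path from the root to $v_0$ has multiplicity containing $a_{n+1}$, and the root multiplicity $a_0=-\sum_{i\le n+1}a_i$ does as well. Hence $\Coef_{a_{n+1}}$ also differentiates the DR polynomials at all vertices along that path, producing classes (derivatives of DR cycles in their multiplicities, and of $\lambda_g\pi_{0*}\DR$ at the root) that are not of the form $(\text{rational function in the }r(v))\cdot\lambda_g\DR_{\Gamma'}$. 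So the verification cannot reduce to "an elementary identity among rational functions in $r(v)$ and $\sum_{\tv\in\Desc[v]}r(\tv)$" of the flavour of \eqref{eq:string for A-class,elementary identity}; to recombine those derivative terms you need genuine geometric input about DR cycles. This is precisely what the paper supplies: it first proves Lemma~\ref{lemma:psi times hA} and Lemma~\ref{lemma:psi0 times hA} using the BSSZ formula \eqref{eq:DR times psi} for $\psi\cdot\DR_g$, obtaining Corollary~\ref{corollary:psi times Apol}, whose right-hand side (see \eqref{eq:psi times Apol,2}) isolates the whole $a_{n+1}$-dependence at a root of genus $\ge 1$, where Lemma~\ref{lemma:divisibility} kills it after $\pi_*$; only then do the string equation (Proposition~\ref{proposition:pullback of A-class}) and $\pi_*(\psi_{n+1}\pi^*\alpha)=(2g-2+n)\alpha$ finish the proof. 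In your sketch the analogue of Lemma~\ref{lemma:psi times hA} is neither stated nor proved, and without it (or an equivalent use of \eqref{eq:DR times psi}) the tree-by-tree bookkeeping you describe does not close.
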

Before proving the proposition let us formulate three auxiliary statements. Recall that for a stable tree $\Gamma\in\ST^m_{g,n+1}$ we denote by $v_1(\Gamma)$ the root of $\Gamma$ and by $l_i(\Gamma)$, $0\le i\le n$, the leg of $\Gamma$ marked by $i$.
\begin{lemma}\label{lemma:psi times hA}
Let $a_0,\ldots,a_n$, $n\ge 1$, be integers with vanishing sum and $m\ge 2$. Then we have
\begin{multline*}
\hA^{g,m}(a_0,\ldots,a_n)-a_1\psi_1\hA^{g,m-1}(a_0,\ldots,a_n)=\\
=\sum_{\substack{\Gamma\in\ST^m_{g,n+1}\\v(l_1(\Gamma))=v_1(\Gamma)}}\frac{2g-1+n}{r(v_1(\Gamma))}a(\Gamma)\lambda_g\DR_\Gamma(a_0,\ldots,a_n).
\end{multline*}
\end{lemma}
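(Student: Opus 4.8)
The plan is to expand the left-hand side over stable trees $\Gamma'\in\ST^{m-1}_{g,n+1}$, apply the formula \eqref{eq:DR times psi} for a $\psi$-class against a double ramification cycle inside each summand, and then reorganize the result as a sum over trees $\Gamma\in\ST^m_{g,n+1}$, whose coefficients I would match against the right-hand side. All the classes involved being polynomial in $a_0,\ldots,a_n$, I may assume $a_1\neq 0$ and conclude by polynomiality. Fix $\Gamma'$, set $v_*:=v(l_1(\Gamma'))$, and let $h_1\in H[v_*]$ be the half-edge carrying the leg $1$, so $a(h_1)=a_1$. Since $\xi_{\Gamma'}^*\psi_1=\psi_{h_1}$ for a leg (with no correction term), the projection formula rewrites $a_1\psi_1\cdot\lambda_g\DR_{\Gamma'}(a_0,\ldots,a_n)$ as $\lambda_g$ times $\xi_{\Gamma'*}$ of $\bigl(a(h_1)\psi_{h_1}\DR_{g(v_*)}(A_{H[v_*]})\bigr)\prod_{v\neq v_*}\DR_{g(v)}(A_{H[v]})$, and into the first factor I would substitute \eqref{eq:DR times psi}.

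The terms of \eqref{eq:DR times psi} with $p\geq 2$ drop out after multiplication by $\lambda_g$: glued into $\Gamma'$ at $v_*$, such a term represents a boundary class whose dual graph has first Betti number $p-1\geq 1$, and $\lambda_g$ restricts to zero on any boundary stratum with positive first Betti number (the Hodge bundle there has a trivial summand of positive rank, so its top Chern class vanishes). The surviving $p=1$ terms replace $v_*$ by two vertices joined by a single new edge, distributing the half-edges of $v_*$ between them, the two new half-edges carrying opposite multiplicities; unstable configurations do not contribute. Hence $a_1\psi_1\hA^{g,m-1}(a_0,\ldots,a_n)$ becomes a sum, over pairs consisting of a tree $\Gamma'$ and a splitting of $v(l_1(\Gamma'))$, of classes $\frac{\rho}{r(v_*)}\,a_I\cdot a(\Gamma')\,\lambda_g\DR_\Gamma(a_0,\ldots,a_n)$ with $\Gamma\in\ST^m_{g,n+1}$.

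Next I would regroup this sum by the output tree $\Gamma$. Writing $w:=v(l_1(\Gamma))$, a given $\Gamma$ arises exactly once from each edge $e$ of $\Gamma$ incident to $w$: contracting $e$ recovers $\Gamma'$ and the splitting, the labelling $I\leftrightarrow J$ (hence the sign of $\rho$ in \eqref{eq:DR times psi}) being forced by the constraint $a_I>0$, since the two half-edge multiplicities across $e$ are opposite. Let $w'$ denote the other endpoint of $e$, $\mu_e$ the multiplicity of $e$ at $w$, and $R(v):=\sum_{\tv\in\Desc[v]}r(\tv)$ in $\Gamma$. Using $r(v_*)=r(w)+r(w')$, the fact that contracting $e$ deletes exactly one factor, equal to $\pm\mu_e$, from $\prod_{h\in H^e_+(\Gamma)}a(h)$, and a short computation of how the factors $r(v)/R(v)$ change under contraction of $e$, one checks that $\frac{\rho}{r(v_*)}\,a_I\cdot a(\Gamma')$ equals $a(\Gamma)\,R(w)/r(w)$ when $w'$ is the mother of $w$ (the unique such edge, which is absent precisely when $w=v_1(\Gamma)$) and $-a(\Gamma)\,R(w')/r(w)$ when $w'$ is a child of $w$. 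Summing over all $e$ incident to $w$ and using the telescoping identity $R(w)=r(w)+\sum_{w' \text{ child of } w}R(w')$, the coefficient of $\lambda_g\DR_\Gamma(a_0,\ldots,a_n)$ in $a_1\psi_1\hA^{g,m-1}(a_0,\ldots,a_n)$ comes out to be $a(\Gamma)$ if $w\neq v_1(\Gamma)$, and $a(\Gamma)\bigl(1-\frac{R(v_1(\Gamma))}{r(v_1(\Gamma))}\bigr)$ if $w=v_1(\Gamma)$. Since $R(v_1(\Gamma))=\sum_{v\in V(\Gamma)}r(v)=2g-1+n$ for every $\Gamma\in\ST^m_{g,n+1}$, subtracting these coefficients from the coefficient $a(\Gamma)$ of $\lambda_g\DR_\Gamma$ in $\hA^{g,m}(a_0,\ldots,a_n)$ yields $0$ in the first case and $\frac{2g-1+n}{r(v_1(\Gamma))}\,a(\Gamma)$ in the second, which is exactly the right-hand side.

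I expect the combinatorial bookkeeping of the third step to be the main obstacle: one must track edge orientations relative to the root (which half-edge of each edge lies in $H^e_+(\Gamma)$) and make sure that at every step the sign of $\rho$ in \eqref{eq:DR times psi} is the one compatible with $a_I>0$. Once these conventions are fixed the product weights telescope cleanly and the elementary identity for the $r(v)$'s closes the argument; the projection-formula step, the vanishing of $\lambda_g$ on strata of positive first Betti number, and the reduction to $a_1\neq 0$ are routine.
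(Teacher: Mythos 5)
Your argument is correct and follows essentially the same route as the paper's proof: expand $a_1\psi_1\hA^{g,m-1}$ via \eqref{eq:DR times psi} (only the $p=1$ terms surviving against $\lambda_g$), regroup by the output tree $\Gamma$, identify the contributing trees as the contractions $\Gamma/e$ of edges $e$ incident to $v(l_1(\Gamma))$, and telescope using $R(w)=r(w)+\sum_{w'}R(w')$ together with $R(v_1(\Gamma))=2g-1+n$. Your stated contraction coefficients $a(\Gamma)R(w)/r(w)$ (mother edge) and $-a(\Gamma)R(w')/r(w)$ (child edge) agree exactly with the paper's $\ta\frac{r'}{R'}\prod_k\frac{r''_k}{R''_k}$ and $-\ta\frac{r'R''_j}{R'R}\prod_k\frac{r''_k}{R''_k}$, so the bookkeeping you defer does close as claimed.
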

\begin{proof}
Using formula~\eqref{eq:DR times psi}, for an arbitrary stable tree $\Gamma\in\ST^{m-1}_{g,n+1}$ we can write a decomposition
$$
a_1\psi_1\cdot a(\Gamma)\lambda_g\DR_\Gamma(a_0,\ldots,a_n)=\sum_{\tGamma\in\ST^m_{g,n+1}}a(\Gamma,\tGamma)\lambda_g\DR_{\tGamma}(a_0,\ldots,a_n),
$$
where $a(\Gamma,\tGamma)$ are certain coefficients. Let $\Gamma\in\ST^m_{g,n+1}$. The statement of the lemma is equivalent to the following equation:
\begin{gather}\label{eq:dilaton for A-class,identity}
a(\Gamma)-\sum_{\tGamma\in\ST^{m-1}_{g,n+1}}a(\tGamma,\Gamma)=
\begin{cases}
\frac{2g-1+n}{r(v_1(\Gamma))},&\text{if $l_1(\Gamma)$ is incident to $v_1(\Gamma)$},\\
0,&\text{otherwise}.
\end{cases}
\end{gather}
Let $v\in V(\Gamma)$ be the vertex incident to $l_1=l_1(\Gamma)$. Denote by $v''_1,\ldots,v''_l$, $l\ge 0$, the direct descendants of~$v$. Let $L':=L[v]\backslash\{l_1\}$, $r:=r(v)$, $r''_i:=r(v''_i)$, $R:=\sum_{\tv\in\Desc[v]}r(\tv)$ and $R''_i:=\sum_{\tv\in\Desc[v''_i]}r(\tv)$.

Suppose that $v\ne v_1(\Gamma)$. Denote by $v'\in V(\Gamma)$ the mother of $v$ and let $r':=r(v')$ and $R':=\sum_{\tv\in\Desc[v']}r(\tv)$. We draw the stable tree $\Gamma$ in Fig.~\ref{fig:dilaton for A-class,1}. 
\begin{figure}[t]
\begin{tikzpicture}[scale=1.1]
\legmm{9.5,0}{-90}{1.1}{l_1};
\legg{8,0}{-110}{1.1};\legg{8,0}{-70}{1.1};\legg{11,1}{20}{1.1};\legg{11,1}{-20}{1.1};\legg{11,-1}{20}{1.1};\legg{11,-1}{-20}{1.1};
\legg{9.5,0}{110}{1};\legg{9.5,0}{70}{1};
\draw (8,0) -- (9.5,0);\draw (9.5,0) -- (11,1);\draw (9.5,0) -- (11,-1);
\ggg{r'}{8,0};\ggg{r}{9.5,0};\ggg{r''_1}{11,1};\ggg{r''_l}{11,-1};
\coordinate [label=center:$\vdots$] () at (11,0.1);
\draw (8,-1.2) ellipse (0.7 and 0.5);
\coordinate [label=center:$A'$] () at (8,-1.2);
\draw (12.2,1) ellipse (0.5 and 0.7);
\coordinate [label=center:$A''_1$] () at (12.2,1);
\draw (12.2,-1) ellipse (0.5 and 0.7);
\coordinate [label=center:$A''_l$] () at (12.2,-1);
\coordinate [label=center:$\ldots$] () at (9.5,0.9);
\coordinate [label=center:$\overbrace{\phantom{aaaaa}}^{L'}$] () at (9.5,1.15);
\end{tikzpicture}
\caption{Stable tree $\Gamma$}
\label{fig:dilaton for A-class,1}
\end{figure}
Similarly to the figures in the proof of Proposition~\ref{proposition:pullback of A-class}, we decorate a vertex $w$ of~$\Gamma$ by number $r(w)$. It is not hard to see that there are exactly $l+1$ stable trees $\tGamma\in\ST^{m-1}_{g,n+1}$ such that $a(\tGamma,\Gamma)\ne 0$. The first one is shown on the left-hand side of Fig.~\ref{fig:dilaton for A-class,2},
\begin{figure}[t]
\begin{tikzpicture}[scale=1.1]
\legmm{9.5,0}{180}{1.1}{l_1};
\legg{9.5,0}{-110}{1.1};\legg{9.5,0}{-70}{1.1};\legg{11,1}{20}{1.1};\legg{11,1}{-20}{1.1};\legg{11,-1}{20}{1.1};\legg{11,-1}{-20}{1.1};
\legg{9.5,0}{110}{1};\legg{9.5,0}{70}{1};
\draw (9.5,0) -- (11,1);\draw (9.5,0) -- (11,-1);
\ggg{\substack{r'+r}}{9.5,0};\ggg{r''_1}{11,1};\ggg{r''_l}{11,-1};
\coordinate [label=center:$\vdots$] () at (11,0.1);
\draw (9.5,-1.2) ellipse (0.7 and 0.5);
\coordinate [label=center:$A'$] () at (9.5,-1.2);
\draw (12.2,1) ellipse (0.5 and 0.7);
\coordinate [label=center:$A''_1$] () at (12.2,1);
\draw (12.2,-1) ellipse (0.5 and 0.7);
\coordinate [label=center:$A''_l$] () at (12.2,-1);
\coordinate [label=center:$\ldots$] () at (9.5,0.9);
\coordinate [label=center:$\overbrace{\phantom{aaaaa}}^{L'}$] () at (9.5,1.15);

\begin{scope}[shift={(7,0)}]
\legmm{9.5,0}{-90}{1.1}{l_1};
\legg{8,0}{-110}{1.1};\legg{8,0}{-70}{1.1};\legg{11,1.7}{20}{1.1};\legg{11,1.7}{-20}{1.1};\legg{11,-1.7}{20}{1.1};
\legg{11,-1.7}{-20}{1.1};\legg{9.5,0}{110}{1};\legg{9.5,0}{70}{1};\legg{9.5,0}{20}{1.1};\legg{9.5,0}{-20}{1.1};
\draw (8,0) -- (9.5,0);\draw (9.5,0) -- (11,1.7);\draw (9.5,0) -- (11,-1.7);
\ggg{r'}{8,0};\ggg{\substack{r+r''_j}}{9.5,0};
\ggg{r''_1}{11,1.7};\ggg{r''_l}{11,-1.7};
\coordinate [label=center:$\vdots$] () at (12.2,0.8);
\coordinate [label=center:$\vdots$] () at (12.2,-0.6);
\draw (8,-1.2) ellipse (0.7 and 0.5);
\coordinate [label=center:$A'$] () at (8,-1.2);
\draw (12.2,1.7) ellipse (0.5 and 0.7);
\coordinate [label=center:$A''_1$] () at (12.2,1.7);
\draw (12.2,-1.7) ellipse (0.5 and 0.7);
\coordinate [label=center:$A''_l$] () at (12.2,-1.7);
\coordinate [label=center:$\ldots$] () at (9.5,0.9);
\draw (10.7,0) ellipse (0.5 and 0.7);
\coordinate [label=center:$A''_j$] () at (10.7,0);
\coordinate [label=center:$\overbrace{\phantom{aaaaa}}^{L'}$] () at (9.5,1.15);
\end{scope}
\end{tikzpicture}
\caption{Stable trees $\tGamma$ such that $a(\tGamma,\Gamma)\ne 0$}
\label{fig:dilaton for A-class,2}
\end{figure}
and the other $l$ trees are on the right-hand side, where $1\le j\le l$. Let 
$$
\ta:=a(\Gamma)\left/\left(\frac{r'}{R'}\frac{r}{R}\prod_{j=1}^l\frac{r''_j}{R''_j}\right)\right..
$$
The coefficient $a(\tGamma,\Gamma)$ for the left tree in Fig.~\ref{fig:dilaton for A-class,2} is equal to $\ta\frac{r'}{R'}\prod_{k=1}^l\frac{r''_k}{R''_k}$ and for the right tree in Fig.~\ref{fig:dilaton for A-class,2} it is equal to $-\ta\frac{r'R''_j}{R'R}\prod_{k=1}^l\frac{r''_k}{R''_k}$. We compute
$$
\sum_{\tGamma\in\ST^{m-1}_{g,n+1}}a(\tGamma,\Gamma)=\ta\left(\frac{r'}{R'}-\sum_{j=1}^l\frac{r'R''_j}{R'R}\right)\prod_{k=1}^l\frac{r''_k}{R''_k}=\ta\frac{r'r}{R'R}\prod_{k=1}^l\frac{r''_k}{R''_k}=a(\Gamma).
$$ 
Therefore, formula~\eqref{eq:dilaton for A-class,identity} is proved in the case when $l_1$ is not incident to $v_1(\Gamma)$.

Suppose that $v=v_1(\Gamma)$. The tree $\Gamma$ and stable trees $\tGamma$ such that $a(\tGamma,\Gamma)\ne 0$ are shown in Fig.~\ref{fig:dilaton for A-class,3}.
\begin{figure}[t]
\begin{tikzpicture}[scale=1.1]
\legmm{9.5,0}{-90}{1.1}{l_1};
\legg{11,1}{20}{1.1};\legg{11,1}{-20}{1.1};\legg{11,-1}{20}{1.1};\legg{11,-1}{-20}{1.1};
\legg{9.5,0}{110}{1};\legg{9.5,0}{70}{1};
\draw (9.5,0) -- (11,1);\draw (9.5,0) -- (11,-1);
\ggg{r}{9.5,0};\ggg{r''_1}{11,1};\ggg{r''_l}{11,-1};
\coordinate [label=center:$\vdots$] () at (11,0.1);
\draw (12.2,1) ellipse (0.5 and 0.7);
\coordinate [label=center:$A''_1$] () at (12.2,1);
\draw (12.2,-1) ellipse (0.5 and 0.7);
\coordinate [label=center:$A''_l$] () at (12.2,-1);
\coordinate [label=center:$\ldots$] () at (9.5,0.9);
\coordinate [label=center:$\overbrace{\phantom{aaaaa}}^{L'}$] () at (9.5,1.15);
\coordinate [label=center:$\Gamma$] () at (10.7,-2.3);

\begin{scope}[shift={(6.5,0)}]
\legmm{9.5,0}{-90}{1.1}{l_1};
\legg{11,1.7}{20}{1.1};\legg{11,1.7}{-20}{1.1};\legg{11,-1.7}{20}{1.1};
\legg{11,-1.7}{-20}{1.1};\legg{9.5,0}{110}{1};\legg{9.5,0}{70}{1};\legg{9.5,0}{20}{1.1};\legg{9.5,0}{-20}{1.1};
\draw (9.5,0) -- (11,1.7);\draw (9.5,0) -- (11,-1.7);
\ggg{\substack{r+r''_j}}{9.5,0};
\ggg{r''_1}{11,1.7};\ggg{r''_l}{11,-1.7};
\coordinate [label=center:$\vdots$] () at (12.2,0.8);
\coordinate [label=center:$\vdots$] () at (12.2,-0.6);
\draw (12.2,1.7) ellipse (0.5 and 0.7);
\coordinate [label=center:$A''_1$] () at (12.2,1.7);
\draw (12.2,-1.7) ellipse (0.5 and 0.7);
\coordinate [label=center:$A''_l$] () at (12.2,-1.7);
\coordinate [label=center:$\ldots$] () at (9.5,0.9);
\draw (10.7,0) ellipse (0.5 and 0.7);
\coordinate [label=center:$A''_j$] () at (10.7,0);
\coordinate [label=center:$\overbrace{\phantom{aaaaa}}^{L'}$] () at (9.5,1.15);
\coordinate [label=center:$\widetilde{\Gamma}$] () at (10.7,-2.9);

\end{scope}

\end{tikzpicture}
\caption{Stable tree $\Gamma$ and stable trees $\tGamma$ such that $a(\tGamma,\Gamma)\ne 0$}
\label{fig:dilaton for A-class,3}
\end{figure}
Let 
$$
\ta:=a(\Gamma)\left/\left(\frac{r}{R}\prod_{j=1}^l\frac{r''_j}{R''_j}\right)\right..
$$
The coefficient $a(\tGamma,\Gamma)$ for the right tree in Fig.~\ref{fig:dilaton for A-class,3} is equal to $-\ta\frac{R''_j}{R}\prod_{k=1}^l\frac{r''_k}{R''_k}$. So we compute
$$
a(\Gamma)-\sum_{\tGamma\in\ST^{m-1}_{g,n+1}}a(\tGamma,\Gamma)=\ta\left(\frac{r}{R}+\sum_{j=1}^l\frac{R''_j}{R}\right)\prod_{k=1}^l\frac{r''_k}{R''_k}=\ta\prod_{k=1}^l\frac{r''_k}{R''_k}=\frac{R}{r}a(\Gamma).
$$ 
The lemma is proved.
\end{proof}

\begin{lemma}\label{lemma:psi0 times hA}
Let $a_0,\ldots,a_n$, $n\ge 1$, be integers with vanishing sum and $m\ge 2$. Then we have
\begin{multline*}
\hA^{g,m}(a_0,\ldots,a_n)-a_0\psi_0\hA^{g,m-1}(a_0,\ldots,a_n)=\\
=\sum_{\Gamma\in\ST^m_{g,n+1}}\frac{2g-1+n}{r(v_1(\Gamma))}a(\Gamma)\lambda_g\DR_\Gamma(a_0,\ldots,a_n).
\end{multline*}
\end{lemma}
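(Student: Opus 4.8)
The plan is to follow the proof of Lemma~\ref{lemma:psi times hA} essentially verbatim; the key simplification is that the leg $l_0(\Gamma)$ is, by the very definition of the root $v_1(\Gamma):=v(l_0(\Gamma))$, always incident to $v_1(\Gamma)$, so the case distinction made there collapses to the single subcase $v=v_1(\Gamma)$, which produces exactly the factor $\frac{2g-1+n}{r(v_1(\Gamma))}$.

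First I would reduce to the situation $a_0\ne 0$. Both sides of the asserted identity are polynomial in $a_1,\dots,a_n$ (with $a_0=-\sum_{i=1}^n a_i$): each $a(\Gamma)\lambda_g\DR_\Gamma(a_0,\dots,a_n)$ is a rational constant times a monomial in the $a(h)$, which are linear in the $a_i$, times the polynomial class $\lambda_g\DR_\Gamma$. Since $\{a_0\ne 0\}$ is Zariski dense in the parameter space, it suffices to prove the identity there, which lets us apply formula~\eqref{eq:DR times psi} with $s$ the marking $0$.

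Next, for each $\Gamma\in\ST^{m-1}_{g,n+1}$ I would expand $a_0\psi_0\cdot a(\Gamma)\lambda_g\DR_\Gamma(a_0,\dots,a_n)$ by applying~\eqref{eq:DR times psi} at the vertex $v_1(\Gamma)$ carrying $l_0$. After multiplying by $\lambda_g$, all terms supported on strata containing a loop (the $p\ge 2$ contributions) vanish, since $\lambda_g$ restricts to zero away from compact type, so only the $p=1$ terms survive and one obtains a decomposition $a_0\psi_0\cdot a(\Gamma)\lambda_g\DR_\Gamma=\sum_{\tGamma\in\ST^m_{g,n+1}}a(\Gamma,\tGamma)\lambda_g\DR_{\tGamma}$ with explicit coefficients $a(\Gamma,\tGamma)$. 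Summing over $\Gamma\in\ST^{m-1}_{g,n+1}$ and collecting by the output tree reduces the lemma to the combinatorial identity
$$
a(\Gamma)-\sum_{\tGamma\in\ST^{m-1}_{g,n+1}}a(\tGamma,\Gamma)=\frac{2g-1+n}{r(v_1(\Gamma))}\,a(\Gamma),\qquad \Gamma\in\ST^m_{g,n+1}.
$$
Since $l_0$ is incident to $v_1(\Gamma)$ for every $\Gamma$, the trees $\tGamma$ with $a(\tGamma,\Gamma)\ne 0$ are precisely those obtained by merging $v_1(\Gamma)$ with one of its direct descendants $v''_j$ — exactly the configuration and set of coefficients computed in the case $v=v_1(\Gamma)$ at the end of the proof of Lemma~\ref{lemma:psi times hA}. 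The identity then follows from the same elementary computation, using $\sum_{v\in V(\Gamma)}r(v)=2g-1+n$ together with $r(v_1(\Gamma))+\sum_j R''_j=\sum_{v\in V(\Gamma)}r(v)$, where $R''_j:=\sum_{\tv\in\Desc[v''_j]}r(\tv)$ ranges over the direct descendants $v''_j$ of $v_1(\Gamma)$.

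The hard part is not a genuine obstacle but careful bookkeeping: one must verify that expanding via~\eqref{eq:DR times psi} at the root produces no trees $\tGamma$ beyond those already catalogued in the $v=v_1(\Gamma)$ case of Lemma~\ref{lemma:psi times hA}, and that the coefficients $a(\tGamma,\Gamma)$ agree with the ones found there. Granting that, no new ideas are required.
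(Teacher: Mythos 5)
Your proposal is correct and coincides with the paper's own argument: the paper proves Lemma~\ref{lemma:psi0 times hA} simply by declaring it ``analogous to the proof of the previous lemma'', i.e.\ one reruns the proof of Lemma~\ref{lemma:psi times hA} with the marking $0$ in place of $1$, and since $l_0(\Gamma)$ is by definition incident to the root, only the subcase $v=v_1(\Gamma)$ of that proof occurs, yielding the factor $\frac{2g-1+n}{r(v_1(\Gamma))}$ for every tree. Your additional remarks (reduction to $a_0\ne 0$ by polynomiality, vanishing of the $p\ge 2$ terms against $\lambda_g$, and the catalogue of contributing trees $\tGamma$ as root-edge contractions with the same coefficients $a(\tGamma,\Gamma)$) are exactly the bookkeeping implicit in the paper's proof.
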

\begin{proof}
The proof is analogous to the proof of the previous lemma.
\end{proof}

\begin{corollary}\label{corollary:psi times Apol}
Let $a_1,\ldots,a_n$, $n\ge 1$, be arbitrary integers and $m\ge 2$. Denote by $\pi\colon\oM_{g,n+1}\to\oM_{g,n}$ the forgetful map that forgets the first marked point. Then we have 
\begin{align}
&A^{g,m}(a_1,\ldots,a_n)-a_1\psi_1 A^{g,m-1}(a_1,\ldots,a_n)=\label{eq:psi times Apol}\\
=&\sum_{\substack{\Gamma\in\ST^m_{g,n+1}\\v(l_1(\Gamma))=v_1(\Gamma)\\g(v_1(\Gamma))\ge 1}}\frac{2g-1+n}{r(v_1(\Gamma))}\frac{a(\Gamma)}{\sum a_i}\lambda_g\pi_*\DR_\Gamma\left(-\sum a_i,a_1,\ldots,a_n\right)\notag\\
&+\hA^{g,m-1}\left(-\sum_{i=2}^n a_i,a_2,\ldots,a_n\right).\notag
\end{align}
\end{corollary}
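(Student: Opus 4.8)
The plan is to deduce Corollary~\ref{corollary:psi times Apol} from Lemma~\ref{lemma:psi times hA} by pushing that identity forward along the forgetful map $\pi\colon\oM_{g,n+1}\to\oM_{g,n}$ (which forgets the leg marked $0$) and then dividing by $\sum a_i$. Throughout write $a_0:=-\sum_{i=1}^n a_i$.

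\textbf{Step 1 (formal manipulations).} First I would record that, since the Hodge bundle pulls back, $\pi^*\lambda_g=\lambda_g$, so by the projection formula $\tA^{g,k}(a_1,\ldots,a_n)=\pi_*\hA^{g,k}(a_0,a_1,\ldots,a_n)$ and hence
$$
A^{g,k}(a_1,\ldots,a_n)=\frac{1}{\sum a_i}\,\pi_*\hA^{g,k}(a_0,a_1,\ldots,a_n),\qquad k=m-1,m.
$$
Next I would use the comparison of psi classes under $\pi$, namely $\psi_1=\pi^*\psi_1+\delta_0^{\{0,1\}}$ on $\oM_{g,n+1}$, together with the projection formula, to write
$$
a_1\psi_1 A^{g,m-1}(a_1,\ldots,a_n)=\frac{a_1}{\sum a_i}\pi_*\bigl(\psi_1\hA^{g,m-1}(a_0,\ldots,a_n)\bigr)-\frac{a_1}{\sum a_i}\pi_*\bigl(\delta_0^{\{0,1\}}\hA^{g,m-1}(a_0,\ldots,a_n)\bigr).
$$
Subtracting and invoking Lemma~\ref{lemma:psi times hA} then reduces \eqref{eq:psi times Apol} to the single identity
\begin{multline}\label{eq:residual sketch}
\frac{1}{\sum a_i}\pi_*\!\Biggl(\sum_{\substack{\Gamma\in\ST^m_{g,n+1}\\v(l_1(\Gamma))=v_1(\Gamma)\\g(v_1(\Gamma))=0}}\frac{2g-1+n}{r(v_1(\Gamma))}a(\Gamma)\lambda_g\DR_\Gamma(a_0,\ldots,a_n)\Biggr)\\
{}+\frac{a_1}{\sum a_i}\pi_*\bigl(\delta_0^{\{0,1\}}\hA^{g,m-1}(a_0,\ldots,a_n)\bigr)=\hA^{g,m-1}\!\bigl(-\textstyle\sum_{i=2}^n a_i,a_2,\ldots,a_n\bigr),
\end{multline}
because the contribution of the graphs with $g(v_1(\Gamma))\ge1$ on the right-hand side of Lemma~\ref{lemma:psi times hA} is, after dividing by $\sum a_i$ and applying $\pi_*$ (which never destabilizes a positive-genus root), precisely the first sum in \eqref{eq:psi times Apol}.

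\textbf{Step 2 (the residual identity).} For a graph $\Gamma$ occurring in the first sum of \eqref{eq:residual sketch} the root $v_1(\Gamma)$ has genus $0$, carries the legs $0$ and $1$, and $r(v_1(\Gamma))=n(v_1(\Gamma))-2\ge1$; since $\DR_0$ is the fundamental class the root contributes trivially to $\DR_\Gamma$, and $\pi$ destabilizes the root exactly when $n(v_1(\Gamma))=3$, in which case leg $1$ slides onto the unique neighbouring vertex with new multiplicity $-\sum_{i=2}^n a_i$. On the other side I would expand $\delta_0^{\{0,1\}}\hA^{g,m-1}$ with the boundary-restriction formula $\delta_h^I\DR=\DR\boxtimes_1\DR$ from Section~\ref{subsection:formulas with DR cycles}: only graphs whose root carries both legs $0$ and $1$ contribute, and for those the intersection glues a $3$-pointed genus-$0$ bubble carrying $\{0,1\}$ onto the root — an \emph{excess} intersection, producing a $-\psi$ factor, exactly when the root is itself such a bubble — after which $\pi_*$ contracts that bubble again. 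I would then match the two sides of \eqref{eq:residual sketch} termwise using the explicit coefficient $a(\Gamma)=\prod_{h\in H^e_+(\Gamma)}a(h)\cdot\prod_{v}\frac{r(v)}{\sum_{\tv\in\Desc[v]}r(\tv)}$: the half-edge products combine to reconstitute the multiplicity $-\sum_{i=2}^n a_i$ of the reattached leg, and the ratios of $r$-numbers telescope through an elementary partial-fractions identity of the same flavour as \eqref{eq:string for A-class,elementary identity} used in the proof of Proposition~\ref{proposition:pullback of A-class}.

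\textbf{Main obstacle.} Step~1 is purely formal. The delicate part is the bookkeeping in Step~2: controlling the excess-intersection $-\psi$ contributions coming from those $\DR_\Gamma$ whose root is a $3$-pointed genus-$0$ bubble (which under $\pi_*$ must be reprocessed via formula~\eqref{eq:DR times psi}), and checking that after this reprocessing all contributions recombine — via the $r$-number identity — into the single graph sum $\hA^{g,m-1}(-\sum_{i=2}^n a_i,a_2,\ldots,a_n)$ with exactly the coefficients $a(\Gamma)$ prescribed by the definition of $\hA$.
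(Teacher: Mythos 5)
Your Step 1 is exactly the paper's reduction: writing $A^{g,k}(a_1,\ldots,a_n)=\frac{1}{\sum a_i}\pi_*\hA^{g,k}(a_0,a_1,\ldots,a_n)$, comparing $\psi_1=\pi^*\psi_1+\delta_0^{\{0,1\}}$, and feeding in Lemma~\ref{lemma:psi times hA}, with the graphs having $g(v_1(\Gamma))\ge 1$ producing the first sum of \eqref{eq:psi times Apol}, is how the corollary is meant to be derived, and your residual identity is the correct remaining statement. Indeed the genus-zero roots with $n(v_1)\ge 4$ die under $\pi_*$ for degree reasons, those with $n(v_1)=3$ contribute $\bigl(\sum_{i\ge 2}a_i\bigr)\hA^{g,m-1}\bigl(-\sum_{i\ge 2}a_i,a_2,\ldots,a_n\bigr)$, and what is left to prove is $\pi_*\bigl(\delta_0^{\{0,1\}}\hA^{g,m-1}(a_0,a_1,\ldots,a_n)\bigr)=\hA^{g,m-1}(a_0+a_1,a_2,\ldots,a_n)$.

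Where you part ways with the paper is in how this last identity is handled: the paper's one-line proof cites \emph{both} Lemma~\ref{lemma:psi times hA} and Lemma~\ref{lemma:psi0 times hA}, and the second lemma is precisely what closes your Step 2. Concretely, in your expansion the non-excess part of $\delta_0^{\{0,1\}}\cdot\hA^{g,m-1}$ gives, for each $\Gamma'\in\ST^{m-1}_{g,n}$ (rooted at leg $1$), the class $\lambda_g\DR_{\Gamma'}(a_0+a_1,a_2,\ldots,a_n)$ weighted by the coefficient $a(\Delta)$ of the tree $\Delta$ obtained from $\Gamma'$ by attaching the leg $0$ at the root, and $a(\Delta)=a(\Gamma')\,\frac{(r(v_1)+1)\,R}{r(v_1)\,(R+1)}$ with $R=2g+n-2$, which is \emph{not} $a(\Gamma')$; so the matching is not termwise. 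The excess contributions, after contracting the $\{0,1\}$-bubble, assemble into $\frac{a_0+a_1}{2g+n-1}\,\psi_1\,\hA^{g,m-2}(a_0+a_1,a_2,\ldots,a_n)$, and the statement that this exactly makes up the coefficient deficit is word for word Lemma~\ref{lemma:psi0 times hA} applied at level $m-1$, with root marking $1$, root value $a_0+a_1$ and $n$ replaced by $n-1$. So your plan is viable, but the step you defer to ``an elementary partial-fractions identity'' is not a short telescoping check: carried out from scratch via \eqref{eq:DR times psi} it amounts to re-proving Lemma~\ref{lemma:psi0 times hA}. Invoking that lemma instead finishes your Step 2 with no further computation, which is why the paper calls the corollary an elementary exercise in the two lemmas.
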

\begin{proof}
The corollary is an elementary exercise that uses two previous lemmas and the fact that
$$
A^{g,m}(a_1,\ldots,a_n)=\frac{1}{\sum a_i}\pi_*\hA^{g,m}\left(-\sum a_i,a_1,\ldots,a_n\right).
$$
\end{proof}

\noindent{\it Proof of Proposition~\ref{proposition:dilatonA}}
Let $m:=\sum d_i-2g+3$. Let us prove that
\begin{gather}\label{eq:dilaton for A-class,derivative}
\left.\frac{\d}{\d a_{n+1}}\pi_*A^{g,m}(a_1,\ldots,a_{n+1})\right|_{a_{n+1}=0}=
\begin{cases}
0,&\text{if $m=1$},\\
\pi_*\left(\psi_{n+1} A^{g,m-1}(a_1,\ldots,a_n,0)\right),&\text{if $m\ge 2$}.
\end{cases}
\end{gather}
For $m=1$ this equation immediately follows from Lemma~\ref{lemma:divisibility}. Suppose $m\ge 2$. Let us rewrite equation~\eqref{eq:psi times Apol} in the way that is more suitable for us:
\begin{align}
&A^{g,m}(a_1,\ldots,a_{n+1})-a_{n+1}\psi_{n+1} A^{g,m-1}(a_1,\ldots,a_{n+1})=\label{eq:psi times Apol,2}\\
=&\sum_{\substack{\Gamma\in\ST^m_{g,n+2}\\v(l_{n+1}(\Gamma))=v_1(\Gamma)\\g(v_1(\Gamma))\ge 1}}\frac{2g+n}{r(v_1(\Gamma))}\frac{a(\Gamma)}{\sum a_i}\lambda_g\pi_{0*}\DR_\Gamma\left(-\sum a_i,a_1,\ldots,a_{n+1}\right)\notag\\
&+\hA^{g,m-1}\left(-\sum_{i=1}^n a_i,a_1,\ldots,a_n\right),\notag
\end{align}
where the map $\pi_0\colon\oM_{g,n+2}\to\oM_{g,n+1}$ forgets the first marked point. The last term on the right-hand side of this equation doesn't depend on $a_{n+1}$. Note also that, by Lemma~\ref{lemma:divisibility}, after applying the pushforward $\pi_*$ each term in the sum on the right-hand side of~\eqref{eq:psi times Apol,2} becomes divisible by $a_{n+1}^2$. This proves equation~\eqref{eq:dilaton for A-class,derivative}. 

Equation~\eqref{eq:dilaton for A-class,derivative} immediately implies the statement of the proposition for $m=1$. In the case $m\ge 2$ equation~\eqref{eq:dilaton for A-class,derivative} yields
\begin{multline*}
\pi_*A^g_{d_1,\ldots,d_n,1}=\pi_*\left(\psi_{n+1} A^g_{d_1,\ldots,d_n,0}\right)\stackrel{\text{by Prop.~\ref{proposition:pullback of A-class}}}{=}\pi_*\left(\psi_{n+1}\pi^* A^g_{d_1,\ldots,d_n}\right)=\\
=(2g-2+n)A^g_{d_1,\ldots,d_n}.
\end{multline*}
The proposition is proved.
\qed

\begin{proposition}\label{proposition:dilatonB}
	Denote by $\pi\colon\oM_{g,n+1}\to\oM_{g,n}$ the forgetful map that forgets the last marked point. Then we have
	\begin{gather}\label{eq:dilaton of B-class}
	\pi_* (B^g_{d_1,\ldots,d_n,1})=
	\begin{cases}
	(2g-2+n) ~ B^g_{d_1,\ldots,d_n},&\text{if $\sum d_i>2g-2$},\\
	0,&\text{if $\sum d_i = 2g-2$}.
	\end{cases}
	\end{gather}
\end{proposition}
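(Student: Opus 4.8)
The plan is to deduce the dilaton equation for the $B$-class from a combinatorial analysis of the tree sum in Definition~\ref{B-class}, combined with the string equation of Proposition~\ref{proposition:pullback of B-class} and the projection formula $\pi_*(\psi_{n+1}\,\pi^*\alpha)=(2g-2+n)\,\alpha$. Concretely, for $\sum d_i>2g-2$ I would first reduce to the identity $\pi_*(B^g_{d_1,\ldots,d_n,1})=\pi_*(\psi_{n+1}\,B^g_{d_1,\ldots,d_n,0})$: granting it, Proposition~\ref{proposition:pullback of B-class} rewrites $B^g_{d_1,\ldots,d_n,0}$ as $\pi^*B^g_{d_1,\ldots,d_n}$ plus boundary corrections supported on the divisors $\delta_0^{\{i,n+1\}}$, and multiplying by $\psi_{n+1}$ kills these corrections because $\psi_{n+1}\,\delta_0^{\{i,n+1\}}=0$, so the right-hand side equals $\pi_*(\psi_{n+1}\,\pi^*B^g_{d_1,\ldots,d_n})=(2g-2+n)\,B^g_{d_1,\ldots,d_n}$, as wanted. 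It is important here that $B^g_{d_1,\ldots,d_n,1}$ and $\psi_{n+1}\,B^g_{d_1,\ldots,d_n,0}$ generally do \emph{not} coincide as classes on $\oM_{g,n+1}$ --- for instance $\Omega^{B,g}_{d_1,\ldots,d_n,1}$ contains trees whose top level has a genus~$0$ vertex carrying only $\sigma_{n+1}$ together with two extra legs, which has no analogue with $q(\sigma_{n+1})=0$ --- so the real claim is only that their difference is annihilated by $\pi_*$.

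To establish $\pi_*(B^g_{d_1,\ldots,d_n,1})=\pi_*(\psi_{n+1}\,B^g_{d_1,\ldots,d_n,0})$ I would expand the left-hand side as $\sum_{(T,q)}(-1)^{\deg(T)-1}(\pi\circ\ee)_*[T,q]$ and group the trees by the vertex $v=v(\sigma_{n+1})$, which lies at level $\deg(T)$, and by the behaviour of $v$ when $\sigma_{n+1}$ is forgotten (and, in $\pi\circ\ee$, all extra legs as well). If $v$ still carries another marking, or has positive genus, or has at least two edge-half-edges, then forgetting $\sigma_{n+1}$ is controlled by the ordinary dilaton push-forward on the $\oM_{g(v),n(v)}$ factor and $(T,q)$ matches the tree obtained by deleting $\sigma_{n+1}$, which lies in $\Omega^{B,g}_{d_1,\ldots,d_n,0}$ and has the same degree, hence the same sign. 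Otherwise $v$ is a genus~$0$ vertex whose non-extra half-edges are $\sigma_{n+1}$ together with one or two edge-half-edges, and forgetting $\sigma_{n+1}$ contracts $v$; here I would rewrite the contribution using the $\psi$-push-forward formula~\eqref{pullbackpsi} and genus~$0$ stabilisation, exactly in the spirit of the good/bad-tree comparison in the proof of Proposition~\ref{proposition:pullback of A-class} (via relation~\eqref{eq:string for A-class,WDVV2}) and of the $T_v$ versus $T_{(v,h)}$ bookkeeping in the proof of Proposition~\ref{proposition:pullback of B-class}. The contracted contributions, taken together with the contribution of $\psi_{n+1}$ acting on a tree with $q(\sigma_{n+1})=0$, should telescope, the underlying elementary identity being of the same type as~\eqref{eq:string for A-class,elementary identity}; Lemma~\ref{simple lemma} controls which levels can collapse.

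The remaining case $\sum d_i=2g-2$ is the one in which the total degree of the $(n+1)$-pointed problem equals exactly $2g-1$; there I would show that the admissibility bound~\eqref{admissibility condition} at level $\deg(T)-1$ rigidifies the top level of every $(T,q)\in\Omega^{B,g}_{d_1,\ldots,d_n,1}$ with $\ee_*[T,q]\ne 0$ so strongly that $\pi\circ\ee$ sends $[T,q]$ into the image of $\pi^*$, hence to a class killed by $\pi_*$ --- the exact analogue of the vanishing in the $m=1$ branch of~\eqref{eq:dilaton for A-class,derivative}. I expect the main obstacle to be the sign and multiplicity bookkeeping in the contracted-vertex cases above: verifying that the $(-1)^{\deg}$ signs and the rational coefficients generated by iterated genus~$0$ stabilisation cancel so that only the diagonal term $\psi_{n+1}\cdot B^g_{d_1,\ldots,d_n,0}$ survives the push-forward. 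I would address this by reusing the tree operations $T_v$ and $T_{(v,h)}$ from the proof of Proposition~\ref{proposition:pullback of B-class}, now read in the push-forward direction.
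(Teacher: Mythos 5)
Your reduction of the first case to the identity $\pi_*(B^g_{d_1,\ldots,d_n,1})=\pi_*\big(\psi_{n+1}B^g_{d_1,\ldots,d_n,0}\big)$, followed by Proposition~\ref{proposition:pullback of B-class} and $\psi_{n+1}\cdot\delta_0^{\{i,n+1\}}=0$, is a sound skeleton and mirrors how the paper handles the $A$-side in Proposition~\ref{proposition:dilatonA}. But that identity is where all of the content of the proposition lives, and your sketch does not establish it. The matching rule you propose (``delete $\sigma_{n+1}$ \dots which lies in $\Omega^{B,g}_{d_1,\ldots,d_n,0}$'') is not even well-typed: deleting the marking produces a tree with $n$ markings, i.e.\ an element of $\Omega^{B,g}_{d_1,\ldots,d_n}$, whereas the comparison you need is with trees carrying $\sigma_{n+1}$ with $q(\sigma_{n+1})=0$; moreover the two index sets genuinely differ (the weak-stability extra-leg counts and the admissibility bound~\eqref{admissibility condition} change when $q(\sigma_{n+1})$ drops from $1$ to $0$), and multiplying $\ee_*[T,q]$ by $\psi_{n+1}$ is not the same as raising $q(\sigma_{n+1})$ --- one must commute $\psi_{n+1}$ past $\ee_*$ and $\xi_{T*}$, which generates precisely the boundary corrections whose cancellation (``should telescope'') is the statement to be proved. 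The paper does this bookkeeping head-on: $\Omega^{B,g}_{d_1,\ldots,d_n,1}$ is put in bijection with the data $(T,q,v)$ and $(T,q,v,h)$ with $l(v)\ge l_T$, the vertex-wise pushforwards are computed in \eqref{eqdil1}--\eqref{eqdil2}, and the resulting sum collapses using $\epsilon(l_T-1)=0$; nothing in your outline substitutes for that computation.

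The second case rests on a claim that is false term by term: for $\sum d_i=2g-2$ it is not true that every $(T,q)\in\Omega^{B,g}_{d_1,\ldots,d_n,1}$ with $\ee_*[T,q]\ne 0$ has $\ee_*[T,q]$ in the image of $\pi^*$. The one-vertex tree is always admissible and contributes $\psi_1^{d_1}\cdots\psi_n^{d_n}\psi_{n+1}$, whose pushforward is nonzero; for instance with $g=1$, $n=1$, $d_1=0$ this term is $\psi_2\in R^1(\oM_{1,2})$ with $\pi_*\psi_2=1$, while $B^1_{0,1}=\psi_2-\delta_0^{\{1,2\}}$ and only the alternating sum pushes forward to zero. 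The vanishing in this case is a cancellation between trees of different degrees: in the paper it comes from the additional admissible trees $T_{(v,h)}$ with $v$ at the top level, which exist exactly when $\sum d_i=2g-2$ and contribute $-(n+\sum d_i)=-(2g-2+n)$ times $B^g_{d_1,\ldots,d_n}$, cancelling the main term. So both halves of your argument still require the combinatorial core that the paper's proof supplies, and the second half as written would fail.
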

\begin{proof}
Let $(T,q) \in \Omega^{B,g}_{d_1,\dotsc,d_n}$ be an admissible and stable complete tree with a power function
$$q \colon H^{em}_+(T) \to \NN,$$
as in Definition \ref{B-class}. We denote by $\epsilon \colon \{1, \dotsc,\deg(T)-1\} \to \NN$ the function
$$
\epsilon (k) := 2 \sum_{\substack{v \in V(T) \\ l(v) \leq k}} g(v)-\sum_{\substack{h \in H^{em}_+(T) \\ l(h)=k}} q(h)-2 
$$
measuring the distance to non-admissibility at the level $k$. As in the proof of Proposition~\ref{proposition:pullback of B-class}, we have two possible ways to add a new marking labelled by $n+1$.

First, choose a vertex $v \in V(T)$.
Let $C=(e^C_1,v^C_1,\dotsc,e^C_{\deg(T)-l(v)},v^C_{\deg(T)-l(v)},\sigma_{n+1})$ be a chain of weakly stable vertices with a new marking $\sigma_{n+1}$.
Precisely, the edge $e^C_1$ is attached to the vertex $v^C_1$, the edge $e^C_k$ links the vertex $v^C_{k-1}$ to $v^C_k$, and the leg $\sigma_{n+1}$ is attached to the vertex~$v^C_{\deg(T)-l(v)}$.
Moreover, every vertex is of genus $0$ and contains two extra legs.
We construct a tree $T_v$, obtained from $T$ by gluing the edge $e^C_1$ (and thus the chain $C$) to the vertex $v$.
We have $H^{em}_+(T) \subset H^{em}_+(T_v)$ and we extend the power function $q$ into a function
$q_v \colon H^{em}_+(T_v) \to \NN$
by taking 
$$
q_v(h^C_k):=1 \quad \textrm{and} \quad q_v(\sigma_{n+1}):=1,
$$
where $h^C_k$ is the half-edge in $H^{em}_+(T_v)$ contained in the edge $e^C_k$.
It is easy to see that we get
$$(T_v,q_v) \in \Omega^{B,g}_{d_1,\dotsc,d_n,1} \iff \forall k \in [l(v),\deg(T)-1], ~~ \epsilon(k) \geq 1.$$
In particular, when the vertex $v$ is at the maximal level $\deg(T)$, then the tree $T_v$ is always admissible.

Second, choose a half-edge $h \in H^{em}_+(T)$ attached to the vertex $v$.
We construct a tree $T_{(v,h)}$, obtained from $T$ by adding an extra level between the levels $l(v)$ and $l(v)+1$ of $T$ as follows:
\begin{itemize}
	\item[-] denote by $h_0,\dotsc,h_m \in H^{em}_+(T)$ the half-edges of level $l(v)$, with $h_0:=h$, 
	\item[-] insert a pair $(e_k,v_k)$ between the half-edge $h_k$ and the vertex it is attached to, where $e_k=(h'_k,h''_k)$ is an edge and $v_k$ is a vertex of genus $0$,
	\item[-] glue the half-edge $h^C_1$ from the chain $C$ to the vertex $v_0$,
	\item[-] add $q(h_k)+1$ extra legs to the vertex $v_k$, for $0 \leq k \leq m$.
\end{itemize}
Therefore, the number of levels of the tree $T_{(v,h)}$ is $\deg(T)+1$, the vertex $v_0 \in V(T_{(v,h)})$ is the only strongly stable vertex at its level, and we have a natural inclusion $H^{em}_+(T_v) \subset H^{em}_+(T_{(v,h)})$.
Then, we extend the power function $q_v$ into a function
$q_{(v,h)} \colon H^{em}_+(T_{(v,h)}) \to \NN$
by taking
$$q_{(v,h)}(h'_k) := q(h_k).$$
We obtain
$$(T_{(v,h)},q_{(v,h)}) \in \Omega^{B,g}_{d_1,\dotsc,d_n,1} \iff \left\lbrace \begin{array}{l}
\forall k \in [l(v),\deg(T)-1], ~~ \epsilon(k) \geq 1,  \quad \textrm{and} \\
\left( l(v) \neq \deg(T) ~~ \textrm{or} ~~ \sum_{i=1}^n d_i = 2g-2 \right).
\end{array}
\right.$$
In particular, when the vertex $v$ is at the maximal level $\deg(T)$, the tree $T_{(v,h)}$ is admissible if and only if $d_1+\dotsb+d_n = 2g-2$.

Let $l_T \in [1,\deg(T)]$ be the smallest integer such that
$$\forall k \in [l_T,\deg(T)-1], ~~ \epsilon(k) \geq 1.$$
When $d_1+\dotsb+d_n > 2g-2$ (resp.~when $d_1+\dotsb+d_n=2g-2$), the two constructions
$$(T,q,v) \mapsto (T_v,q_v) \quad \textrm{and} \quad (T,q,v,h) \mapsto (T_{(v,h)},q_{(v,h)})$$
give a bijection from the set
$$\bigsqcup_{(T,q) \in \Omega^{B,g}_{d_1,\dotsc,d_n}} \{v \in V(T) | l(v) \geq l_T \} \sqcup \{(v,h) \in V(T) \times H^{em}_+(T) | h \to v,l_T \leq l(v) < \deg(T) \}$$
(resp.~the same set with the inequality $l_T \leq l(v) \leq \deg(T)$)
to the set $\Omega^{B,g}_{d_1,\dotsc,d_n,1}$.
Furthermore, we get the contributions
\begin{align}
e_* \pi_* ([T_v,q_v])=& (2g(v)-2+n(v)+q(v)+1) e_* [T,q], \label{eqdil1}\\
e_* \pi_* ([T_{(v,h)},q_{(v,h)}])=& (q(h)+1) e_* [T,q],\label{eqdil2}
\end{align}
where $q(v)$ denotes the value of the power function $q \colon H^{em}_+(T) \to \NN$ at the (half-)edge linking the mother of the vertex $v$ to the vertex $v$, and $n(v)$ denotes the number of half-edges attached to the vertex $v$, without counting the extra legs. Thus, the total number of half-edges attached to the vertex $v$ is indeed $n(v)+q(v)+1$.

Finally, when $d_1+\dotsb+d_n>2g-2$, we get
\begin{align*}
\pi_* (B^g_{d_1,\ldots,d_n,1})=&\sum_{(T,q) \in \Omega^{B,g}_{d_1,\dotsc,d_n,1}} (-1)^{\deg(T)-1} \pi_* \ee_* [T,q]= \\
=& \sum_{(T,q) \in \Omega^{B,g}_{d_1,\dotsc,d_n}}  (-1)^{\deg(T)-1}\left( \sum_{\substack{v \in V(T) \\ l(v)=\deg(T)}}
\pi_* \ee_* [T_v,q_v]\right. \\
& + \left.\sum_{\substack{v \in V(T) \\ l_T \leq l(v) < \deg(T)}}
\left(\pi_* \ee_* [T_v,q_v] - \sum_{\substack{h \in H^{em}_+(T) \\ h \to v }} \pi_* \ee_* [T_{(v,h)}, q_{(v,h)}] \right)\right)=\\
=&\sum_{(T,q) \in \Omega^{B,g}_{d_1,\dotsc,d_n}}  (-1)^{\deg(T)-1}\left( \sum_{\substack{v \in V(T) \\ l(v)=\deg(T)}}
\ee_* \pi_* [T_v,q_v] \right. \\
& + \left.\sum_{\substack{v \in V(T) \\ l_T \leq l(v) < \deg(T)}}
\left(\ee_* \pi_* [T_v,q_v] - \sum_{\substack{h \in H^{em}_+(T) \\ h \to v }} \ee_* \pi_* [T_{(v,h)}, q_{(v,h)}] \right)\right), 
\end{align*}
where the minus sign in the second line of the second equality comes from the fact that the number of levels in the tree $T_{(v,h)}$ is $\deg(T)+1$, the third equality comes from the relation $\ee \circ \pi = \pi \circ \ee$ among the forgetful maps. Using equations \eqref{eqdil1} and \eqref{eqdil2}, we get
\begin{align*}
&\pi_*(B^g_{d_1,\ldots,d_n,1})=\\
= & \sum_{(T,q) \in \Omega^{B,g}_{d_1,\dotsc,d_n}} (-1)^{\deg(T)-1} \ee_* [T,q] \cdot
\left(\sum_{\substack{v \in V(T) \\ l(v)=\deg(T)}}
(2g(v)-2+n(v)+q(v)+1)\right. \\
& + \left. \sum_{\substack{v \in V(T) \\ l_T \leq l(v) < \deg(T)}}
\left(2g(v)-1+n(v)+q(v) - \sum_{\substack{h \in H^{em}_+(T) \\ h \to v }} (q(h)+1) \right)\right)=
\end{align*}
\begin{align*}
= & \sum_{(T,q) \in \Omega^{B,g}_{d_1,\dotsc,d_n}} (-1)^{\deg(T)-1} \ee_* [T,q] \cdot
\left(\sum_{\substack{v \in V(T) \\ l(v)=\deg(T)}}
(2g(v)+n(v)-1+q(v))\right. \\
& + \left.\sum_{\substack{v \in V(T) \\ l_T \leq l(v) < \deg(T)}}
\left( 2g(v)+q(v) - \sum_{\substack{h \in H^{em}_+(T) \\ h \to v }} q(h) \right)\right)= \\
= & \sum_{(T,q) \in \Omega^{B,g}_{d_1,\dotsc,d_n}} (-1)^{\deg(T)-1} \ee_* [T,q] \cdot
\left(2 \sum_{\substack{v \in V(T) \\ l(v) \geq l_T}} g(v) + n \right. \\
& \left. + \left(\sum_{\substack{v \in V(T) \\ l(v) \geq l_T}} q(v)
- \sum_{\substack{v \in V(T) \\ l_T \leq l(v) < \deg(T)}} \sum_{\substack{h \in H^{em}_+(T) \\ h \to v }} q(h) \right)\right)= \\
= & \sum_{(T,q) \in \Omega^{B,g}_{d_1,\dotsc,d_n}} (-1)^{\deg(T)-1} \ee_* [T,q] \cdot
\left(2 \sum_{\substack{v \in V(T) \\ l(v) \geq l_T}} g(v) + n + \sum_{\substack{v \in V(T) \\ l(v) = l_T}} q(v)
\right).
\end{align*}
We conclude using the equality $\epsilon(l_T-1)=0$:
\begin{align*}
&\pi_* (B^g_{d_1,\ldots,d_n,1})=\\
=&\sum_{(T,q) \in \Omega^{B,g}_{d_1,\dotsc,d_n}} (-1)^{\deg(T)-1} \ee_* [T,q] \cdot
\left(2 \sum_{\substack{v \in V(T) \\ l(v) \geq l_T}} g(v) + n + 2\sum_{\substack{v \in V(T) \\ l(v) < l_T}} g(v) -2\right)=\\
=&(2g-2+n) \sum_{(T,q) \in \Omega^{B,g}_{d_1,\dotsc,d_n}} (-1)^{\deg(T)-1} \ee_* [T,q]=(2g-2+n)B^g_{d_1,\ldots,d_n}.
\end{align*}
When $d_1+\dotsb+d_n=2g-2$, we have the same sequence of equalities with the additional term
\begin{align*}
&- \sum_{(T,q) \in \Omega^{B,g}_{d_1,\dotsc,d_n}} \sum_{\substack{v \in V(T) \\ l(v)=\mathrm{deg}(T)}}\sum_{\substack{h \in H^{em}_+(T) \\ h \to v }} (-1)^{\deg(T)-1} \ee_* \pi_* [T_{(v,h)},q_{(v,h)}] =\\ 	
=&-\sum_{(T,q) \in \Omega^{B,g}_{d_1,\dotsc,d_n}} (-1)^{\deg(T)-1} \ee_* [T,q] \cdot (n+d_1+\dotsb+d_n)= \\
 = & - (2g-2+n)B^g_{d_1,\ldots,d_n},
\end{align*}
coming from the fact that $(T_{(v,h)},q_{(v,h)}) \in \Omega^{B,g}_{d_1,\dotsc,d_n,1}$ when $l(v)=\deg(T)$.
\end{proof}

\subsection{Validity of the conjecture on $\cM_{g,n}$}\label{subsection:Mgn}

Let $g, n, m \geq 0$ such that $2g-2+n>0$ and denote by $\pi^{(m)} \colon \overline{\cM}_{g,n+m} \to \overline{\cM}_{g,n}$ the map forgetting the last $m$ markings. By definition, the restriction of Conjecture \ref{conjecture} to~$\cM_{g,n}$ is the following statement.

\begin{proposition}\label{proposition:Mgn}
The restriction of Conjecture \ref{conjecture} to $\cM_{g,n}$ is true.
Precisely, for every integers $d_1, \dotsc, d_{n+m} \geq 1$ such that
$$d_1+\dotsb+d_{n+m} > 2g-2,$$
we have
\begin{equation*}
\left.\left(\pi^{(m)}_*(A^g_{d_1,\dotsc,d_{n+m}})\right)\right|_{\cM_{g,n}} = \left.\left(\pi^{(m)}_*(B^g_{d_1,\dotsc,d_{n+m}})\right)\right|_{\cM_{g,n}} \in R^*(\cM_{g,n}).
\end{equation*}
\end{proposition}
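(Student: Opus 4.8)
The genus $0$ and $1$ cases are already instances of the full Conjecture (proven in Section~\ref{subsection:reduction}), so I would assume $g\ge2$. The plan has two parts: first use the dilaton equations to strip off forgotten markings until the statement cannot be reduced further, and then attack the remaining ``core'' cases modulo the ideal of boundary classes, where both the $A$- and $B$-classes become explicit — the $B$-class trivially, the $A$-class via Hain's formula.

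For the reduction, write $\pi^{(m)}=\pi\circ\pi^{(m-1)}$ with $\pi\colon\oM_{g,n+m}\to\oM_{g,n+m-1}$ forgetting the last marking. Propositions~\ref{proposition:dilatonA} and~\ref{proposition:dilatonB} give the \emph{same} rule for $X=A$ and $X=B$: one has $\pi_*X^g_{d_1,\dots,d_{n+m-1},1}=(2g-3+n+m)\,X^g_{d_1,\dots,d_{n+m-1}}$ when $\sum_{i\le n+m-1}d_i>2g-2$, and $\pi_*X^g_{d_1,\dots,d_{n+m-1},1}=0$ when that partial sum equals $2g-2$. So, after reordering the forgotten markings, whenever one of $d_{n+1},\dots,d_{n+m}$ equals $1$ we forget it, reducing the identity to the one with $m$ replaced by $m-1$ (or making both sides vanish). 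Iterating, it remains to treat the case $m=0$ and the cases in which $d_{n+1},\dots,d_{n+m}$ are all $\ge2$. In the latter situation $(\sum d_i)-m\ge n+m$, so the target lies in $R^{\ge n+m}(\cM_{g,n})$, which already vanishes once $n+m>g-2+n$ by the standard vanishing $R^k(\cM_{g,n})=0$ for $k>g-2+n$; the residual cases with $m\le g-2$ are handled by the same mod-boundary computation as below, now carried out for the pushed-forward classes.

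For the core case $m=0$ one must show $A^g_{d_1,\dots,d_n}\big|_{\cM_{g,n}}=B^g_{d_1,\dots,d_n}\big|_{\cM_{g,n}}$ for $d_i\ge1$ and $\sum d_i>2g-2$ (the only nontrivial range being $2g-1\le\sum d_i\le g-2+n$, which forces $n\ge g+1$). The $B$-side is immediate from Definition~\ref{B-class}: the unique one-level tree in $\Omega^{B,g}_{d_1,\dots,d_n}$ contributes $\psi_1^{d_1}\cdots\psi_n^{d_n}$, while every $(T,q)$ with $\deg(T)\ge2$ has $\st(T)$ carrying an edge, so $\ee_*[T,q]$ is supported on a boundary divisor and restricts to $0$ on $\cM_{g,n}$; hence $B^g_{d_1,\dots,d_n}\big|_{\cM_{g,n}}=\psi_1^{d_1}\cdots\psi_n^{d_n}\big|_{\cM_{g,n}}$. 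For the $A$-side I would restrict first to $\cM^\ct_{g,n}$, apply Hain's formula~\eqref{eq:Hain's formula} to each factor $\DR_{g(v)}$ appearing in $\DR_\Gamma$, and then restrict to $\cM_{g,n}$, where all the $\delta$-classes vanish; in particular $\lambda_g\DR_g(a_1,\dots,a_n)\big|_{\cM_{g,n}}=\frac{\lambda_g}{2^g g!}\big(\sum_i a_i^2\psi_i\big)^g$. It then remains to check that $\frac1{\sum a_i}\sum_\Gamma a(\Gamma)\,\lambda_g\,\pi_{0*}\DR_\Gamma$, restricted to $\cM_{g,n}$, collapses to $\psi_1^{d_1}\cdots\psi_n^{d_n}\big|_{\cM_{g,n}}$: after forgetting the auxiliary leg $l_0$, only those stable trees $\Gamma$ whose leg $l_0$ sits on a chain of genus-$0$ three-pointed vertices that $\pi_{0*}$ contracts survive modulo boundary, and their $a(\Gamma)$-weighted contributions should telescope, by elementary rational-function identities of the type used in the proof of Proposition~\ref{proposition:pullback of A-class}.

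The hard part will be precisely this last collapse of the $A$-side: identifying exactly which multi-vertex trees survive on $\cM_{g,n}$ after forgetting $l_0$, and verifying that the weights $a(\Gamma)$ cancel down to the single monomial coming from the $B$-class. The $\lambda_g$ factor is what makes the computation feasible — it confines everything to compact type and brings Hain's formula into play — but reconciling the stable-tree sum defining $A$ with the one-term answer from $B$ is the delicate point of the argument.
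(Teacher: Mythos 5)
Your first step (using Propositions~\ref{proposition:dilatonA} and~\ref{proposition:dilatonB} to strip off forgotten markings with $d_i=1$, with both sides vanishing when the remaining sum drops to $2g-2$) is exactly the paper's first step. But after that your argument has a genuine gap: you invoke only the vanishing $R^k(\cM_{g,n})=0$ for $k>g-2+n$, which disposes of the cases $m>g-2$ and leaves both the residual cases $1\le m\le g-2$ and the entire ``core case'' $m=0$ to an unexecuted computation — the restriction of the $A$-class via Hain's formula and a telescoping of the weights $a(\Gamma)$ that you yourself flag as the hard, delicate point and do not carry out. As written, the decisive part of the proof is missing. Moreover, the route is unnecessary: the paper finishes with the sharper vanishing theorem of Ionel, $R^p(\cM_{g,n})=0$ for all $p>g-1$, together with a two-line degree count. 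After the dilaton reduction one may assume $d_{n+1},\dotsc,d_{n+m}\ge 2$; the pushed-forward classes have degree $\delta=\sum_{i\le n}d_i+\sum_{i>n}(d_i-1)$, which satisfies $\delta>2g-2-m$ and $\delta\ge n+m$, hence $\delta>g-1$, so \emph{both} restrictions are zero and there is nothing to compare. In particular your ``nontrivial range'' $2g-1\le\sum d_i\le g-2+n$ in the case $m=0$ is empty once the correct vanishing is used, since $\sum d_i\ge 2g-1\ge g$ already forces $\psi_1^{d_1}\cdots\psi_n^{d_n}|_{\cM_{g,n}}=0$; without that input, your plan to show that the tree sum defining $A$ collapses to this monomial would still leave you short of the statement. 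To repair the write-up, replace the mod-boundary Hain-formula analysis by the degree estimate plus the citation of Ionel's vanishing.
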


\begin{proof}
Using Propositions \ref{proposition:dilatonA} and \ref{proposition:dilatonB}, we can assume that $d_{n+1}, \dotsc, d_{n+m} \geq 2$.
Furthermore, the Chow degree of the two classes in the statement is
$$\delta:=d_1+\dotsb+d_n+(d_{n+1}-1)+\dotsb+(d_{n+m}-1).$$
We get
$$\delta > 2g-2-m \quad \textrm{and} \quad \delta \geq n+m.$$
Summing these two inequalities yields
$$\delta > g+\frac{n}{2}-1 \geq g-1.$$
We conclude with the following result from \cite{Ionel}:
$$R^p(\cM_{g,n})=0, ~~ \textrm{for all } p > g-1.$$
\end{proof}


\subsection{New expression for $\lambda_g$}\label{subsection:lambdag}

Let us show that our conjectural relations~\eqref{eq:main relations} give a new formula for the class $\lambda_g\in R^g(\oM_g)$.

Let $g\ge 2$ and consider the class
$$
A^{g,1}(a_1,\ldots,a_{g-1})=\lambda_g\frac{1}{\sum a_i}\DR_g\left(\widetilde{-\sum a_i},a_1,\ldots,a_{g-1}\right).
$$
Let $\pi\colon\oM_{g,g-1}\to\oM_g$ be the forgetful map that forgets all marked points. Then, by~\eqref{eq:DR and fundamental},
$$
\pi_* A^{g,1}(a_1,\ldots,a_{g-1})=g!\lambda_g a_1^2\cdots a_{g-1}^2\sum a_i.
$$
Thus,
$$
\lambda_g=\frac{1}{g!}\pi_* A^g_{3,2,\ldots,2}\in R^g(\oM_g).
$$
So, Conjecture~\ref{conjecture} implies that
\begin{gather}\label{eq:conjecture for lambdag}
\lambda_g=\frac{1}{g!}\pi_* B^g_{3,2,\ldots,2}\in R^g(\oM_g).
\end{gather}
We can easily see that the expression on the right-hand side of this equation is a linear combination of basic tautological classes $\xi_{\Gamma*}(\gamma)$, where $\Gamma$ is a tree. No such expressions for the class~$\lambda_g$ were known before. Let us write explicitly and prove the resulting formulas in genus~$2$ and~$3$.

\subsubsection{Genus $2$}

We already wrote the expression for $B^2_3$ in~\eqref{eq:formula for B23}. Pushing it forward to $\oM_2$ and dividing by $2$, we get that Conjecture~\ref{conjecture} implies
\begin{gather}\label{eq:lambda2}
 \lambda_2 =\frac{1}{2}\kappa_2-\frac{1}{2}\tikz[baseline=-1mm]{\draw (A)--(B); \gg{1}{A}; \gg{1}{B};\lab{B}{90}{3mm}{\kappa_1};}.
\end{gather}
The relation $A^2_3=B^2_3$ is proved in Section~\ref{section:genus 2}, so formula~\eqref{eq:lambda2} is true.

\subsubsection{Genus $3$}

We compute
\begin{align*}
B^3_{3,2}=&\psi_1^3\psi_2^2
-10\,\tikz[baseline=-1mm]{\draw (A)--(B);\legm{B}{30}{1};\legm{B}{-30}{2};\gg{3}{A};\gg{0}{B};\lab{A}{25}{4mm}{\psi^4};}
-\tikz[baseline=-1mm]{\draw (A)--(B);\legm{B}{30}{1};\legm{B}{-30}{2};\gg{2}{A};\gg{1}{B};\lab{A}{28}{3.5mm}{\psi};\lab{B}{53}{3.5mm}{\psi};\lab{B}{-69}{3.8mm}{\psi^2};}
-2\,\tikz[baseline=-1mm]{\draw (A)--(B);\legm{B}{30}{1};\legm{B}{-30}{2};\gg{2}{A};\gg{1}{B};\lab{A}{28}{3.5mm}{\psi};\lab{B}{47}{4.1mm}{\psi^2};\lab{B}{-60}{3.5mm}{\psi};}
-\tikz[baseline=-1mm]{\draw (A)--(B);\legm{B}{30}{1};\legm{B}{-30}{2};\gg{2}{A};\gg{1}{B};\lab{A}{28}{3.5mm}{\psi};\lab{B}{47}{4.1mm}{\psi^3};}\\
&-\tikz[baseline=-1mm]{\draw (A)--(B);\legm{B}{30}{1};\legm{B}{-30}{2};\gg{2}{A};\gg{1}{B};\lab{A}{25}{4mm}{\psi^2};\lab{B}{-69}{3.8mm}{\psi^2};}
-3\,\tikz[baseline=-1mm]{\draw (A)--(B);\legm{B}{30}{1};\legm{B}{-30}{2};\gg{2}{A};\gg{1}{B};\lab{A}{25}{4mm}{\psi^2};\lab{B}{-60}{3.5mm}{\psi};\lab{B}{53}{3.5mm}{\psi};}
-3\,\tikz[baseline=-1mm]{\draw (A)--(B);\legm{B}{30}{1};\legm{B}{-30}{2};\gg{2}{A};\gg{1}{B};\lab{A}{25}{4mm}{\psi^2};\lab{B}{47}{4.1mm}{\psi^2};}
-\tikz[baseline=-1mm]{\draw (A)--(B);\legm{B}{0}{1};\legm{A}{-45}{2};\gg{2}{A};\gg{1}{B};\lab{B}{25}{4mm}{\psi^2};\lab{A}{-84}{3.8mm}{\psi^2};}
-\tikz[baseline=-1mm]{\draw (A)--(B);\legm{B}{30}{1};\legm{B}{-30}{2};\gg{1}{A};\gg{2}{B};\lab{B}{47}{4.1mm}{\psi^3};\lab{B}{-60}{3.5mm}{\psi};}\\
&-\tikz[baseline=-1mm]{\draw (A)--(B);\legm{B}{30}{1};\legm{B}{-30}{2};\gg{1}{A};\gg{2}{B};\lab{B}{47}{4.1mm}{\psi^2};\lab{B}{-69}{3.8mm}{\psi^2};}
+10\,\tikz[baseline=-1mm]{\draw (A)--(B)--(C);\legm{C}{30}{1};\legm{C}{-30}{2};\gg{2}{A};\gg{1}{B};\gg{0}{C};\lab{A}{28}{3.5mm}{\psi};\lab{B}{25}{4mm}{\psi^2};}
+10\,\tikz[baseline=-1mm]{\draw (A)--(B)--(C);\legm{C}{30}{1};\legm{C}{-30}{2};\gg{2}{A};\gg{1}{B};\gg{0}{C};\lab{B}{28}{3.5mm}{\psi};\lab{A}{25}{4mm}{\psi^2};}
+\tikz[baseline=-1mm]{\draw (A)--(B)--(C);\legm{C}{0}{1};\legm{B}{-45}{2};\gg{2}{A};\gg{0}{B};\gg{1}{C};\lab{A}{28}{3.5mm}{\psi};\lab{C}{25}{4mm}{\psi^2};}
\end{align*}
\begin{align*}
&+\tikz[baseline=-1mm]{\draw (A)--(B)--(C);\legm{C}{30}{1};\legm{C}{-30}{2};\gg{1}{A};\gg{1}{B};\gg{1}{C};\lab{C}{53}{3.5mm}{\psi};\lab{C}{-69}{3.8mm}{\psi^2};}
+2\,\tikz[baseline=-1mm]{\draw (A)--(B)--(C);\legm{C}{30}{1};\legm{C}{-30}{2};\gg{1}{A};\gg{1}{B};\gg{1}{C};\lab{C}{47}{4.1mm}{\psi^2};\lab{C}{-60}{3.5mm}{\psi};}
+\tikz[baseline=-1mm]{\draw (A)--(B)--(C);\legm{C}{30}{1};\legm{C}{-30}{2};\gg{1}{A};\gg{1}{B};\gg{1}{C};\lab{C}{47}{4.1mm}{\psi^3};}
+\tikz[baseline=-1mm]{\draw (A)--(B)--(C);\legm{C}{30}{1};\legm{C}{-30}{2};\gg{1}{A};\gg{1}{B};\gg{1}{C};\lab{B}{28}{3.5mm}{\psi};\lab{C}{-69}{3.8mm}{\psi^2};}\\
&+3\,\tikz[baseline=-1mm]{\draw (A)--(B)--(C);\legm{C}{30}{1};\legm{C}{-30}{2};\gg{1}{A};\gg{1}{B};\gg{1}{C};\lab{B}{28}{3.5mm}{\psi};\lab{C}{-60}{3.5mm}{\psi};\lab{C}{53}{3.5mm}{\psi};}
+3\,\tikz[baseline=-1mm]{\draw (A)--(B)--(C);\legm{C}{30}{1};\legm{C}{-30}{2};\gg{1}{A};\gg{1}{B};\gg{1}{C};\lab{B}{28}{3.5mm}{\psi};\lab{C}{47}{4.1mm}{\psi^2};}
+\tikz[baseline=-1mm]{\draw (A)--(B)--(C);\legm{C}{0}{1};\legm{B}{-45}{2};\gg{1}{A};\gg{1}{B};\gg{1}{C};\lab{C}{25}{4mm}{\psi^2};\lab{B}{-75}{3.5mm}{\psi};}
+10\,\tikz[baseline=-1mm]{\draw (A)--(B)--(C);\legm{C}{30}{1};\legm{C}{-30}{2};\gg{1}{A};\gg{2}{B};\gg{0}{C};\lab{B}{25}{4mm}{\psi^3};}\\
&-10\,\tikz[baseline=-1mm]{\draw (A)--(B)--(C)--(D);\legm{D}{30}{1};\legm{D}{-30}{2};\gg{1}{A};\gg{1}{B};\gg{1}{C};\gg{0}{D};\lab{C}{25}{4mm}{\psi^2};}
-10\,\tikz[baseline=-1mm]{\draw (A)--(B)--(C)--(D);\legm{D}{30}{1};\legm{D}{-30}{2};\gg{1}{A};\gg{1}{B};\gg{1}{C};\gg{0}{D};\lab{B}{28}{3.5mm}{\psi};\lab{C}{28}{3.5mm}{\psi};}
-\tikz[baseline=-1mm]{\draw (A)--(B)--(C)--(D);\legm{D}{0}{1};\legm{C}{-45}{2};\gg{1}{A};\gg{1}{B};\gg{0}{C};\gg{1}{D};\lab{D}{25}{4mm}{\psi^2};}.
\end{align*}
Pushing forward this expression to~$\oM_3$ and dividing it by $6$, we get that Conjecture~\ref{conjecture} implies
\begin{align}
 \lambda_3=&-\frac{3}{2}\kappa_3+\frac{1}{6}\kappa_1\kappa_2+
 \frac{2}{3}\,\tikz[baseline=-1mm]{\draw (A)--(B); \gg{2}{A}; \gg{1}{B};\lab{A}{28}{3.5mm}{\psi};\lab{B}{90}{3mm}{\kappa_1};}
 -\frac{1}{6}\tikz[baseline=-1mm]{\draw (A)--(B); \gg{2}{A}; \gg{1}{B};\lab{A}{90}{3mm}{\kappa_1};\lab{B}{90}{3mm}{\kappa_1};}
 +\frac{5}{6}\tikz[baseline=-1mm]{\draw (A)--(B); \gg{2}{A}; \gg{1}{B};\lab{A}{90}{3mm}{\kappa_2};}
 -\frac{1}{6}\tikz[baseline=-1mm]{\draw (A)--(B); \gg{2}{A}; \gg{1}{B};\lab{A}{90}{3.85mm}{\kappa_1^2};}\label{eq:lambda3}\\
 &-\frac{1}{3}\,\tikz[baseline=-1mm]{\draw (A)--(B); \draw (B)--(C); \gg{1}{A};\gg{1}{B};\gg{1}{C};\lab{C}{90}{3mm}{\kappa_1};}.\notag
\end{align}
Let us prove this equation. In~\cite{Fab90} C.~Faber proved that the whole cohomology ring of~$\oM_3$ is tautological and that it is generated by the classes
$$
\delta_0:=\tikz[baseline=-1mm]{\lp{A}{0};\gg{2}{A};}\qquad \delta_1:=\tikz[baseline=-1mm]{\draw (A)--(B);\gg{2}{A};\gg{1}{B};}\qquad\lambda_1\qquad\kappa_2.
$$
There are $13$ monomials of cohomological degree~$6$ in these classes. C.~Faber proved that $\dim R^3(\oM_3)=10$ and found $3$ relations between the 13 monomials (see \cite[page 407]{Fab90}). These relations easily imply that the following $10$ classes form a basis in $R^3(\oM_3)$:
$$
\delta_0^2\lambda_1,\,\delta_0\delta_1\lambda_1,\,\delta_0\lambda_1^2,\,\delta_0\kappa_2,\,\delta_1^3,\,\delta_1^2\lambda_1,\,\delta_1\lambda_1^2,\,\lambda_1^3,\,\delta_1\kappa_2,\,\lambda_1\kappa_2.
$$  
It is not hard to check that each of these $10$ classes has the same intersection numbers with both sides of equation~\eqref{eq:lambda3}. So, formula~\eqref{eq:lambda3} is true.


\section{Restricted set of relations}\label{section:restricted}

In this section we show that the strong DR/DZ equivalence conjecture for semisimple cohomological field theories follows from the restricted set of relations~\eqref{eq:main relations}, where $\sum d_i=2g$ and $d_i\ge 1$.

Consider an arbitrary cohomological field theory in genus $0$, $c_{0,n}\colon V^{\otimes n}\to H^*(\oM_{0,n},\mbC)$. Let~$F_0(t^*_*)$ be its potential. Suppose we have a deformation~$F(t^*_*,\eps)$ of~$F_0$ the form
$$
F=F_0+\sum_{g\ge 1}\eps^{2g}F_g,\quad F_g\in \mbC[[t^*_*]].
$$
Introduce formal power series $(w^\sol)^\alpha(x,t^*_*,\eps)$ by $(w^\sol)^\alpha:=\left.\eta^{\alpha\mu}\frac{\d^2 F}{\d t^\mu_0\d t^1_0}\right|_{t^1_0\mapsto t^1_0+x}$, and let $(w^\sol)^\alpha_n:=\d_x^n(w^\sol)^\alpha$. We will use the following notation:
$$
\<\tau_{d_1}(e_{\alpha_1})\cdots\tau_{d_n}(e_{\alpha_n})\>_g:=\left.\frac{\d^n F_g}{\d t^{\alpha_1}_{d_1}\cdots\d t^{\alpha_n}_{d_n}}\right|_{t^*_*=0}.
$$
A correlator $\<\tau_{d_1}(e_{\alpha_1})\cdots\tau_{d_n}(e_{\alpha_n})\>_g$ will be called admissible, if $\sum d_i\le 2g$.
\begin{lemma}\label{lemma:technical lemma for reconstruction}
Suppose that the following conditions are satisfied:
\begin{itemize}

\item we have the vanishing property 
\begin{gather}\label{eq:vanishing for technical}
\<\tau_{d_1}(e_{\alpha_1})\cdots\tau_{d_n}(e_{\alpha_n})\>_g=0,\quad\text{if}\quad \sum d_i\le 2g-2;
\end{gather}
\item the string and the dilaton equations hold:
\begin{align*}
&\frac{\d F}{\d t^1_0}=\sum_{n\ge 0}t^\alpha_{n+1}\frac{\d F}{\d t^\alpha_n}+\frac{1}{2}\eta_{\alpha\beta}t^\alpha_0t^\beta_0,\\
&\frac{\d F}{\d t^1_1}=\eps\frac{\d F}{\d\eps}+\sum_{n\ge 0}t^\alpha_n\frac{\d F}{\d t^\alpha_n}-2F+\eps^2\frac{N}{24};
\end{align*} 

\item for each $\mu$ there exists a differential polynomial $\Omega_{1,1;\mu,0}\in\hcA^{[0]}_w$ such that
\begin{gather}\label{eq:differential polynomial for technical}
\left.\Omega_{1,1;\mu,0}\right|_{w^\gamma_n=(w^\sol)^\gamma_n}=\left.\frac{\d^2 F}{\d t^1_1\d t^\mu_0}\right|_{t^1_0\mapsto t^1_0+x}.
\end{gather}
\end{itemize}
Then all correlators $\<\tau_{d_1}(e_{\alpha_1})\cdots\tau_{d_n}(e_{\alpha_n})\>_g$ are uniquely determined by the admissible correlators.
\end{lemma}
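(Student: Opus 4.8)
The natural approach is a double induction: the outer induction on the genus $g$ (genus $0$ is given, $F_0$ being the potential of the fixed CohFT), and inside each genus an induction on the total descendent degree $\delta:=\sum d_i$ of the correlator. The base cases are exactly the hypotheses: a correlator with $\delta\le 2g-2$ vanishes by \eqref{eq:vanishing for technical}, and one with $\delta\in\{2g-1,2g\}$ is admissible, hence assumed known. So fix $g\ge1$, suppose all correlators of genus $<g$ and all genus-$g$ correlators of total degree $<\delta$ are determined for some $\delta\ge 2g+1$, and let $c=\langle\prod_{i=1}^n\tau_{d_i}(e_{\alpha_i})\rangle_g$ with $\sum d_i=\delta$.

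The first reduction uses the string and dilaton equations to eliminate insertions equal to $\tau_0(e_1)$ or $\tau_1(e_1)$: the string equation rewrites $\langle\tau_0(e_1)Y\rangle_g$ as a sum of correlators with one descendent lowered, of total degree $\delta-1$, and the dilaton equation rewrites $\langle\tau_1(e_1)Y\rangle_g$ as $(2g-2+|Y|)\langle Y\rangle_g$, again of degree $\delta-1$; in both cases the inner induction applies. If every $d_i=0$ then $\delta=0\le2g-2$ and $c=0$; so we may assume each $\tau_{d_i}(e_{\alpha_i})$ satisfies $\alpha_i\ne1$ or $d_i\ge2$, and that some $d_n\ge1$.

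The key step is to promote hypothesis \eqref{eq:differential polynomial for technical}: from the single ``row'' $\{\Omega_{1,1;\mu,0}\}_\mu$, using the string and dilaton equations, one reconstructs --- by the standard tau-structure recursion (cf.\ \cite{DZ05,BDGR16a}) --- differential polynomials $\Omega_{\alpha,p;1,1}\in\hcA^{[0]}_w$ for all $\alpha$ and all $p\ge1$ satisfying
\[
\left.\Omega_{\alpha,p;1,1}\right|_{w^\gamma_k=(w^\sol)^\gamma_k}=\left.\frac{\partial^2F}{\partial t^\alpha_p\,\partial t^1_1}\right|_{t^1_0\mapsto t^1_0+x}.
\]
Granting this, I would differentiate the identity with $(\alpha,p)=(\alpha_n,d_n)$ by $\partial/\partial t^{\alpha_1}_{d_1}\cdots\partial/\partial t^{\alpha_{n-1}}_{d_{n-1}}$, set $t^*_*=x=0$, and extract $\Coef_{\eps^{2g}}$. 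The right-hand side becomes $\langle\tau_1(e_1)\prod_{i=1}^n\tau_{d_i}(e_{\alpha_i})\rangle_g$, which by the dilaton equation equals $(2g-2+n)\,c$, with $2g-2+n>0$. On the left, write the genus expansion $(w^\sol)^\gamma=\sum_{h\ge0}\eps^{2h}(w^\sol_h)^\gamma$ with $(w^\sol_h)^\gamma=\eta^{\gamma\nu}\left.\partial^2F_h/\partial t^\nu_0\partial t^1_0\right|_{t^1_0\mapsto t^1_0+x}$. Since $\Omega_{\alpha_n,d_n;1,1}\in\hcA^{[0]}_w$, its $\eps^0$-part has differential degree $0$ and is therefore a function of the $w^\gamma_0$ only; consequently, after substituting $w=w^\sol$ and extracting $\eps^{2g}$, each genus-$g$ correlator occurs only linearly and with coefficients built from genus-$0$ correlators, and all such terms come from $\sum_\gamma\left.(\partial\Omega^{[0]}_{\alpha_n,d_n;1,1}/\partial w^\gamma)\right|_{w=w^\sol_0}\cdot(w^\sol_g)^\gamma$. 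After the $t$-differentiations and $t=x=0$, these genus-$g$ correlators have the shape $\langle(\text{a subset of }\{\tau_{d_i}(e_{\alpha_i})\}_{i<n})\,\tau_0(e_\nu)\,\tau_0(e_1)\rangle_g$, of total degree $\le\sum_{i<n}d_i=\delta-d_n<\delta$ (the ``spectator'' indices $\alpha_n,d_n$ and $1,1$ are not differentiation variables, so they do not contribute to the left-hand degree). Hence every term on the right of the resulting equality is either a genus-$g$ correlator of total degree $<\delta$, or involves only correlators of genus $<g$ and genus-$0$ correlators --- all known --- so we may solve for $c$. This closes the induction.

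The main obstacle is the reconstruction, from the $\Omega_{1,1;\mu,0}$ alone, of the $\Omega_{\alpha,p;1,1}$ (equivalently of the full family $\Omega_{\alpha,p;\beta,q}$): this is where one genuinely uses the structure of the associated integrable hierarchy, the string and dilaton equations furnishing recursions that raise the descendent indices, and the delicate part is to verify that polynomiality in the $w^*_*$ --- and the grading, so that each $\Omega^{[0]}_{\alpha,p;\beta,q}$ really has differential degree $0$, which is precisely what forces the degree drop in the step above --- is preserved throughout. A secondary, purely combinatorial check is that, after the chain-rule expansion of $\Omega^{[0]}_{\alpha_n,d_n;1,1}|_{w=w^\sol}$ and the $t$-differentiations, no genus-$g$ correlator of total degree $\delta$ remains on the left.
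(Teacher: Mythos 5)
There is a genuine gap, and it sits exactly where you flag your ``main obstacle''. Your argument hinges on promoting the single hypothesis \eqref{eq:differential polynomial for technical} to a full family of differential polynomials $\Omega_{\alpha,p;1,1}\in\hcA^{[0]}_w$ for all $(\alpha,p)$, invoking ``the standard tau-structure recursion''. But the lemma is stated for an \emph{arbitrary} deformation $F$ of a genus-$0$ potential satisfying only the vanishing property, string, dilaton, and the existence of $\Omega_{1,1;\mu,0}$; no integrable hierarchy, Hamiltonian operator, or tau-structure is assumed, so the recursions of \cite{DZ05,BDGR16a} that raise descendent indices are simply not available, and nothing in the hypotheses guarantees that $\d^2F/\d t^\alpha_p\d t^1_1$ is even expressible as a differential polynomial in $w^\sol$ for $p\ge 1$. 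The paper never needs this: it uses only $\Omega_{1,1;\mu,0}$, regards $w^\sol$ as a solution of the single evolutionary system $\frac{\d w^\alpha}{\d t^1_1}=\eta^{\alpha\mu}\d_x\Omega_{1,1;\mu,0}$, and invokes the reconstruction argument of Proposition~5.2 of \cite{BG16} (plus string and dilaton) to rebuild the whole of $w^\sol$, and then $F$, from the dispersionless part, which is fixed by the admissible (indeed genus-$0$ primary) correlators.

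A second, related gap is in your bookkeeping of the $\eps^{2g}$-coefficient: you claim that besides the linear terms $\sum_\gamma(\d\Omega^{[0]}/\d w^\gamma)\cdot(w^\sol_g)^\gamma$ everything on the left is ``known'', but the terms coming from $\Omega^{[2h]}_{\alpha_n,d_n;1,1}$ with $1\le h\le g$ evaluated on lower-genus jets involve the \emph{coefficients of the differential polynomial itself}, which are not given --- only the composite identity after substituting $w^\sol$ is assumed. One must therefore first prove that the differential polynomial is uniquely determined by the admissible correlators; this is precisely the technical heart of the paper's proof for $\Omega_{1,1;\mu,0}$ (it uses the structure \eqref{eq:property of wsol} of $(w^\sol)^\alpha_n|_{x=0}$ coming from string plus \eqref{eq:vanishing for technical}, the grading, and the auxiliary vanishing $\d^2\Omega_{1,1;\mu,0}/\d(w^1_x)^2=0$), and it is absent from your proposal. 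Your outer induction on genus and degree, and the use of string/dilaton to strip $\tau_0(e_1)$ and $\tau_1(e_1)$ insertions, are fine as far as they go, but without closing these two points the induction step does not let you solve for the unknown correlator.
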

\begin{proof}
The topological recursion relation in genus zero implies that the primary correlators $\<\tau_0(e_{\alpha_1})\cdots\tau_0(e_{\alpha_n})\>_0$ determine all correlators in genus zero. Denote by $\cR_d$ the subspace of~$\mbC[[t^*_*]]$ defined by
$$
\cR_d:=\left\{\sum c_{\alpha_1,\ldots,\alpha_n}^{d_1,\ldots,d_n}\prod t^{\alpha_i}_{d_i}\in\mbC[[t^*_*]]\left|c_{\alpha_1,\ldots,\alpha_n}^{d_1,\ldots,d_n}=0,\text{ if $\sum d_i\le d-1$}\right.\right\}.
$$ 
From the string equation and the vanishing property~\eqref{eq:vanishing for technical} it follows that the function $(w^\sol)^\alpha_n|_{x=0}$ has the form
\begin{gather}\label{eq:property of wsol}
(w^\sol)^\alpha_n|_{x=0}=\delta^{\alpha,1}\delta_{n,1}+t^\alpha_n+r^\alpha_n+\sum_{g\ge 1}q^\alpha_{g,n}\eps^{2g},\quad r^\alpha_n\in\cR_{n+1},\quad q^\alpha_{g,n}\in\cR_{2g+n}. 
\end{gather}
Introduce a grading in the ring $\mbC[[t^*_*]]$ by $\deg t^\alpha_d:=d$ and consider the expansion
$$
q^\alpha_{g,n}=\sum_{k\ge 0}q^\alpha_{g,n,k},\quad \deg q^\alpha_{g,n,k}=2g+n+k.
$$
Note that the functions $q^\alpha_{g,n,0}$ and $q^\alpha_{g,n,1}$ are determined by the admissible correlators.

Let us show that $\frac{\d^2\Omega_{1,1;\mu,0}}{\d(w^1_x)^2}=0$. Consider a monomial $f$ of the form
\begin{gather}\label{eq:monomial f}
f=\eps^{2h}(w^1_x)^l w^{\alpha_1}_{d_1}\cdots w^{\alpha_n}_{d_n},\quad l+\sum d_i=2h,\quad (\alpha_i,d_i)\ne (1,1).
\end{gather}
Then property~\eqref{eq:property of wsol} implies that
\begin{gather}\label{eq:main property for technical}
(f|_{w^\alpha_n=(w^\sol)^\alpha_n})|_{x=0}=\eps^{2h}(t^{\alpha_1}_{d_1}\cdots t^{\alpha_n}_{d_n}+h_0)+\sum_{\substack{k\ge 1\\m\ge 0}}\eps^{2h+2k}h_{k,m},
\end{gather}
where $h_0\in\cR_{2h-l+1}$, $\deg h_{k,m}=2h+2k-l+m$ and the functions $h_{k,0}$ and $h_{k,1}$ are completely determined by the admissible correlators. Suppose now that $\frac{\d^2\Omega_{1,1;\mu,0}}{\d(w^1_x)^2}\ne 0$. Consider monomials~$f$ \eqref{eq:monomial f} with the minimal $h$ such that $l\ge 2$ and the coefficient of $f$ in the differential polynomial $\Omega_{1,1;\mu,0}$ is non-zero. Let us choose such a monomial with as big $l$ as possible. Then using equation~\eqref{eq:differential polynomial for technical} we can see that
$$
\<\tau_0(e_\mu)\prod\tau_{d_i}(e_{\alpha_i})\>_h=\frac{1}{2h-1+n}\<\tau_1(e_1)\tau_0(e_\mu)\prod\tau_{d_i}(e_{\alpha_i})\>_h\ne 0.
$$
This contradicts the vanishing property~\eqref{eq:vanishing for technical}, because $\sum d_i=2h-l\le 2h-2$. We conclude that $\frac{\d^2\Omega_{1,1;\mu,0}}{\d(w^1_x)^2}=0$.

Let us now prove that the differential polynomial $\Omega_{1,1;\mu,0}$ is completely determined by the admissible correlators. Let
$$
c_{g;\alpha_1,\ldots,\alpha_n}^{d_1,\ldots,d_n}:=\Coef_{\eps^{2g}}\frac{\d^n\Omega_{1,1;\mu,0}}{\d w^{\alpha_1}_{d_1}\cdots\d w^{\alpha_n}_{d_n}},\quad\sum d_i=2g.
$$
Let us prove by induction on~$g$ that all coefficients $c_{g;\alpha_1,\ldots,\alpha_n}^{d_1,\ldots,d_n}$ are uniquely determined by the admissible correlators. We already know it for $g=0$. Suppose $g\ge 1$. Using property~\eqref{eq:main property for technical} we see that if $(\beta_i,q_i)\ne (1,1)$ and $\sum q_i=2g-1$, then the difference 
$$
\<\tau_1(e_1)\tau_0(e_\mu)\prod\tau_{q_i}(e_{\beta_i})\>_g-c_{g;1,\beta_1,\ldots,\beta_m}^{1,q_1,\ldots,q_m}
$$ 
can be expressed in terms of the admissible correlators and the coefficients $c_{h;\gamma_1,\ldots,\gamma_i}^{r_1,\ldots,r_i}$ with $h<g$. Similarly, if $(\beta_i,q_i)\ne (1,1)$ and $\sum q_i=2g$, then the difference 
$$
\<\tau_1(e_1)\tau_0(e_\mu)\prod\tau_{q_i}(e_{\beta_i})\>_g-c_{g;\beta_1,\ldots,\beta_m}^{q_1,\ldots,q_m}
$$ 
can be expressed in terms of the admissible correlators, the coefficients $c_{h;\gamma_1,\ldots,\gamma_i}^{r_1,\ldots,r_i}$ with $h<g$ and the coefficients $c_{g;1,\rho_1,\ldots,\rho_j}^{1,s_1,\ldots,s_j}$. We conclude that the differential polynomial $\Omega_{1,1;\mu,0}$ is completely determined by the admissible correlators.

We see that the functions $(w^\sol)^\alpha$ are solutions of the following system of partial differential equations:
$$
\frac{\d w^\alpha}{\d t^1_1}=\eta^{\alpha\mu}\d_x\Omega_{1,1;\mu,0},\quad 1\le\alpha\le N.
$$
The argument from the proof of Proposition~5.2 in~\cite{BG16} shows that using this system together with the string and the dilaton equations for $F$ one can uniquely reconstruct the whole solution~$(w^\sol)^\alpha$ starting from the dispersionless part $(w^\sol)^\alpha|_{\eps=0}$. After that using the string and the dilaton equations it is easy to reconstruct the whole function $F$. The lemma is proved.
\end{proof}

\begin{proposition}
Suppose that all relations~\eqref{eq:main relations} with $\sum d_i=2g$ and $d_i\ge 1$ are true. Then the strong DR/DZ equivalence conjecture is true for any semisimple cohomological field theory.
\end{proposition}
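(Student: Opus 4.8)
The strategy is to deduce the equivalent statement $F^\DR=F^\red$ (which by~\cite{BDGR16a} is, for semisimple CohFTs, equivalent to Conjecture~\ref{conjecture:strong}, and in general is Conjecture~\ref{conjecture:generalization of strong}) by applying the reconstruction Lemma~\ref{lemma:technical lemma for reconstruction} to both $F^\DR$ and $F^\red$, viewed as deformations of the genus-$0$ potential $F_0$ of the CohFT. Both are indeed deformations of the same $F_0$: for $F^\red$ because $\cP\in\hcA^{[-2]}_w$ contributes only at order $\eps^2$ and higher, and for $F^\DR$ because by~\eqref{eq:DR/DZ and relations,eq1} together with the genus-$0$ case of Conjecture~\ref{conjecture} (proved above) one has $\langle\prod\tau_{d_i}(e_{\alpha_i})\rangle^\DR_0=\int\prod\psi_i^{d_i}c_{0,n}=\langle\prod\tau_{d_i}(e_{\alpha_i})\rangle_0$. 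Since, by Lemma~\ref{lemma:technical lemma for reconstruction}, a deformation of $F_0$ satisfying the three listed hypotheses is determined by its admissible correlators (those with $\sum d_i\le 2g$), it suffices to check (i) that $F^\DR$ and $F^\red$ each satisfy these hypotheses, and (ii) that their admissible correlators coincide.

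For (ii) I would first observe that by Proposition~\ref{proposition:vanishing} and Proposition~\ref{proposition:Fred} all correlators of $F^\DR$ and $F^\red$ with $\sum d_i\le 2g-2$ vanish, so only $\sum d_i\in\{2g-1,2g\}$ needs to be matched; by~\eqref{eq:DR/DZ and relations,eq1}--\eqref{eq:DR/DZ and relations,eq2} this reduces to showing that the assumed identities $A^g_{d_1,\ldots,d_n}=B^g_{d_1,\ldots,d_n}$ with $\sum d_i=2g$ and all $d_i\ge 1$ propagate to all multi-indices with $\sum d_i\in\{2g-1,2g\}$. For $\sum d_i=2g-1$ with all $d_i\ge 1$ I would apply Propositions~\ref{proposition:dilatonA} and~\ref{proposition:dilatonB} with an extra parameter equal to $1$, so that $(d_1,\ldots,d_n,1)$ lies in the assumed set: then $(2g-2+n)A^g_{d_1,\ldots,d_n}=\pi_*A^g_{d_1,\ldots,d_n,1}=\pi_*B^g_{d_1,\ldots,d_n,1}=(2g-2+n)B^g_{d_1,\ldots,d_n}$, hence $A^g_{d_1,\ldots,d_n}=B^g_{d_1,\ldots,d_n}$ because $2g-2+n>0$. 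The string equation (Propositions~\ref{proposition:pullback of A-class} and~\ref{proposition:pullback of B-class}) then propagates the identity across vanishing parameters: in the range $\sum d_i=2g-1$ the pullback formula has no correction term, so induction on the number of zeros settles all multi-indices with $\sum d_i=2g-1$; in the range $\sum d_i=2g$ the correction terms have $\sum=2g-1$ (already settled) and the main term is of type $\sum d_i=2g$ on one fewer point (inductive hypothesis, with base case the assumed relations), so a further induction on the number of zeros settles $\sum d_i=2g$.

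For (i) I would check the three hypotheses of Lemma~\ref{lemma:technical lemma for reconstruction} for each of $F^\DR$ and $F^\red$. The vanishing~\eqref{eq:vanishing for technical} holds for $F^\DR$ by Proposition~\ref{proposition:vanishing} and for $F^\red$ by Proposition~\ref{proposition:Fred}; the string and dilaton equations hold for $F^\DR$ by construction and for $F^\red$ by~\cite{BDGR16a}, where the reduction procedure defining $\cP$ is shown to preserve them. The differential polynomial $\Omega_{1,1;\mu,0}$ of~\eqref{eq:differential polynomial for technical} is, for $F^\DR$, the component of the tau-structure of the DR hierarchy constructed in~\cite{BDGR16a}; for $F^\red$ one uses that, by Proposition~\ref{proposition:Fred}, $F^\red$ is the topological tau-function of the hierarchy obtained from the DZ hierarchy via the normal Miura transformation~\eqref{eq:normal} with differential polynomial $\cP$, and that a normal Miura transformation carries the DZ tau-structure to a tau-structure of the transformed hierarchy whose normal coordinate is precisely $(w^\sol)^\alpha$; the corresponding two-point function $\Omega_{1,1;\mu,0}$ then satisfies~\eqref{eq:differential polynomial for technical}. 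Having matched the admissible correlators and verified the hypotheses, Lemma~\ref{lemma:technical lemma for reconstruction} forces $F^\DR=F^\red$, which proves the proposition.

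The main obstacle I anticipate is the verification of the hypotheses for $F^\red$, in particular the existence of $\Omega_{1,1;\mu,0}$: one needs to know that the reduction of Proposition~\ref{proposition:Fred} produces not merely a series with the correct vanishing but genuinely the topological tau-function of a tau-symmetric hierarchy, and that this series still obeys the string and dilaton equations — both facts resting on the normal Miura transformation formalism of~\cite{BDGR16a}. By comparison, the combinatorics in part (ii), namely propagating $A^g=B^g$ from the restricted set to $\sum d_i\in\{2g-1,2g\}$, is routine once the case $\sum d_i=2g-1$ has been extracted from the dilaton equation.
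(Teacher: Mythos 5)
Your proposal is correct and follows essentially the same route as the paper: it uses the string/dilaton Propositions (pullback and pushforward of the $A$- and $B$-classes) to propagate the assumed relations with $\sum d_i=2g$, $d_i\ge 1$ to all relations with $\sum d_i\le 2g$, hence matches the admissible DR and reduced correlators, and then invokes Lemma~\ref{lemma:technical lemma for reconstruction}, whose hypotheses both $F^\DR$ and $F^\red$ satisfy by the results of~\cite{BDGR16a}, to conclude $F^\DR=F^\red$. The only difference is that you spell out the propagation and the verification of the lemma's hypotheses, which the paper leaves implicit with a citation.
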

\begin{proof}
Consider an arbitrary semisimple cohomological field theory. Propositions~\ref{proposition:pullback of A-class},~\ref{proposition:pullback of B-class}, \ref{proposition:dilatonA} and~\ref{proposition:dilatonB} imply that all relations~\eqref{eq:main relations} with $\sum d_i\le 2g$ are true. Therefore, $\<\prod\tau_{d_i}(e_{\alpha_i})\>^\DR_g=\<\prod\tau_{d_i}(e_{\alpha_i})\>_g^\red$, if $\sum d_i\le 2g$. Both potentials~$F^\DR$ and~$F^\red$ satisfy the assumptions of Lemma~\ref{lemma:technical lemma for reconstruction} (see~\cite{BDGR16a}). Therefore, the lemma implies that $F^\DR=F^\red$. So the strong DR/DZ equivalence conjecture is true.
\end{proof}

In the appendix we will prove that relations~\eqref{eq:main relations} are true, when $g=2$, $d_i\ge 1$ and $\sum d_i\le 4$. Therefore, the strong DR/DZ equivalence conjecture is true for all semisimple cohomological field theories at the approximation up to genus~$2$.


{\appendix

\section{Proof of the restricted genus $2$ relations}\label{section:genus 2}

Here we prove relations~\eqref{eq:main relations}, when $g=2$, $d_i\ge 1$ and $\sum d_i\le 4$.

\subsection{Relation $A^2_d=B^2_d$}

As we know from Section~\ref{subsection:1-point}, in order to prove that $A^2_d=B^2_d$ for any $d\ge 3$, it is sufficient to prove that $A^2_3=B^2_3$. We have
$$
B^2_3=\psi_1^3-\tikz[baseline=-1mm]{\draw (A)--(B);\leg{B}{0};\gg{1}{A};\gg{1}{B};\lab{B}{25}{4mm}{\psi^2};},
$$
and $A^2_3=\Coef_{a^4}\left(\lambda_2\DR_2(\widetilde{-a},a)\right)$. The group $H^2(\oM_{2,1},\mbQ)$ has dimension $3$ and a basis is given by~(see e.g.~\cite{Get98})
$$
\psi_1\qquad 
\delta_0:=\tikz[baseline=-1mm]{\lp{A}{180};\leg{A}{0};\gg{1}{A};}\qquad
\delta_1:=\tikz[baseline=-1mm]{\draw (A)--(B);\leg{B}{0};\gg{1}{A};\gg{1}{B};}.
$$ 
So it is sufficient to check that the intersection of the difference $A^2_3-B^2_3$ with these three classes is zero. We compute
\begin{align*}
&\int_{\DR_2(\widetilde{-a},a)}\lambda_2\psi_1=\frac{a^4}{1152}&&\Rightarrow&&\int_{\oM_{2,1}}A^2_3\psi_1=\frac{1}{1152},\\
&\int_{\DR_2(\widetilde{-a},a)}\lambda_2\delta_0=0&&\Rightarrow&&\int_{\oM_{2,1}}A^2_3\delta_0=0,\\
&\int_{\DR_2(\widetilde{-a},a)}\lambda_2\delta_1=\frac{a^4}{576}&&\Rightarrow&&\int_{\oM_{2,1}}A^2_3\delta_1=\frac{1}{576},
\end{align*}
and
\begin{align*}
&\int_{\oM_{2,1}}B^2_3\psi_1=\int_{\oM_{2,1}}\psi_1^4=\frac{1}{1152},\\
&\int_{\oM_{2,1}}B^2_3\delta_0=
\tikz[baseline=-1mm]{\lp{A}{180};\leg{A}{0};\gg{1}{A};\lab{A}{25}{4mm}{\psi^3};}
-\tikz[baseline=-1mm]{\draw (A)--(B);\lp{A}{180};\leg{B}{0};\gg{0}{A};\gg{1}{B};\lab{B}{25}{4mm}{\psi^2};}=0,\\
&\int_{\oM_{2,1}}B^2_3\delta_1=
\tikz[baseline=-1mm]{\draw (A)--(B);\leg{B}{0};\gg{1}{A};\gg{1}{B};\lab{B}{25}{4mm}{\psi^2};\lab{A}{28}{3.5mm}{\psi};}=\frac{1}{576}.
\end{align*}
Thus, $A^2_3=B^2_3$.

\subsection{Relation $A^2_{2,1}=B^2_{2,1}$}

We have
\begin{gather}\label{eq:B21}
B^2_{2,1}=\psi_1^2\psi_2
-3\,\tikz[baseline=-1mm]{\draw (A)--(B);\legm{B}{30}{1};\legm{B}{-30}{2};\gg{2}{A};\gg{0}{B};\lab{A}{25}{4mm}{\psi^2}}
-\tikz[baseline=-1mm]{\draw (A)--(B);\legm{B}{30}{1};\legm{B}{-30}{2};\gg{1}{A};\gg{1}{B};\lab{B}{53}{3.5mm}{\psi};\lab{B}{-56}{3.5mm}{\psi}}
-\tikz[baseline=-1mm]{\draw (A)--(B);\legm{B}{30}{1};\legm{B}{-30}{2};\gg{1}{A};\gg{1}{B};\lab{B}{48}{4.1mm}{\psi^2}}
+3\,\tikz[baseline=-1mm]{\draw (A)--(B)--(C);\legm{C}{30}{1};\legm{C}{-30}{2};\gg{1}{A};\gg{1}{B};\gg{0}{C};\lab{B}{28}{3.5mm}{\psi}}.
\end{gather}
In~\cite{Get98} E.~Getzler proved that $H^*(\oM_{2,2},\mbQ)=R^*(\oM_{2,2})$. Moreover, he proved that the group $R^2(\oM_{2,2})$ has dimension~$14$ with a basis given by
\begin{align}
&\delta_{22}:=\tikz[baseline=-1mm]{\draw (A)--(B);\legm{B}{30}{1};\legm{B}{-30}{2};\gg{2}{A};\gg{0}{B};\lab{A}{28}{3.5mm}{\psi};}&&
\delta_{11|}:=\tikz[baseline=-1mm]{\draw (A)--(B)--(C);\legm{B}{-135}{1};\legm{B}{-45}{2};\gg{1}{A};\gg{0}{B};\gg{1}{C};}&&
\delta_{11|1}:=\tikz[baseline=-1mm]{\draw (A)--(B)--(C);\legm{A}{-90}{1};\legm{B}{-90}{2};\gg{1}{A};\gg{0}{B};\gg{1}{C};}\label{eq:basis in M22}\\
&\delta_{11|2}:=\tikz[baseline=-1mm]{\draw (A)--(B)--(C);\legm{A}{-90}{2};\legm{B}{-90}{1};\gg{1}{A};\gg{0}{B};\gg{1}{C};}&&
\delta_{11|12}:=\tikz[baseline=-1mm]{\draw (A)--(B)--(C);\legm{C}{30}{1};\legm{C}{-30}{2};\gg{1}{A};\gg{1}{B};\gg{0}{C};}&&
\delta_{01|}:=\tikz[baseline=-1mm]{\draw (A)--(B);\legm{A}{-135}{1};\legm{A}{-45}{2};\lp{A}{90};\gg{0}{A};\gg{1}{B}}\notag\\
&\delta_{01|1}:=\tikz[baseline=-1mm]{\draw (A)--(B);\legm{A}{-90}{2};\legm{B}{-90}{1};\lp{A}{90};\gg{0}{A};\gg{1}{B}}&&
\delta_{01|2}:=\tikz[baseline=-1mm]{\draw (A)--(B);\legm{A}{-90}{1};\legm{B}{-90}{2};\lp{A}{90};\gg{0}{A};\gg{1}{B}}&&
\delta_{01|12}:=\tikz[baseline=-1mm]{\draw (A)--(B);\legm{B}{30}{1};\legm{B}{-30}{2};\lp{A}{180};\gg{0}{A};\gg{1}{B}}\notag\\
&\delta_{0|}:=\tikz[baseline=-1mm]{\doubleedge;\legm{B}{30}{1};\legm{B}{-30}{2};\gg{1}{A};\gg{0}{B}}&&
\delta_{0|1}:=\tikz[baseline=-1mm]{\doubleedge;\legm{A}{180}{1};\legm{B}{0}{2};\gg{1}{A};\gg{0}{B}}&&
\delta_{0|2}:=\tikz[baseline=-1mm]{\doubleedge;\legm{A}{180}{2};\legm{B}{0}{1};\gg{1}{A};\gg{0}{B}}\notag\\
&\delta_{0|12}:=\tikz[baseline=-1mm]{\draw (A)--(B);\lp{A}{180};\legm{B}{30}{1};\legm{B}{-30}{2};\gg{1}{A};\gg{0}{B}}&&
\delta_{00}:=\tikz[baseline=-1mm]{\lp{A}{90};\lp{A}{-90};\legm{A}{0}{1};\legm{A}{180}{2};\gg{0}{A}}&&\notag
\end{align}
We compute
\begin{align*}
&\int_{\DR_2(\widetilde{-a_1-a_2},a_1,a_2)}\lambda_2\delta_{22}=&&&&\\
=&\int_{\DR_2(\widetilde{-a_1-a_2},a_1+a_2)}\lambda_2\psi_1=\frac{(a_1+a_2)^4}{1152}&&\Rightarrow&&\int_{\oM_{2,2}}A^2_{2,1}\delta_{22}=\frac{1}{384},\\
&\int_{\DR_2(\widetilde{-a_1-a_2},a_1,a_2)}\lambda_2\delta_{11|}=0&&\Rightarrow&&\int_{\oM_{2,2}}A^2_{2,1}\delta_{11|}=0,\\
&\int_{\DR_2(\widetilde{-a_1-a_2},a_1,a_2)}\lambda_2\delta_{11|1}=\frac{a_1^2(a_1+a_2)^2}{576}&&\Rightarrow&&\int_{\oM_{2,2}}A^2_{2,1}\delta_{11|1}=\frac{1}{576},\\
&\int_{\DR_2(\widetilde{-a_1-a_2},a_1,a_2)}\lambda_2\delta_{11|2}=\frac{a_2^2(a_1+a_2)^2}{576}&&\Rightarrow&&\int_{\oM_{2,2}}A^2_{2,1}\delta_{11|2}=0,\\
&\int_{\DR_2(\widetilde{-a_1-a_2},a_1,a_2)}\lambda_2\delta_{11|12}=\frac{(a_1+a_2)^4}{576}&&\Rightarrow&&\int_{\oM_{2,2}}A^2_{2,1}\delta_{11|12}=\frac{1}{192}.
\end{align*}
Since $\left.\lambda_g\right|_{\oM_{g,n}\backslash\cM^{\ct}_{g,n}}=0$, the intersections of all remaining $9$ classes from~\eqref{eq:basis in M22} with $A^2_{2,1}$ are equal to zero. It is not hard to compute the intersections of the class $B^2_{2,1}$ with the classes from~\eqref{eq:basis in M22} ans see that they agree with what we have just computed for $A^2_{2,1}$. Thus, $A^2_{2,1}=B^2_{2,1}$.

\subsection{Relation $A^2_{1,1,1}=B^2_{1,1,1}$}

We have
\begin{align}
B^2_{1,1,1}=&\psi_1\psi_2\psi_3
-2\,\tikz[baseline=-1mm]{\draw (A)--(B);\leg{A}{-30};\leg{B}{30};\leg{B}{-30};\gg{2}{A};\gg{0}{B};\lab{A}{28}{3.5mm}{\psi};\lab{A}{-60}{3.5mm}{\psi}}
-6\,\tikz[baseline=-1mm]{\draw (A)--(B);\leg{B}{30};\leg{B}{-30};\leg{B}{0};\gg{2}{A};\gg{0}{B};\lab{A}{25}{4mm}{\psi^2}}
-2\,\tikz[baseline=-1mm]{\draw (A)--(B);\leg{B}{30};\leg{B}{0};\leg{B}{-30};\gg{2}{A};\gg{0}{B};\lab{A}{28}{3.5mm}{\psi};\lab{B}{53}{3.5mm}{\psi};}
-\tikz[baseline=-1mm]{\draw (A)--(B);\leg{B}{30};\leg{B}{0};\leg{B}{-30};\gg{1}{A};\gg{1}{B};\lab{B}{53}{3.5mm}{\psi};\lab{B}{-60}{3.5mm}{\psi};}\label{eq:B2111}\\
&+4\,\tikz[baseline=-1mm]{\draw (A)--(B)--(C);\leg{C}{-30};\leg{C}{30};\leg{B}{-30};\gg{2}{A};\gg{0}{B};\gg{0}{C};\lab{A}{28}{3.5mm}{\psi};}
+2\,\tikz[baseline=-1mm]{\draw (A)--(B)--(C);\leg{B}{-30};\leg{C}{-30};\leg{C}{30};\gg{1}{A};\gg{1}{B};\gg{0}{C};\lab{B}{-60}{3.5mm}{\psi}}
+2\,\tikz[baseline=-1mm]{\draw (A)--(B)--(C);\leg{C}{-30};\leg{C}{30};\leg{B}{-30};\gg{1}{A};\gg{1}{B};\gg{0}{C};\lab{B}{28}{3.5mm}{\psi};}\notag\\
&+6\,\tikz[baseline=-1mm]{\draw (A)--(B)--(C);\leg{C}{-30};\leg{C}{30};\leg{C}{0};\gg{1}{A};\gg{1}{B};\gg{0}{C};\lab{B}{28}{3.5mm}{\psi};}
+2\,\tikz[baseline=-1mm]{\draw (A)--(B)--(C);\leg{C}{-30};\leg{C}{30};\leg{C}{0};\gg{1}{A};\gg{1}{B};\gg{0}{C};\lab{C}{53}{3.5mm}{\psi};}
-4\,\tikz[baseline=-1mm]{\draw (A)--(B)--(C)--(D);\leg{C}{-30};\leg{D}{30};\leg{D}{-30};\gg{1}{A};\gg{1}{B};\gg{0}{C};\gg{0}{D};}.\notag
\end{align}
Introduce the following notations:
\begin{align*}
&\alpha_1:=\tikz[baseline=-1mm]{\draw (A)--(B)--(C)--(D);\leg{B}{-60};\leg{B}{-120};\leg{C}{-90};\gg{1}{A};\gg{0}{B};\gg{0}{C};\gg{1}{D}}&&
\alpha_2:=\tikz[baseline=-1mm]{\draw (E)--(A)--(B);\draw (F)--(A);\leg{A}{-30};\leg{B}{30};\leg{B}{-30};\gg{1}{E};\gg{1}{F};\gg{0}{A};\gg{0}{B}}&&
\alpha_3:=\tikz[baseline=-1mm]{\draw (E)--(A)--(B);\draw (F)--(A);\leg{F}{-30};\leg{B}{30};\leg{B}{-30};\gg{1}{E};\gg{1}{F};\gg{0}{A};\gg{0}{B}}\\
&\alpha_4:=\tikz[baseline=-1mm]{\draw (A)--(B)--(C)--(D);\leg{C}{-30};\leg{D}{30};\leg{D}{-30};\gg{1}{A};\gg{1}{B};\gg{0}{C};\gg{0}{D}}&&
\alpha_5:=\tikz[baseline=-1mm]{\draw (E)--(A)--(B);\draw (F)--(A);\leg{B}{0};\leg{B}{30};\leg{B}{-30};\gg{1}{E};\gg{1}{F};\gg{0}{A};\gg{0}{B}}&&
\alpha_6:=\tikz[baseline=-1mm]{\draw (A)--(B)--(C)--(D);\leg{B}{-30};\leg{D}{30};\leg{D}{-30};\gg{1}{A};\gg{0}{B};\gg{1}{C};\gg{0}{D}}\\
&a_{3,2,1}^{i,j,k}:=\tikz[baseline=-1mm]{\draw (A)--(B)--(C);\leg{A}{-60};\leg{A}{-120};\leg{B}{-60};\leg{B}{-120};\leg{C}{-60};\leg{C}{-120};\lp{A}{90};\gg{1}{A};\gg{0}{B};\gg{0}{C};\lab{A}{-90}{5.9mm}{...};\lab{A}{-90}{7mm}{\underbrace{\phantom{aaa}}_{\text{$i$ legs}}};\lab{B}{-90}{5.9mm}{...};\lab{B}{-90}{7mm}{\underbrace{\phantom{aaa}}_{\text{$j$ legs}}};\lab{C}{-90}{5.9mm}{...};\lab{C}{-90}{7mm}{\underbrace{\phantom{aaa}}_{\text{$k$ legs}}}}&&
a^{i,j,k}_{1,4,1}:=\tikz[baseline=-1mm]{\draw (A)--(B)--(C);\leg{A}{-60};\leg{A}{-120};\leg{B}{-60};\leg{B}{-120};\leg{C}{-60};\leg{C}{-120};\lp{B}{90};\gg{1}{A};\gg{0}{B};\gg{0}{C};\lab{A}{-90}{5.9mm}{...};\lab{A}{-90}{7mm}{\underbrace{\phantom{aaa}}_{\text{$i$ legs}}};\lab{B}{-90}{5.9mm}{...};\lab{B}{-90}{7mm}{\underbrace{\phantom{aaa}}_{\text{$j$ legs}}};\lab{C}{-90}{5.9mm}{...};\lab{C}{-90}{7mm}{\underbrace{\phantom{aaa}}_{\text{$k$ legs}}}}&&
a^{i,j,k}_{1,2,3}:=\tikz[baseline=-1mm]{\draw (A)--(B)--(C);\leg{A}{-60};\leg{A}{-120};\leg{B}{-60};\leg{B}{-120};\leg{C}{-60};\leg{C}{-120};\lp{C}{90};\gg{1}{A};\gg{0}{B};\gg{0}{C};\lab{A}{-90}{5.9mm}{...};\lab{A}{-90}{7mm}{\underbrace{\phantom{aaa}}_{\text{$i$ legs}}};\lab{B}{-90}{5.9mm}{...};\lab{B}{-90}{7mm}{\underbrace{\phantom{aaa}}_{\text{$j$ legs}}};\lab{C}{-90}{5.9mm}{...};\lab{C}{-90}{7mm}{\underbrace{\phantom{aaa}}_{\text{$k$ legs}}}}\\
&a^{i,j,k}_{2,3,1}:=\tikz[baseline=-1mm]{\draw (A)--(B)--(C);\leg{A}{-60};\leg{A}{-120};\leg{B}{-60};\leg{B}{-120};\leg{C}{-60};\leg{C}{-120};\lp{A}{90};\gg{0}{A};\gg{1}{B};\gg{0}{C};\lab{A}{-90}{5.9mm}{...};\lab{A}{-90}{7mm}{\underbrace{\phantom{aaa}}_{\text{$j$ legs}}};\lab{B}{-90}{5.9mm}{...};\lab{B}{-90}{7mm}{\underbrace{\phantom{aaa}}_{\text{$i$ legs}}};\lab{C}{-90}{5.9mm}{...};\lab{C}{-90}{7mm}{\underbrace{\phantom{aaa}}_{\text{$k$ legs}}}}&&
b_{2,3,1}^{i,j,k}:=\tikz[baseline=-1mm]{\doubleedge;\draw (B)--(C);\leg{A}{-60};\leg{A}{-120};\leg{B}{-60};\leg{B}{-120};\leg{C}{-60};\leg{C}{-120};\gg{1}{A};\gg{0}{B};\gg{0}{C};\lab{A}{-90}{5.9mm}{...};\lab{A}{-90}{7mm}{\underbrace{\phantom{aaa}}_{\text{$i$ legs}}};\lab{B}{-90}{5.9mm}{...};\lab{B}{-90}{7mm}{\underbrace{\phantom{aaa}}_{\text{$j$ legs}}};\lab{C}{-90}{5.9mm}{...};\lab{C}{-90}{7mm}{\underbrace{\phantom{aaa}}_{\text{$k$ legs}}}}&&
b_{1,3,2}^{i,j,k}:=\tikz[baseline=-1mm]{\doubleedgeB;\draw (A)--(B);\leg{A}{-60};\leg{A}{-120};\leg{B}{-60};\leg{B}{-120};\leg{C}{-60};\leg{C}{-120};\gg{1}{A};\gg{0}{B};\gg{0}{C};\lab{A}{-90}{5.9mm}{...};\lab{A}{-90}{7mm}{\underbrace{\phantom{aaa}}_{\text{$i$ legs}}};\lab{B}{-90}{5.9mm}{...};\lab{B}{-90}{7mm}{\underbrace{\phantom{aaa}}_{\text{$j$ legs}}};\lab{C}{-90}{5.9mm}{...};\lab{C}{-90}{7mm}{\underbrace{\phantom{aaa}}_{\text{$k$ legs}}}}\\
&b_{3,2,1}^{i,j,k}:=\tikz[baseline=-1mm]{\doubleedgeB;\draw (A)--(B);\leg{A}{-60};\leg{A}{-120};\leg{B}{-60};\leg{B}{-120};\leg{C}{-60};\leg{C}{-120};\gg{0}{A};\gg{1}{B};\gg{0}{C};\lab{A}{-90}{5.9mm}{...};\lab{A}{-90}{7mm}{\underbrace{\phantom{aaa}}_{\text{$k$ legs}}};\lab{B}{-90}{5.9mm}{...};\lab{B}{-90}{7mm}{\underbrace{\phantom{aaa}}_{\text{$i$ legs}}};\lab{C}{-90}{5.9mm}{...};\lab{C}{-90}{7mm}{\underbrace{\phantom{aaa}}_{\text{$j$ legs}}}}&&
c_{2,2,2}^{1,1,1}:=\tikz[baseline=-1mm]{\draw (G)--(E)--(F)--(G);\leg{G}{180};\leg{E}{0};\leg{F}{0};\gg{1}{G};\gg{0}{E};\gg{0}{F};}&&
c_{2,2,2}^{0,2,1}:=\tikz[baseline=-1mm]{\draw (G)--(E)--(F)--(G);\leg{E}{30};\leg{E}{-30};\leg{F}{0};\gg{1}{G};\gg{0}{E};\gg{0}{F};}
\end{align*}
Denote by $L'$ the subspace of $R^3(\oM_{2,3})$ spanned by boundary strata $\xi_{\Gamma*}(1)$, where the first Betti number of a stable graph $\Gamma$ is equal to~$2$. The symmetric group $S_3$ acts on $\oM_{2,3}$ by permutations of marked points. This action induces an action on $R^*(\oM_{2,3})$. Define a map $\mathrm{Sym}\colon R^*(\oM_{2,3})\to R^*(\oM_{2,3})$ by
$$
\mathrm{Sym}(\alpha):=\frac{1}{3!}\sum_{\sigma\in S_3}\sigma\alpha,\quad\alpha\in R^*(\oM_{2,3}).
$$
Let $L:=\mathrm{Sym}(L')\subset R^3(\oM_{2,3})^{S_3}$. For two classes $\alpha,\beta\in R^3(\oM_{2,3})$ we will write $\alpha\stackrel{\mathrm{mod}\,L}{=}\beta$, if $\alpha-\beta\in L$. Using the formulas for $\psi_1^2\in R^2(\oM_{2,1})$ and $\psi_1\psi_2\in R^2(\oM_{2,2})$ from~\cite{Get98} and also the topological recursion relations in genus~$0$ and~$1$, after a long computation we obtain
\begin{align*}
B^2_{1,1,1}\stackrel{\mathrm{mod}\,L}{=}&-\frac{3 \alpha _1}{5}+\frac{3 \alpha _2}{10}+\frac{4\alpha _3}{5}+\frac{4 \alpha _4}{15}+\frac{\alpha _5}{10}-\frac{4 \alpha _6}{5}+\frac{1}{360} a_{1,2,3}^{0,1,2}-\frac{1}{90} a_{1,2,3}^{0,2,1}\\
&-\frac{1}{24} a_{1,2,3}^{0,3,0}+\frac{1}{40}a_{1,2,3}^{1,1,1}+\frac{1}{180} a_{1,2,3}^{1,2,0}-\frac{1}{120} a_{1,2,3}^{2,1,0}-\frac{5}{48} a_{1,4,1}^{0,0,3}-\frac{1}{80}a_{1,4,1}^{0,1,2}\\
&-\frac{1}{144} a_{1,4,1}^{1,0,2}+\frac{1}{80} a_{2,3,1}^{0,0,3}-\frac{7}{240} a_{2,3,1}^{0,1,2}+\frac{1}{240}a_{2,3,1}^{1,0,2}+\frac{1}{90} a_{3,2,1}^{0,1,2}+\frac{7}{90} b_{1,3,2}^{0,1,2}\\
&+\frac{1}{45} b_{1,3,2}^{0,2,1}+\frac{1}{90}b_{1,3,2}^{1,1,1}-\frac{11}{30} b_{2,3,1}^{0,0,3}-\frac{1}{10} b_{2,3,1}^{0,1,2}-\frac{1}{90} b_{2,3,1}^{1,0,2}-\frac{1}{30}b_{3,2,1}^{0,1,2}\\
&+\frac{1}{5} c_{2,2,2}^{0,2,1}+\frac{2}{45} c_{2,2,2}^{1,1,1}.
\end{align*}
On the other hand, a direct computation using Hain's formula gives
\begin{align*}
&\left.\Coef_{a_1a_2a_3}\left(\frac{1}{a_1+a_2+a_3}\DR_2(\widetilde{-a_1-a_2-a_3},a_1,a_2,a_3)\right)\right|_{\cM_{2,3}^\ct}=\\
=&(\psi_1+\psi_2+\psi_3)
-6\,\tikz[baseline=-1mm]{\draw (A)--(B);\leg{B}{30};\leg{B}{0};\leg{B}{-30};\gg{2}{A};\gg{0}{B};}
-3\,\tikz[baseline=-1mm]{\draw (A)--(B);\leg{A}{-30};\leg{B}{30};\leg{B}{-30};\gg{2}{A};\gg{0}{B};}
-\frac{6}{5}\,\tikz[baseline=-1mm]{\draw (A)--(B);\leg{B}{30};\leg{B}{0};\leg{B}{-30};\gg{1}{A};\gg{1}{B};}
-\frac{1}{5}\,\tikz[baseline=-1mm]{\draw (A)--(B);\leg{A}{-30};\leg{B}{30};\leg{B}{-30};\gg{1}{A};\gg{1}{B};}.
\end{align*}
In this computation one should use that (see e.g.~\cite{Get98})
$$
\kappa_1=
\frac{7}{10}\,\tikz[baseline=-1mm]{\draw (A)--(B);\gg{1}{A};\gg{1}{B};}
+\frac{1}{10}\,\tikz[baseline=-1mm]{\lp{A}{90}\gg{1}{A};}\in R^1(\oM_2).
$$
Using the formula (\cite{JPPZ16})
$$
\lambda_2=
\frac{1}{960}\,\tikz[baseline=-1mm]{\lp{A}{0};\lp{A}{180};\gg{0}{A};}
+\frac{1}{240}\,\tikz[baseline=-1mm]{\draw (A)--(B);\lp{B}{0};\gg{1}{A};\gg{0}{B}}\in R^2(\oM_2),
$$
we obtain the following formula for the class $A^2_{1,1,1}$:
\begin{align*}
A^2_{1,1,1}\stackrel{\mathrm{mod}\,L}{=}&\frac{1}{120} a_{1,2,3}^{0,1,2}+\frac{1}{60} a_{1,2,3}^{0,2,1}+\frac{1}{40} a_{1,2,3}^{0,3,0}+\frac{1}{120}a_{1,2,3}^{1,1,1}+\frac{1}{60} a_{1,2,3}^{1,2,0}+\frac{1}{120} a_{1,2,3}^{2,1,0}\\
&-\frac{1}{80} a_{1,4,1}^{0,0,3}-\frac{1}{240}a_{1,4,1}^{0,1,2}-\frac{1}{240} a_{1,4,1}^{1,0,2}-\frac{1}{80} a_{2,3,1}^{0,0,3}-\frac{1}{240} a_{2,3,1}^{0,1,2}-\frac{1}{240}a_{2,3,1}^{1,0,2}.
\end{align*}
Thus,
\begin{align*}
B^2_{1,1,1}-A^2_{1,1,1}\stackrel{\mathrm{mod}\,L}{=}&-\frac{3 \alpha _1}{5}+\frac{3 \alpha _2}{10}+\frac{4
   \alpha _3}{5}+\frac{4 \alpha _4}{15}+\frac{\alpha _5}{10}-\frac{4 \alpha _6}{5}-\frac{1}{180} a_{1,2,3}^{0,1,2}\\
   &-\frac{1}{36} a_{1,2,3}^{0,2,1}-\frac{1}{15} a_{1,2,3}^{0,3,0}+\frac{1}{60}a_{1,2,3}^{1,1,1}-\frac{1}{90} a_{1,2,3}^{1,2,0}-\frac{1}{60} a_{1,2,3}^{2,1,0}\\
   &-\frac{11}{120} a_{1,4,1}^{0,0,3}-\frac{1}{120}a_{1,4,1}^{0,1,2}-\frac{1}{360} a_{1,4,1}^{1,0,2}+\frac{1}{40} a_{2,3,1}^{0,0,3}-\frac{1}{40} a_{2,3,1}^{0,1,2}\\
   &+\frac{1}{120}a_{2,3,1}^{1,0,2}+\frac{1}{90} a_{3,2,1}^{0,1,2}+\frac{7}{90} b_{1,3,2}^{0,1,2}+\frac{1}{45} b_{1,3,2}^{0,2,1}+\frac{1}{90}b_{1,3,2}^{1,1,1}\\
   &-\frac{11}{30} b_{2,3,1}^{0,0,3}-\frac{1}{10}b_{2,3,1}^{0,1,2}-\frac{1}{90} b_{2,3,1}^{1,0,2}-\frac{1}{30}b_{3,2,1}^{0,1,2}+\frac{1}{5} c_{2,2,2}^{0,2,1}+\frac{2}{45} c_{2,2,2}^{1,1,1}.
\end{align*}

The famous Getzler relation~\cite{Get97} says that 
\begin{align}
\gamma:=&\tikz[baseline=-1mm]{\draw (A)--(B)--(C);\leg{A}{-60};\leg{A}{-120};\leg{C}{-60};\leg{C}{-120};\gg{0}{A};\gg{1}{B};\gg{0}{C}}
-\frac{1}{3}\,\tikz[baseline=-1mm]{\draw (A)--(B)--(C);\leg{A}{-90};\leg{B}{-90};\leg{C}{-60};\leg{C}{-120};\gg{1}{A};\gg{0}{B};\gg{0}{C}}
-\frac{1}{6}\,\tikz[baseline=-1mm]{\draw (A)--(B)--(C);\leg{B}{-60};\leg{B}{-120};\leg{C}{-60};\leg{C}{-120};\gg{1}{A};\gg{0}{B};\gg{0}{C}}
+\frac{1}{2}\,\tikz[baseline=-1mm]{\draw (A)--(B)--(C);\leg{B}{-90};\leg{C}{-90};\leg{C}{-60};\leg{C}{-120};\gg{1}{A};\gg{0}{B};\gg{0}{C}}+\label{eq:Getzler relation}\\
&+\frac{1}{24}\,\tikz[baseline=-1mm]{\draw (A)--(B);\lp{A}{90};\leg{A}{-90};\leg{B}{-120};\leg{B}{-90};\leg{B}{-60};\gg{0}{A};\gg{0}{B}}
+\frac{1}{24}\,\tikz[baseline=-1mm]{\draw (A)--(B);\lp{A}{90};\leg{B}{-60};\leg{B}{-80};\leg{B}{-100};\leg{B}{-120};\gg{0}{A};\gg{0}{B}}
-\frac{1}{12}\,\tikz[baseline=-1mm]{\doubleedge;\leg{A}{-60};\leg{A}{-120};\leg{B}{-60};\leg{B}{-120};\gg{0}{A};\gg{0}{B}}=\notag\\
=&0\in R^2(\oM_{1,4}).\notag
\end{align}
We will adopt the following notation. Suppose $g_1,g_2\ge 0$ and let $i_1,\ldots,i_k$ and $j_1,\ldots,j_l$ be two lists of integers such that $\{i_1,\ldots,i_k,j_1,\ldots,j_l\}=\{1,2,\ldots,k+l\}$. Consider the moduli spaces~$\oM_{g_1,k+1}$ and~$\oM_{g_2,l+1}$, but let us label the marked points on curves from~$\oM_{g_1,k+1}$ and~$\oM_{g_2,l+1}$ by the numbers~$i_1,\ldots,i_k,k+l+1$ and $j_1,\ldots,j_l,k+l+2$, respectively. Denote by
$$
\gl^{g_1|g_2}_{i_1,\ldots,i_k|j_1,\ldots,j_l}\colon\oM_{g_1,k+1}\times\oM_{g_2,l+1}\to\oM_{g_1+g_2,k+l}
$$
the gluing map that glues the marked points labeled by~$k+l+1$ and~$k+l+2$. From Getzler's relation we obtain
\begin{align}
(\gl^{1|1}_{|1,2,3,4})_*([\oM_{1,1}]\times\gamma)=&\frac{\alpha _1}{3}-\frac{\alpha _2}{6}-\alpha _3-\frac{\alpha _4}{3}+\frac{\alpha _5}{2}+\alpha _6+\frac{1}{24} a_{1,2,3}^{0,2,1}\label{eq:get1}\\
&+\frac{1}{24} a_{1,2,3}^{0,3,0}+\frac{1}{24} a_{1,4,1}^{0,0,3}-\frac{1}{12}b_{1,3,2}^{0,1,2}=0\in R^3(\oM_{2,3}).\notag
\end{align}
Notice that the WDVV relation on $\oM_{0,5}$ implies that $-\frac{\alpha _1}{3}+\frac{\alpha _2}{6}+\frac{\alpha_5}{2}=0$. Using this observation and expressing the class $\alpha_6$ via formula~\eqref{eq:get1}, we get
\begin{align}
B^2_{1,1,1}-A^2_{1,1,1}\stackrel{\mathrm{mod}\,L}{=}&-\frac{1}{180} a_{1,2,3}^{0,1,2}+\frac{1}{180} a_{1,2,3}^{0,2,1}-\frac{1}{30} a_{1,2,3}^{0,3,0}+\frac{1}{60}a_{1,2,3}^{1,1,1}-\frac{1}{90} a_{1,2,3}^{1,2,0}\label{eq:a111b111,1}\\
&-\frac{1}{60} a_{1,2,3}^{2,1,0}-\frac{7}{120} a_{1,4,1}^{0,0,3}-\frac{1}{120}a_{1,4,1}^{0,1,2}-\frac{1}{360} a_{1,4,1}^{1,0,2}+\frac{1}{40} a_{2,3,1}^{0,0,3}\notag\\
&-\frac{1}{40} a_{2,3,1}^{0,1,2}+\frac{1}{120}a_{2,3,1}^{1,0,2}+\frac{1}{90} a_{3,2,1}^{0,1,2}+\frac{1}{90} b_{1,3,2}^{0,1,2}+\frac{1}{45} b_{1,3,2}^{0,2,1}\notag\\
&+\frac{1}{90}b_{1,3,2}^{1,1,1}-\frac{11}{30} b_{2,3,1}^{0,0,3}-\frac{1}{10} b_{2,3,1}^{0,1,2}-\frac{1}{90} b_{2,3,1}^{1,0,2}-\frac{1}{30}b_{3,2,1}^{0,1,2}\notag\\
&+\frac{1}{5} c_{2,2,2}^{0,2,1}+\frac{2}{45} c_{2,2,2}^{1,1,1}.\notag
\end{align}
Let $\pi\colon\oM_{1,5}\to\oM_{1,4}$ be the forgetful map that forgets the fifth marked point and $\gl_{1,5}\colon\oM_{1,5}\to\oM_{2,3}$ be the gluing map that glues the first and the fifth marked points. Then from Getzler's relation~\eqref{eq:Getzler relation} we obtain
\begin{align*}
\gl_{1,5*}(\pi^*\gamma)\stackrel{\mathrm{mod}\,L}{=}&\frac{1}{2} a_{1,2,3}^{0,1,2}-\frac{1}{6} a_{1,2,3}^{0,2,1}-\frac{1}{3} a_{1,2,3}^{1,1,1}+\frac{1}{2} a_{1,4,1}^{0,0,3}-\frac{1}{6}a_{1,4,1}^{0,1,2}-\frac{1}{3} a_{1,4,1}^{1,0,2}+a_{2,3,1}^{0,1,2}\\
&-\frac{1}{3} a_{3,2,1}^{0,1,2}+\frac{1}{2}b_{1,3,2}^{0,0,3}+\frac{1}{3} b_{1,3,2}^{0,1,2}-\frac{1}{6} b_{1,3,2}^{0,2,1}-\frac{1}{3} b_{1,3,2}^{1,0,2}-\frac{1}{3}b_{1,3,2}^{1,1,1}\\
&+\frac{1}{2} b_{2,3,1}^{0,0,3}-\frac{1}{2} b_{2,3,1}^{0,1,2}-\frac{1}{3}b_{2,3,1}^{1,0,2}+b_{3,2,1}^{0,1,2}+c_{2,2,2}^{0,2,1}-\frac{2}{3} c_{2,2,2}^{1,1,1}\in L.
\end{align*}
We can obtain another consequence from~\eqref{eq:Getzler relation}. Let $\gl_{1,2}\colon\oM_{1,5}\to\oM_{2,3}$ the the gluing map that glues the first two marked points. Then Getzler's relation implies that
\begin{align*}
\mathrm{Sym}(\gl_{1,2*}(\pi^*\gamma))\stackrel{\mathrm{mod}\,L}{=}&\frac{1}{3} a_{1,2,3}^{0,1,2}+\frac{5}{18} a_{1,2,3}^{0,2,1}-\frac{1}{6} a_{1,2,3}^{0,3,0}+\frac{1}{18}a_{1,2,3}^{1,1,1}-\frac{5}{18} a_{1,2,3}^{1,2,0}-\frac{2}{9} a_{1,2,3}^{2,1,0}\\
&-\frac{1}{6} a_{1,4,1}^{0,0,3}-\frac{1}{18}a_{1,4,1}^{0,1,2}-\frac{1}{18} a_{1,4,1}^{1,0,2}+a_{2,3,1}^{0,0,3}+\frac{1}{3} a_{2,3,1}^{0,1,2}+\frac{1}{3}
   a_{2,3,1}^{1,0,2}\\
   &+b_{1,3,2}^{0,0,3}+\frac{1}{9} b_{1,3,2}^{0,1,2}-\frac{2}{9} b_{1,3,2}^{0,2,1}-\frac{1}{9}b_{1,3,2}^{1,0,2}-\frac{1}{3} b_{1,3,2}^{1,1,1}-\frac{4}{9} b_{1,3,2}^{2,0,1}\\
   &-\frac{2}{3} b_{2,3,1}^{0,0,3}-\frac{2}{9}b_{2,3,1}^{0,1,2}-\frac{2}{9} b_{2,3,1}^{1,0,2}+\frac{4}{9} c_{2,2,2}^{0,2,1}+\frac{2}{9} c_{2,2,2}^{1,1,1}\in L.
\end{align*}
Adding $\frac{1}{30}\gl_{1,5*}(\pi^*\gamma)-\frac{1}{40}\mathrm{Sym}(\gl_{1,2*}(\pi^*\gamma))$ to the right-hand side of~\eqref{eq:a111b111,1}, we get
\begin{align}
B^2_{1,1,1}-A^2_{1,1,1}\stackrel{\mathrm{mod}\,L}{=}&\frac{1}{360} a_{1,2,3}^{0,1,2}-\frac{1}{144} a_{1,2,3}^{0,2,1}-\frac{7}{240} a_{1,2,3}^{0,3,0}+\frac{1}{240}a_{1,2,3}^{1,1,1}-\frac{1}{240} a_{1,2,3}^{1,2,0}\label{eq:a111b111,2}\\
&-\frac{1}{90} a_{1,2,3}^{2,1,0}-\frac{3}{80} a_{1,4,1}^{0,0,3}-\frac{1}{80}a_{1,4,1}^{0,1,2}-\frac{1}{80} a_{1,4,1}^{1,0,2}-\frac{1}{120} b_{1,3,2}^{0,0,3}\notag\\
&+\frac{7}{360} b_{1,3,2}^{0,1,2}+\frac{1}{45}b_{1,3,2}^{0,2,1}-\frac{1}{120} b_{1,3,2}^{1,0,2}+\frac{1}{120} b_{1,3,2}^{1,1,1}+\frac{1}{90} b_{1,3,2}^{2,0,1}\notag\\
&-\frac{1}{3}b_{2,3,1}^{0,0,3}-\frac{1}{9} b_{2,3,1}^{0,1,2}-\frac{1}{60} b_{2,3,1}^{1,0,2}+\frac{2}{9} c_{2,2,2}^{0,2,1}+\frac{1}{60}c_{2,2,2}^{1,1,1}.\notag
\end{align}
The WDVV relations on $\oM_{0,4}$, $\oM_{0,5}$ and $\oM_{0,6}$ imply that
\begin{align*}
c_{2,2,2}^{1,1,1}=&b_{2,3,1}^{1,0,2},\\
c_{2,2,2}^{0,2,1}=&\frac{3}{2} b_{2,3,1}^{0,0,3}+\frac{1}{2} b_{2,3,1}^{0,1,2},\\
a_{1,2,3}^{2,1,0}=&b_{1,3,2}^{2,0,1},\\
b_{1,3,2}^{1,0,2}=&\frac{1}{2} a_{1,2,3}^{1,1,1}+a_{1,2,3}^{1,2,0}-\frac{1}{2} b_{1,3,2}^{1,1,1},
\end{align*}
\begin{align*}
b_{1,3,2}^{1,1,1}=&a_{1,2,3}^{1,2,0}+a_{1,4,1}^{1,0,2},\\
b_{1,3,2}^{0,0,3}=&\frac{1}{3} a_{1,2,3}^{0,1,2}+\frac{2}{3} a_{1,2,3}^{0,2,1}+a_{1,2,3}^{0,3,0}-\frac{2}{3}b_{1,3,2}^{0,1,2}-\frac{1}{3} b_{1,3,2}^{0,2,1},\\
b_{1,3,2}^{0,2,1}=&\frac{1}{2} a_{1,2,3}^{0,2,1}+\frac{3}{2} a_{1,2,3}^{0,3,0}+\frac{3}{2} a_{1,4,1}^{0,0,3}+\frac{1}{2}a_{1,4,1}^{0,1,2}-b_{1,3,2}^{0,1,2}.
\end{align*}
Using these relations, one can easily check that the right-hand side of~\eqref{eq:a111b111,2} is zero. We conclude that $B^2_{1,1,1}-A^2_{1,1,1}\in L$.

It is easy to see that the space $L$ is spanned by the following classes:
\begin{align*}
&\beta_1:=\tikz[baseline=-1mm]{\draw (A)--(B);\lp{A}{90};\lp{A}{-90};\leg{B}{-60};\leg{B}{-120};\leg{B}{-90};\gg{0}{A};\gg{0}{B}}&&
\beta_2:=\tikz[baseline=-1mm]{\draw (A)--(B);\lp{A}{90};\lp{B}{90};\leg{B}{-60};\leg{B}{-120};\leg{B}{-90};\gg{0}{A};\gg{0}{B}}&&
\beta_3:=\tikz[baseline=-1mm]{\doubleedge;\lp{A}{180};\leg{B}{-60};\leg{B}{-120};\leg{B}{-90};\gg{0}{A};\gg{0}{B}}&&
\beta_4:=\tikz[baseline=-1mm]{\doubleedge;\draw (A)--(B);\leg{B}{-60};\leg{B}{-120};\leg{B}{-90};\gg{0}{A};\gg{0}{B}}\\
&\beta_5:=\tikz[baseline=-1mm]{\draw (A)--(B);\lp{A}{90};\lp{A}{-90};\leg{A}{180};\leg{B}{-120};\leg{B}{-60};\gg{0}{A};\gg{0}{B}}&&
\beta_6:=\tikz[baseline=-1mm]{\doubleedge;\lp{A}{180};\leg{A}{-90};\leg{B}{-120};\leg{B}{-60};\gg{0}{A};\gg{0}{B}}&&
\beta_7:=\tikz[baseline=-1mm]{\draw (A)--(B);\lp{A}{90};\lp{B}{90};\leg{A}{-90};\leg{B}{-120};\leg{B}{-60};\gg{0}{A};\gg{0}{B}}&&
\beta_8:=\tikz[baseline=-1mm]{\doubleedge;\draw (A)--(B);\leg{A}{-90};\leg{B}{-120};\leg{B}{-60};\gg{0}{A};\gg{0}{B}}\\
&\beta_9:=\tikz[baseline=-1mm]{\doubleedge;\lp{B}{0};\leg{A}{-90};\leg{B}{90};\leg{B}{-90};\gg{0}{A};\gg{0}{B}}&&&&&&
\end{align*}
The WDVV relations on $\oM_{0,7}$ give the following relations:
\begin{align*}
\beta_2-\beta_4+\beta_7-\beta_8=&0,\\
\beta_1+\beta_2+2\beta_3+\frac{1}{3}\beta_5+\frac{1}{3}\beta_7-\frac{4}{3}\beta_8-\frac{2}{3}\beta_9=&0.
\end{align*}
Therefore, $\dim L\le 7$. On the other hand, in Figure~\ref{fig:intersection matrix} we compute the intersection matrix of the classes $\beta_1,\ldots,\beta_9$ with the following seven classes: $\psi_1^3,\psi_1^2\psi_2,\psi_1\psi_2\psi_3,\kappa_3$, $\kappa_1\kappa_2,\psi_1\kappa_2,\psi_1^2\delta$; where
$$
\delta:=\tikz[baseline=-1mm]{\draw (A)--(B);\leg{B}{-120};\leg{B}{-90};\leg{B}{-60};\gg{1}{A};\gg{1}{B}}\,.
$$
\begin{figure}[t]
\begin{tabular}{c|c|c|c|c|c|c|c|c|c}
& $\beta_1$ & $\beta_2$ & $\beta_3$ & $\beta_4$ & $\beta_5$ & $\beta_6$ & $\beta_7$ & $\beta_8$ & $\beta_9$ \\
\hline
$\psi_1^3$ & $0$ & $1$ & $0$ & $1$ & $1$ & $0$ & $0$ & $0$ & $2$ \\
\hline
$\psi_1^2\psi_2$ & $0$ & $3$ & $0$ & $3$ & $0$ & $1$ & $1$ & $1$ & $3$ \\
\hline
$\psi_1\psi_2\psi_3$ & $0$ & $6$ & $0$ & $6$ & $0$ & $0$ & $6$ & $6$ & $0$ \\
\hline
$\kappa_3$ & $0$ & $1$ & $0$ & $1$ & $3$ & $0$ & $0$ & $0$ & $3$ \\
\hline
$\kappa_1\kappa_2$ & $1$ & $9$ & $1$ & $9$ & $27$ & $3$ & $3$ & $3$ & $27$ \\
\hline
$\psi_1\kappa_2$ & $1$ & $4$ & $0$ & $4$ & $4$ & $2$ & $1$ & $1$ & $8$ \\
\hline
$\psi_1^2\delta$ & $0$ & $-2$ & $1$ & $0$ & $2$ & $0$ & $2$ & $0$ & $2$
\end{tabular}
\caption{Intersection matrix of $\beta_1,\ldots,\beta_9$ with $\psi_1^3,\psi_1^2\psi_2,\psi_1\psi_2\psi_3,\kappa_3,\kappa_1\kappa_2,\psi_1\kappa_2,\psi_1^2\delta$}
\label{fig:intersection matrix}
\end{figure}
This matrix is non-degenerate, so $\dim L=7$. Thus, is order to prove that $A^2_{1,1,1}=B^2_{1,1,1}$ it is sufficient to check that the intersections of $A^2_{1,1,1}-B^2_{1,1,1}$ with the classes $\psi_1^3,\psi_1^2\psi_2,\psi_1\psi_2\psi_3$, $\kappa_3,\kappa_1\kappa_2,\psi_1\kappa_2,\psi_1^2\delta$ are zero. This is a simple direct computation. The relation $A^2_{1,1,1}=B^2_{1,1,1}$ is proved.

\subsection{Relations $A^2_{3,1}=B^2_{3,1}$ and $A^2_{2,2}=B^2_{2,2}$}

Suppose $g,n\ge 1$ and $a_1,\ldots,a_n\in\mbZ$. Let $a:=\sum a_i$. The following formula is the particular case of Corollary~\ref{corollary:psi times Apol} when $m=2$:
\begin{align}
&A^{g,2}(a_1,\ldots,a_n)-a_1\psi_1 A^{g,1}(a_1,\ldots,a_n)=\label{eq:psi and A-class}\\
=&\lambda_g\DR_g\left(a_1-a,a_2,\ldots,a_n\right)\notag\\
&+\lambda_g\sum_{\substack{g_1\ge 1,\,g_2\ge 0\\g_1+g_2=g}}\sum_{\substack{I\sqcup J=\{1,\ldots,n\}\\1\in I\\2g_2-1+|J|>0}}\frac{a_J}{a}\DR_{g_1}\left(\widetilde{-a},A_I,a_J\right)\boxtimes_1\DR_{g_2}\left(A_J,-a_J\right).\notag
\end{align}

Let us prove now that $A^2_{d_1,d_2}=B^2_{d_1,d_2}$, where $(d_1,d_2)=(3,1)$ or $(d_1,d_2)=(2,2)$. By equation~\eqref{eq:psi and A-class}, we have
\begin{align*}
&A^2_{d_1,d_2}-\psi_1 A^2_{d_1-1,d_2}=\\
=&\Coef_{a_1^{d_1} a_2^{d_2}}\left(\frac{1}{a_1+a_2}\lambda_2\DR_1(\widetilde{-a_1-a_2},a_1,a_2)\boxtimes_1\DR_1(a_2,-a_2)\right)=\\
=&\Coef_{a_1^{d_1}a_2^{d_2}}\left(\frac{(a_1+a_2)a_2^3}{576}\tikz[baseline=-1mm]{\draw (A)--(B)--(C);\lp{A}{180};\lp{C}{0};\legm{A}{-30}{1};\legm{B}{-30}{2};\gg{0}{A};\gg{0}{B};\gg{0}{C};}\right)=0.
\end{align*}
On the other hand, it is easy to compute that
\begin{align*}
B^2_{3,1}=&\psi_1^3\psi_2
-\tikz[baseline=-1mm]{\draw (A)--(B);\legm{B}{30}{1};\legm{B}{-30}{2};\gg{1}{A};\gg{1}{B};\lab{B}{48}{4.1mm}{\psi^2};\lab{B}{-60}{3.5mm}{\psi};}
-\tikz[baseline=-1mm]{\draw (A)--(B);\legm{B}{30}{1};\legm{B}{-30}{2};\gg{1}{A};\gg{1}{B};\lab{B}{48}{4.1mm}{\psi^3};},\\
B^2_{2,2}=&\psi_1^2\psi_2^2
-\tikz[baseline=-1mm]{\draw (A)--(B);\legm{B}{30}{1};\legm{B}{-30}{2};\gg{1}{A};\gg{1}{B};\lab{B}{48}{4.1mm}{\psi^2};\lab{B}{-60}{3.5mm}{\psi};}
-\tikz[baseline=-1mm]{\draw (A)--(B);\legm{B}{30}{1};\legm{B}{-30}{2};\gg{1}{A};\gg{1}{B};\lab{B}{53}{3.5mm}{\psi};\lab{B}{-67}{3.8mm}{\psi^2};}.
\end{align*}
Comparing these expressions with formula~\eqref{eq:B21}, we can easily see that $B^2_{3,1}=\psi_1 B^2_{2,1}$ and $B^2_{2,2}=\psi_1 B^2_{1,2}$. Since the relation $A^2_{2,1}=B^2_{2,1}$ is already checked, the relations $A^2_{3,1}=B^2_{3,1}$ and $A^2_{2,2}=B^2_{2,2}$ are now also proved.

\subsection{Relation $A^2_{2,1,1}=B^2_{2,1,1}$}

Using equation~\eqref{eq:psi and A-class}, we compute
\begin{align}
&A^2_{2,1,1}-\psi_1 A^2_{1,1,1}=\notag\\
=&\sum_{\substack{I\sqcup J=\{1,2,3\}\\1\in I,\,|J|\ge 1}}\Coef_{a_1^2a_2a_3}\left(\frac{a_J}{\sum a_i}\lambda_2\DR_1\left(\widetilde{-\sum a_i},A_I,a_J\right)\boxtimes_1\DR_1(A_J,-a_J)\right)+\label{eq:A211B211,1}\\
&+\Coef_{a_1^2a_2a_3}\left(\frac{a_2+a_3}{\sum a_i}\lambda_2\DR_2\left(\widetilde{-\sum a_i},a_1,a_2+a_3\right)\boxtimes_1\DR_0(a_2,a_3,-a_2-a_3)\right).\label{eq:A211B211,2}
\end{align}
Let us look at a term in the sum in line~\eqref{eq:A211B211,1}. The class $\lambda_1\DR_1(A_J,-a_J)$ is a polynomial in the variables $a_j, j\in J$, and it doesn't depend on $a_1$. We have
$$
\frac{1}{a_1+a_2+a_3}\lambda_1\DR_1(\widetilde{-a_1-a_2-a_3},A_I,a_J)=(a_1+a_2+a_3)\lambda_1\in R^1(\oM_{1,|I|+1}).
$$
So, the polynomial class in the brackets in line~\eqref{eq:A211B211,1} depends on $a_1$ at most linearly. Therefore, the expression in line~\eqref{eq:A211B211,1} is equal to zero. Let us look at the expression in line~\eqref{eq:A211B211,2}. We can easily see that it is equal to
\begin{multline*}
2\cdot\Coef_{a^2b^2}(\gl^{2,0}_{1|2,3})_*\left(\frac{b}{a+b}\lambda_2\DR_2(\widetilde{-a-b},a,b)\times[\oM_{0,3}]\right)=\\
=2\cdot(\gl^{2|0}_{1|2,3})_*\left(A^2_{2,1}\times[\oM_{0,3}]\right).
\end{multline*}
As a result, we obtain 
$$
A^2_{2,1,1}=\psi_1 A^2_{1,1,1}+2\cdot(\gl^{2|0}_{1|2,3})_*\left(A^2_{2,1}\times[\oM_{0,3}]\right).
$$

On the other hand, we have
\begin{align*}
B^2_{2,1,1}=&\psi_1^2\psi_2\psi_3
-6\,\tikz[baseline=-1mm]{\draw (A)--(B);\legm{B}{30}{1};\leg{B}{-30};\leg{B}{0};\gg{2}{A};\gg{0}{B};\lab{A}{25}{4mm}{\psi^2};\lab{B}{53}{3.5mm}{\psi};}
-3\,\tikz[baseline=-1mm]{\draw (A)--(B);\legm{B}{30}{1};\leg{B}{0};\leg{B}{-30};\gg{2}{A};\gg{0}{B};\lab{B}{-60}{3.5mm}{\psi};\lab{A}{25}{4mm}{\psi^2};}
-\tikz[baseline=-1mm]{\draw (A)--(B);\leg{B}{45};\leg{B}{0};\leg{B}{-45};\gg{1}{A};\gg{1}{B};\lab{B}{64}{3.9mm}{\psi};\lab{B}{19}{4.5mm}{\psi};\lab{B}{-71}{3.7mm}{\psi};}
-\tikz[baseline=-1mm]{\draw (A)--(B);\legm{B}{30}{1};\leg{B}{0};\leg{B}{-30};\gg{1}{A};\gg{1}{B};\lab{B}{-56}{3.5mm}{\psi};\lab{B}{48}{4.1mm}{\psi^2}}\\
&+6\,\tikz[baseline=-1mm]{\draw (A)--(B)--(C);\leg{C}{-30};\legm{C}{30}{1};\leg{C}{0};\gg{1}{A};\gg{1}{B};\gg{0}{C};\lab{B}{28}{3.5mm}{\psi};\lab{C}{53}{3.5mm}{\psi};}
+3\,\tikz[baseline=-1mm]{\draw (A)--(B)--(C);\leg{C}{-30};\legm{C}{30}{1};\leg{C}{0};\gg{1}{A};\gg{1}{B};\gg{0}{C};\lab{B}{28}{3.5mm}{\psi};\lab{C}{-60}{3.5mm}{\psi};}.
\end{align*}
Using also formula~\eqref{eq:B2111}, we compute
\begin{align*}
B^2_{2,1,1}-\psi_1 B^2_{1,1,1}=&
-3\,\tikz[baseline=-1mm]{\draw (A)--(B);\legm{B}{30}{1};\leg{B}{0};\leg{B}{-30};\gg{2}{A};\gg{0}{B};\lab{B}{-60}{3.5mm}{\psi};\lab{A}{25}{4mm}{\psi^2};}
+3\,\tikz[baseline=-1mm]{\draw (A)--(B)--(C);\leg{C}{-30};\legm{C}{30}{1};\leg{C}{0};\gg{1}{A};\gg{1}{B};\gg{0}{C};\lab{B}{28}{3.5mm}{\psi};\lab{C}{-60}{3.5mm}{\psi};}
+2\,\tikz[baseline=-1mm]{\draw (A)--(B);\legm{A}{-40}{1};\leg{B}{30};\leg{B}{-30};\gg{2}{A};\gg{0}{B};\lab{A}{28}{3.5mm}{\psi};\lab{A}{-78}{3.7mm}{\psi^2}}\\
&-2\,\tikz[baseline=-1mm]{\draw (A)--(B)--(C);\legm{B}{-40}{1};\leg{C}{-30};\leg{C}{30};\gg{1}{A};\gg{1}{B};\gg{0}{C};\lab{B}{-78}{3.7mm}{\psi^2}}
-2\,\tikz[baseline=-1mm]{\draw (A)--(B)--(C);\leg{C}{-30};\leg{C}{30};\legm{B}{-40}{1};\gg{1}{A};\gg{1}{B};\gg{0}{C};\lab{B}{28}{3.5mm}{\psi};\lab{B}{-70}{3.5mm}{\psi};}=\\
=&2\,\tikz[baseline=-1mm]{\draw (A)--(B);\legm{A}{-40}{1};\leg{B}{30};\leg{B}{-30};\gg{2}{A};\gg{0}{B};\lab{A}{28}{3.5mm}{\psi};\lab{A}{-78}{3.7mm}{\psi^2}}
-6\,\tikz[baseline=-1mm]{\draw (A)--(B)--(C);\leg{C}{30};\leg{C}{-30};\legm{B}{-40}{1};\gg{2}{A};\gg{0}{B};\gg{0}{C};\lab{A}{25}{4mm}{\psi^2};}
-2\,\tikz[baseline=-1mm]{\draw (A)--(B)--(C);\leg{C}{-30};\leg{C}{30};\legm{B}{-40}{1};\gg{1}{A};\gg{1}{B};\gg{0}{C};\lab{B}{28}{3.5mm}{\psi};\lab{B}{-70}{3.5mm}{\psi};}\\
&-2\,\tikz[baseline=-1mm]{\draw (A)--(B)--(C);\legm{B}{-40}{1};\leg{C}{-30};\leg{C}{30};\gg{1}{A};\gg{1}{B};\gg{0}{C};\lab{B}{-78}{3.7mm}{\psi^2}}
+6\,\tikz[baseline=-1mm]{\draw (A)--(B)--(C)--(D);\leg{D}{30};\legm{C}{-40}{1};\leg{D}{-30};\gg{1}{A};\gg{1}{B};\gg{0}{C};\gg{0}{D};\lab{B}{28}{3.5mm}{\psi};}.
\end{align*}
Using~\eqref{eq:B21} we see that the last expression is equal to $2\cdot(\gl^{2|0}_{1|2,3})_*\left(B^2_{2,1}\times [\oM_{0,3}]\right)$ and we get
$$
B^2_{2,1,1}=\psi_1 B^2_{1,1,1}+2\cdot(\gl^{2|0}_{1|2,3})_*\left(B^2_{2,1}\times[\oM_{0,3}]\right).
$$
Since the relations $A^2_{2,1}=B^2_{2,1}$ and $A^2_{1,1,1}=B^2_{1,1,1}$ are proved, we conclude that relation $A^2_{2,1,1}=B^2_{2,1,1}$ is true. 

\subsection{Relation $A^2_{1,1,1,1}=B^2_{1,1,1,1}$}
We follow the same strategy, as in the previous section. Using equation~\eqref{eq:psi and A-class}, we compute
\begin{align}
&A^2_{1,1,1,1}-\psi_1 A^2_{0,1,1,1}=\notag\\
=&\sum_{\substack{I\sqcup J=\{1,2,3,4\}\\I\ni 1,\,|J|\ge 1}}\Coef_{a_1a_2a_3a_4}\left(\frac{a_J}{a}\lambda_2\DR_1(\widetilde{-a},A_I,a_J)\boxtimes_1\DR_1(A_J,-a_J)\right)+\label{eq:A1111B1111,1}\\
&+\sum_{\substack{I\sqcup J=\{1,2,3,4\}\\I\ni 1,\,|J|\ge 2}}\Coef_{a_1a_2a_3a_4}\left(\frac{a_J}{a}\lambda_2\DR_2(\widetilde{-a},A_I,a_J)\boxtimes_1\DR_0(A_J,-a_J)\right),\label{eq:A1111B1111,2}
\end{align}
where $a:=\sum_{i=1}^4 a_i$. Let us look at a term in the sum in line~\eqref{eq:A1111B1111,1}. We have $\frac{1}{a}\lambda_1\DR_1(\widetilde{-a},A_I,a_J)=a\lambda_1\in R^1(\oM_{1,|I|+1})$ and the class $\lambda_1\DR_1(A_J,-a_J)$ doesn't depend on the variables $a_i$, $i\in I$. Therefore, the coefficient of $a_1a_2a_3a_4$ can be non-zero only if $I=\{1\}$. So the expression in line~\eqref{eq:A1111B1111,1} is equal to
\begin{align}
&(\gl_{1|2,3,4}^{1|1})_*\left(\lambda_1\times\Coef_{a_2a_3a_4}((a_2+a_3+a_4)\lambda_1\DR_1(a_2,a_3,a_4,-a_2-a_3-a_4))\right)=\label{eq:A1111B1111,4}\\
=&-\tikz[baseline=-1mm]{\draw (A)--(B)--(C);\legm{A}{-90}{1};\leg{B}{-90};\leg{C}{-60};\leg{C}{-120};\gg{1}{A};\gg{1}{B};\gg{0}{C};\lab{A}{90}{3.5mm}{\lambda_1};\lab{B}{90}{3.5mm}{\lambda_1};}
-3\,\tikz[baseline=-1mm]{\draw (A)--(B)--(C);\legm{A}{-90}{1};\leg{C}{-90};\leg{C}{-60};\leg{C}{-120};\gg{1}{A};\gg{1}{B};\gg{0}{C};\lab{A}{90}{3.5mm}{\lambda_1};\lab{B}{90}{3.5mm}{\lambda_1};}
+2\,\tikz[baseline=-1mm]{\draw (A)--(B)--(C);\legm{A}{-90}{1};\leg{B}{-90};\leg{C}{-60};\leg{C}{-120};\gg{1}{A};\gg{0}{B};\gg{1}{C};\lab{A}{90}{3.5mm}{\lambda_1};\lab{C}{90}{3.5mm}{\lambda_1};}
+3\,\tikz[baseline=-1mm]{\draw (A)--(B)--(C);\legm{A}{-90}{1};\leg{B}{-60};\leg{B}{-120};\leg{C}{-90};\gg{1}{A};\gg{0}{B};\gg{1}{C};\lab{A}{90}{3.5mm}{\lambda_1};\lab{C}{90}{3.5mm}{\lambda_1};}\notag\\
&+3\,\tikz[baseline=-1mm]{\draw (A)--(B)--(C);\legm{A}{-90}{1};\leg{B}{-60};\leg{B}{-120};\leg{B}{-90};\gg{1}{A};\gg{0}{B};\gg{1}{C};\lab{A}{90}{3.5mm}{\lambda_1};\lab{C}{90}{3.5mm}{\lambda_1};}.\notag
\end{align}
The expression in line~\eqref{eq:A1111B1111,2} is equal to
$$
6(\gl_{1|2,3,4}^{2|0})_*\left(A^2_{1,2}\times [\oM_{0,4}]\right)+2\sum_{\substack{\{i,j,k\}=\{2,3,4\}\\j<k}}(\gl_{1,i|j,k}^{2|0})_*\left(A^2_{1,1,1}\times [\oM_{0,3}]\right).
$$
On the other hand, we have
\begin{align*}
B^2_{1,1,1,1}=&\psi_1\psi_2\psi_3\psi_4
-6\,\tikz[baseline=-1mm]{\draw (A)--(B);\leg{B}{30};\leg{B}{10};\leg{B}{-10};\leg{B}{-30};\gg{2}{A};\gg{0}{B};\lab{A}{25}{4mm}{\psi^2};\lab{B}{53}{3.5mm}{\psi};}
-2\,\tikz[baseline=-1mm]{\draw (A)--(B);\leg{A}{-30};\leg{B}{30};\leg{B}{0};\leg{B}{-30};\gg{2}{A};\gg{0}{B};\lab{A}{28}{3.5mm}{\psi};\lab{A}{-60}{3.5mm}{\psi};\lab{B}{53}{3.5mm}{\psi};}
-2\,\tikz[baseline=-1mm]{\draw (A)--(B);\leg{B}{-30};\leg{B}{-10};\leg{B}{10};\leg{B}{30};\gg{2}{A};\gg{0}{B};\lab{A}{28}{3.5mm}{\psi};\lab{B}{-60}{3.5mm}{\psi};\lab{B}{53}{3.5mm}{\psi};}
-\tikz[baseline=-1mm]{\draw (A)--(B);\leg{B}{-60};\leg{B}{-20};\leg{B}{20};\leg{B}{60};\gg{1}{A};\gg{1}{B};\lab{B}{75}{4.5mm}{\psi};\lab{B}{37}{4.5mm}{\psi};\lab{B}{-2}{4.5mm}{\psi};}\\
&-4\,\tikz[baseline=-1mm]{\draw (G)--(E);\draw (G)--(F);\leg{E}{-30};\leg{E}{30};\leg{F}{-30};\leg{E}{30};\gg{2}{G};\gg{0}{E};\gg{0}{F};\lab{G}{53}{3.5mm}{\psi};\lab{G}{-60}{3.5mm}{\psi};}
+4\,\tikz[baseline=-1mm]{\draw (A)--(B)--(C);\leg{B}{-30};\leg{C}{30};\leg{C}{0};\leg{C}{-30};\gg{2}{A};\gg{0}{B};\gg{0}{C};\lab{A}{28}{3.5mm}{\psi};\lab{C}{53}{3.5mm}{\psi};}
+6\,\tikz[baseline=-1mm]{\draw (A)--(B)--(C);\leg{C}{-30};\leg{C}{-10};\leg{C}{10};\leg{C}{30};\gg{1}{A};\gg{1}{B};\gg{0}{C};\lab{B}{28}{3.5mm}{\psi};\lab{C}{53}{3.5mm}{\psi};}
+2\,\tikz[baseline=-1mm]{\draw (A)--(B)--(C);\leg{C}{-30};\leg{C}{-10};\leg{C}{10};\leg{C}{30};\gg{1}{A};\gg{1}{B};\gg{0}{C};\lab{C}{-60}{3.5mm}{\psi};\lab{C}{53}{3.5mm}{\psi};}\\
&+2\,\tikz[baseline=-1mm]{\draw (A)--(B)--(C);\leg{B}{-30};\leg{C}{-30};\leg{C}{0};\leg{C}{30};\gg{1}{A};\gg{1}{B};\gg{0}{C};\lab{B}{-60}{3.5mm}{\psi};\lab{C}{53}{3.5mm}{\psi};}
+2\,\tikz[baseline=-1mm]{\draw (A)--(B)--(C);\leg{B}{-30};\leg{C}{-30};\leg{C}{0};\leg{C}{30};\gg{1}{A};\gg{1}{B};\gg{0}{C};\lab{B}{28}{3.5mm}{\psi};\lab{C}{53}{3.5mm}{\psi};}
+8\,\tikz[baseline=-1mm]{\draw (H)--(G)--(E);\draw (G)--(F);\leg{E}{-30};\leg{E}{30};\leg{F}{-30};\leg{F}{30};\gg{0}{G};\gg{0}{E};\gg{0}{F};\gg{2}{H};\lab{H}{28}{3.5mm}{\psi};}\\
&-4\,\tikz[baseline=-1mm]{\draw (A)--(B)--(C)--(D);\leg{C}{-30};\leg{D}{30};\leg{D}{0};\leg{D}{-30};\gg{1}{A};\gg{1}{B};\gg{0}{C};\gg{0}{D};\lab{D}{53}{3.5mm}{\psi};}
+4\,\tikz[baseline=-1mm]{\draw (H)--(G)--(E);\draw (G)--(F);\leg{E}{-30};\leg{E}{30};\leg{F}{-30};\leg{F}{30};\gg{1}{G};\gg{0}{E};\gg{0}{F};\gg{1}{H};\lab{G}{53}{3.5mm}{\psi};}
-8\,\tikz[baseline=-1mm]{\draw (I)--(H)--(G)--(E);\draw (G)--(F);\leg{E}{-30};\leg{E}{30};\leg{F}{-30};\leg{F}{30};\gg{1}{I};\gg{1}{H};\gg{0}{G};\gg{0}{E};\gg{0}{F};}\,.
\end{align*}
After a long direct computation, that uses only the genus $0$ topological recursion relation, we obtain
\begin{align}
B^2_{1,1,1,1}-\psi_1 B^2_{0,1,1,1}=&6(\gl_{1|2,3,4}^{2|0})_*\left(B^2_{1,2}\times [\oM_{0,4}]\right)\label{eq:A1111B1111,3}\\
&+2\sum_{\substack{\{i,j,k\}=\{2,3,4\}\\j<k}}(\gl_{1,i|j,k}^{2|0})_*\left(B^2_{1,1,1}\times [\oM_{0,3}]\right)\notag
\end{align}
\begin{align}
&+6\,\tikz[baseline=-1mm]{\draw (A)--(B)--(C);\legm{B}{-90}{1};\leg{C}{-120};\leg{C}{-90};\leg{C}{-60};\gg{1}{A};\gg{1}{B};\gg{0}{C};\lab{B}{25}{4mm}{\psi^2};}
+2\,\tikz[baseline=-1mm]{\draw (A)--(B)--(C);\legm{B}{-120}{1};\leg{B}{-60};\leg{C}{-120};\leg{C}{-60};\gg{1}{A};\gg{1}{B};\gg{0}{C};\lab{B}{28}{3.5mm}{\psi};\lab{B}{-38}{3.5mm}{\psi};}
-2\,\tikz[baseline=-1mm]{\draw (A)--(B)--(C)--(D);\legm{B}{-90}{1};\leg{C}{-90};\leg{D}{-120};\leg{D}{-60};\gg{1}{A};\gg{1}{B};\gg{0}{C};\gg{0}{D};\lab{B}{28}{3.5mm}{\psi};}\notag\\
&-\tikz[baseline=-1mm]{\draw (A)--(B);\legm{B}{-150}{1};\leg{B}{-110};\leg{B}{-70};\leg{B}{-30};\gg{1}{A};\gg{1}{B};\lab{B}{-12}{4.5mm}{\psi};\lab{B}{-55}{4.5mm}{\psi};\lab{B}{-94}{4.5mm}{\psi};}
+\left(\tikz[baseline=-1mm]{\draw (A)--(B);\legm{A}{-90}{1};\leg{B}{-120};\leg{B}{-90};\leg{B}{-60};\gg{1}{A};\gg{1}{B};\lab{B}{-38}{3.5mm}{\psi};\lab{B}{-138}{3.5mm}{\psi};\lab{A}{-70}{3.5mm}{\psi};}
-2\,\tikz[baseline=-1mm]{\draw (A)--(B)--(C);\legm{A}{-90}{1};\leg{B}{-90};\leg{C}{-120};\leg{C}{-60};\gg{1}{A};\gg{1}{B};\gg{0}{C};\lab{B}{-70}{3.5mm}{\psi};\lab{A}{-70}{3.5mm}{\psi};}
-2\,\tikz[baseline=-1mm]{\draw (A)--(B)--(C);\legm{A}{-90}{1};\leg{B}{-90};\leg{C}{-120};\leg{C}{-60};\gg{1}{A};\gg{1}{B};\gg{0}{C};\lab{B}{28}{3.5mm}{\psi};\lab{A}{-70}{3.5mm}{\psi};}\right.\notag\\
&\left.-6\,\tikz[baseline=-1mm]{\draw (A)--(B)--(C);\legm{A}{-90}{1};\leg{C}{-120};\leg{C}{-90};\leg{C}{-60};\gg{1}{A};\gg{1}{B};\gg{0}{C};\lab{B}{28}{3.5mm}{\psi};\lab{A}{-70}{3.5mm}{\psi};}
+2\,\tikz[baseline=-1mm]{\draw (A)--(B)--(C)--(D);\legm{A}{-90}{1};\leg{C}{-90};\leg{D}{-120};\leg{D}{-60};\gg{1}{A};\gg{1}{B};\gg{0}{C};\gg{0}{D};\lab{A}{-70}{3.5mm}{\psi};}\right).\notag
\end{align}
Using the formula
$$
\psi_1=\lambda_1+\tikz[baseline=-1mm]{\draw (A)--(B);\legm{B}{30}{1};\legm{B}{-30}{2};\gg{1}{A};\gg{0}{B};}\in R^1(\oM_{1,2}),
$$
we can rewrite the expression in brackets on the right-hand side of equation~\eqref{eq:A1111B1111,3} as
\begin{align}
&\label{eq:A1111B1111,5}\left(\tikz[baseline=-1mm]{\draw (A)--(B);\legm{A}{-90}{1};\leg{B}{-120};\leg{B}{-90};\leg{B}{-60};\gg{1}{A};\gg{1}{B};\lab{B}{-38}{3.5mm}{\psi};\lab{B}{-138}{3.5mm}{\psi};\lab{A}{90}{3.5mm}{\lambda_1};}
-2\,\tikz[baseline=-1mm]{\draw (A)--(B)--(C);\legm{A}{-90}{1};\leg{B}{-90};\leg{C}{-120};\leg{C}{-60};\gg{1}{A};\gg{1}{B};\gg{0}{C};\lab{B}{-70}{3.5mm}{\psi};\lab{A}{90}{3.5mm}{\lambda_1};}
-2\,\tikz[baseline=-1mm]{\draw (A)--(B)--(C);\legm{A}{-90}{1};\leg{B}{-90};\leg{C}{-120};\leg{C}{-60};\gg{1}{A};\gg{1}{B};\gg{0}{C};\lab{B}{28}{3.5mm}{\psi};\lab{A}{90}{3.5mm}{\lambda_1};}
-6\,\tikz[baseline=-1mm]{\draw (A)--(B)--(C);\legm{A}{-90}{1};\leg{C}{-120};\leg{C}{-90};\leg{C}{-60};\gg{1}{A};\gg{1}{B};\gg{0}{C};\lab{B}{28}{3.5mm}{\psi};\lab{A}{90}{3.5mm}{\lambda_1};}\right.\\
&+\left.2\,\tikz[baseline=-1mm]{\draw (A)--(B)--(C)--(D);\legm{A}{-90}{1};\leg{C}{-90};\leg{D}{-120};\leg{D}{-60};\gg{1}{A};\gg{1}{B};\gg{0}{C};\gg{0}{D};\lab{A}{90}{3.5mm}{\lambda_1};}\right)
+\tikz[baseline=-1mm]{\draw (AA)--(A)--(B);\legm{A}{-90}{1};\leg{B}{-120};\leg{B}{-90};\leg{B}{-60};\gg{1}{AA};\gg{0}{A};\gg{1}{B};\lab{B}{-38}{3.5mm}{\psi};\lab{B}{-138}{3.5mm}{\psi};}
-2\,\tikz[baseline=-1mm]{\draw (AA)--(A)--(B)--(C);\legm{A}{-90}{1};\leg{B}{-90};\leg{C}{-120};\leg{C}{-60};\gg{1}{AA};\gg{0}{A};\gg{1}{B};\gg{0}{C};\lab{B}{-70}{3.5mm}{\psi};}
-2\,\tikz[baseline=-1mm]{\draw (AA)--(A)--(B)--(C);\legm{A}{-90}{1};\leg{B}{-90};\leg{C}{-120};\leg{C}{-60};\gg{1}{AA};\gg{0}{A};\gg{1}{B};\gg{0}{C};\lab{B}{28}{3.5mm}{\psi};}\notag\\
&-6\,\tikz[baseline=-1mm]{\draw (AA)--(A)--(B)--(C);\legm{A}{-90}{1};\leg{C}{-120};\leg{C}{-90};\leg{C}{-60};\gg{1}{AA};\gg{0}{A};\gg{1}{B};\gg{0}{C};\lab{B}{28}{3.5mm}{\psi};}
+2\,\tikz[baseline=-1mm]{\draw (AA)--(A)--(B)--(C)--(D);\legm{A}{-90}{1};\leg{C}{-90};\leg{D}{-120};\leg{D}{-60};\gg{1}{AA};\gg{0}{A};\gg{1}{B};\gg{0}{C};\gg{0}{D};}.\notag
\end{align}
The expression on the right-hand side of equation~\eqref{eq:A1111B1111,4} has the form $(\gl^{1|1}_{1|2,3,4})_*(\lambda_1\times\alpha)$, where 
\begin{align*}
\alpha=&-\tikz[baseline=-1mm]{\draw (B)--(C);\legm{B}{180}{4};\leg{B}{-90};\leg{C}{-60};\leg{C}{-120};\gg{1}{B};\gg{0}{C};\lab{B}{90}{3.5mm}{\lambda_1};}
-3\,\tikz[baseline=-1mm]{\draw (B)--(C);\legm{B}{180}{4};\leg{C}{-90};\leg{C}{-60};\leg{C}{-120};\gg{1}{B};\gg{0}{C};\lab{B}{90}{3.5mm}{\lambda_1};}
+2\,\tikz[baseline=-1mm]{\draw (B)--(C);\legm{B}{180}{4};\leg{B}{-90};\leg{C}{-60};\leg{C}{-120};\gg{0}{B};\gg{1}{C};\lab{C}{90}{3.5mm}{\lambda_1};}
+3\,\tikz[baseline=-1mm]{\draw (B)--(C);\legm{B}{180}{4};\leg{B}{-60};\leg{B}{-120};\leg{C}{-90};\gg{0}{B};\gg{1}{C};\lab{C}{90}{3.5mm}{\lambda_1};}\\
&+3\,\tikz[baseline=-1mm]{\draw (B)--(C);\legm{B}{180}{4};\leg{B}{-60};\leg{B}{-120};\leg{B}{-90};\gg{0}{B};\gg{1}{C};\lab{C}{90}{3.5mm}{\lambda_1};}.
\end{align*}
The part in brackets in expression~\eqref{eq:A1111B1111,5} has the form $(\gl^{1|1}_{1|2,3,4})_*(\lambda_1\times\beta)$, where 
\begin{align*}
\beta=&\tikz[baseline=-1mm]{\legm{B}{180}{4};\leg{B}{-120};\leg{B}{-90};\leg{B}{-60};\gg{1}{B};\lab{B}{-38}{3.5mm}{\psi};\lab{B}{-138}{3.5mm}{\psi};}
-2\,\tikz[baseline=-1mm]{\draw (B)--(C);\legm{B}{180}{4};\leg{B}{-90};\leg{C}{-120};\leg{C}{-60};\gg{1}{B};\gg{0}{C};\lab{B}{-70}{3.5mm}{\psi};}
-2\,\tikz[baseline=-1mm]{\draw (B)--(C);\legm{B}{180}{4};\leg{B}{-90};\leg{C}{-120};\leg{C}{-60};\gg{1}{B};\gg{0}{C};\lab{B}{28}{3.5mm}{\psi};}
-6\,\tikz[baseline=-1mm]{\draw (B)--(C);\legm{B}{180}{4};\leg{C}{-120};\leg{C}{-90};\leg{C}{-60};\gg{1}{B};\gg{0}{C};\lab{B}{28}{3.5mm}{\psi};}\\
&+2\,\tikz[baseline=-1mm]{\draw (B)--(C)--(D);\legm{B}{180}{4};\leg{C}{-90};\leg{D}{-120};\leg{D}{-60};\gg{1}{B};\gg{0}{C};\gg{0}{D};}.
\end{align*}
Expressing all psi classes using the genus $1$ topological recursion relation and also using the WDVV relation, it is easy to show that $\alpha=\beta$. Since $A^2_{2,1}=B^2_{2,1}$ and $A^2_{1,1,1}=B^2_{1,1,1}$, we obtain
\begin{align}
B^2_{1,1,1,1}-A^2_{1,1,1,1}=&6\,\tikz[baseline=-1mm]{\draw (A)--(B)--(C);\legm{B}{-90}{1};\leg{C}{-120};\leg{C}{-90};\leg{C}{-60};\gg{1}{A};\gg{1}{B};\gg{0}{C};\lab{B}{25}{4mm}{\psi^2};}
+2\,\tikz[baseline=-1mm]{\draw (A)--(B)--(C);\legm{B}{-120}{1};\leg{B}{-60};\leg{C}{-120};\leg{C}{-60};\gg{1}{A};\gg{1}{B};\gg{0}{C};\lab{B}{28}{3.5mm}{\psi};\lab{B}{-38}{3.5mm}{\psi};}
-2\,\tikz[baseline=-1mm]{\draw (A)--(B)--(C)--(D);\legm{B}{-90}{1};\leg{C}{-90};\leg{D}{-120};\leg{D}{-60};\gg{1}{A};\gg{1}{B};\gg{0}{C};\gg{0}{D};\lab{B}{28}{3.5mm}{\psi};}\label{eq:A1111B1111,6}\\
&-\tikz[baseline=-1mm]{\draw (A)--(B);\legm{B}{-150}{1};\leg{B}{-110};\leg{B}{-70};\leg{B}{-30};\gg{1}{A};\gg{1}{B};\lab{B}{-12}{4.5mm}{\psi};\lab{B}{-55}{4.5mm}{\psi};\lab{B}{-94}{4.5mm}{\psi};}
+\tikz[baseline=-1mm]{\draw (AAA)--(A)--(B);\legm{A}{-90}{1};\leg{B}{-120};\leg{B}{-90};\leg{B}{-60};\gg{1}{AAA};\gg{0}{A};\gg{1}{B};\lab{B}{-38}{3.5mm}{\psi};\lab{B}{-138}{3.5mm}{\psi};}
-2\,\tikz[baseline=-1mm]{\draw (AAA)--(A)--(B)--(C);\legm{A}{-90}{1};\leg{B}{-90};\leg{C}{-120};\leg{C}{-60};\gg{1}{AAA};\gg{0}{A};\gg{1}{B};\gg{0}{C};\lab{B}{-70}{3.5mm}{\psi};}\notag\\
&-2\,\tikz[baseline=-1mm]{\draw (AAA)--(A)--(B)--(C);\legm{A}{-90}{1};\leg{B}{-90};\leg{C}{-120};\leg{C}{-60};\gg{1}{AAA};\gg{0}{A};\gg{1}{B};\gg{0}{C};\lab{B}{28}{3.5mm}{\psi};}
-6\,\tikz[baseline=-1mm]{\draw (AAA)--(A)--(B)--(C);\legm{A}{-90}{1};\leg{C}{-120};\leg{C}{-90};\leg{C}{-60};\gg{1}{AAA};\gg{0}{A};\gg{1}{B};\gg{0}{C};\lab{B}{28}{3.5mm}{\psi};}\notag\\
&+2\,\tikz[baseline=-1mm]{\draw (AAA)--(A)--(B)--(C)--(D);\legm{A}{-90}{1};\leg{C}{-90};\leg{D}{-120};\leg{D}{-60};\gg{1}{AAA};\gg{0}{A};\gg{1}{B};\gg{0}{C};\gg{0}{D};}.\notag
\end{align}

Define an operator $\mathrm{Sym}\colon R^*(\oM_{2,4})\to R^*(\oM_{2,4})$ by
$$
\mathrm{Sym}(\alpha):=\frac{1}{4!}\sum_{\sigma\in S_4}\sigma\alpha,\quad\alpha\in R^*(\oM_{2,4}),
$$
where the symmetric group $S_4$ acts on $\oM_{2,4}$ by permutations of marked points. Applying the operator $\mathrm{Sym}$ to both sides of equation~\eqref{eq:A1111B1111,6} we obtain
\begin{align*}
B^2_{1,1,1,1}-A^2_{1,1,1,1}=&\frac{3}{2}\,\tikz[baseline=-1mm]{\draw (A)--(B)--(C);\leg{B}{-90};\leg{C}{-120};\leg{C}{-90};\leg{C}{-60};\gg{1}{A};\gg{1}{B};\gg{0}{C};\lab{B}{25}{4mm}{\psi^2};}
+\frac{1}{2}\,\tikz[baseline=-1mm]{\draw (A)--(B)--(C);\leg{B}{-120};\leg{B}{-60};\leg{C}{-120};\leg{C}{-60};\gg{1}{A};\gg{1}{B};\gg{0}{C};\lab{B}{28}{3.5mm}{\psi};\lab{B}{-38}{3.5mm}{\psi};}
-\frac{1}{2}\,\tikz[baseline=-1mm]{\draw (A)--(B)--(C)--(D);\leg{B}{-90};\leg{C}{-90};\leg{D}{-120};\leg{D}{-60};\gg{1}{A};\gg{1}{B};\gg{0}{C};\gg{0}{D};\lab{B}{28}{3.5mm}{\psi};}\\
&-\frac{1}{4}\tikz[baseline=-1mm]{\draw (A)--(B);\leg{B}{-150};\leg{B}{-110};\leg{B}{-70};\leg{B}{-30};\gg{1}{A};\gg{1}{B};\lab{B}{-12}{4.5mm}{\psi};\lab{B}{-55}{4.5mm}{\psi};\lab{B}{-94}{4.5mm}{\psi};}
+\frac{1}{4}\tikz[baseline=-1mm]{\draw (AAA)--(A)--(B);\leg{A}{-90};\leg{B}{-120};\leg{B}{-90};\leg{B}{-60};\gg{1}{AAA};\gg{0}{A};\gg{1}{B};\lab{B}{-38}{3.5mm}{\psi};\lab{B}{-138}{3.5mm}{\psi};}
-\frac{1}{2}\,\tikz[baseline=-1mm]{\draw (AAA)--(A)--(B)--(C);\leg{A}{-90};\leg{B}{-90};\leg{C}{-120};\leg{C}{-60};\gg{1}{AAA};\gg{0}{A};\gg{1}{B};\gg{0}{C};\lab{B}{-70}{3.5mm}{\psi};}\\
&-\frac{1}{2}\,\tikz[baseline=-1mm]{\draw (AAA)--(A)--(B)--(C);\leg{A}{-90};\leg{B}{-90};\leg{C}{-120};\leg{C}{-60};\gg{1}{AAA};\gg{0}{A};\gg{1}{B};\gg{0}{C};\lab{B}{28}{3.5mm}{\psi};}
-\frac{3}{2}\,\tikz[baseline=-1mm]{\draw (AAA)--(A)--(B)--(C);\leg{A}{-90};\leg{C}{-120};\leg{C}{-90};\leg{C}{-60};\gg{1}{AAA};\gg{0}{A};\gg{1}{B};\gg{0}{C};\lab{B}{28}{3.5mm}{\psi};}\\
&+\frac{1}{2}\,\tikz[baseline=-1mm]{\draw (AAA)--(A)--(B)--(C)--(D);\leg{A}{-90};\leg{C}{-90};\leg{D}{-120};\leg{D}{-60};\gg{1}{AAA};\gg{0}{A};\gg{1}{B};\gg{0}{C};\gg{0}{D};}.
\end{align*}
We see that the expression on the right-hand side has the form $(\gl^{1|1}_{|1,2,3,4})_*\left(\oM_{1,1}\times\rho\right)$, where
\begin{align*}
\rho=&\frac{3}{2}\,\tikz[baseline=-1mm]{\draw (B)--(C);\legm{B}{180}{5};\leg{B}{-90};\leg{C}{-120};\leg{C}{-90};\leg{C}{-60};\gg{1}{B};\gg{0}{C};\lab{B}{25}{4mm}{\psi^2};}
+\frac{1}{2}\,\tikz[baseline=-1mm]{\draw (B)--(C);\legm{B}{180}{5};\leg{B}{-120};\leg{B}{-60};\leg{C}{-120};\leg{C}{-60};\gg{1}{B};\gg{0}{C};\lab{B}{28}{3.5mm}{\psi};\lab{B}{-38}{3.5mm}{\psi};}
-\frac{1}{2}\,\tikz[baseline=-1mm]{\draw (B)--(C)--(D);\legm{B}{180}{5};\leg{B}{-90};\leg{C}{-90};\leg{D}{-120};\leg{D}{-60};\gg{1}{B};\gg{0}{C};\gg{0}{D};\lab{B}{28}{3.5mm}{\psi};}
-\frac{1}{4}\tikz[baseline=-1mm]{\legm{B}{180}{5};\leg{B}{-150};\leg{B}{-110};\leg{B}{-70};\leg{B}{-30};\gg{1}{B};\lab{B}{-12}{4.5mm}{\psi};\lab{B}{-55}{4.5mm}{\psi};\lab{B}{-94}{4.5mm}{\psi};}\\
&+\frac{1}{4}\tikz[baseline=-1mm]{\draw (A)--(B);\legm{A}{180}{5};\leg{A}{-90};\leg{B}{-120};\leg{B}{-90};\leg{B}{-60};\gg{0}{A};\gg{1}{B};\lab{B}{-38}{3.5mm}{\psi};\lab{B}{-138}{3.5mm}{\psi};}
-\frac{1}{2}\,\tikz[baseline=-1mm]{\draw (A)--(B)--(C);\legm{A}{180}{5};\leg{A}{-90};\leg{B}{-90};\leg{C}{-120};\leg{C}{-60};\gg{0}{A};\gg{1}{B};\gg{0}{C};\lab{B}{-70}{3.5mm}{\psi};}
-\frac{1}{2}\,\tikz[baseline=-1mm]{\draw (A)--(B)--(C);\legm{A}{180}{5};\leg{A}{-90};\leg{B}{-90};\leg{C}{-120};\leg{C}{-60};\gg{0}{A};\gg{1}{B};\gg{0}{C};\lab{B}{28}{3.5mm}{\psi};}\\
&-\frac{3}{2}\,\tikz[baseline=-1mm]{\draw (A)--(B)--(C);\legm{A}{180}{5};\leg{A}{-90};\leg{C}{-120};\leg{C}{-90};\leg{C}{-60};\gg{0}{A};\gg{1}{B};\gg{0}{C};\lab{B}{28}{3.5mm}{\psi};}
+\frac{1}{2}\,\tikz[baseline=-1mm]{\draw (A)--(B)--(C)--(D);\legm{A}{180}{5};\leg{A}{-90};\leg{C}{-90};\leg{D}{-120};\leg{D}{-60};\gg{0}{A};\gg{1}{B};\gg{0}{C};\gg{0}{D};}.
\end{align*}
It is sufficient to prove that $\rho=0$. For this we express all the psi classes using the genus~$1$ topological recursion relation, and then prove that $\rho=0$ using the WDVV relation and Getzler's relation. This computation is straightforward, but quite long, so we present here only the most interesting parts of it. Expressing all the psi classes we obtain
$$
\rho=\lambda_1\theta_1+\theta_2,
$$
where $\theta_1\in R^2(\oM_{1,5})$ and $\theta_2\in R^3(\oM_{1,5})$ are sums of boundary strata. Using the WDVV relation it is not hard to prove that $\theta_2=0$. For the class $\lambda_1\theta_1$ we get the following expression:
\begin{align*}
\lambda_1\theta_1=&a_1^{0,1,3}+\frac{1}{6} a_1^{0,2,2}-\frac{1}{6} a_1^{1,1,2}-\frac{1}{4} a_2^{0,0,4}+\frac{7}{8} a_2^{0,1,3}+\frac{1}{8}a_2^{0,2,2}+\frac{5}{4} a_2^{1,0,3}+\frac{1}{4} a_2^{1,1,2}\\
&+a_2^{2,0,2}-\frac{7}{16} a_3^{0,1,3}-\frac{3}{4}a_3^{0,2,2}+\frac{1}{16} a_3^{0,3,1}-\frac{7}{12} a_3^{1,1,2}+\frac{1}{12} a_3^{1,2,1}+b_1^{0,2,2}\\
&-\frac{3}{2}b_2^{0,1,3}-\frac{1}{2} b_2^{1,1,2},
\end{align*}
where we use the following notations:
\begin{align*}
&a^{i,j,k}_1:=\tikz[baseline=-1mm]{\draw (A)--(B)--(C);\legm{A}{180}{5};\leg{A}{-60};\leg{A}{-120};\leg{B}{-60};\leg{B}{-120};\leg{C}{-60};\leg{C}{-120};\gg{1}{A};\gg{0}{B};\gg{0}{C};\lab{A}{-90}{5.9mm}{...};\lab{A}{-90}{7mm}{\underbrace{\phantom{aaa}}_{\text{$i$ legs}}};\lab{B}{-90}{5.9mm}{...};\lab{B}{-90}{7mm}{\underbrace{\phantom{aaa}}_{\text{$j$ legs}}};\lab{C}{-90}{5.9mm}{...};\lab{C}{-90}{7mm}{\underbrace{\phantom{aaa}}_{\text{$k$ legs}}};\lab{A}{90}{3.5mm}{\lambda_1};}&&
a^{i,j,k}_2:=\tikz[baseline=-1mm]{\draw (A)--(B)--(C);\legm{B}{90}{5};\leg{A}{-60};\leg{A}{-120};\leg{B}{-60};\leg{B}{-120};\leg{C}{-60};\leg{C}{-120};\gg{1}{A};\gg{0}{B};\gg{0}{C};\lab{A}{-90}{5.9mm}{...};\lab{A}{-90}{7mm}{\underbrace{\phantom{aaa}}_{\text{$i$ legs}}};\lab{B}{-90}{5.9mm}{...};\lab{B}{-90}{7mm}{\underbrace{\phantom{aaa}}_{\text{$j$ legs}}};\lab{C}{-90}{5.9mm}{...};\lab{C}{-90}{7mm}{\underbrace{\phantom{aaa}}_{\text{$k$ legs}}};\lab{A}{90}{3.5mm}{\lambda_1};}\\
&a^{i,j,k}_3:=\tikz[baseline=-1mm]{\draw (A)--(B)--(C);\legm{C}{0}{5};\leg{A}{-60};\leg{A}{-120};\leg{B}{-60};\leg{B}{-120};\leg{C}{-60};\leg{C}{-120};\gg{1}{A};\gg{0}{B};\gg{0}{C};\lab{A}{-90}{5.9mm}{...};\lab{A}{-90}{7mm}{\underbrace{\phantom{aaa}}_{\text{$i$ legs}}};\lab{B}{-90}{5.9mm}{...};\lab{B}{-90}{7mm}{\underbrace{\phantom{aaa}}_{\text{$j$ legs}}};\lab{C}{-90}{5.9mm}{...};\lab{C}{-90}{7mm}{\underbrace{\phantom{aaa}}_{\text{$k$ legs}}};\lab{A}{90}{3.5mm}{\lambda_1};}&&
b^{0,2,2}_1:=\tikz[baseline=-1mm]{\draw (A)--(B)--(C);\legm{B}{-90}{5};\leg{A}{-60};\leg{A}{-120};\leg{C}{-60};\leg{C}{-120};\gg{0}{A};\gg{1}{B};\gg{0}{C};\lab{B}{90}{3.5mm}{\lambda_1};}\\
&b^{i,j,k}_2:=\tikz[baseline=-1mm]{\draw (A)--(B)--(C);\legm{A}{180}{5};\leg{A}{-60};\leg{A}{-120};\leg{B}{-60};\leg{B}{-120};\leg{C}{-60};\leg{C}{-120};\gg{0}{A};\gg{1}{B};\gg{0}{C};\lab{A}{-90}{5.9mm}{...};\lab{A}{-90}{7mm}{\underbrace{\phantom{aaa}}_{\text{$j$ legs}}};\lab{B}{-90}{5.9mm}{...};\lab{B}{-90}{7mm}{\underbrace{\phantom{aaa}}_{\text{$i$ legs}}};\lab{C}{-90}{5.9mm}{...};\lab{C}{-90}{7mm}{\underbrace{\phantom{aaa}}_{\text{$k$ legs}}};\lab{B}{90}{3.5mm}{\lambda_1};}&&
\end{align*}

Consider Getzler's relation~\eqref{eq:Getzler relation}. Let $\pi\colon\oM_{1,5}\to\oM_{1,4}$ be the forgetful map that forgets the last marked point. We have
\begin{align*}
\lambda_1\pi^*\gamma=&\frac{1}{2} a_1^{0,1,3}-\frac{1}{6} a_1^{0,2,2}-\frac{1}{3} a_1^{1,1,2}+\frac{1}{2} a_2^{0,1,3}-\frac{1}{6}a_2^{0,2,2}-\frac{1}{3} a_2^{1,1,2}+\frac{1}{2} a_3^{0,1,3}\\
&-\frac{1}{6} a_3^{0,2,2}-\frac{1}{3}a_3^{1,1,2}+b_1^{0,2,2}+b_2^{0,2,2}=0\in R^3(\oM_{1,5}).
\end{align*} 
Let $\pi'\colon\oM_{1,5}\to\oM_{1,4}$ be the forgetful map that forgets the first marked point. We assume, that after forgetting the first marked point, a point labeled by $i$, $i\ge 2$, on a curve from $\oM_{1,5}$ becomes a point labeled by $i-1$ on a curve in $\oM_{1,4}$. The symmetric group $S_4$ acts on $\oM_{1,5}$ by permutations of the first four marked points. Define a map $\mathrm{Sym}'\colon R^*(\oM_{1,5})\to R^*(\oM_{1,5})$ by
$$
\mathrm{Sym}'(\alpha):=\frac{1}{4!}\sum_{\sigma\in S_4}\sigma\alpha,\quad\alpha\in R^*(\oM_{1,5}).
$$
We have
\begin{align*}
\lambda_1\mathrm{Sym}'((\pi')^*\gamma)=&-\frac{1}{4} a_1^{0,1,3}-\frac{1}{6} a_1^{0,2,2}-\frac{1}{12} a_1^{1,1,2}+\frac{1}{2} a_2^{0,0,4}-\frac{1}{12}a_2^{0,2,2}-\frac{1}{8} a_2^{1,0,3}\\
&-\frac{1}{8} a_2^{1,1,2}-\frac{1}{6} a_2^{2,0,2}+\frac{3}{8} a_3^{0,1,3}+\frac{1}{6}a_3^{0,2,2}-\frac{1}{8} a_3^{0,3,1}-\frac{1}{24} a_3^{1,1,2}\\
&-\frac{5}{24} a_3^{1,2,1}-\frac{1}{6} a_3^{2,1,1}+\frac{3}{4}b_2^{0,1,3}+\frac{1}{2} b_2^{0,2,2}+\frac{1}{4} b_2^{1,1,2}=0\in R^3(\oM_{1,5}).
\end{align*}
We compute
\begin{align*}
\lambda_1\theta_1=&\lambda_1\theta_1-\lambda_1\pi^*\gamma+2\lambda_1\mathrm{Sym}'((\pi')^*\gamma)=\\
=&\frac{3}{4} a_2^{0,0,4}+\frac{3}{8} a_2^{0,1,3}+\frac{1}{8} a_2^{0,2,2}+a_2^{1,0,3}+\frac{1}{3} a_2^{1,1,2}+\frac{2}{3}a_2^{2,0,2}-\frac{3}{16} a_3^{0,1,3}-\frac{1}{4} a_3^{0,2,2}\\
&-\frac{3}{16} a_3^{0,3,1}-\frac{1}{3} a_3^{1,1,2}-\frac{1}{3}a_3^{1,2,1}-\frac{1}{3} a_3^{2,1,1}.
\end{align*}
Finally, applying the WDVV relations
\begin{align*}
a_3^{2,1,1}=&2a_2^{2,0,2},\\
a_2^{1,0,3}=&-\frac{1}{3}a_2^{1,1,2}+\frac{1}{3}a_3^{1,1,2}+\frac{1}{3}a_3^{1,2,1},\\
a_2^{0,0,4}=&-\frac{1}{2}a_2^{0,1,3}-\frac{1}{6}a_2^{0,2,2}+\frac{1}{4}a_3^{0,1,3}+\frac{1}{3}a_3^{0,2,2}+\frac{1}{4}a_3^{0,3,1},
\end{align*}
it is easy to see that $\lambda_1\theta_1=0$. The relation $A^2_{1,1,1,1}=B^2_{1,1,1,1}$ is proved.
}

%
%
%
%

\end{document}